\documentclass[10pt,letterpaper]{amsart}
\usepackage[utf8]{inputenc}
\usepackage{amsmath,amssymb,amsthm,mathtools}
\usepackage[margin=1in]{geometry}
\usepackage{enumitem}
\usepackage{shuffle}
\usepackage[hidelinks]{hyperref}

\newcommand{\Law}{\mathrm{Law}}
\newcommand{\Prob}{\mathbb{P}}
\DeclareMathOperator{\Unif}{Unif}
\newcommand{\E}{\mathbb{E}}
\newcommand{\R}{\mathbb{R}}
\newcommand{\supp}{\operatorname{supp}}
\DeclareMathOperator{\TV}{TV}
\DeclareMathOperator{\sep}{sep}
\DeclareMathOperator{\im}{im}

\theoremstyle{plain}
\newtheorem{theorem}{Theorem}[section]
\newtheorem{lemma}[theorem]{Lemma}
\newtheorem{proposition}[theorem]{Proposition}
\newtheorem{corollary}[theorem]{Corollary}
\theoremstyle{definition}
\newtheorem{definition}[theorem]{Definition}
\newtheorem{example}[theorem]{Example}
\newtheorem{remark}[theorem]{Remark}

\title{Explicit cutoff profiles for colored top-\texorpdfstring{$m$}{m}-to-random shuffles}
\author{Ivan Z. Feng}
\address{Department of Mathematics, University of Southern California, Los Angeles, CA 90089-2532, USA}
\email{ifeng@usc.edu}
\urladdr{https://dornsife.usc.edu/ivan/}
\date{May 27, 2026}

\begin{document}

\begin{abstract}
We study \(p\)-colored top-\(m\)-to-random on the wreath product \(G_{n,p}=C_p\wr S_n\), with \(m\) fixed. Using the Nakano--Sadahiro--Sakurai basis elements \(B_m\), we obtain exact nested-set occupancy mixtures and reduce the likelihood ratio to the single statistic \(L_p\). This yields exact formulas for separation and \(L^\infty(U)\), and exact one-dimensional formulas for total variation, \(L^q(U)\) (\(1\le q<\infty\)), \(\chi^2\), and relative entropy. At
\[
k=\Bigl\lfloor \frac{n}{m}(\log n+c)\Bigr\rfloor,
\]
the number of never-chosen labels in the associated \(m\)-subset occupancy model converges in law to \(\mathrm{Poisson}(e^{-c})\), giving the total-variation profile \(f_p(c)\), the separation profile, and the corresponding \(L^q(U)\), \(L^\infty(U)\), \(\chi^2\), and relative-entropy profiles. For \(m=1\) we recover colored top-to-random; for \(p=1\), the total-variation profile reduces to the Diaconis--Fill--Pitman profile. For the reversed chain, we also identify optimal strong stationary times whose tail probabilities are exactly the separation distances.
\end{abstract}

\maketitle
\tableofcontents
\section{Introduction}

Diaconis--Fill--Pitman \cite{DFP} determined the total-variation cutoff profile for classical top-to-random on \(S_n\) at times
\[
k=\lfloor n\log n+cn\rfloor.
\]
For integers \(n,p\ge1\), let
\[
G_{n,p}=C_p\wr S_n\cong C_p^{\,n}\rtimes S_n,
\]
the wreath product of the cyclic group \(C_p\) with the symmetric group \(S_n\); equivalently, \(G_{n,p}\) is the group of \(p\)-colored permutations.
Nakano--Sadahiro--Sakurai \cite{NSS} proved cutoff on the \(n\log n\) scale for colored top-to-random, but did not give a closed profile.
We obtain explicit cutoff profiles for colored top-\(m\)-to-random with fixed $m$, together with exact and asymptotic formulas for separation, $L^\infty(U)$, $L^q(U)$, $\chi^2$, and relative entropy. For \(m=1\) we recover colored top-to-random; for \(m=p=1\), we recover the Diaconis--Fill--Pitman total-variation profile \cite{DFP}. 

A one-step \(p\)-colored top-\(m\)-to-random move removes the top $m$ cards, recolors them by independent uniform elements of $C_p$, applies a uniform random permutation to these $m$ cards, chooses an $m$-element set of final positions uniformly among the \(\binom{n}{m}\) possibilities, and inserts the $m$ cards in those positions. The remaining $n-m$ cards preserve their relative order and color. Section~\ref{sec:prel} gives the group-algebra realization via the basis elements \(B_m\) introduced by Nakano--Sadahiro--Sakurai \cite{NSS}.

By the time-reversal discussion in Section~\ref{sec:prel}, passing to the inverse law does not change total variation or separation. For \(m=1\), this connects colored top-to-random with colored move-to-front. Weighted colored move-to-front chains also appear in the \(p\)-colored Tsetlin-library literature; see \cite{NakagawaNakano}.
For \(p=1,2\) there is a real hyperplane-arrangement realization in which faces act on chambers by the Tits projection \cite{AthanasiadisDiaconis_Hyperplane, BrownDiaconisHyperplane}: \(p=1\) is the braid arrangement of type \(A_{n-1}\), and \(p=2\) the signed arrangement of type \(B_n\); see also \cite{Bjorner2008Libraries}. In these cases, chamber hitting gives an optimal strong stationary time for separation \cite{Nestoridi_HyperplaneSST}.
For \(p\ge3\), the natural reflection arrangement of \(G_{n,p}\) is complex rather than real, so we work algebraically through the basis and nested-set mixtures. Related algebraic frameworks include Brown's semigroup methods \cite{Brown2000Semigroups} and Pang's Hopf-algebraic descent-operator approach \cite{Pang2016Hopf}.

Our starting point is the product expansion for $B_m$ in \cite{NSS}. It yields exact nested-set occupancy mixtures $\widetilde Q_{\mu,p}$ on the decreasing family
\[
A_0\supseteq A_1\supseteq \cdots \supseteq A_n.
\]
The nesting compresses the likelihood ratio to one statistic:
\[
\frac{\widetilde Q_{\mu,p}(x)}{U(x)}=S_\mu(L_p(x)).
\]
At
\[
k=\left\lfloor \frac{n}{m}(\log n+c)\right\rfloor,
\]
the number of never-chosen labels converges in law to \(\operatorname{Poisson}(e^{-c})\). This coupon-collector variable is the source of all cutoff-window profiles in the paper. Once the likelihood ratio has been compressed to \(S_\mu(L_p(x))\), each metric question becomes a one-dimensional question about the distribution of \(L_p\). Thus the same mechanism gives the total-variation profile \(f_p(c)\), the exact separation and \(L^\infty(U)\) formulas, and the \(L^q(U)\), \(\chi^2\), and relative-entropy profiles. In particular, separation is determined by $\min L_p$, with a sharp $p=1$ versus $p\ge2$ dichotomy. Here $U$ denotes the uniform law on $G_{n,p}$. Section~\ref{sec:prel} fixes the Markov-chain and convolution conventions. For a broad treatment of card shuffling, including move-to-front-type models and geometric viewpoints, see Diaconis and Fulman \cite{DiaconisFulmanShuffling}. For background on mixing and cutoff, see \cite{Diaconis96Cutoff, LPW}. For general comparisons among probability metrics, see Gibbs and Su \cite{GibbsSu}. Unlike total variation, separation is not symmetric and hence is not a distance between probability measures; see also \cite{DiaconisSaloffCosteSep}.

For \(0\le u\le n\), let \(A_u\) be the set of colored permutations in which the largest \(u\) labels appear in increasing order and have color \(0\), and let \(\widetilde Q_{n-u,p}\) be the uniform law on \(A_u\). Set
\[
L_p(x):=\max\{u:x\in A_u\},\qquad
Q^{(m)}:=\widetilde Q_{m,p},
\]
and, for a probability mass function \(\mu\) on \(\{0,\dots,n\}\),
\[
\widetilde Q_{\mu,p}:=\sum_{u=0}^{n}\mu(n-u)\,\widetilde Q_{n-u,p}.
\]
In asymptotic statements involving a sequence \((\mu_n)\) of probability mass functions on \(\{0,\dots,n\}\), write
\[
\lambda:=e^{-c},\qquad
r:=p\lambda,\qquad
M_n:=\widetilde Q_{\mu_n,p},
\]
extend \(\mu_n(a)=0\) for \(a\notin\{0,\dots,n\}\), and assume
\begin{equation}\label{eq:intro-Poisson-assump}
\mu_n(n-u)\longrightarrow e^{-\lambda}\frac{\lambda^u}{u!}
\qquad(u\ge0).
\end{equation}
We also write \(\chi^2(M,U):=\left\|\frac{M(\cdot)}{U(\cdot)}-1\right\|_{L^2(U)}^2\) and \(D(M\|U):=\sum_x M(x)\log\frac{M(x)}{U(x)}\) with the convention \(0\log0:=0\).
For asymptotic top-\(m\)-to-random statements with fixed \(m\ge1\) and \(c\in\R\), write
\begin{equation}\label{eq:window-time-top-m}
k_n^{(m)}(c):=\left\lfloor \frac{n}{m}(\log n+c)\right\rfloor.
\end{equation}
For fixed \(m\) and \(c\), this integer is nonnegative for all sufficiently large \(n\), which is the regime used in the asymptotic statements.
When \(m\) is fixed throughout a statement, we also write \(k_n(c)\) for \(k_n^{(m)}(c)\).

For the reversed chain, set
\[
\check Q^{(m)}(g):=Q^{(m)}(g^{-1}).
\]
Let \((Y_j)_{j\ge0}\) be the \(\check Q^{(m)}\)-walk started at \(e\). In its inverse-move realization, each step chooses a uniform \(m\)-subset of labels, moves those labels to the first \(m\) positions in uniform random order, recolors them independently and uniformly in \(C_p\), and leaves all other labels and colors in relative order. Let \(T_j\subseteq[n]\) be the set of labels touched by time \(j\), with \(T_0=\varnothing\), and set
\[
P_j^{(m)}(u):=\Prob(n-|T_j|=u),
\]
and define
\[
\tau_{\mathrm{cov}}:=\min\{j:T_j=[n]\},
\qquad
\tau_{n-1}:=\min\{j:|T_j|\ge n-1\}.
\]

We now state three main theorems.
For the Poisson-regime statements below, write
\[
w^*:=w^*(c,p):=\min\left\{\ell\ge0:\ e^{-\lambda}\sum_{u=0}^{\ell}r^u>1\right\},
\qquad
a_\ell:=\frac{1}{\ell!\,p^\ell}-\frac{1}{(\ell+1)!\,p^{\ell+1}},
\qquad
s(\ell):=e^{-\lambda}\sum_{u=0}^{\ell}r^u.
\]
Set
\[
f_p(c):=
1-e^{-\lambda}\sum_{u=0}^{w^*-1}\frac{\lambda^u}{u!}
+\frac{e^{-\lambda}\sum_{u=0}^{w^*-1}r^u-1}{w^*!\,p^{w^*}},
\qquad
s_p(c):=
\begin{cases}
1-e^{-\lambda}(1+\lambda), & p=1,\\[4pt]
1-e^{-\lambda}, & p\ge2,
\end{cases}
\]
\[
H_{q,p}(c):=\sum_{\ell=0}^{\infty}a_\ell\,|s(\ell)-1|^q
\quad(1\le q<\infty),
\qquad
h_p(c):=\sum_{\ell=0}^{\infty}a_\ell\,s(\ell)\log s(\ell),
\]
\[
g_p(c):=H_{2,p}(c)=
\begin{cases}
\displaystyle \frac{r+1}{r-1}e^{-\lambda(2-r)}-\frac{2}{r-1}e^{-\lambda}-1,
& r\neq1,\\[12pt]
\displaystyle (2\lambda+1)e^{-\lambda}-1,
& r=1,
\end{cases}
\qquad
H_{\infty,p}(c):=
\begin{cases}
\displaystyle \frac{e^{-\lambda}}{1-r}-1, & r<1,\\[10pt]
+\infty, & r\ge1.
\end{cases}
\]
Theorem~\ref{thm:intro-LR} gives the exact one-statistic reduction and the resulting exact formulas for separation, \(L^q(U)\), \(L^\infty(U)\), \(\chi^2\), and relative entropy. Theorem~\ref{thm:intro-engine} gives the general Poisson-regime profiles, with extra domination hypotheses for \(L^q(U)\), relative entropy, and \(L^\infty(U)\). Theorem~\ref{thm:intro-topm} specializes these results to colored top-\(m\)-to-random and identifies optimal strong stationary times for the reversed chain.

\begin{theorem}[Exact reduction to \texorpdfstring{$L_p$}{Lp}]\label{thm:intro-LR}
Let \(\mu\) be a probability mass function on \(\{0,1,\dots,n\}\), and define
\[
\widetilde Q_{\mu,p}:=\sum_{u=0}^{n}\mu(n-u)\,\widetilde Q_{n-u,p},
\qquad
S_\mu(\ell):=\sum_{u=0}^{\ell}\mu(n-u)\,u!\,p^u.
\]
Then
\[
\frac{\widetilde Q_{\mu,p}(x)}{U(x)}=S_\mu(L_p(x))
\qquad(x\in G_{n,p}),
\]
and, for every \(\Phi:[0,\infty)\to\mathbb R\) for which the sums are finite,
\[
\sum_{x\in G_{n,p}}U(x)\,
\Phi\!\left(\frac{\widetilde Q_{\mu,p}(x)}{U(x)}\right)
=
\sum_{\ell=0}^{n}U\{L_p=\ell\}\,\Phi\!\bigl(S_\mu(\ell)\bigr).
\]
Moreover,
\[
\sep(\widetilde Q_{\mu,p},U)=
\begin{cases}
1-\mu(n)-\mu(n-1), & p=1,\\[4pt]
1-\mu(n), & p\ge2,
\end{cases}
\]
and
\[
\left\|\frac{\widetilde Q_{\mu,p}(\cdot)}{U(\cdot)}-1\right\|_{L^\infty(U)}
=S_\mu(n)-1
=\sum_{u=0}^{n}\mu(n-u)\,u!\,p^u-1.
\]
\end{theorem}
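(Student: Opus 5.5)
Everything follows from counting the sets \(A_u\), so we start there. An element \(x\in A_u\) is specified by three pieces of data: the set of positions occupied by the largest \(u\) labels, in which those labels are forced to appear in increasing order with color \(0\); an arbitrary arrangement of the remaining \(n-u\) labels in the other positions; and arbitrary colors on those \(n-u\) labels. Hence
\[
|A_u|=\binom{n}{u}(n-u)!\,p^{n-u}=\frac{n!\,p^n}{u!\,p^u},
\qquad\text{so}\qquad \frac{U(A_u)}{1}=\frac{1}{u!\,p^u}.
\]
Since \(\widetilde Q_{n-u,p}\) is uniform on \(A_u\), we get \(\widetilde Q_{n-u,p}(x)/U(x)=u!\,p^u\,\mathbf 1_{A_u}(x)\). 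The family is nested, \(A_0\supseteq A_1\supseteq\cdots\supseteq A_n\), because dropping the smallest label from the top-\(u\) block preserves both the increasing order and the color condition. Consequently \(x\in A_u\) if and only if \(u\le L_p(x)\). Summing over \(u\) with weights \(\mu(n-u)\) gives
\[
\frac{\widetilde Q_{\mu,p}(x)}{U(x)}=\sum_{u\le L_p(x)}\mu(n-u)\,u!\,p^u=S_\mu(L_p(x)).
\]
The \(\Phi\)-identity is then just the change of variables obtained by grouping the points \(x\) according to the value of \(L_p(x)\).

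Separation is \(\sep=\max_x\bigl(1-\widetilde Q_{\mu,p}(x)/U(x)\bigr)=1-S_\mu(\min_x L_p(x))\), since \(S_\mu\) is nondecreasing in \(\ell\) (all terms are nonnegative). The key step is therefore to identify \(\min L_p\).
\begin{itemize}
\item Every \(x\) lies in \(A_0=G_{n,p}\).
\item A single label \(n\) is trivially "in increasing order", so \(x\in A_1\) exactly when label \(n\) has color \(0\).
\item For \(p\ge2\), we can give label \(n\) a nonzero color, so \(\min L_p=0\) and \(\sep=1-\mu(n)\).
\item For \(p=1\), every \(x\) lies in \(A_1\). Taking \(x\) with \(n\) placed before \(n-1\) shows \(\min L_1=1\), so \(\sep=1-\mu(n)-\mu(n-1)\cdot 1!\,1^1\).
\end{itemize}

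For the \(L^\infty\) norm, \(S_\mu(\ell)-1\) ranges between \(S_\mu(\min L_p)-1\le0\) and \(S_\mu(n)-1\ge0\); the latter is nonnegative because \(S_\mu(n)\ge\sum_u\mu(n-u)=1\). The sup norm is therefore \(\max\bigl(S_\mu(n)-1,\;1-S_\mu(\min L_p)\bigr)\).

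The main obstacle is showing that the upper deviation dominates, i.e.\ \(S_\mu(n)-1\ge 1-S_\mu(\min L_p)=\sep\). The value \(\ell=n\) is attained, by the identity with all labels in order and color \(0\). Writing \(\ell_0=\min L_p\),
\[
S_\mu(n)-1=\sum_{u}\mu(n-u)\bigl(u!\,p^u-1\bigr),
\qquad
1-S_\mu(\ell_0)=\sum_{u>\ell_0}\mu(n-u)-\sum_{u\le\ell_0}\mu(n-u)\bigl(u!\,p^u-1\bigr).
\]
Comparing these term by term, the inequality follows from \(u!\,p^u-1\ge1\) for \(u>\ell_0\): this holds for \(u\ge1\) when \(p\ge2\), and for \(u\ge2\) when \(p=1\). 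The terms with \(u\le\ell_0\) only help, since they enter the first sum with a nonnegative sign and the second with a nonpositive one. Hence the \(L^\infty\) norm equals \(S_\mu(n)-1\), completing the plan.
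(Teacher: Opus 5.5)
Your proposal is correct and follows essentially the same route as the paper. The paper also uses the component ratio \(u!\,p^u\mathbf 1_{A_u}\) together with nestedness to get \(S_\mu(L_p)\), reduces separation to \(\min L_p\) (a nonzero color on label \(n\) when \(p\ge2\), a permutation with \(n\) before \(n-1\) when \(p=1\)), and proves the \(L^\infty\) claim by the same termwise comparison using \(u!\,p^u\ge2\) for \(u>\min L_p\). The only detail to add is the trivial case \(p=1\), \(n=1\): there is no label \(n-1\), but \(L_1\le n=1\) forces \(\min L_1=1\) anyway, which the paper handles separately.
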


\begin{theorem}[Poisson-regime profiles]\label{thm:intro-engine}
Fix \(p\ge1\) and \(c\in\R\). Assume \eqref{eq:intro-Poisson-assump}. Then
\[
\|M_n-U\|_{\TV}\longrightarrow f_p(c),
\qquad
\sep(M_n,U)\longrightarrow s_p(c).
\]
If in addition there exists \(B\ge1\) such that, for all sufficiently large \(n\) and all \(u\ge0\),
\[
0\le \mu_n(n-u)\,u!\,p^u\le B^u,
\]
then, for every \(1\le q<\infty\),
\[
\left\|\frac{M_n(\cdot)}{U(\cdot)}-1\right\|_{L^q(U)}^q
\longrightarrow H_{q,p}(c),
\qquad
D(M_n\|U)\longrightarrow h_p(c).
\]
In particular,
\[
\chi^2(M_n,U)\longrightarrow H_{2,p}(c)=g_p(c).
\]
Assume further, in the case \(r<1\), that there exists a summable \((b_u)_{u\ge0}\) such that, for all sufficiently large \(n\) and all \(u\ge0\),
\[
0\le \mu_n(n-u)\,u!\,p^u\le b_u.
\]
Then, with convergence understood in the extended interval \([0,\infty]\),
\[
\left\|\frac{M_n(\cdot)}{U(\cdot)}-1\right\|_{L^\infty(U)}
\longrightarrow H_{\infty,p}(c).
\]
\end{theorem}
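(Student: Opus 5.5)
The plan is to reduce everything to Theorem~\ref{thm:intro-LR} and then take limits term by term in the resulting one-dimensional sums. First I compute the law of $L_p$ under $U$. Since $A_\ell$ fixes the relative order and the colors of the largest $\ell$ labels, $|A_\ell|=|G_{n,p}|/(\ell!\,p^\ell)$. Hence $U\{L_p\ge \ell\}=1/(\ell!\,p^\ell)$, so that $U\{L_p=\ell\}=a_\ell$ for $0\le\ell<n$ and $U\{L_p=n\}=1/(n!\,p^n)$. Write $S_n:=S_{\mu_n}$. Assumption \eqref{eq:intro-Poisson-assump} gives $\mu_n(n-u)\,u!\,p^u\to e^{-\lambda}r^u$, so for each fixed $\ell$ we get $S_n(\ell)\to s(\ell)$, a finite sum. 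I also record that $S_n$ and $s$ are nondecreasing in $\ell$ and that $s(\ell)\uparrow\infty$ when $r\ge1$.

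\emph{Total variation.} Apply Theorem~\ref{thm:intro-LR} with $\Phi(x)=(1-x)_+$ to get
\[
\|M_n-U\|_{\TV}=\sum_{\ell=0}^{n}U\{L_p=\ell\}\,(1-S_n(\ell))_+ .
\]
By monotonicity, every term with $\ell\ge w^*$ vanishes once $S_n(w^*)>1$, and this holds eventually because $s(w^*)>1$. So only the finitely many indices $\ell<w^*$ remain. For these, continuity of $x\mapsto(1-x)_+$ gives the limit $\sum_{\ell<w^*}a_\ell(1-s(\ell))$, and ties $s(\ell)=1$ cause no trouble. An Abel summation using $a_\ell=\frac1{\ell!p^\ell}-\frac1{(\ell+1)!p^{\ell+1}}$ and $s(\ell)-s(\ell-1)=e^{-\lambda}r^\ell$ then rewrites this limit as $f_p(c)$. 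Separation is immediate from the exact formula in Theorem~\ref{thm:intro-LR}: $\mu_n(n)\to e^{-\lambda}$ and $\mu_n(n-1)\to\lambda e^{-\lambda}$.

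\emph{$L^q$ and entropy.} Use $\Phi(x)=|x-1|^q$ and $\Phi(x)=x\log x$. The domination hypothesis gives $0\le S_n(\ell)\le(\ell+1)B^\ell$. The general term is then bounded by $\frac{1}{\ell!p^\ell}\bigl(1+(\ell+1)B^\ell\bigr)^q$, which is summable in $\ell$; for entropy I use $|x\log x|\le 1+x^2$. The boundary term at $\ell=n$ has size at most $\bigl(1+(n+1)B^n\bigr)^q/n!\to0$. Dominated convergence therefore gives the limits $H_{q,p}(c)$ and $h_p(c)$. The closed form for $g_p=H_{2,p}$ is a routine computation. One expands $(s(\ell)-1)^2$ with $s(\ell)=e^{-\lambda}\frac{1-r^{\ell+1}}{1-r}$ for $r\ne1$. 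One then sums against $a_\ell$ using the series of $e^{1/p}$ and $e^{r/p}=e^{\lambda}$, after Abel summation again, and treats $r=1$ as a limit or directly.

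\emph{$L^\infty$.} By Theorem~\ref{thm:intro-LR}, the $L^\infty(U)$ norm equals $S_n(n)-1=\sum_u\mu_n(n-u)\,u!\,p^u-1$. If $r<1$, the summable bound $b_u$ and dominated convergence for series give the limit $e^{-\lambda}\sum_u r^u-1=H_{\infty,p}(c)$. If $r\ge1$, Fatou's lemma for series gives $\liminf_n S_n(n)\ge\sum_u e^{-\lambda}r^u=\infty$, so the limit is $+\infty$ in $[0,\infty]$.

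I expect the main obstacle to be the TV step. The issue is justifying that the active range $\{\ell:S_n(\ell)<1\}$ stabilizes to $\{\ell<w^*\}$ without any domination hypothesis. Monotonicity of $S_n$ together with the strict inequality $s(w^*)>1$ in the definition of $w^*$ is exactly what handles this. After that, the remaining risk is only the bookkeeping in the Abel summations for $f_p$ and $g_p$.
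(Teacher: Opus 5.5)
Your proposal is correct. The separation, $L^q$, relative-entropy and $L^\infty$ parts essentially match the paper. They use the exact formulas of Theorem~\ref{thm:intro-LR}, then dominated convergence with $S_{\mu_n}(\ell)\le(\ell+1)B^\ell$ against $U\{L_p=\ell\}\le 1/(\ell!\,p^\ell)$. For $r\ge1$ they use the truncation (Fatou) argument.

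\textbf{Total variation: a different route.} The paper first proves an exact finite-$n$ formula, $\|\widetilde Q_{\mu,p}-U\|_{\TV}=\widetilde Q_{\mu,p}(A_{w_\mu})-U(A_{w_\mu})$, from the positive part $(S_\mu(L_p)-1)_+$. It then shows $w_{\mu_n}=w^*$ eventually, except in the tie case $s(w^*-1)=1$. There only $w_{\mu_n}\in\{w^*-1,w^*\}$ is available, and an extra estimate shows the discrepancy $(S_{\mu_n}(w^*-1)-1)\,U\{L_p=w^*-1\}$ is $o(1)$.

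You work with the negative part $\mathbb E_U[(1-S_{\mu_n}(L_p))_+]$ instead. Since $S_{\mu_n}$ is nondecreasing and $S_{\mu_n}(w^*)>1$ eventually, only the finitely many indices $\ell<w^*$ contribute. Continuity of $(1-x)_+$ absorbs ties automatically, and your Abel summation does give $f_p(c)$. Since $0\le(1-x)_+\le1$, dominated convergence alone would even suffice here, with no domination hypothesis on $\mu_n$.

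What each route buys: yours is shorter and needs no case split. The paper's costs the tie analysis but yields the exact finite-$n$ TV formula and the extremal set $A_{w_\mu}$. It reuses these for the late-window identities $\|M_n-U\|_{\TV}=(1-\tfrac1p)\sep(M_n,U)$ and $\tfrac12\sep(M_n,U)$.

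\textbf{Correction to the $\chi^2$ sketch.} Put $G(x):=\sum_{\ell}a_\ell x^\ell=e^{x/p}-(e^{x/p}-1)/x$. Expanding $s(\ell)^2$ with the closed form of $s(\ell)$ requires $G(r^2)$, hence the series $e^{r^2/p}=e^{p\lambda^2}=e^{r\lambda}$. This is exactly the source of the factor $e^{-\lambda(2-r)}$. By contrast, $e^{1/p}$ enters only through $G(1)=\sum_\ell a_\ell=1$ and cancels. With this fix your method gives $\sum_\ell a_\ell s(\ell)=1$ and $\sum_\ell a_\ell s(\ell)^2=\frac{r+1}{r-1}e^{-\lambda(2-r)}-\frac{2}{r-1}e^{-\lambda}$, which matches $g_p(c)+1$. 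The paper instead expands $s(\ell)^2$ as a double sum over $(u,v)$ and uses $\sum_{\ell\ge\max(u,v)}a_\ell=1/(\max(u,v)!\,p^{\max(u,v)})$. Both computations are valid.
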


\begin{theorem}[Cutoff-window profiles for colored top-\texorpdfstring{$m$}{m}-to-random and reversed-chain optimal strong stationary times]\label{thm:intro-topm}

Fix \(c\in\R\) and integers \(p\ge1\) and \(m\ge1\), with \(m\) independent of \(n\). Let \(k=k_n(c)\). Then
\[
\sep\bigl((Q^{(m)})^{*k},U\bigr)=s_p(c)+o(1),
\]
\[
\left\|\frac{(Q^{(m)})^{*k}(\cdot)}{U(\cdot)}-1\right\|_{L^q(U)}^q
=
H_{q,p}(c)+o(1)
\qquad(1\le q<\infty),
\]
\[
D\bigl((Q^{(m)})^{*k}\|U\bigr)=h_p(c)+o(1),
\]
and, with convergence in the extended interval \([0,\infty]\),
\[
\left\|\frac{(Q^{(m)})^{*k}(\cdot)}{U(\cdot)}-1\right\|_{L^\infty(U)}
\longrightarrow H_{\infty,p}(c).
\]
In particular,
\[
\bigl\|(Q^{(m)})^{*k}-U\bigr\|_{\TV}
=
f_p(c)+o(1),
\qquad
\chi^2\bigl((Q^{(m)})^{*k},U\bigr)
=
g_p(c)+o(1).
\]
Hence the total-variation cutoff is at \(\frac{n}{m}\log n\) with window \(\frac{n}{m}\).

Furthermore, with the reversed-chain notation above, if \(p\ge2\), then \(\tau_{\mathrm{cov}}\) is an optimal strong stationary time and
\[
\Prob(\tau_{\mathrm{cov}}>j)
=
1-P_j^{(m)}(0)
=
\sep\bigl((Q^{(m)})^{*j},U\bigr)
\qquad(j\ge0).
\]
If \(p=1\), then \(\tau_{n-1}\) is an optimal strong stationary time and
\[
\Prob(\tau_{n-1}>j)
=
1-P_j^{(m)}(0)-P_j^{(m)}(1)
=
\sep\bigl((Q^{(m)})^{*j},U\bigr)
\qquad(j\ge0).
\]
\end{theorem}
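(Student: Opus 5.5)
The plan is to reduce everything to Theorems~\ref{thm:intro-LR} and~\ref{thm:intro-engine}. The first step is to show that $(Q^{(m)})^{*k}=\widetilde Q_{\mu_k,p}$ with
\[
\mu_k(n-u):=P_k^{(m)}(u)=\Prob(n-|T_k|=u).
\]
This is the exact nested-set occupancy mixture. I would obtain it from the product expansion of $B_m$ in \cite{NSS}, by induction on $k$. One computes $\widetilde Q_{n-u,p}*Q^{(m)}$ and shows that it equals a mixture of $\widetilde Q_{n-u',p}$, with $u'$ distributed as the number of untouched labels after one further uniform $m$-subset is drawn from $u$ untouched labels. The hypergeometric transition $u\mapsto u-|\text{new}\cap\text{untouched}|$ is exactly the one-step transition of $n-|T_j|$.

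A more probabilistic route works with the reversed chain $Y_j$. The labels that have never been touched keep color $0$ and stay in their original relative order at the bottom. Conditional on $T_j$, the touched part is uniform: each touch re-randomizes color and relative position among the touched labels at the front. So $\Law(Y_j\mid |T_j|=n-u)$ is uniform on a conjugate or inverse version of $A_u$, and inversion together with relabeling (largest $u$ labels) gives $\widetilde Q_{n-u,p}$. The main obstacle is this identification: matching the conventions for the inverse, for which labels are "largest", and for the orientation of the convolution. It is cleaner to rely on the NSS expansion as the paper does.

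Next, $n-|T_k|$ is the number of never-chosen labels when $k$ uniform $m$-subsets are drawn. For $k=k_n(c)$ this converges to $\mathrm{Poisson}(e^{-c})$ by the method of factorial moments. The $r$-th factorial moment is
\[
\binom{n}{r}r!\left(\frac{\binom{n-r}{m}}{\binom{n}{m}}\right)^{k}=n^r\,(1-rm/n+O(n^{-2}))^k\,(1+o(1))\to e^{-rc}.
\]
This gives \eqref{eq:intro-Poisson-assump}, so the TV and separation limits follow from Theorem~\ref{thm:intro-engine}.

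For the domination hypotheses I would use the union bound
\[
\mu_n(n-u)\le\binom{n}{u}\left(\frac{\binom{n-u}{m}}{\binom nm}\right)^k\le\frac{n^u}{u!}\Bigl(1-\frac{u}{n}\Bigr)^{mk}\le\frac{n^u}{u!}e^{-umk/n}\le \frac{(Ce^{-c})^u}{u!},
\]
using $mk/n\ge\log n+c-m/n$ and $(1-um/n)$-type estimates valid for all $u\le n$. Multiplying by $u!p^u$ gives $\mu_n(n-u)\,u!\,p^u\le (Cpe^{-c})^u\le B^u$. When $r<1$, the refinement $(\lambda p(1+o(1)))^u$ is bounded by a summable geometric sequence $b_u=\rho^u$ with $\rho<1$ for large $n$, because $C$ can be taken to be $1+o(1)$. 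Theorem~\ref{thm:intro-engine} then gives the $L^q$, $\chi^2$, entropy and $L^\infty$ limits. Cutoff at $\frac nm\log n$ with window $\frac nm$ follows since $f_p(c)\to1$ as $c\to-\infty$ and $f_p(c)\to0$ as $c\to\infty$.

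For the strong stationary times, I would use the conditional uniformity above. On $\{\tau_{\mathrm{cov}}\le j\}$ with $p\ge2$, $Y_j$ is uniform given the stopping information, because $\widetilde Q_{0,p}=U$. For $p=1$, having $u=1$ untouched label also gives $A_1=G_{n,1}$, so $\tau_{n-1}$ is a strong stationary time. The tails are therefore $1-P_j(0)$ and $1-P_j(0)-P_j(1)$. These equal $\sep((\check Q^{(m)})^{*j},U)$ by Theorem~\ref{thm:intro-LR} applied to $\mu=\mu_j$, and separation is invariant under inversion by the time-reversal discussion in Section~\ref{sec:prel}. Since the tail of any strong stationary time is at least the separation, these times are optimal.
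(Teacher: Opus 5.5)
Your treatment of the profiles is essentially the paper's: identify $(Q^{(m)})^{*k}=\widetilde Q_{\mu_k^{(m)},p}$, check the Poisson hypothesis, dominate via $P_k^{(m)}(u)\,u!\le\E[(E)_u]\le(ne^{-mk/n})^u$, and invoke Theorems~\ref{thm:intro-LR} and~\ref{thm:intro-engine}. Your one-step induction is a legitimate substitute for the paper's route. In that induction, given that $j$ of the $u$ frozen labels lie among the top $m$ cards, the product is uniform on $A_{u-j}$, and $j$ is hypergeometric. The paper instead uses the closed form of $B_m^k$ from \cite[Theorem~2.7]{NSS} and matches it against an inclusion--exclusion formula. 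Your factorial moments likewise legitimately replace the citation of \cite{SchillingHenze}. For the cutoff sentence you also need $f_p$ nonincreasing, which follows from monotonicity of total variation in time.

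The gap is in the strong-stationary-time half, at its crux: $\Law(Y_j\mid T_j=S)=\Unif(F_S)$. Your stated reason, that each touch re-randomizes relative positions among the touched labels, is false. A move randomizes only the order of the $m$ labels chosen at that step and places them ahead of all earlier-touched labels. So given the selected subsets, the order of the touched block is far from uniform; for $m=1$ it is fixed by last-touch times. Uniformity holds only after conditioning on the set $T_j=S$, and it needs a symmetry argument such as the paper's. There, relabeling an entire history by $\rho\in\mathfrak S_S$ and shifting the assigned colors by $a\in C_p^S$ gives a probability-preserving bijection of histories that preserves $\{T_j=S\}$ and acts transitively on $F_S$.

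You also cannot fall back on the NSS mixture you prefer. It gives only the marginal law of $Y_j$, while the strong stationary time property needs the joint identity $\Prob(Y_j=y,\ \tau\le j)=U(y)\Prob(\tau\le j)$. A decomposition of the marginal says nothing about the event $\{\tau\le j\}$ coinciding with the uniform component. (That component is $\widetilde Q_{n,p}=\Unif(A_0)$, not $\widetilde Q_{0,p}=\delta_e$.)

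For $p=1$ there is a further step. Given $T_j=[n]\setminus\{i\}$, the state is uniform only on permutations with $i$ in last position. Uniformity on $\{|T_j|=n-1\}$ requires averaging over $i$, using that $\Prob(T_j=[n]\setminus\{i\})$ does not depend on $i$.

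Finally, passing from uniformity on $\{\tau\le j\}$ for each $j$ to $Y_\tau\sim U$ independent of $\tau$ uses the Markov property and stationarity of $U$, as the paper does explicitly. Once the symmetry lemma is in place, inverting it also proves the mixture identity directly, so the NSS step becomes unnecessary.
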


\section*{Acknowledgments}

The author thanks Jason Fulman for suggesting this problem and for providing references and valuable feedback.

\section{Preliminaries}\label{sec:prel}

This section fixes the convolution, metric, and group-action conventions used throughout the paper. It also recalls the Nakano--Sadahiro--Sakurai basis elements \(B_m\), their supports, and the exact \(m=1\) expansion.

\paragraph{\textbf{Markov chains and convolution.}}
Let \(X\) be a finite group and let \(Q\) be a probability measure on \(X\). We let successive steps act by right multiplication: if \((G_t)_{t\ge1}\) are i.i.d.\ with law \(Q\), then
\[
X_t=X_{t-1}G_t\qquad(t\ge1),
\qquad\text{so}\qquad
X_k=X_0G_1\cdots G_k.
\]
Hence the transition kernel is
\begin{equation}\label{eq:intro-P-from-Q-DFP}
P(x,y)=\Prob(X_{t+1}=y\mid X_t=x)
=\Prob(xG_{t+1}=y)
=Q(x^{-1}y).
\end{equation}

For probability measures \(M,N\) on \(X\), define
\begin{equation}\label{eq:intro-conv-DFP}
(M*N)(z):=\sum_{u\in X}M(zu^{-1})\,N(u).
\end{equation}
This is exactly the law of a product: if $G\sim M$ and $H\sim N$ are independent, then for each $z\in X$,
\[
\Prob(GH=z)=\sum_{u\in X}\Prob(H=u)\Prob(G=zu^{-1})
=\sum_{u\in X}N(u)M(zu^{-1})
=(M*N)(z);
\]
i.e., \(\Law(GH)=M*N\). 

This convolution is the evolution rule for distributions under the kernel $P$. Indeed, for any probability measure $\mu$ on $X$,
\[
(\mu P)(y):=\sum_{x\in X}\mu(x)P(x,y)
=\sum_{x\in X}\mu(x)Q(x^{-1}y)
=\sum_{u\in X}\mu(yu^{-1})Q(u)
=(\mu*Q)(y),
\]
so $\mu_{t+1}=\mu_tP=\mu_t*Q$.
Starting from \(X_0=e\), we get \(\mu_0=\delta_e\) and hence
\[
\mu_k=Q^{*k},
\qquad
Q^{*k}:=\underbrace{Q*\cdots*Q}_{k\text{ times}},
\qquad
Q^{*k}(z)=P^k(e,z).
\]
More generally
\begin{equation}\label{eq:intro-Pk-Qk-DFP}
P^k(x,y)=\Prob(X_k=y\mid X_0=x)=\Prob(G_1\cdots G_k=x^{-1}y)=Q^{*k}(x^{-1}y)\qquad(k\ge0).
\end{equation}

If we identify a measure \(M\) with the group-algebra element \(\sum_{x\in X}M(x)\,x\in\mathbb RX\), convolution becomes multiplication in \(\mathbb RX\):
\[
MN=\sum_{z\in X}(M*N)(z)\,z.
\]
In particular, \(Q^{*k}=Q^k\) in \(\mathbb RX\).

\medskip

\paragraph{\textbf{Distances and cutoff.}}
Let \(U\) be the uniform measure on \(X\). For probability measures \(M,N\) on \(X\), set
\[
\|M-N\|_{\TV}:=\frac12\sum_{x\in X}|M(x)-N(x)|,
\qquad
\sep(M,U):=1-\min_{x\in X}\frac{M(x)}{U(x)}.
\]
For \(1\le q\le\infty\), define
\[
\|f\|_{L^q(U)}:=
\begin{cases}
\left(\sum_{y\in X}|f(y)|^qU(y)\right)^{1/q}, & 1\le q<\infty,\\[4pt]
\max_{y\in X}|f(y)|, & q=\infty.
\end{cases}
\]
Write
\[
q_k(x,y):=\frac{P^k(x,y)}{U(y)},
\qquad
d^{(q)}(k):=\max_{x\in X}\|q_k(x,\cdot)-1\|_{L^q(U)},
\qquad
d(k):=\max_{x\in X}\|P^k(x,\cdot)-U\|_{\TV},
\]
and
\[
s_x(k):=\sep(P^k(x,\cdot),U)
=1-\min_{y\in X}q_k(x,y),
\qquad
s(k):=\max_{x\in X}s_x(k).
\]

\begin{proposition}[Comparison of \(L^q\), total variation, and separation]\label{prop:distance-comparison}
For every \(k\ge0\) and \(x\in X\),
\[
\|P^k(x,\cdot)-U\|_{\TV}\le s_x(k)\le \|q_k(x,\cdot)-1\|_{L^\infty(U)}.
\]
Consequently,
\[
d(k)\le s(k)\le d^{(\infty)}(k).
\]
Also,
\[
d^{(1)}(k)=2d(k)\le d^{(2)}(k)\le d^{(\infty)}(k),
\]
and hence
\[
\frac12\,d^{(1)}(k)\le s(k)\le d^{(\infty)}(k).
\]
\end{proposition}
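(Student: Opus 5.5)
The plan is to fix \(k\ge0\) and \(x\in X\), write \(\pi:=P^k(x,\cdot)\) and \(f:=q_k(x,\cdot)=\pi/U\), and derive every inequality from elementary facts about the nonnegative function \(f\), which satisfies \(\sum_y U(y)f(y)=1\). Maximizing over \(x\) at the end then gives the statements for \(d\), \(s\) and \(d^{(q)}\).

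\emph{First chain.} Since \(\sum_y(\pi(y)-U(y))=0\), total variation equals the positive part of the difference:
\[
\|\pi-U\|_{\TV}=\sum_y\bigl(U(y)-\pi(y)\bigr)^+=\sum_y U(y)\bigl(1-f(y)\bigr)^+ .
\]
For each \(y\) we have \(1-f(y)\le 1-\min f=s_x(k)\), and \(s_x(k)\ge0\) because \(f\) has \(U\)-mean \(1\), so \(\min f\le 1\). Hence \((1-f(y))^+\le s_x(k)\). Averaging against \(U\) gives \(\|\pi-U\|_{\TV}\le s_x(k)\). For the upper bound, \(s_x(k)=1-\min_y f(y)=\max_y(1-f(y))\le\max_y|f(y)-1|=\|f-1\|_{L^\infty(U)}\). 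Taking the maximum over \(x\) on each side of both inequalities yields \(d(k)\le s(k)\le d^{(\infty)}(k)\).

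\emph{Second chain.} The identity \(\|f-1\|_{L^1(U)}=\sum_y U(y)|\pi(y)/U(y)-1|=\sum_y|\pi(y)-U(y)|=2\|\pi-U\|_{\TV}\) holds for each \(x\). Maximizing over \(x\) gives \(d^{(1)}(k)=2d(k)\). Because \(U\) is a probability measure, Jensen's inequality (equivalently Cauchy--Schwarz, or Hölder) gives \(\|g\|_{L^1(U)}\le\|g\|_{L^2(U)}\le\|g\|_{L^\infty(U)}\) for every \(g\). Applying this to \(g=f-1\) and then maximizing over \(x\) gives \(d^{(1)}(k)\le d^{(2)}(k)\le d^{(\infty)}(k)\). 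The final display then follows by combining \(\tfrac12 d^{(1)}(k)=d(k)\le s(k)\) with \(s(k)\le d^{(\infty)}(k)\).

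Nothing here is a genuine obstacle. The only points needing care are the sign \(s_x(k)\ge 0\), which is what makes the bound \((1-f)^+\le s_x(k)\) valid, and the order of maximization over \(x\). The latter is harmless, because each inequality holds pointwise in \(x\) and taking a maximum over \(x\) preserves it.
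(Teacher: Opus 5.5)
Your proof is correct and takes essentially the paper's route. The bound $s_x(k)\le\|q_k(x,\cdot)-1\|_{L^\infty(U)}$ is argued identically. For $\|P^k(x,\cdot)-U\|_{\TV}\le s_x(k)$ and $d^{(1)}(k)=2d(k)\le d^{(2)}(k)\le d^{(\infty)}(k)$, the paper simply cites Lemma~6.16 and Section~4.7 of \cite{LPW}; your positive-part argument (including the check $s_x(k)\ge 0$) and the Jensen/H\"older monotonicity of $L^q(U)$ norms are the standard arguments behind those citations, written out in self-contained form.
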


\begin{proof}
The bound \(\|P^k(x,\cdot)-U\|_{\TV}\le s_x(k)\) is \cite[Lemma~6.16]{LPW}. Also,
\[
s_x(k)=\max_{y\in X}\bigl(1-q_k(x,y)\bigr)
\le \max_{y\in X}|q_k(x,y)-1|
=\|q_k(x,\cdot)-1\|_{L^\infty(U)}.
\]
Taking maxima over \(x\) gives \(d(k)\le s(k)\le d^{(\infty)}(k)\). Finally,
\[
d^{(1)}(k)=2d(k)\le d^{(2)}(k)\le d^{(\infty)}(k)
\]
is from \cite[Section~4.7]{LPW}.
\end{proof}

\begin{proposition}[Translation invariance on a group]\label{prop:distance-translation}
For every \(k\ge0\), \(x\in X\), and \(1\le q\le\infty\),
\[
\|P^k(x,\cdot)-U\|_{\TV}=\|Q^{*k}-U\|_{\TV},
\]
\[
\|q_k(x,\cdot)-1\|_{L^q(U)}
=
\left\|\frac{Q^{*k}(\cdot)}{U(\cdot)}-1\right\|_{L^q(U)},
\]
and
\[
s_x(k)=\sep(Q^{*k},U).
\]
Therefore
\[
d(k)=\|Q^{*k}-U\|_{\TV},
\qquad
d^{(q)}(k)=\left\|\frac{Q^{*k}(\cdot)}{U(\cdot)}-1\right\|_{L^q(U)},
\qquad
s(k)=\sep(Q^{*k},U).
\]
\end{proposition}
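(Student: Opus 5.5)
The proof rests on one observation: left translation by \(x^{-1}\) is a bijection of \(X\) that preserves \(U\). Throughout, \(U(y)=1/|X|\) is constant.

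By \eqref{eq:intro-Pk-Qk-DFP} we have \(P^k(x,y)=Q^{*k}(x^{-1}y)\) for all \(y\). Hence
\[
q_k(x,y)=\frac{Q^{*k}(x^{-1}y)}{U(x^{-1}y)},
\]
since \(U(y)=U(x^{-1}y)\). Define \(F(z):=Q^{*k}(z)/U(z)\), so that \(q_k(x,\cdot)=F\circ T_x\) with \(T_x(y)=x^{-1}y\).

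\emph{Total variation.} Substitute \(z=x^{-1}y\), which runs over \(X\) as \(y\) does:
\[
\sum_{y\in X}|P^k(x,y)-U(y)|=\sum_{y\in X}|Q^{*k}(x^{-1}y)-U(x^{-1}y)|=\sum_{z\in X}|Q^{*k}(z)-U(z)|.
\]

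\emph{\(L^q\) norms for \(1\le q<\infty\).} The same substitution, now weighted by \(U(y)=U(z)\), gives
\[
\sum_{y\in X}|F(x^{-1}y)-1|^qU(y)=\sum_{z\in X}|F(z)-1|^qU(z).
\]

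\emph{\(L^\infty\) norm.} Taking a maximum over \(y\) equals taking it over \(z=x^{-1}y\), because \(T_x\) is a bijection. So \(\max_y|F(x^{-1}y)-1|=\max_z|F(z)-1|\).

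\emph{Separation.} Likewise \(\min_y q_k(x,y)=\min_z F(z)\), so \(s_x(k)=1-\min_z F(z)=\sep(Q^{*k},U)\).

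None of these quantities depends on \(x\). Taking the maximum over \(x\) in the definitions of \(d(k)\), \(d^{(q)}(k)\) and \(s(k)\) therefore gives the stated identities.

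There is no real obstacle. The only point to check is that \(P^k(x,y)=Q^{*k}(x^{-1}y)\) holds under the right-multiplication convention of \eqref{eq:intro-Pk-Qk-DFP}, so that the translation is on the left. That identity is already recorded in that equation.
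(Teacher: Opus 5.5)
Your proof is correct and is the paper's argument. Both rest on $P^k(x,y)=Q^{*k}(x^{-1}y)$, the fact that $y\mapsto x^{-1}y$ is a bijection, and the uniformity of $U$, followed by the change of variables $z=x^{-1}y$. You simply write out separately the total-variation, $L^q$, $L^\infty$, and separation cases, which the paper treats together as immediate.
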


\begin{proof}
By \eqref{eq:intro-Pk-Qk-DFP},
\[
P^k(x,y)=Q^{*k}(x^{-1}y),
\qquad
q_k(x,y)=\frac{Q^{*k}(x^{-1}y)}{U(y)}.
\]
Since \(y\mapsto x^{-1}y\) is a bijection and \(U\) is uniform, the three identities follow immediately after the change of variables \(z=x^{-1}y\).
\end{proof}

For a random walk on a finite group with increment law \(Q\), the reversed law
\[
\check Q(g):=Q(g^{-1})
\]
generates the time-reversed walk. Moreover, the total-variation distance to stationarity is unchanged under time reversal for group walks; see  \cite[Section~4.6]{LPW} and \cite[Lemma~3.16]{Aldous1983}. In the present setting we also have
\[
\check Q^{*k}(x^{-1})=Q^{*k}(x),
\]
so the same invariance holds for separation and all \(L^q(U)\) distances:
\[
\|Q^{*k}-U\|_{\TV}=\|\check Q^{*k}-U\|_{\TV},
\qquad
\sep(Q^{*k},U)=\sep(\check Q^{*k},U),
\]
and
\[
\left\|\frac{Q^{*k}(\cdot)}{U(\cdot)}-1\right\|_{L^q(U)}
=
\left\|\frac{\check Q^{*k}(\cdot)}{U(\cdot)}-1\right\|_{L^q(U)}.
\]

A (total-variation) cutoff at times \(t_n\) with window \(b_n\) means that there exists a nonincreasing function
\(f:\R\to[0,1]\) with \(f(c)\to1\) as \(c\to-\infty\) and \(f(c)\to0\) as \(c\to+\infty\) such that
\[
\|Q^{*\lfloor t_n+cb_n\rfloor}-U\|_{\TV}\longrightarrow f(c)
\qquad(n\to\infty)
\]
for each fixed \(c\in\R\). The limit \(f\) is the \emph{cutoff profile}.
\medskip

\paragraph{\textbf{The colored permutation group \(G_{n,p}\).}}
Fix \(n,p\ge1\), let \(C_p=\mathbb Z/p\mathbb Z\) written additively, and set
\[
G_{n,p}:=C_p\wr S_n\cong C_p^{\,n}\rtimes S_n.
\]
Following \cite{NSS}, we realize \(G_{n,p}\) as \(C_p^{\,n}\times S_n\) with multiplication
\[
(t,\tau)(s,\sigma)=(\sigma t+s,\tau\sigma),
\qquad
\sigma t:=(t_{\sigma(1)},\dots,t_{\sigma(n)}).
\]

Let \(\mathbb RG_{n,p}\) be the real group algebra and \(\mathbb QG_{n,p}\subseteq \mathbb RG_{n,p}\) its \(\mathbb Q\)-subalgebra. We identify any real-valued function \(M:G_{n,p}\to\mathbb R\) with
\[
M=\sum_{x\in G_{n,p}}M(x)\,x\in\mathbb RG_{n,p}.
\]
All conventions above now apply with \(X=G_{n,p}\).

Let \(U=U_{n,p}\) denote the uniform law on \(G_{n,p}\):
\[
U(x)=\frac{1}{|G_{n,p}|}=\frac{1}{p^n n!}.
\]
This is stationary for every right random walk on \(G_{n,p}\).

\medskip

\paragraph{\textbf{Words and shuffle product.}}
We identify \((s,\sigma)\in G_{n,p}\) with the word
\[
(s_1,\sigma(1))(s_2,\sigma(2))\cdots(s_n,\sigma(n))
\]
over the alphabet \(C_p\times[n]\). Conversely, any word
\[
(c_1,i_1)(c_2,i_2)\cdots(c_n,i_n),
\qquad (i_1,\dots,i_n)\in S_n,
\]
corresponds to the unique \((s,\sigma)\in G_{n,p}\) with
\[
\sigma(t)=i_t,
\qquad
s_t=c_t
\qquad(1\le t\le n).
\]
Thus \(\sigma(t)\) is the label in position \(t\), and \(\sigma^{-1}(i)\) is the position of label \(i\).

Let \(\varepsilon\) be the empty word. The shuffle product \(\shuffle\) is defined by
\[
u\shuffle\varepsilon=\varepsilon\shuffle u:=u,
\qquad
(ua)\shuffle(vb):=(u\shuffle vb)\,a+(ua\shuffle v)\,b,
\]
for words \(u,v\) and letters \(a,b\). Intuitively, \(u\shuffle v\) is the formal sum of all interleavings of \(u\) and \(v\) preserving the relative order within each word.

For \(0\le m\le n\), set
\[
W_{m,n}:=(0,m+1)(0,m+2)\cdots(0,n).
\]
Intuitively, $W_{m,n}$ is the ``frozen tail'': the labels $m+1,\dots,n$ appear in increasing order and all have color $0$.

\medskip

\paragraph{\textbf{The Nakano--Sadahiro--Sakurai basis elements \(B_m\).}}

For \(0\le m\le n\), define \(B_m\in\mathbb QG_{n,p}\) by
\[
B_m:=
\begin{cases}
\mathrm{id}, & m=0,\\[4pt]
\displaystyle \sum_{\alpha\in G_{m,p}}\alpha\shuffle W_{m,n}, & 1\le m\le n-1,\\[10pt]
\displaystyle \sum_{\alpha\in G_{n,p}}\alpha, & m=n.
\end{cases}
\]
Here \(G_{m,p}\) is identified with colored words on labels \(1,\dots,m\). In particular,
\[
B_1=\sum_{c\in C_p}(c,1)\shuffle W_{1,n}.
\]

Intuitively, \(B_m\) is the sum of all colored words in which the labels \(m+1,\dots,n\) appear in increasing order and all have color \(0\); the labels \(1,\dots,m\) may appear anywhere, in any order, with arbitrary colors. Equivalently, for \(1\le m\le n-1\),
\[
\supp(B_m)=\Bigl\{x=(s,\sigma)\in G_{n,p}:\ 
\sigma^{-1}(m+1)<\cdots<\sigma^{-1}(n),\
s_{\sigma^{-1}(m+1)}=\cdots=s_{\sigma^{-1}(n)}=0
\Bigr\}.
\]
Also,
\[
\supp(B_0)=\{\mathrm{id}\},
\qquad
\supp(B_n)=G_{n,p}.
\]
\begin{lemma}\label{lem:Ba-indicator}
For each \(0\le a\le n\) and \(x\in G_{n,p}\), we have the coefficient of $x$:
\[
B_a(x)\in\{0,1\}.
\]
Equivalently,
\[
B_a(x)=\mathbf 1_{\{x\in\supp(B_a)\}}
\qquad(x\in G_{n,p}).
\]
\end{lemma}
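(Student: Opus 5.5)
The claim is that each $B_a$ has $0/1$ coefficients. The cases $a=0$ and $a=n$ are immediate from the definition. For $a=0$, $B_0=\mathrm{id}$ is a single group element with coefficient $1$. For $a=n$, $B_n=\sum_{\alpha\in G_{n,p}}\alpha$ is a sum over distinct group elements, each appearing once. So all the work is in the range $1\le a\le n-1$. There the plan is to show that every word in the support occurs exactly once across the whole expansion $\sum_{\alpha\in G_{a,p}}\alpha\shuffle W_{a,n}$.

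First I would record the standard description of the shuffle product obtained by unwinding the recursive definition. For words $u$ of length $a$ and $v$ of length $n-a$, $u\shuffle v$ is the sum over all $a$-subsets $P\subseteq[n]$ of positions of the single word $w_P$. This word $w_P$ has $u$ written in the positions of $P$, in order, and $v$ written in the complementary positions, in order. I would prove this by induction on $|u|+|v|$ using $(ua)\shuffle(vb)=(u\shuffle vb)a+(ua\shuffle v)b$, splitting according to whether position $n$ carries the last letter of $u$ or of $v$. Since $u$ and $v$ use disjoint labels (labels $1,\dots,a$ versus $a+1,\dots,n$), distinct $P$ give distinct words. The reason is that $P$ is recovered from $w_P$ as the set of positions holding labels in $[a]$. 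Hence each $\alpha\shuffle W_{a,n}$ has coefficients in $\{0,1\}$.

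Second, I would show the supports for different $\alpha$ are disjoint. Given a word $x=w_P$ arising from $\alpha\shuffle W_{a,n}$, the colored subword of $x$ formed by the letters with labels in $[a]$, read left to right, is exactly $\alpha$. So $\alpha$ is determined by $x$. Combining this with the first step, the map $(\alpha,P)\mapsto w_P$ is injective. Therefore $B_a=\sum_{(\alpha,P)}w_P$ is a sum of distinct group elements, and $B_a(x)\in\{0,1\}$.

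Finally, the equivalent form $B_a(x)=\mathbf 1_{\{x\in\supp(B_a)\}}$ is just a restatement. As a side check, the image of $(\alpha,P)\mapsto w_P$ is the support description stated before the lemma: the labels $a+1,\dots,n$ appear in increasing positions with color $0$. This is consistent with a count of $|G_{a,p}|\binom{n}{a}=p^a a!\binom na$ elements.

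The only mildly delicate step is the combinatorial unwinding of the recursive shuffle definition into the position-subset description. I would handle it by the induction on total length sketched above, and I do not expect genuine difficulty there.
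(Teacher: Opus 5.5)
Your proposal is correct and follows essentially the same route as the paper: because all labels in a shuffled word are distinct, the word determines both the positions of the letters with labels in $[a]$ (your $P$) and the colored subword $\alpha$, so no element of $G_{n,p}$ occurs twice in $B_a$. The paper states this injectivity in a few lines. You additionally spell out the position-subset description of $\alpha\shuffle W_{a,n}$ by induction on the recursive definition, and you treat the boundary cases $a=0,n$ separately; both are harmless elaborations.
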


\begin{proof}
The letters appearing in any word of \(\alpha\shuffle W_{a,n}\) are all distinct, since their labels are \(1,\dots,n\). Hence a final word uniquely determines the positions occupied by the letters of \(\alpha\), their order, and their colors, so it determines \(\alpha\) uniquely. Therefore no group element appears with multiplicity \(>1\) in the sum defining \(B_a\), and the coefficient of any \(x\in G_{n,p}\) is either \(0\) or \(1\).
\end{proof}

For \(0\le a\le n\), let
\[
|B_a|:=|\supp(B_a)|,
\qquad
\widetilde Q_{a,p}:=\frac{1}{|B_a|}B_a.
\]
Thus \(\widetilde Q_{a,p}\) is the uniform probability measure on \(\supp(B_a)\). In particular, for \(1\le m\le n\),
\[
Q^{(m)}:=\widetilde Q_{m,p}=\frac{1}{|B_m|}B_m.
\]
For \(m=1\), write
\[
Q:=Q_{n,p}:=Q^{(1)}=\widetilde Q_{1,p}.
\]

\begin{example}[\(n=3,\ p=2\)]
Here \(C_2=\{0,1\}\). We have
\[
W_{0,3}=(0,1)(0,2)(0,3),\quad
W_{1,3}=(0,2)(0,3),\quad
W_{2,3}=(0,3),\quad
W_{3,3}=\varepsilon.
\]
For example,
\[
(1,1)\shuffle(0,2)(0,3)
=(1,1)(0,2)(0,3)+(0,2)(1,1)(0,3)+(0,2)(0,3)(1,1).
\]
Hence
\[
B_1=\sum_{c\in\{0,1\}}
\Bigl((c,1)(0,2)(0,3)+(0,2)(c,1)(0,3)+(0,2)(0,3)(c,1)\Bigr),
\]
so \(|B_1|=6\) and \(Q=\frac16 B_1\). For \(x\in G_{3,2}\),
\[
B_1(x)=
\begin{cases}
1, & x\in\{
(0,1)(0,2)(0,3),\,
(0,2)(0,1)(0,3),\,
(0,2)(0,3)(0,1),\\
&\hphantom{x\in\{}
(1,1)(0,2)(0,3),\,
(0,2)(1,1)(0,3),\,
(0,2)(0,3)(1,1)
\},\\[4pt]
0, & \text{otherwise}.
\end{cases}
\]

Likewise, if \(\alpha=(1,2)(0,1)\in G_{2,2}\), then
\[
\alpha\shuffle(0,3)
=(0,3)(1,2)(0,1)+(1,2)(0,3)(0,1)+(1,2)(0,1)(0,3),
\]
and \(|B_2|=\binom{3}{2}2^2\cdot2!=24\).
\end{example}

\medskip

\paragraph{\textbf{One-step colored top-to-random.}}
Since \(B_1\) has \(n\) insertion positions and \(p\) color choices, it contains exactly \(np\) distinct group elements. Therefore
\[
Q=Q_{n,p}=\frac{1}{np}B_1
\]
is the uniform law on \(\supp(B_1)\), and the \(k\)-step law is \(Q^{*k}=Q^k\).

\begin{proposition}\label{thm:NSS-stirling} (\cite[Theorem~2.2]{NSS})
For every integer \(k\ge0\),
\[
B_1^k=\sum_{a=0}^{\min(n,k)}p^{\,k-a}
\left\{\!\!\begin{matrix}k\\ a\end{matrix}\!\!\right\}B_a,
\]
where \(\left\{\!\!\begin{smallmatrix}k\\ a\end{smallmatrix}\!\!\right\}\) is the Stirling number of the second kind.
\end{proposition}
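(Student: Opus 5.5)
The plan is to prove the formula by induction on \(k\). The inductive step rests on a single multiplication rule for the basis elements:
\[
B_aB_1=a\,p\,B_a+B_{a+1}\qquad(0\le a\le n-1),
\qquad
B_nB_1=np\,B_n .
\]
Granting this rule, the case \(k=0\) is \(B_1^0=\mathrm{id}=B_0\). For the inductive step, multiply the formula for \(B_1^k\) on the right by \(B_1\) and apply the rule termwise. Collecting the coefficient of \(B_a\) gives
\[
p^{k-a}\,a p\left\{\!\!\begin{smallmatrix}k\\ a\end{smallmatrix}\!\!\right\}+p^{k-(a-1)}\left\{\!\!\begin{smallmatrix}k\\ a-1\end{smallmatrix}\!\!\right\}
=p^{k+1-a}\left(a\left\{\!\!\begin{smallmatrix}k\\ a\end{smallmatrix}\!\!\right\}+\left\{\!\!\begin{smallmatrix}k\\ a-1\end{smallmatrix}\!\!\right\}\right)
=p^{k+1-a}\left\{\!\!\begin{smallmatrix}k+1\\ a\end{smallmatrix}\!\!\right\},
\]
by the standard Stirling recursion. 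The truncation at \(\min(n,k)\) is consistent with this. At \(a=n\) there is no \(B_{n+1}\) term, and the term \(np\,B_n\) supplies exactly \(a p\) with \(a=n\). When \(k<n\), the Stirling number with \(a=k+1>k\) vanishes.

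To prove the multiplication rule, first make the right action of \(y\in\supp(B_1)\) explicit in word language. For \(x=(t,\tau)\) and \(y=(s,\sigma)\), the product rule \((t,\tau)(s,\sigma)=(\sigma t+s,\tau\sigma)\) shows that the word of \(xy\) has label \(\tau(\sigma(i))\) in position \(i\), with color \(t_{\sigma(i)}+s_i\). Since \(\sigma\) arises from inserting the letter in position \(1\) of the identity word somewhere else, right multiplication by \(y\) does the following: it removes the first letter of \(x\), adds a color \(c\in C_p\) to it, and reinserts it at one of the \(n\) positions, leaving the rest unchanged. Here \(c\) is the color of label \(1\) in \(y\); the other entries of \(s\) are \(0\). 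This is the colored top-to-random move. I must check this composition convention carefully, because a slip would produce a "random-to-top" action instead. I expect this step, together with the multiplicity count below, to be the main obstacle.

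Next, count the coefficient of each \(z\) in \(B_aB_1=\sum_{x\in\supp(B_a)}\sum_{y\in\supp(B_1)}xy\), using Lemma~\ref{lem:Ba-indicator}. Split according to the first letter of \(x\in\supp(B_a)\). Either that letter has a free label \(\le a\), or it is the frozen letter \((0,a+1)\); the latter is the only frozen letter that can occupy position \(1\).

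In the free-first case, every product lies in \(\supp(B_a)\). Conversely, fix \(z\in\supp(B_a)\). A preimage is obtained by choosing one of its \(a\) free letters, subtracting one of \(p\) colors, and moving it to the front, and distinct choices give distinct pairs \((x,y)\). So this case contributes \(ap\,B_a\).

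In the frozen-first case, the products are exactly the words in which labels \(a+2,\dots,n\) are increasing with color \(0\), while label \(a+1\) has arbitrary position and color. This set is \(\supp(B_{a+1})\). Each such \(z\) has the unique preimage obtained by moving label \(a+1\) to the front with color \(0\). So this case contributes \(B_{a+1}\).

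For \(a=n\) every letter is free, so only the first case occurs and \(B_nB_1=np\,B_n\). This proves the rule and hence the proposition.
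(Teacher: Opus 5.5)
Your proof is correct, and it argues something the paper does not. The paper gives no argument of its own for this statement: it imports the expansion from \cite[Theorem~2.2]{NSS}, and likewise takes the top-$m$ analogue from \cite[Theorem~2.7]{NSS} in Lemma~\ref{prop:Qm-mixture}.

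You instead derive the expansion from the one-step relation $B_aB_1=ap\,B_a+B_{a+1}$ for $0\le a\le n-1$, together with $B_nB_1=np\,B_n$ and the Stirling recursion.

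The convention check you flagged comes out as you claim. Under $(t,\tau)(s,\sigma)=(\sigma t+s,\tau\sigma)$, position $i$ of $xy$ carries the letter from position $\sigma(i)$ of $x$, with its color increased by $s_i$. For $y\in\supp(B_1)$ with label $1$ in position $j$, one has $\sigma(i)=i+1$ for $i<j$, $\sigma(j)=1$, and $\sigma(i)=i$ for $i>j$. So $xy$ is $x$ with its top letter recolored by $c$ and reinserted at position $j$: this is top-to-random, not random-to-top.

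The rest also checks out. Your multiplicity counts are right: exactly $ap$ pairs $(x,y)$ for each $z\in\supp(B_a)$ in the free-first case, and exactly one pair for each $z\in\supp(B_{a+1})$ in the frozen-first case. The boundary bookkeeping at $a=0$, $a=n$, and $a=k+1$ is handled correctly.

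What your route buys is self-containment. It also gives an explicit verification that right multiplication by $Q$ is the colored top-to-random move described in the introduction, which the paper only asserts. The same bookkeeping extends to $m$ cards: tracking which frozen letters lie among the top $m$ expresses $B_aB_m$ as a combination of $B_a,\dots,B_{a+m}$, which is the natural path to the generalized Stirling expansion the paper later uses.

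What the citation buys is brevity. It also leaves both expansions resting on a single source's conventions, though those conventions are not checked in the paper.
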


\begin{corollary}\label{cor:Qk-Ba-exact}
For every integer \(k\ge0\),
\begin{equation}\label{eq:Qk-Ba-exact}
Q^{*k}
=\sum_{a=0}^{n}\frac{1}{n^k p^a}
\left\{\!\!\begin{matrix}k\\ a\end{matrix}\!\!\right\}B_a.
\end{equation}
\end{corollary}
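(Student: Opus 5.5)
The plan is to derive \eqref{eq:Qk-Ba-exact} directly from Proposition~\ref{thm:NSS-stirling}. The ingredients are the normalization \(Q=\frac{1}{np}B_1\) and the identification of convolution powers with powers in the group algebra, \(Q^{*k}=Q^k\) in \(\mathbb RG_{n,p}\), both recorded in Section~\ref{sec:prel}.

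\textbf{Step 1: raise \(Q\) to the \(k\)-th power.} Since scalars commute in \(\mathbb RG_{n,p}\), we get
\[
Q^{*k}=Q^k=\frac{1}{n^kp^k}B_1^k .
\]
Substituting the Stirling expansion of \(B_1^k\) gives
\[
Q^{*k}=\sum_{a=0}^{\min(n,k)}\frac{p^{k-a}}{n^kp^k}
\left\{\!\!\begin{matrix}k\\ a\end{matrix}\!\!\right\}B_a
=\sum_{a=0}^{\min(n,k)}\frac{1}{n^kp^a}
\left\{\!\!\begin{matrix}k\\ a\end{matrix}\!\!\right\}B_a .
\]

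\textbf{Step 2: extend the range of summation to \(a=0,\dots,n\).} If \(k<n\), the extra indices \(k<a\le n\) carry \(\left\{\!\begin{smallmatrix}k\\ a\end{smallmatrix}\!\right\}=0\), since a \(k\)-set has no partition into more than \(k\) blocks. So adding these terms changes nothing, and we obtain exactly \eqref{eq:Qk-Ba-exact}.

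\textbf{Checks.} The case \(k=0\) should be verified separately. There \(\left\{\!\begin{smallmatrix}0\\ 0\end{smallmatrix}\!\right\}=1\) and all other \(\left\{\!\begin{smallmatrix}0\\ a\end{smallmatrix}\!\right\}\) vanish, so the right side is \(B_0=\mathrm{id}=\delta_e=Q^{*0}\), which is consistent.

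No real obstacle is expected. The only point needing care is the bookkeeping of the power of \(p\): the factor \(p^{k-a}\) from the proposition combines with \(p^{-k}\) to give \(p^{-a}\). One should also remember that the \(B_a\) are group-algebra elements, not normalized measures. As a sanity check, summing coefficients should give total mass \(1\). This can be verified using \(|B_a|=\binom{n}{a}a!\,p^a\), which gives
\[
\sum_a\frac{1}{n^k}\binom{n}{a}a!\left\{\!\!\begin{matrix}k\\ a\end{matrix}\!\!\right\}=\frac{n^k}{n^k}=1,
\]
by the standard surjection-counting identity \(n^k=\sum_a\binom{n}{a}a!\left\{\!\begin{smallmatrix}k\\ a\end{smallmatrix}\!\right\}\).
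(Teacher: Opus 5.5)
Your proposal is correct and matches the paper's proof essentially line for line: write \(Q^{*k}=(np)^{-k}B_1^k\), substitute Proposition~\ref{thm:NSS-stirling}, simplify \(p^{k-a}/p^k=p^{-a}\), and extend the sum to \(a\le n\) using the vanishing of the Stirling numbers for \(a>k\). Your extra checks (the \(k=0\) case and the total-mass identity) are valid but not needed for the argument.
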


\begin{proof}
Since \(Q=(np)^{-1}B_1\), we have \(Q^{*k}=\frac{1}{(np)^k}B_1^k\).
Now apply Proposition~\ref{thm:NSS-stirling}:
\begin{align*}
Q^{*k}
&=\frac{1}{(np)^k}B_1^k\\
&=\sum_{a=0}^{\min(n,k)} \frac{p^{k-a}}{(np)^k}
\left\{\!\!\begin{matrix}k\\ a\end{matrix}\!\!\right\}B_a\\
&=\sum_{a=0}^{n}\frac{1}{n^k p^a}
\left\{\!\!\begin{matrix}k\\ a\end{matrix}\!\!\right\}B_a,
\end{align*}
because $\left\{\!\!\begin{smallmatrix}k\\ a\end{smallmatrix}\!\!\right\}=0$ for $a>k$.
\end{proof}

\begin{proposition}\label{lem:Ba-size-exact}(\cite[Section~3]{NSS})
For \(0\le a\le n\),
\[
|B_a|=\binom{n}{a}p^a a!=(n)_a\,p^a,
\qquad
(n)_a:=n(n-1)\cdots(n-a+1)\ \ (a\ge1), \qquad (n)_0:=1.
\]
\end{proposition}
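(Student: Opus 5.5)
The plan is to count $\supp(B_a)$ directly from the support description given after the definition of $B_m$, and to check the edge cases $a=0$ and $a=n$ separately.

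For $1\le a\le n-1$, an element $x=(s,\sigma)\in\supp(B_a)$ is determined by three choices. First, choose the set of $a$ positions occupied by the free labels $1,\dots,a$: there are $\binom{n}{a}$ ways. The remaining $n-a$ positions are then forced to carry the labels $a+1,\dots,n$ in increasing order with color $0$. Second, choose the bijection assigning the labels $1,\dots,a$ to those $a$ positions: there are $a!$ ways. Third, choose the colors of the free labels: there are $p^a$ ways. The constraints in the support description concern only the frozen labels, so these three choices are independent.

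The resulting map from choices to elements is a bijection. Injectivity holds because a word with distinct labels determines the positions of the free labels, their order and their colors; this is the same argument used in Lemma~\ref{lem:Ba-indicator}. Surjectivity holds because every such choice yields a word in which the frozen tail $W_{a,n}$ appears as a subsequence. Equivalently, $B_a$ is the sum over $\alpha\in G_{a,p}$ of $\alpha\shuffle W_{a,n}$. There are $|G_{a,p}|=p^a a!$ choices of $\alpha$, and each shuffle has $\binom{n}{a}$ distinct terms, one for each choice of interleaving positions. By Lemma~\ref{lem:Ba-indicator} there are no coincidences across different $\alpha$, so the coefficients are all $0$ or $1$ and the support size equals the total number of terms.

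For the edge cases: when $a=0$, $|B_0|=1=\binom n0 p^0 0!$. When $a=n$, $|B_n|=|G_{n,p}|=p^n n!=\binom nn p^n n!$. Finally, $\binom{n}{a}a!=(n)_a$, which gives the second form of the formula.

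There is no real obstacle. The only point that needs care is distinctness, since the shuffle terms are counted with multiplicity; Lemma~\ref{lem:Ba-indicator} already settles this.
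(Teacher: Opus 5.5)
Your proposal is correct and is the same count the paper gives: $\binom{n}{a}$ choices of positions for the labels $1,\dots,a$, then $a!$ orderings, then $p^a$ colorings, with the frozen tail forced. Your separate checks of the cases $a=0$ and $a=n$, and your appeal to Lemma~\ref{lem:Ba-indicator} for distinctness, are harmless additions that spell out details the paper leaves implicit.
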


\begin{proof}
Choose the $a$ positions of labels $\{1,\dots,a\}$ ($\binom{n}{a}$ ways), permute them ($a!$ ways), and choose their colors ($p^a$ ways); the remaining labels $a+1,\dots,n$ are forced to appear in increasing order with color $0$.
\end{proof}

\section{Nested-set mixtures}\label{sec:mixture-engine}

This section rewrites the shuffle laws as mixtures over the nested sets \(A_u\), introduces the statistic \(L_p\), and reduces likelihood ratios to the one-dimensional function \(S_\mu(L_p)\). The same reduction gives the exact total-variation formula and the Poisson-regime profile.

\subsection{Exact mixture and \texorpdfstring{$A_u$}{Au}}

Recall from Section~\ref{sec:prel} that
\[
\widetilde Q_{a,p}:=\frac{1}{|B_a|}B_a
\qquad(0\le a\le n),
\]
so \(\widetilde Q_{a,p}\) is the uniform probability measure on \(\supp(B_a)\). By Lemma~\ref{lem:Ba-indicator},
\begin{equation}\label{eq:Qbarsupp}
\widetilde Q_{a,p}(x)=\frac{1}{|B_a|}B_a(x)
=\frac{1}{|B_a|}\mathbf 1_{\{x\in\supp(B_a)\}}
\qquad(x\in G_{n,p}).
\end{equation}

By Corollary~\ref{cor:Qk-Ba-exact},
\[
Q^{*k}=\sum_{a=0}^{n}\frac{1}{n^k p^a}
\left\{\!\!\begin{matrix}k\\ a\end{matrix}\!\!\right\}B_a.
\]

\begin{lemma}[Exact occupancy mixture]\label{lem:Qk-occupancy-mixture}
For every \(k\ge0\),
\begin{equation}\label{eq:mixture-u-exact}
Q^{*k}
=\sum_{a=0}^{n}\mu_k(a)\,\widetilde Q_{a,p}
=\sum_{u=0}^{n}P_k(u)\,\widetilde Q_{n-u,p},
\end{equation}
where
\begin{equation}\label{eq:mu-k-a}
\mu_k(a):=\frac{1}{n^k}\binom{n}{a}\left\{\!\!\begin{matrix}k\\ a\end{matrix}\!\!\right\}a!,
\end{equation}
and \(P_k(u)=\Prob(E=u)\) for the usual balls-into-boxes model with \(k\) i.i.d.\ uniform balls in \(n\) boxes, where \(E\) is the number of empty boxes. Equivalently,
\[
P_k(u)=\mu_k(n-u).
\]
\end{lemma}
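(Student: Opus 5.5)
The plan is to start from Corollary~\ref{cor:Qk-Ba-exact} and replace each $B_a$ by its normalized version. By definition $\widetilde Q_{a,p}=B_a/|B_a|$, so $B_a=|B_a|\,\widetilde Q_{a,p}$. By Proposition~\ref{lem:Ba-size-exact}, $|B_a|=\binom{n}{a}a!\,p^a$. Substituting into \eqref{eq:Qk-Ba-exact}, the coefficient of $\widetilde Q_{a,p}$ becomes
\[
\frac{1}{n^k p^a}\left\{\!\!\begin{matrix}k\\ a\end{matrix}\!\!\right\}\binom{n}{a}a!\,p^a
=\frac{1}{n^k}\binom{n}{a}\left\{\!\!\begin{matrix}k\\ a\end{matrix}\!\!\right\}a!
=\mu_k(a).
\]
The powers of $p$ cancel exactly, which is why the mixing weights do not depend on $p$. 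This gives the first equality in \eqref{eq:mixture-u-exact}.

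Next I would identify $\mu_k(a)$ with an occupancy probability. Throw $k$ labeled balls independently and uniformly into $n$ boxes; there are $n^k$ equally likely outcomes. Count the outcomes in which exactly $a$ boxes are occupied. First choose the occupied set ($\binom{n}{a}$ ways). Then count surjections from the $k$ balls onto those $a$ boxes: a set partition of $[k]$ into $a$ blocks ($\left\{\!\begin{smallmatrix}k\\ a\end{smallmatrix}\!\right\}$ ways) times a bijection from blocks to boxes ($a!$ ways). Hence $\Prob(\text{exactly }a\text{ occupied})=\mu_k(a)$. Since the number of empty boxes is $E=n-(\#\text{occupied})$, we get $P_k(u)=\Prob(E=u)=\mu_k(n-u)$.

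Reindexing $u=n-a$ in the first sum then gives the second equality. As a consistency check, $\sum_a\mu_k(a)=1$ follows automatically because this is a probability distribution; it also follows from the fact that both $Q^{*k}$ and each $\widetilde Q_{a,p}$ have total mass one. For $k<a$ the Stirling number vanishes, so those terms drop out consistently; the case $k=0$ gives $\mu_0=\delta_0$ and $Q^{*0}=\widetilde Q_{0,p}=\delta_{\mathrm{id}}$.

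There is no real obstacle here. The only care needed is the cancellation of $p^a$ and the surjection count $a!\left\{\!\begin{smallmatrix}k\\ a\end{smallmatrix}\!\right\}$.
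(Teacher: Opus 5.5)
Your proposal is correct and follows the paper's proof essentially step for step. Like the paper, you substitute $B_a=|B_a|\,\widetilde Q_{a,p}$ into Corollary~\ref{cor:Qk-Ba-exact}, cancel $p^a$ using Proposition~\ref{lem:Ba-size-exact}, and identify $\mu_k(a)$ as the number of occupied boxes via the count of occupied set, set partition and labeling, then reindex $u=n-a$.
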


\begin{proof}
Using \(B_a=|B_a|\widetilde Q_{a,p}\) and Proposition~\ref{lem:Ba-size-exact},
\begin{align}
Q^{*k}
&=\sum_{a=0}^{n}\frac{1}{n^k p^a}
\left\{\!\!\begin{matrix}k\\ a\end{matrix}\!\!\right\}|B_a|\,\widetilde Q_{a,p}\nonumber\\
&=\sum_{a=0}^{n}\frac{1}{n^k p^a}
\left\{\!\!\begin{matrix}k\\ a\end{matrix}\!\!\right\}\binom{n}{a}p^a a!\,\widetilde Q_{a,p}\nonumber\\
&=\sum_{a=0}^{n}\mu_k(a)\,\widetilde Q_{a,p}.
\label{eq:mixture-a-exact}
\end{align}

To interpret \(\mu_k(a)\), let \(A\) be the number of occupied boxes after throwing \(k\) i.i.d.\ uniform balls into \(n\) boxes. Then
\[
\Prob(A=a)=\frac{\#\{f:[k]\to[n]:|\im(f)|=a\}}{n^k}.
\]
Choose which $a$ boxes are occupied ($\binom{n}{a}$ choices),
partition $[k]$ into $a$ nonempty fibers ($\left\{\!\!\begin{smallmatrix}k\\ a\end{smallmatrix}\!\!\right\}$ choices), and label these fibers by the chosen boxes ($a!$ choices).
This gives
\[
\Prob(A=a)=\frac{1}{n^k}\binom{n}{a}\left\{\!\!\begin{matrix}k\\ a\end{matrix}\!\!\right\}a!=\mu_k(a).
\]
If \(E=n-A\), then \(P_k(u)=\Prob(E=u)=\mu_k(n-u)\), and \eqref{eq:mixture-a-exact} becomes \eqref{eq:mixture-u-exact}.
\end{proof}

\begin{example}[Balls into boxes]
If \(n=5\), \(k=6\), and the ball locations are
\[
(2,2,5,1,2,5),
\]
then the occupied boxes are \(\{1,2,5\}\), so \(A=3\) and \(E=2\).
\end{example}

\begin{remark}
There is a bijection
\[
\{f:[k]\to[n]\}\ \longleftrightarrow\ [n]^k,\qquad
f\ \longmapsto\ (f(1),\dots,f(k)),\ \ \ (x_1,\dots,x_k)\ \longmapsto\bigl(t\mapsto x_t\bigr).
\]
Under this identification,
\[
|\im(f)|=\bigl|\{f(1),\dots,f(k)\}\bigr|=\bigl|\{x_1,\dots,x_k\}\bigr|.
\]
Hence, for each $a$,
\[
\#\{f:[k]\to[n]:|\im(f)|=a\}
=
\#\{(x_1,\dots,x_k)\in[n]^k:\ |\{x_1,\dots,x_k\}|=a\}.
\]
That is, counting functions $f:[k]\to[n]$ with $|\im(f)|=a$ is the same as counting length-$k$ words in $[n]^k$ that use exactly $a$ distinct symbols.
\end{remark}

Note that for \(p=1\), \eqref{eq:mixture-u-exact} is exactly the mixture formula \cite[(2.5)]{DFP}. The formulation in \cite{NSS} provides its colored analog. To see that, we first introduce the following.

\begin{definition}\label{def:Au}
For \(0\le u\le n\), define
\[
A_u:=\supp(\widetilde Q_{n-u,p})=\supp(B_{n-u})\subseteq G_{n,p}.
\]
Equivalently, for \(1\le u\le n\) and \(x=(s,\sigma)\in G_{n,p}\),
\begin{equation}\label{eq:Au-condition-exact}
x\in A_u
\iff
\sigma^{-1}(n-u+1)<\cdots<\sigma^{-1}(n)
\ \text{and}\
s_{\sigma^{-1}(n-u+1)}=\cdots=s_{\sigma^{-1}(n)}=0.
\end{equation}
Also,
\[
A_0=\supp(B_n)=G_{n,p},
\qquad
A_{u+1}\subseteq A_u,
\qquad
|A_u|=|B_{n-u}|=\binom{n}{u}p^{\,n-u}(n-u)!.
\]
\end{definition}

\begin{lemma}\label{lem:nested-intersection}
For \(0\le u,\ell\le n\),
\[
A_\ell\cap A_u=A_{\max(\ell,u)},
\qquad
|A_\ell\cap A_u|=|A_{\max(\ell,u)}|.
\]
\end{lemma}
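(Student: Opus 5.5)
The plan is to derive the intersection identity directly from the nesting of the family $(A_u)$. Since $A_{\max(\ell,u)}$ is by definition one of $A_\ell$, $A_u$, it suffices to prove monotonicity: $A_{v}\subseteq A_{w}$ whenever $w\le v$. Given this, assume without loss of generality that $\ell\le u$. Then $A_u\subseteq A_\ell$, so
\[
A_\ell\cap A_u=A_u=A_{\max(\ell,u)},
\]
and the cardinality statement follows by taking sizes.

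For monotonicity, induction reduces everything to the one-step inclusion $A_{u+1}\subseteq A_u$ for $0\le u\le n-1$. Definition~\ref{def:Au} records this inclusion, but it is worth checking from the characterization \eqref{eq:Au-condition-exact}.

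The case $u=0$ is trivial, because $A_0=G_{n,p}$. For $1\le u\le n-1$, let $x=(s,\sigma)\in A_{u+1}$. Then the positions satisfy
\[
\sigma^{-1}(n-u)<\sigma^{-1}(n-u+1)<\cdots<\sigma^{-1}(n),
\]
and all the corresponding colors vanish. Discarding the first inequality and the color condition on label $n-u$ leaves exactly the conditions defining $A_u$, so $x\in A_u$. In words, if the largest $u+1$ labels appear in increasing order with color $0$, then so do the largest $u$ labels. This uses only the explicit support description of $B_{n-u}$ in Section~\ref{sec:prel}, together with the identification $A_u=\supp(B_{n-u})$.

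There is no real obstacle here. The one point needing care is the boundary indices: $u=0$, and $u=n$, where $B_0=\mathrm{id}$ and $A_n=\{\mathrm{id}\}$. These must be consistent with \eqref{eq:Au-condition-exact}, and they are: for $u=n$ the condition forces $\sigma=\mathrm{id}$ with all colors $0$, which is the identity element. With the one-step inclusion in hand, the chain $A_n\subseteq\cdots\subseteq A_0$ gives the lemma.
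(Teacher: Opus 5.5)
Your proposal is correct and follows the same route as the paper: the paper's proof simply invokes the nesting \(A_{u+1}\subseteq A_u\) recorded in Definition~\ref{def:Au} and concludes that the intersection is the smaller set. Your additional check of the one-step inclusion from \eqref{eq:Au-condition-exact}, including the boundary cases \(u=0\) and \(u=n\), is a harmless supplement to that argument.
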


\begin{proof}
Since \(A_{u+1}\subseteq A_u\), the family \((A_u)\) is nested decreasing, so the intersection is the smaller set.
\end{proof}

\begin{definition}\label{def:Lp}
Define
\[
L_p(x):=\max\{u\in\{0,1,\dots,n\}:x\in A_u\}
\qquad(x\in G_{n,p}).
\]
For \(p=1\), write \(L:=L_1\).
\end{definition}

\begin{lemma}\label{lem:Au-Lp-equiv}
Let \(1\le u\le n\) and \(x=(s,\sigma)\in G_{n,p}\). The following are equivalent:
\begin{enumerate}
\item[(i)] \(x\in A_u\).
\item[(ii)] \(\sigma^{-1}(n-u+1)<\cdots<\sigma^{-1}(n)\) and
\[
s_{\sigma^{-1}(n-u+1)}=\cdots=s_{\sigma^{-1}(n)}=0.
\]
\item[(iii)] \(L_p(x)\ge u\).
\end{enumerate}
Hence
\[
\{x:L_p(x)\ge u\}=A_u
\qquad(0\le u\le n).
\]
Also, for \(0\le u\le n-1\),
\[
L_p(x)=u
\iff
x\in A_u\setminus A_{u+1}.
\]
\end{lemma}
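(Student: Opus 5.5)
The plan is to prove (i)$\iff$(ii) first, then (i)$\iff$(iii), and finally read off the two displayed consequences.

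For (i)$\iff$(ii), I will unwind Definition~\ref{def:Au}. By definition $A_u=\supp(B_{n-u})$. For $1\le u\le n-1$ we have $1\le n-u\le n-1$, and the explicit support description of $B_{n-u}$ from Section~\ref{sec:prel} states that $x\in\supp(B_{n-u})$ exactly when the labels $n-u+1,\dots,n$ occur in increasing positions and all carry color $0$. This is precisely (ii), and it is also the content of \eqref{eq:Au-condition-exact}. The case $u=n$ needs a separate check. Here $A_n=\supp(B_0)=\{\mathrm{id}\}$, while (ii) requires $\sigma^{-1}(1)<\cdots<\sigma^{-1}(n)$ with all colors $0$. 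The first condition forces $\sigma=\mathrm{id}$ and the second forces $s=0$, so (ii) also describes the single element $(0,\mathrm{id})$, which is the identity of $G_{n,p}$. I expect this boundary case to be the only point needing care, because the shuffle definition of $B_m$ treats $m=0$ separately.

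For (i)$\iff$(iii), I will use the nesting $A_{u+1}\subseteq A_u$. This nesting is visible directly from (ii): if the top $u+1$ labels are increasing and uncolored, then so are the top $u$. If $x\in A_u$, then $u$ belongs to the set whose maximum defines $L_p(x)$, so $L_p(x)\ge u$. Conversely, if $L_p(x)=v\ge u$, then $x\in A_v\subseteq A_u$ by iterating the nesting. The set in the definition of $L_p$ is nonempty because $A_0=G_{n,p}$, so $L_p$ is well defined. This argument also gives $\{L_p\ge 0\}=G_{n,p}=A_0$, which settles the identity $\{x:L_p(x)\ge u\}=A_u$ for all $0\le u\le n$.

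Finally, for $0\le u\le n-1$, I will write $\{L_p=u\}=\{L_p\ge u\}\setminus\{L_p\ge u+1\}=A_u\setminus A_{u+1}$.
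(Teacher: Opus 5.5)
Your proof is correct and takes the same route as the paper: (i)$\iff$(ii) by unwinding $A_u=\supp(B_{n-u})$, (i)$\iff$(iii) from the definition of $L_p$ together with the nesting $A_{u+1}\subseteq A_u$, and the level-set identity $\{L_p=u\}=A_u\setminus A_{u+1}$ by subtraction. You also spell out two details the paper leaves implicit: the boundary case $u=n$, where $A_n=\supp(B_0)=\{\mathrm{id}\}$, and the use of nesting in the direction (iii)$\Rightarrow$(i).
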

\begin{proof}
The equivalence \((i)\Leftrightarrow(ii)\) is Definition~\ref{def:Au}. By Definition~\ref{def:Lp},
\[
L_p(x)\ge u \iff x\in A_u,
\]
which gives \((i)\Leftrightarrow(iii)\). Since \(A_{u+1}\subseteq A_u\),
\[
L_p(x)=u \iff x\in A_u\setminus A_{u+1}. \qedhere
\]
\end{proof}

For \(p=1\),
\[
A_u=\{\pi\in S_n:\pi^{-1}(n-u+1)<\cdots<\pi^{-1}(n)\},
\qquad
|A_u|=\frac{n!}{u!}.
\]

\begin{corollary}\label{cor:Au-L}
Let \(p=1\), so \(G_{n,1}=S_n\), and write \(L:=L_1\). Then for \(1\le u\le n\),
\[
\pi\in A_u
\iff
\pi^{-1}(n-u+1)<\cdots<\pi^{-1}(n)
\iff
L(\pi)\ge u.
\]
Also,
\[
L(\pi)=u
\iff
\pi\in A_u\setminus A_{u+1}
\qquad(0\le u\le n-1).
\]
For \(1\le u\le n-1\), this is equivalently
\[
L(\pi)=u
\iff
\pi^{-1}(n-u+1)<\cdots<\pi^{-1}(n)
\ \text{and}\
\pi^{-1}(n-u)>\pi^{-1}(n-u+1).
\]
\end{corollary}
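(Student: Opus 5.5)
The statement to be proved is Corollary~\ref{cor:Au-L}, the specialization of Lemma~\ref{lem:Au-Lp-equiv} to \(p=1\). The plan is to read it off from that lemma after noting that the color conditions become vacuous.

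\textbf{Identification.} When \(p=1\) the group \(C_1\) is trivial, so every color \(s_t\) equals \(0\) automatically. Hence \(G_{n,1}=C_1^{\,n}\times S_n\) is identified with \(S_n\) via \((0,\pi)\mapsto\pi\), and the color clause in \eqref{eq:Au-condition-exact} holds for every element.

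\textbf{First two claims.} With the color clause gone, Lemma~\ref{lem:Au-Lp-equiv}(i)\(\Leftrightarrow\)(ii)\(\Leftrightarrow\)(iii) reads
\[
\pi\in A_u\iff \pi^{-1}(n-u+1)<\cdots<\pi^{-1}(n)\iff L(\pi)\ge u
\]
for \(1\le u\le n\). The statement \(L(\pi)=u\iff\pi\in A_u\setminus A_{u+1}\) for \(0\le u\le n-1\) is the last assertion of the same lemma.

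\textbf{Final equivalence.} Take \(1\le u\le n-1\). Then \(\pi\in A_u\setminus A_{u+1}\) means two things:
\[
\pi^{-1}(n-u+1)<\cdots<\pi^{-1}(n)
\quad\text{and not}\quad
\pi^{-1}(n-u)<\pi^{-1}(n-u+1)<\cdots<\pi^{-1}(n).
\]
Given the first chain, the second chain fails exactly when its first link fails, i.e. when \(\pi^{-1}(n-u)<\pi^{-1}(n-u+1)\) is false. Since \(\pi^{-1}\) is injective, positions of distinct labels are distinct, so this negation is the strict inequality \(\pi^{-1}(n-u)>\pi^{-1}(n-u+1)\).

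The only point needing any care is this last step: one must invoke injectivity of \(\pi^{-1}\) so that the negation of a strict inequality is again a strict inequality. Everything else is direct specialization of Lemma~\ref{lem:Au-Lp-equiv}.
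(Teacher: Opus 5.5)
Your proposal is correct and matches the paper, which states this corollary without a separate proof as the \(p=1\) specialization of Lemma~\ref{lem:Au-Lp-equiv}, where the color clause is vacuous. The paper leaves the last equivalence implicit, and you supply it: given the chain defining \(A_u\), membership in \(A_{u+1}\) reduces to \(\pi^{-1}(n-u)<\pi^{-1}(n-u+1)\), and injectivity of \(\pi^{-1}\) makes its negation strict.
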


By \eqref{eq:Qbarsupp},
\[
\widetilde Q_{n-u,1}(\pi)=\frac{1}{|A_u|}\mathbf 1_{A_u}(\pi)
=\frac{u!}{n!}\mathbf 1_{\{L(\pi)\ge u\}},
\]
so \eqref{eq:mixture-u-exact} is the formula \cite[(2.5)]{DFP}.

\begin{corollary}[Distribution of \(L_p\) under \(Q^{*k}\)]\label{prop:Lp-under-Qk}
For \(0\le \ell\le n\),
\[
Q^{*k}\{L_p\ge \ell\}
=
\sum_{u=0}^{\ell-1}P_k(u)\,\frac{u!}{\ell!}\,p^{u-\ell}
+\sum_{u=\ell}^{n}P_k(u).
\]
For \(p=1\), this is \cite[(2.7)]{DFP}.
\end{corollary}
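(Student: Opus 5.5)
We compute \(Q^{*k}(A_\ell)\) directly from the exact occupancy mixture of Lemma~\ref{lem:Qk-occupancy-mixture}. By Lemma~\ref{lem:Au-Lp-equiv} we have \(\{L_p\ge\ell\}=A_\ell\). Hence
\[
Q^{*k}\{L_p\ge\ell\}=\sum_{u=0}^{n}P_k(u)\,\widetilde Q_{n-u,p}(A_\ell).
\]
Each \(\widetilde Q_{n-u,p}\) is uniform on \(A_u\). Therefore
\[
\widetilde Q_{n-u,p}(A_\ell)=\frac{|A_u\cap A_\ell|}{|A_u|}=\frac{|A_{\max(u,\ell)}|}{|A_u|},
\]
where the second equality is Lemma~\ref{lem:nested-intersection}.

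We split according to the relative size of \(u\) and \(\ell\).

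\emph{Case \(u\ge\ell\).} Here \(A_u\subseteq A_\ell\), so the ratio is \(1\). These terms contribute \(\sum_{u=\ell}^n P_k(u)\).

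\emph{Case \(u<\ell\).} The ratio is \(|A_\ell|/|A_u|\). Definition~\ref{def:Au} gives \(|A_u|=\frac{n!}{u!}p^{\,n-u}\). Therefore
\[
\frac{|A_\ell|}{|A_u|}=\frac{n!\,p^{n-\ell}/\ell!}{n!\,p^{n-u}/u!}=\frac{u!}{\ell!}\,p^{u-\ell}.
\]
These terms give the first sum in the statement.

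The edge case \(\ell=0\) is consistent: the first sum is empty and the second equals \(1\), matching \(A_0=G_{n,p}\). For \(p=1\) the formula reduces to \(\sum_{u<\ell}P_k(u)u!/\ell!+\sum_{u\ge\ell}P_k(u)\), which is DFP (2.7).

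There is no real obstacle. The only care needed is the simplification \(|A_u|=\binom{n}{u}p^{n-u}(n-u)!=n!\,p^{n-u}/u!\), together with the convention that \(P_k(u)=0\) for impossible values of \(u\).
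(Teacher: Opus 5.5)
Your proposal is correct and follows the paper's own proof essentially step for step. You write \(\{L_p\ge\ell\}=A_\ell\), expand \(Q^{*k}\) through the occupancy mixture, and use \(A_u\cap A_\ell=A_{\max(u,\ell)}\) with the uniformity of \(\widetilde Q_{n-u,p}\) on \(A_u\). You then simplify \(|A_\ell|/|A_u|=\frac{u!}{\ell!}p^{u-\ell}\) for \(u<\ell\) using \(|A_j|=\frac{n!}{j!}p^{\,n-j}\), exactly as the paper does.
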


\begin{proof}
By Lemma~\ref{lem:Au-Lp-equiv}, \eqref{eq:mixture-u-exact}, and
Lemma~\ref{lem:nested-intersection},
\[
Q^{*k}\{L_p\ge \ell\}
=
\sum_{u=0}^{\ell-1}P_k(u)\,\frac{|A_\ell|}{|A_u|}
+\sum_{u=\ell}^{n}P_k(u).
\]
Since \(|A_j|=\frac{n!}{j!}p^{\,n-j}\), we get
\[
\frac{|A_\ell|}{|A_u|}=\frac{u!}{\ell!}p^{u-\ell},
\]
hence the claim follows.
\end{proof}

\subsection{One-statistic likelihood ratios}

This is the colored analog of \cite[(2.5)]{DFP}.

\begin{lemma}\label{lem:LR-one-statistic}
For every integer \(k\ge0\) and every \(x\in G_{n,p}\),
\begin{equation}\label{eq:LR-exact}
\frac{Q^{*k}(x)}{U(x)}
=\sum_{u=0}^{L_p(x)}P_k(u)\,u!\,p^u.
\end{equation}
\end{lemma}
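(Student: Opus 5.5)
Starting from the exact occupancy mixture of Lemma~\ref{lem:Qk-occupancy-mixture}, write
\[
Q^{*k}(x)=\sum_{u=0}^{n}P_k(u)\,\widetilde Q_{n-u,p}(x).
\]
By \eqref{eq:Qbarsupp} and Definition~\ref{def:Au}, each component is an indicator divided by a size:
\[
\widetilde Q_{n-u,p}(x)=\frac{\mathbf 1_{A_u}(x)}{|A_u|}.
\]
By Lemma~\ref{lem:Au-Lp-equiv}, \(x\in A_u\) if and only if \(u\le L_p(x)\). This holds for \(u\ge1\) by the lemma, and for \(u=0\) because \(A_0=G_{n,p}\) and \(L_p\ge0\). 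Hence the indicator truncates the sum to \(0\le u\le L_p(x)\), and
\[
Q^{*k}(x)=\sum_{u=0}^{L_p(x)}\frac{P_k(u)}{|A_u|}.
\]

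Next, divide by \(U(x)=1/(p^n n!)\) and use the size formula \(|A_u|=\binom{n}{u}p^{n-u}(n-u)!=\frac{n!}{u!}p^{n-u}\). The ratio simplifies to
\[
\frac{p^n n!}{|A_u|}=u!\,p^u,
\]
and substituting this gives \eqref{eq:LR-exact} directly.

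There is no genuine obstacle here; the only points needing care are bookkeeping. The first is the boundary case \(u=0\) (and \(u=n\), where \(A_n=\{\mathrm{id}\}\) has size \(1=\frac{n!}{n!}p^0\)), which is consistent with the size formula. The second is confirming that the indexing \(P_k(u)=\mu_k(n-u)\) matches \(\widetilde Q_{n-u,p}\) with support \(A_u\), so that the truncation is at \(L_p(x)\) and not at \(n-L_p(x)\).
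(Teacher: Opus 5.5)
Your proof is correct and follows essentially the same route as the paper. You expand $Q^{*k}$ via the occupancy mixture, write each component as $\mathbf 1_{A_u}/|A_u|$ with $\{L_p\ge u\}=A_u$, and use $|A_u|=\frac{n!}{u!}p^{n-u}$ to get the componentwise ratio $u!\,p^u$. The paper does the same, except that it computes $\widetilde Q_{n-u,p}(x)/U(x)=u!\,p^u\,\mathbf 1_{\{L_p(x)\ge u\}}$ first and then sums, whereas you truncate the sum first and divide afterwards.
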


\begin{proof}
Because $\widetilde Q_{n-u,p}$ is uniform on $A_u$, for $x\in A_u$ we have
\[
\widetilde Q_{n-u,p}(x)=\frac{1}{|A_u|}=\frac{1}{|B_{n-u}|}.
\]
Thus, by Proposition~\ref{lem:Ba-size-exact} with $a=n-u$,
\begin{equation}\label{eq:component-ratio-exact}
\frac{\widetilde Q_{n-u,p}(x)}{U(x)}
=
\frac{1/\bigl(\binom{n}{u}p^{n-u}(n-u)!\bigr)}{1/(p^n n!)}
=
u!\,p^u.
\end{equation}
If \(x\notin A_u\), the ratio is \(0\). Hence
\begin{equation}\label{eq:component-ratio-indicator}
\frac{\widetilde Q_{n-u,p}(x)}{U(x)}
=u!\,p^u\,\mathbf 1_{A_u}(x)
=u!\,p^u\,\mathbf 1_{\{L_p(x)\ge u\}}.
\end{equation}
Dividing \eqref{eq:mixture-u-exact} by \(U(x)\) gives \eqref{eq:LR-exact}.
\end{proof}

Let \(\mu\) be any probability mass function on \(\{0,1,\dots,n\}\).

\begin{definition}\label{def:LR-general-mu}
Define
\[
\widetilde Q_{\mu,p}:=\sum_{a=0}^{n}\mu(a)\,\widetilde Q_{a,p}
=\sum_{u=0}^{n}\mu(n-u)\,\widetilde Q_{n-u,p}.
\]
In particular, \(\widetilde Q_{\mu_k,p}=Q^{*k}\).
\end{definition}

\begin{remark}\label{rem:LR-general-mu}
By \eqref{eq:component-ratio-indicator},
\[
\frac{\widetilde Q_{\mu,p}(x)}{U(x)}
=\sum_{u=0}^{n}\mu(n-u)\frac{\widetilde Q_{n-u,p}(x)}{U(x)}
=\sum_{u=0}^{n}\mu(n-u)\,u!\,p^u\,\mathbf 1_{\{L_p(x)\ge u\}}
=\sum_{u=0}^{L_p(x)}\mu(n-u)\,u!\,p^u.
\]
Thus \(Q^{*k}\) is the special case \(\mu=\mu_k\).
\end{remark}

Define
\begin{equation}\label{eq:def-Smu}
S_\mu(\ell):=\sum_{u=0}^{\ell}\mu(n-u)\,u!\,p^u
\qquad(0\le \ell\le n).
\end{equation}
Then
\begin{equation}\label{eq:Smu}
\frac{\widetilde Q_{\mu,p}(x)}{U(x)}=S_\mu(L_p(x)).
\end{equation}

\begin{lemma}\label{lem:Smu-monotone}
For \(1\le \ell\le n\),
\[
S_\mu(\ell)-S_\mu(\ell-1)=\mu(n-\ell)\,\ell!\,p^\ell\ge0.
\]
Hence \(S_\mu\) is nondecreasing.
\end{lemma}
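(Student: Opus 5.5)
The claim follows directly from the definition \eqref{eq:def-Smu}, so the plan is a telescoping computation followed by a sign check. For \(1\le \ell\le n\), write
\[
S_\mu(\ell)=\sum_{u=0}^{\ell-1}\mu(n-u)\,u!\,p^u+\mu(n-\ell)\,\ell!\,p^\ell
= S_\mu(\ell-1)+\mu(n-\ell)\,\ell!\,p^\ell .
\]
Here we split off the final summand \(u=\ell\). This split is legitimate because \(\ell-1\) lies in \(\{0,\dots,n\}\), so \(S_\mu(\ell-1)\) is defined by the same formula. Subtracting \(S_\mu(\ell-1)\) from both sides gives the stated identity.

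For nonnegativity, note that \(0\le n-\ell\le n-1\), so \(\mu(n-\ell)\) is a value of a probability mass function on \(\{0,\dots,n\}\) and is therefore \(\ge0\). The factors \(\ell!\) and \(p^\ell\) are positive since \(p\ge1\). Their product is therefore nonnegative.

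Monotonicity follows at once. The consecutive differences \(S_\mu(\ell)-S_\mu(\ell-1)\) are nonnegative for every \(1\le\ell\le n\). Summing them, or equivalently inducting on \(\ell\), gives \(S_\mu(\ell')\le S_\mu(\ell)\) whenever \(0\le\ell'\le\ell\le n\).

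There is no real obstacle. The only point needing care is the index bookkeeping: we must check that \(n-\ell\) stays within the support domain of \(\mu\), so that the nonnegativity of \(\mu\) applies to \(\mu(n-\ell)\).
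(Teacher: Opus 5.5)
Your proof is correct and follows the same route as the paper: the paper likewise reads the difference $S_\mu(\ell)-S_\mu(\ell-1)=\mu(n-\ell)\,\ell!\,p^\ell$ directly off the definition and notes that it is nonnegative. Your index check that $n-\ell\in\{0,\dots,n-1\}$ and your explicit telescoping step for monotonicity are details the paper leaves implicit.
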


\begin{proof}
For \(1\le \ell\le n\),
\[
S_\mu(\ell)-S_\mu(\ell-1)=\mu(n-\ell)\,\ell!\,p^\ell\ge0. \qedhere
\]
\end{proof}

Set
\[
r_\mu:=\max\{u:\mu(n-u)>0\}.
\]
Equivalently,
\[
\mu(n-r_\mu)>0,
\qquad
\mu(n-u)=0\quad(u>r_\mu).
\]
That is, $r_\mu$ is the largest index $u$ for which the term $\mu(n-u)\widetilde Q_{n-u,p}$ appears with nonzero coefficient.

\begin{lemma}\label{lem:Smu-plateau-max}
We have
\[
S_\mu(\ell)=S_\mu(r_\mu)\qquad(\ell\ge r_\mu),
\]
and
\[
S_\mu(\ell)<S_\mu(r_\mu)\qquad(0\le \ell<r_\mu).
\]
Consequently,
\[
\max_{x\in G_{n,p}}\frac{\widetilde Q_{\mu,p}(x)}{U(x)}=S_\mu(r_\mu),
\]
and the maximum is attained exactly on
\[
\{x:L_p(x)\ge r_\mu\}=A_{r_\mu}.
\]
\end{lemma}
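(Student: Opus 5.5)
The proof follows directly from the increment formula in Lemma~\ref{lem:Smu-monotone} and the definition of \(r_\mu\). First, for \(\ell>r_\mu\) (with \(\ell\le n\)), Lemma~\ref{lem:Smu-monotone} gives
\[
S_\mu(\ell)-S_\mu(\ell-1)=\mu(n-\ell)\,\ell!\,p^\ell=0,
\]
because \(\mu(n-u)=0\) for \(u>r_\mu\). Telescoping from \(r_\mu\) up to \(\ell\) then yields \(S_\mu(\ell)=S_\mu(r_\mu)\) for all \(\ell\ge r_\mu\).

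For \(0\le\ell<r_\mu\), write
\[
S_\mu(r_\mu)-S_\mu(\ell)=\sum_{u=\ell+1}^{r_\mu}\mu(n-u)\,u!\,p^u .
\]
Every term is nonnegative, and the term \(u=r_\mu\) is strictly positive since \(\mu(n-r_\mu)>0\), \(r_\mu!\ge1\) and \(p\ge1\). Hence \(S_\mu(\ell)<S_\mu(r_\mu)\). Note that \(r_\mu\) is well defined because \(\mu\) is a probability mass function, so some \(\mu(n-u)\) is positive.

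The maximum statement comes from \eqref{eq:Smu}. The likelihood ratio at \(x\) equals \(S_\mu(L_p(x))\), and \(L_p\) takes values in \(\{0,\dots,n\}\). By the two facts above, \(\max_{0\le\ell\le n}S_\mu(\ell)=S_\mu(r_\mu)\), attained exactly at the \(\ell\ge r_\mu\). This value is actually realized by some \(x\), since \(A_{r_\mu}\supseteq A_n\ni\mathrm{id}\) (the identity word has all labels increasing with color \(0\)), so \(A_{r_\mu}\) is nonempty. Therefore the maximum over \(G_{n,p}\) is \(S_\mu(r_\mu)\). It is attained precisely when \(L_p(x)\ge r_\mu\), and by Lemma~\ref{lem:Au-Lp-equiv} this set is exactly \(A_{r_\mu}\).

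The only point needing care is nonemptiness of the attaining set, that is, checking that the maximum over \(\ell\) corresponds to an actual group element. This is handled by the observation \(\mathrm{id}\in A_n\subseteq A_{r_\mu}\).
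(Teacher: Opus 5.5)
Your proof is correct and follows the same route as the paper: the plateau for $\ell\ge r_\mu$ comes from $\mu(n-u)=0$ for $u>r_\mu$, strictness for $\ell<r_\mu$ comes from the positive $u=r_\mu$ term, and the maximizing set follows from \eqref{eq:Smu} together with Lemma~\ref{lem:Au-Lp-equiv}. Your extra check that $A_{r_\mu}\supseteq A_n=\{\mathrm{id}\}$ is nonempty is a small point the paper leaves implicit.
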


\begin{proof}
If \(\ell\ge r_\mu\), then \(\mu(n-u)=0\) for \(u>r_\mu\), so \(S_\mu(\ell)=S_\mu(r_\mu)\). If \(0\le \ell<r_\mu\), then
\[
S_\mu(r_\mu)-S_\mu(\ell)
=\sum_{u=\ell+1}^{r_\mu}\mu(n-u)\,u!\,p^u>0.
\]
By \eqref{eq:Smu}, the ratio is maximal exactly when
\[
L_p(x)\ge r_\mu.
\]
By Lemma~\ref{lem:Au-Lp-equiv},
\[
\{x\in G_{n,p}:L_p(x)\ge r_\mu\}=A_{r_\mu}.\qedhere
\]
\end{proof}

For \(p=1\), \eqref{eq:LR-exact} becomes
\[
Q^{*k}(\pi)=\frac{1}{n!}\sum_{u=0}^{L(\pi)}P_k(u)\,u!,
\]
which is the likelihood-ratio formula \cite[(2.5)]{DFP}.

\subsection{Poisson limit regime}

\begin{lemma}\label{lem:Lp-under-U}
For \(0\le u\le n\),
\begin{equation}\label{eq:Lpu}
U\{L_p\ge u\}=\frac{1}{u!\,p^u}.
\end{equation}
Hence, for \(0\le \ell\le n-1\),
\begin{equation}\label{eq:Lp-mass}
U\{L_p=\ell\}
=\frac{1}{\ell!\,p^\ell}-\frac{1}{(\ell+1)!\,p^{\ell+1}},
\end{equation}
and
\[
U\{L_p=n\}=\frac{1}{n!\,p^n}.
\]
\end{lemma}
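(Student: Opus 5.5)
By Lemma~\ref{lem:Au-Lp-equiv}, the event \(\{L_p\ge u\}\) is exactly the set \(A_u\). Hence \(U\{L_p\ge u\}=|A_u|/|G_{n,p}|\), and the whole argument reduces to dividing the two cardinalities already recorded in the paper.

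\textbf{Step 1.} From Definition~\ref{def:Au}, which rests on Proposition~\ref{lem:Ba-size-exact}, we have \(|A_u|=|B_{n-u}|=\binom{n}{u}p^{n-u}(n-u)!=\frac{n!}{u!}p^{n-u}\). We also have \(|G_{n,p}|=p^n n!\). Dividing gives
\[
U\{L_p\ge u\}=\frac{n!\,p^{n-u}/u!}{n!\,p^n}=\frac{1}{u!\,p^u}.
\]
The endpoint cases are consistent with this formula: \(u=0\) gives \(A_0=G_{n,p}\), and \(u=n\) gives \(A_n=\{\mathrm{id}\}\), of size \(1\).

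\textbf{Step 2.} For \(0\le\ell\le n-1\), Lemma~\ref{lem:Au-Lp-equiv} gives \(\{L_p=\ell\}=A_\ell\setminus A_{\ell+1}\), and this is a disjoint difference because \(A_{\ell+1}\subseteq A_\ell\). Therefore
\[
U\{L_p=\ell\}=U\{L_p\ge \ell\}-U\{L_p\ge \ell+1\},
\]
and substituting \eqref{eq:Lpu} yields \eqref{eq:Lp-mass}.

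\textbf{Step 3.} For \(\ell=n\), the statistic satisfies \(L_p\le n\), so \(\{L_p=n\}=\{L_p\ge n\}=A_n\). This set has \(U\)-mass \(1/(n!\,p^n)\) by Step 1.

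No step is a real obstacle. The only point needing care is the size formula \(|A_u|=\frac{n!}{u!}p^{n-u}\) at the boundary values \(u=0\) and \(u=n\), where \(B_{n}\) and \(B_0\) are defined separately. I would check these two cases directly, noting that \(|B_0|=1\) and \(|B_n|=p^n n!\), both of which agree with Proposition~\ref{lem:Ba-size-exact}.
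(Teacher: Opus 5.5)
Your proof is correct and follows the paper's own route: identify \(\{L_p\ge u\}=A_u\), divide \(|A_u|=\frac{n!}{u!}p^{n-u}\) by \(|G_{n,p}|=p^n n!\), get the point masses by subtraction, and use \(\{L_p=n\}=\{L_p\ge n\}\) for the top value. Your separate check of \(u=0\) and \(u=n\) is harmless but not needed, since Proposition~\ref{lem:Ba-size-exact} already covers all \(0\le a\le n\).
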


\begin{proof}
Since \(U\) is uniform and \(\{L_p\ge u\}=A_u=\supp(B_{n-u})\),
\[
    U\{L_p\ge u\}=U(A_u)=\frac{|A_u|}{|G_{n,p}|}
=\frac{\binom{n}{u}p^{\,n-u}(n-u)!}{p^n n!}
=\frac{n!}{u!(n-u)!}\cdot\frac{p^{\,n-u}(n-u)!}{p^n n!}
=\frac{1}{u!\,p^u}.
\]
The point masses follow by subtraction.
Also,
\[
U\{L_p=n\}=U\{L_p\ge n\}=\frac{1}{n!\,p^n},
\qquad\text{since }U\{L_p\ge n+1\}=0.\qedhere
\]
\end{proof}

\begin{remark}
Equivalently, under \(U\), the event \(\{L_p\ge u\}\) is the conjunction of:
(i) the labels \(n-u+1,\dots,n\) appearing in increasing order, probability \(1/u!\); and
(ii) their colors being \(0\), probability \(p^{-u}\).
Thus
\begin{equation}\label{eq:Lp-tail}
U\{L_p\ge u\}=\frac{1}{u!\,p^u}.
\end{equation}
\end{remark}

\begin{corollary}\label{cor:Lp-law-general-mu}
For \(0\le \ell\le n-1\),
\[
\widetilde Q_{\mu,p}\{L_p=\ell\}
=
\left(\frac{1}{\ell!\,p^\ell}-\frac{1}{(\ell+1)!\,p^{\ell+1}}\right)S_\mu(\ell),
\]
and
\[
\widetilde Q_{\mu,p}\{L_p=n\}
=
\frac{1}{n!\,p^n}S_\mu(n).
\]
\end{corollary}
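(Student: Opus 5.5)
The plan is to write \(\widetilde Q_{\mu,p}\{L_p=\ell\}\) as a sum over the level set and use the fact that the likelihood ratio is constant there. By \eqref{eq:Smu}, every \(x\) with \(L_p(x)=\ell\) satisfies \(\widetilde Q_{\mu,p}(x)=U(x)\,S_\mu(\ell)\). Summing over the level set therefore gives
\[
\widetilde Q_{\mu,p}\{L_p=\ell\}
=\sum_{x:\,L_p(x)=\ell}U(x)\,S_\mu(\ell)
=S_\mu(\ell)\,U\{L_p=\ell\}.
\]
Note that \(S_\mu(\ell)\) comes out of the sum precisely because it depends on \(x\) only through \(L_p(x)\).

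The remaining step is to insert the uniform point masses from Lemma~\ref{lem:Lp-under-U}. For \(0\le\ell\le n-1\), \eqref{eq:Lp-mass} gives
\[
U\{L_p=\ell\}=\frac{1}{\ell!\,p^\ell}-\frac{1}{(\ell+1)!\,p^{\ell+1}},
\]
which produces the first formula. For \(\ell=n\), Lemma~\ref{lem:Lp-under-U} gives \(U\{L_p=n\}=1/(n!\,p^n)\), which produces the second formula.

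I do not expect any real obstacle here. The only point requiring care is the top value \(\ell=n\), where the set \(A_{n+1}\) is not defined. For this reason the boundary case is handled separately, using \(U\{L_p\ge n+1\}=0\) as Lemma~\ref{lem:Lp-under-U} already does, rather than applying \eqref{eq:Lp-mass} at \(\ell=n\).
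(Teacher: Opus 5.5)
Your proposal is correct and is essentially the paper's proof. The paper likewise pulls the constant likelihood ratio \(S_\mu(\ell)\) out of the sum over the level set \(\{L_p=\ell\}\), then substitutes the uniform masses from Lemma~\ref{lem:Lp-under-U}, treating \(\ell=n\) separately via \(U\{L_p=n\}=1/(n!\,p^n)\).
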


\begin{proof}
By \eqref{eq:Smu}, the ratio \(\widetilde Q_{\mu,p}(x)/U(x)\) is constant on \(\{L_p=\ell\}\), equal to \(S_\mu(\ell)\). Thus
\[
\widetilde Q_{\mu,p}\{L_p=\ell\}
=
\sum_{x:\,L_p(x)=\ell}U(x)\,S_\mu(\ell)
=
U\{L_p=\ell\}\,S_\mu(\ell),
\]
and Lemma~\ref{lem:Lp-under-U} finishes the proof.
\end{proof}

\begin{proposition}\label{prop:TV-likelihood}
If \(M\ll U\), then
\begin{equation}\label{eq:MUTV}
\|M-U\|_{\TV}
=\mathbb E_U\Bigl[\Bigl(\frac{M(X)}{U(X)}-1\Bigr)_+\Bigr].
\end{equation}
\end{proposition}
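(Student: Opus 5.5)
The plan is a direct computation from the definition of total variation, splitting \(X\) according to the sign of \(M-U\). Write \(\|M-U\|_{\TV}=\frac12\sum_{x\in X}|M(x)-U(x)|\). Since \(M\) and \(U\) are both probability measures,
\[
\sum_{x\in X}\bigl(M(x)-U(x)\bigr)=0 .
\]
So the positive and negative parts of \(M-U\) have equal total mass. Writing \(|t|=t_++t_-\) and \(t=t_+-t_-\), this gives
\[
\sum_x\bigl(M(x)-U(x)\bigr)_+=\sum_x\bigl(M(x)-U(x)\bigr)_- ,
\qquad\text{hence}\qquad
\|M-U\|_{\TV}=\sum_x\bigl(M(x)-U(x)\bigr)_+ .
\]

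Next, factor out \(U(x)\), which is strictly positive everywhere (\(U(x)=1/|X|\)). This gives
\[
\bigl(M(x)-U(x)\bigr)_+=U(x)\Bigl(\frac{M(x)}{U(x)}-1\Bigr)_+ .
\]
Summing over \(x\) yields exactly \(\mathbb E_U\bigl[(M(X)/U(X)-1)_+\bigr]\) with \(X\sim U\), which is \eqref{eq:MUTV}.

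The hypothesis \(M\ll U\) is automatic here, since \(U\) has full support; it only serves to make the ratio \(M/U\) well defined. There is no real obstacle. The only point needing care is the identity equating the positive and negative parts, which rests on both measures having total mass one.
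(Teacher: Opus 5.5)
Your proof is correct and is essentially the paper's argument. The paper first rewrites the total variation as $\frac12\mathbb E_U|Z|$ with $Z=M(X)/U(X)-1$ and then uses $\mathbb E_U[Z]=0$ to equate $\mathbb E_U[Z_+]$ with $\mathbb E_U[(-Z)_+]$, whereas you equate the positive and negative parts of $M-U$ first and factor out $U(x)>0$ afterwards — the two orderings are interchangeable and nothing is missing.
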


\begin{proof}
Let \(Z:=M(X)/U(X)-1\). Then
\begin{align*}
\|M-U\|_{\TV}
&=\frac12\sum_x |M(x)-U(x)|
=\frac12\sum_x U(x)\left|\frac{M(x)}{U(x)}-1\right|
=\frac12\,\mathbb E_U\left[\left|\frac{M(X)}{U(X)}-1\right|\right]
=\frac12\,\mathbb E_U[|Z|].
\end{align*}
Also,
\[
\mathbb E_U[Z]
=\mathbb E_U\!\left[\frac{M(X)}{U(X)}\right]-1
=\sum_x U(x)\frac{M(x)}{U(x)}-1
=\sum_x M(x)-1
=0,
\]
so \(\mathbb E_U[Z_+]=\mathbb E_U[(-Z)_+]\). Hence
\[
\|M-U\|_{\TV}
=\frac12\bigl(\mathbb E_U[Z_+]+\mathbb E_U[(-Z)_+]\bigr)
=\mathbb E_U[Z_+]
=\mathbb E_U\Bigl[\Bigl(\frac{M(X)}{U(X)}-1\Bigr)_+\Bigr]. \qedhere
\]
\end{proof}

\begin{theorem}[Exact TV formula for nested-set mixtures]\label{prop:TV-exact-general-mu}
If \(\widetilde Q_{\mu,p}=U\), then
\[
\|\widetilde Q_{\mu,p}-U\|_{\TV}=0.
\]
Assume \(\widetilde Q_{\mu,p}\neq U\), and define
\[
w_\mu:=\min\{\ell\in\{0,1,\dots,n\}: S_\mu(\ell)>1\}.
\]
Then \(w_\mu\in\{1,\dots,n\}\) (indeed \(w_\mu\ge2\) when \(p=1\)), and
\[
\|\widetilde Q_{\mu,p}-U\|_{\TV}
=\widetilde Q_{\mu,p}(A_{w_\mu})-U(A_{w_\mu}).
\]
Moreover,
\[
\|\widetilde Q_{\mu,p}-U\|_{\TV}
=
1-\sum_{u=0}^{w_\mu-1}\mu(n-u)
+\frac{1}{w_\mu!\,p^{w_\mu}}
\left(\sum_{u=0}^{w_\mu-1}\mu(n-u)\,u!\,p^u-1\right).
\]
\end{theorem}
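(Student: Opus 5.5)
The argument combines Proposition~\ref{prop:TV-likelihood} with the one-statistic reduction \eqref{eq:Smu} and the monotonicity of $S_\mu$ (Lemma~\ref{lem:Smu-monotone}). The first claim is trivial. So assume $\widetilde Q_{\mu,p}\neq U$; then the likelihood ratio $S_\mu(L_p(x))$ is not identically $1$.

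First we locate $w_\mu$. Since $S_\mu(0)=\mu(n)\le 1$ and $S_\mu$ is nondecreasing, the set $\{\ell:S_\mu(\ell)>1\}$ is an up-set of $\{0,\dots,n\}$. We show it is nonempty. If $S_\mu(\ell)\le1$ for all $\ell$, then the ratio $\widetilde Q_{\mu,p}/U\le 1$ pointwise. Because both measures have total mass $1$, this forces equality everywhere, a contradiction. Hence $w_\mu$ exists, and $w_\mu\ge1$ because $S_\mu(0)=\mu(n)\le1$.

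For $p=1$ we also need $S_\mu(1)=\mu(n)+\mu(n-1)\le 1$, which gives $w_\mu\ge2$. (If $n=1$, the group $S_1$ is trivial and the nonuniform case cannot occur.)

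By Proposition~\ref{prop:TV-likelihood} and \eqref{eq:Smu},
\[
\|\widetilde Q_{\mu,p}-U\|_{\TV}=\sum_{x}U(x)\bigl(S_\mu(L_p(x))-1\bigr)_+ .
\]
By monotonicity of $S_\mu$ and the definition of $w_\mu$, the positive part is nonzero exactly on $\{L_p\ge w_\mu\}$, which equals $A_{w_\mu}$ by Lemma~\ref{lem:Au-Lp-equiv}. On that set the positive part equals $S_\mu(L_p(x))-1$ itself. Summing therefore gives
\[
\sum_{x\in A_{w_\mu}}\bigl(\widetilde Q_{\mu,p}(x)-U(x)\bigr)=\widetilde Q_{\mu,p}(A_{w_\mu})-U(A_{w_\mu}).
\]

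For the explicit formula we compute $\widetilde Q_{\mu,p}(A_w)$ from the mixture, as in the proof of Corollary~\ref{prop:Lp-under-Qk}. Each component $\widetilde Q_{n-u,p}$ is uniform on $A_u$, and Lemma~\ref{lem:nested-intersection} gives $A_u\cap A_w=A_{\max(u,w)}$. Hence
\[
\widetilde Q_{n-u,p}(A_w)=\frac{|A_{\max(u,w)}|}{|A_u|},
\]
which equals $1$ for $u\ge w$ and equals $\frac{u!\,p^u}{w!\,p^w}$ for $u<w$. It follows that
\[
\widetilde Q_{\mu,p}(A_w)=\sum_{u\ge w}\mu(n-u)+\frac{1}{w!\,p^w}\sum_{u<w}\mu(n-u)\,u!\,p^u .
\]
We rewrite the first sum as $1-\sum_{u<w}\mu(n-u)$. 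Subtracting $U(A_w)=1/(w!\,p^w)$ from Lemma~\ref{lem:Lp-under-U} and grouping terms yields exactly the displayed formula.

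The only delicate point is the existence and range of $w_\mu$, namely ruling out $S_\mu\le1$ everywhere when the measures differ. The mass argument above handles this. Everything else is bookkeeping.
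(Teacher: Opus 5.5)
Your proof is correct and follows the paper's argument essentially step for step. You use the same mass argument to rule out $S_\mu\le 1$ everywhere, the same restriction of the positive part to $\{L_p\ge w_\mu\}=A_{w_\mu}$ to get the set formula, and the same nested-intersection computation $A_u\cap A_w=A_{\max(u,w)}$ to get the explicit expression.
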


\begin{proof}
If \(\widetilde Q_{\mu,p}=U\), then
\[
\|\widetilde Q_{\mu,p}-U\|_{\TV}=0.
\]
Assume henceforth that \(\widetilde Q_{\mu,p}\neq U\).

By \eqref{eq:Smu},
\[
\frac{\widetilde Q_{\mu,p}(x)}{U(x)}=S_\mu(L_p(x))
\qquad(x\in G_{n,p}).
\]
Hence
\[
1=\sum_{x\in G_{n,p}}\widetilde Q_{\mu,p}(x)
=\sum_{x\in G_{n,p}}U(x)\,S_\mu(L_p(x))
=\sum_{\ell=0}^{n}U\{L_p=\ell\}\,S_\mu(\ell).
\]

\smallskip
\noindent\textbf{Claim 1.}
The index \(w_\mu\) is well-defined and satisfies \(w_\mu\in\{1,\dots,n\}\).
Moreover, if \(p=1\), then \(w_\mu\ge2\).

\smallskip
\noindent\emph{Proof of Claim 1.}
By Lemma~\ref{lem:Smu-monotone}, \(S_\mu\) is nondecreasing, and
\[
S_\mu(0)=\mu(n)\le 1.
\]
If \(S_\mu(\ell)\le 1\) for all \(\ell\), then
\[
1=\sum_{\ell=0}^{n}U\{L_p=\ell\}\,S_\mu(\ell)
\]
shows that the random variable \(S_\mu(L_p)\) is bounded above by \(1\) and has \(U\)-expectation \(1\). Hence
\[
S_\mu(L_p)=1
\qquad U\text{-a.s.}
\]
Since \(U(x)>0\) for every \(x\in G_{n,p}\), this means
\[
S_\mu(L_p(x))=1,
\qquad \forall x\in G_{n,p}.
\]
Therefore, by \eqref{eq:Smu},
\[
\frac{\widetilde Q_{\mu,p}(x)}{U(x)}=1
\qquad(x\in G_{n,p}),
\]
so \(\widetilde Q_{\mu,p}=U\), a contradiction. Thus \(w_\mu\) is well-defined, and \(w_\mu\ge1\).

If \(p=1\), then
\[
S_\mu(1)=\mu(n)+\mu(n-1)\le \sum_{a=0}^n \mu(a)=1,
\]
so \(w_\mu\ge2\). This proves Claim~1.

\smallskip
By Proposition~\ref{prop:TV-likelihood},
\[
\|\widetilde Q_{\mu,p}-U\|_{\TV}
=\mathbb E_U\bigl[(S_\mu(L_p)-1)_+\bigr].
\]
Since \(S_\mu\) is nondecreasing and \(w_\mu\) is the first index with
\(S_\mu(\ell)>1\), we have
\[
S_\mu(\ell)\le1 \quad (\ell<w_\mu),
\qquad
S_\mu(\ell)>1 \quad (\ell\ge w_\mu).
\]
Therefore
\[
(S_\mu(L_p)-1)_+=(S_\mu(L_p)-1)\mathbf 1_{\{L_p\ge w_\mu\}}.
\]

\smallskip
\noindent\textbf{Claim 2.}
\[
\|\widetilde Q_{\mu,p}-U\|_{\TV}
=\widetilde Q_{\mu,p}(A_{w_\mu})-U(A_{w_\mu}).
\]

\smallskip
\noindent\emph{Proof of Claim 2.}
By Lemma~\ref{lem:Au-Lp-equiv},
\[
\{L_p\ge w_\mu\}=A_{w_\mu}.
\]
Hence by Proposition~\ref{prop:TV-likelihood} and \eqref{eq:Smu},
\begin{align*}
\|\widetilde Q_{\mu,p}-U\|_{\TV}
&=
\sum_{x\in G_{n,p}}U(x)\,(S_\mu(L_p(x))-1)\mathbf 1_{A_{w_\mu}}(x)\\
&=
\sum_{x\in A_{w_\mu}}
\left(\widetilde Q_{\mu,p}(x)-U(x)\right)\\
&=
\widetilde Q_{\mu,p}(A_{w_\mu})-U(A_{w_\mu}).
\end{align*}
This proves Claim~2.

\smallskip
It remains to evaluate \(\widetilde Q_{\mu,p}(A_{w_\mu})\). By definition,
\[
\widetilde Q_{\mu,p}
=\sum_{u=0}^{n}\mu(n-u)\,\widetilde Q_{n-u,p},
\]
so
\[
\widetilde Q_{\mu,p}(A_{w_\mu})
=
\sum_{u=0}^{n}\mu(n-u)\,\widetilde Q_{n-u,p}(A_{w_\mu}).
\]

Since \(\widetilde Q_{n-u,p}\) is uniform on \(A_u\) by \eqref{eq:Qbarsupp},
\[
\widetilde Q_{n-u,p}(A_{w_\mu})
=
\frac{|A_u\cap A_{w_\mu}|}{|A_u|}.
\]
By Lemma~\ref{lem:nested-intersection}, \(A_u\cap A_{w_\mu}=A_{\max(u,w_\mu)}\).
Hence
\[
\widetilde Q_{n-u,p}(A_{w_\mu})
=
\begin{cases}
|A_{w_\mu}|/|A_u|, & u<w_\mu,\\
1, & u\ge w_\mu.
\end{cases}
\]
Also, by \eqref{eq:Lp-tail},
\[
\frac{|A_{w_\mu}|}{|A_u|}
=\frac{U(A_{w_\mu})}{U(A_u)}
=\frac{u!\,p^u}{w_\mu!\,p^{w_\mu}}
\qquad(u<w_\mu),
\]
and
\[
U(A_{w_\mu})=\frac{1}{w_\mu!\,p^{w_\mu}}.
\]
Therefore
\begin{align*}
\widetilde Q_{\mu,p}(A_{w_\mu})
&=
\sum_{u=0}^{w_\mu-1}\mu(n-u)\frac{u!\,p^u}{w_\mu!\,p^{w_\mu}}
+\sum_{u=w_\mu}^{n}\mu(n-u)\\
&=
1-\sum_{u=0}^{w_\mu-1}\mu(n-u)
+\frac{1}{w_\mu!\,p^{w_\mu}}
\sum_{u=0}^{w_\mu-1}\mu(n-u)\,u!\,p^u.
\end{align*}
Subtracting \(U(A_{w_\mu})=1/(w_\mu!\,p^{w_\mu})\) and using Claim~2 gives
\[
\|\widetilde Q_{\mu,p}-U\|_{\TV}
=
1-\sum_{u=0}^{w_\mu-1}\mu(n-u)
+\frac{1}{w_\mu!\,p^{w_\mu}}
\left(\sum_{u=0}^{w_\mu-1}\mu(n-u)\,u!\,p^u-1\right).\qedhere
\]
\end{proof}

Now assume that, for some fixed \(\lambda>0\), equivalently \(c:=-\log\lambda\in\R\),
\begin{equation}\label{eq:mu-Poisson-assump}
\mu_n(n-u)\longrightarrow e^{-\lambda}\frac{\lambda^u}{u!}
\qquad(n\to\infty)
\end{equation}
for each fixed \(u\ge0\). Write
\[
r:=p\lambda,
\qquad
s(\ell):=e^{-\lambda}\sum_{u=0}^{\ell}r^u
\qquad(\ell\ge0).
\]
Then \(s(\ell)\) is strictly increasing and \(s(0)=e^{-\lambda}<1\). If \(r\ge1\), then
\[
s(\ell)\to+\infty
\qquad(\ell\to\infty).
\]
If \(0<r<1\), then
\[
s(\ell)\to \frac{e^{-\lambda}}{1-r}
\qquad(\ell\to\infty).
\]
Since \(r=p\lambda\ge\lambda\), in the subcase \(0<r<1\) we have
\[
0<1-r\le 1-\lambda<e^{-\lambda},
\]
and therefore
\[
\frac{e^{-\lambda}}{1-r}>1.
\]
Hence in all cases there is a unique integer \(w^*=w^*(c,p)\ge1\) such that
\begin{equation}\label{eq:wstar}
e^{-\lambda}\sum_{u=0}^{w^*-1}r^u\le1<
e^{-\lambda}\sum_{u=0}^{w^*}r^u.
\end{equation}
Equivalently, \(w^*-1\) is the largest \(\ell\) with \(s(\ell)\le1\). When \(p=1\), this \(w^*\) equals \(\ell^*+1\) in \cite[Proposition~3.1]{DFP}.

\begin{lemma}\label{lem:wstar-closed}
With \(r=p\lambda\), the unique integer \(w^*\) satisfying \eqref{eq:wstar} is
\[
w^*=
\begin{cases}
\displaystyle \left\lfloor \frac{\log\!\bigl(1+e^{\lambda}(r-1)\bigr)}{\log r}\right\rfloor,
& r\neq 1,\\[10pt]
\displaystyle \lfloor e^{\lambda}\rfloor,
& r=1.
\end{cases}
\]
\end{lemma}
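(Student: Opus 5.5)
The plan is to use the characterization recorded just after \eqref{eq:wstar}: \(w^*-1\) is the largest \(\ell\ge0\) with \(s(\ell)\le1\). Because \(s\) is strictly increasing, it suffices to find an explicit quantity \(X\) for which
\[
s(\ell)\le 1\iff \ell+1\le X\qquad(\ell\ge0).
\]
Once this holds, the admissible values of \(\ell+1\) are exactly the integers in \([1,X]\). This set is nonempty because \(s(0)=e^{-\lambda}<1\), which forces \(X\ge1\). Hence \(w^*=\max\{\ell+1\}=\lfloor X\rfloor\).

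\emph{Case \(r=1\).} Here \(s(\ell)=e^{-\lambda}(\ell+1)\), so \(s(\ell)\le1\iff \ell+1\le e^{\lambda}\). Thus \(X=e^\lambda\) and \(w^*=\lfloor e^\lambda\rfloor\).

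\emph{Case \(r\ne1\).} Use the geometric sum \(\sum_{u=0}^{\ell}r^u=(r^{\ell+1}-1)/(r-1)\). The condition \(s(\ell)\le1\) becomes \((r^{\ell+1}-1)/(r-1)\le e^{\lambda}\), and we split by the sign of \(r-1\).

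\begin{itemize}
\item If \(r>1\), multiplying by \(r-1>0\) gives \(r^{\ell+1}\le 1+e^{\lambda}(r-1)\). The right side is positive. Taking logarithms and dividing by \(\log r>0\) gives \(\ell+1\le \log(1+e^\lambda(r-1))/\log r\).
\item If \(0<r<1\), multiplying by \(r-1<0\) reverses the inequality, giving \(r^{\ell+1}\ge 1+e^{\lambda}(r-1)\). Before taking logarithms we must know \(1+e^\lambda(r-1)>0\), i.e.\ \(e^{\lambda}(1-r)<1\). This is exactly the inequality \(e^{-\lambda}/(1-r)>1\) established before \eqref{eq:wstar}, using \(1-r\le1-\lambda<e^{-\lambda}\). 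Taking logarithms and dividing by \(\log r<0\) flips the inequality back, again giving \(\ell+1\le \log(1+e^\lambda(r-1))/\log r\).
\end{itemize}

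So in both subcases \(X=\log(1+e^\lambda(r-1))/\log r\), and \(w^*=\lfloor X\rfloor\).

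The only delicate points are the sign bookkeeping in the subcase \(r<1\) (two inequality reversals) and the positivity of the logarithm's argument there. Both are handled by the bound \(e^{-\lambda}/(1-r)>1\) already proved in the text preceding the lemma.
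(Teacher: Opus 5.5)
Your proposal is correct and follows essentially the paper's proof: you rewrite the condition with the geometric sum, split on the sign of \(r-1\), and in the subcase \(r<1\) get positivity of \(1+e^{\lambda}(r-1)\) from \(e^{-\lambda}/(1-r)>1\) before taking logarithms. The only cosmetic difference is that you prove the one-sided equivalence \(s(\ell)\le1\iff \ell+1\le X\) for every \(\ell\) and then take the maximum, whereas the paper manipulates the two-sided inequality \eqref{eq:wstar} directly to get \(w^*\le X<w^*+1\).
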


\begin{proof}
The inequality \eqref{eq:wstar} is equivalent to
\[
\sum_{u=0}^{w^*-1}r^u\le e^\lambda<\sum_{u=0}^{w^*}r^u.
\]
If \(r=1\), this reads
\[
w^*\le e^\lambda<w^*+1,
\]
so \(w^*=\lfloor e^\lambda\rfloor\).

If \(r\neq1\), use
\[
\sum_{u=0}^{m}r^u=\frac{r^{m+1}-1}{r-1}
\]
to obtain
\[
\frac{r^{w^*}-1}{r-1}\le e^\lambda<\frac{r^{w^*+1}-1}{r-1}.
\]
If \(r>1\), then
\[
r^{w^*}\le 1+e^\lambda(r-1)<r^{w^*+1}.
\]
If \(0<r<1\), then multiplying by \(r-1<0\) reverses the inequalities, so
\[
r^{w^*}\ge 1+e^\lambda(r-1)>r^{w^*+1}.
\]
In this case \(1+e^\lambda(r-1)>0\) because
\[
\frac{e^{-\lambda}}{1-r}>1.
\]
Therefore, in both cases,
\[
w^*\le \frac{\log(1+e^\lambda(r-1))}{\log r}<w^*+1.
\]
Hence
\[
w^*=\left\lfloor \frac{\log(1+e^\lambda(r-1))}{\log r}\right\rfloor.\qedhere
\]
\end{proof}

\begin{theorem}[General Poisson-regime TV limit]\label{prop:DFP3.1-colored}
Let \(w^*=w^*(c,p)\) be given by \eqref{eq:wstar}. Then
\[
\|M_n-U\|_{\TV}\longrightarrow f_p(c),
\]
where
\begin{equation}\label{eq:fp-explicit0}
f_p(c)=
1-e^{-\lambda}\sum_{u=0}^{w^*-1}\frac{\lambda^u}{u!}
+\frac{1}{w^*!\,p^{w^*}}
\left(e^{-\lambda}\sum_{u=0}^{w^*-1}r^u-1\right).
\end{equation}
\end{theorem}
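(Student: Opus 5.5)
We plan to avoid passing to the limit in the exact formula of Theorem~\ref{prop:TV-exact-general-mu}. The difficulty is that \(w_{\mu_n}\) need not stabilize when some \(s(\ell)\) equals \(1\) exactly. Instead we work with the one-statistic representation of the negative part. By Proposition~\ref{prop:TV-likelihood} and \eqref{eq:Smu}, writing \(Z=S_{\mu_n}(L_p)-1\), we have \(\mathbb E_U[Z]=0\). Hence
\[
\|M_n-U\|_{\TV}=\mathbb E_U\bigl[(1-S_{\mu_n}(L_p))_+\bigr]
=\sum_{\ell=0}^{n}U\{L_p=\ell\}\,\bigl(1-S_{\mu_n}(\ell)\bigr)_+ .
\]
The point of using the negative part is that each summand is bounded by \(U\{L_p=\ell\}\), since \(S_{\mu_n}\ge0\). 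By Lemma~\ref{lem:Lp-under-U}, \(U\{L_p=\ell\}=a_\ell\) for \(\ell\le n-1\), and \(U\{L_p=n\}=1/(n!p^n)\to0\).

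Next we establish pointwise convergence. For each fixed \(\ell\), \(S_{\mu_n}(\ell)\) is a finite sum of \(\ell+1\) terms \(\mu_n(n-u)\,u!\,p^u\). By \eqref{eq:mu-Poisson-assump} it converges to \(\sum_{u\le\ell}e^{-\lambda}\lambda^u p^u=s(\ell)\). Since \(x\mapsto(1-x)_+\) is continuous, \((1-S_{\mu_n}(\ell))_+\to(1-s(\ell))_+\). Continuity also handles a tie \(s(w^*-1)=1\) harmlessly. We extend the summand by \(0\) for \(\ell>n\) and bound the \(\ell=n\) term by \(1/(n!p^n)\to0\). The sequence \((a_\ell)\) is summable, with \(\sum_\ell a_\ell=1\) by telescoping. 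Dominated convergence for series then gives
\[
\|M_n-U\|_{\TV}\longrightarrow\sum_{\ell\ge0}a_\ell\,(1-s(\ell))_+=\sum_{\ell=0}^{w^*-1}a_\ell\,(1-s(\ell)),
\]
where the last equality uses \eqref{eq:wstar} and the monotonicity of \(s\).

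It remains to evaluate this finite sum. Telescoping gives \(\sum_{\ell<w^*}a_\ell=1-\frac{1}{w^*!p^{w^*}}\). For the other part we interchange the order of summation:
\[
\sum_{\ell<w^*}a_\ell s(\ell)=e^{-\lambda}\sum_{u<w^*}r^u\sum_{\ell=u}^{w^*-1}a_\ell
=e^{-\lambda}\sum_{u<w^*}r^u\Bigl(\frac{1}{u!p^u}-\frac{1}{w^*!p^{w^*}}\Bigr).
\]
Since \(r^u/(u!p^u)=\lambda^u/u!\), this equals
\[
e^{-\lambda}\sum_{u<w^*}\frac{\lambda^u}{u!}-\frac{s(w^*-1)}{w^*!p^{w^*}}.
\]
Subtracting it from the first sum yields exactly \eqref{eq:fp-explicit0}.

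The only genuinely delicate step is justifying the interchange of limit and sum. Working with the positive part would leave summands that are not uniformly bounded. Switching to the negative part \((1-S)_+\le1\) removes this issue entirely, and the remaining steps are bookkeeping.
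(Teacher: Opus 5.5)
Your proof is correct, but it takes a different route from the paper. The paper passes to the limit in the exact finite formula of Theorem~\ref{prop:TV-exact-general-mu}. It shows that the threshold $w_{\mu_n}$ eventually equals $w^*$ when $s(w^*-1)<1$. In the tie case $s(w^*-1)=1$, it shows $w_{\mu_n}\in\{w^*-1,w^*\}$, and that the two candidate values of $\widetilde Q_{\mu_n,p}(A_w)-U(A_w)$ differ by $(S_{\mu_n}(w^*-1)-1)\,U\{L_p=w^*-1\}=o(1)$. The closed form then follows by termwise convergence, with no further computation.

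You instead write the distance as $\sum_\ell U\{L_p=\ell\}(1-S_{\mu_n}(\ell))_+$. Its summands are bounded by $U\{L_p=\ell\}$, which gives you three things:
\begin{enumerate}
\item dominated convergence needs only the pointwise hypothesis \eqref{eq:mu-Poisson-assump};
\item the tie case disappears by continuity of $x\mapsto(1-x)_+$;
\item you never need $M_n\neq U$ or well-definedness of $w_{\mu_n}$.
\end{enumerate}
The price is the interchange-of-summation computation at the end, which you carry out correctly.

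The paper's route gives extra structural information: the maximizing event is eventually $A_{w_{\mu_n}}$ with $w_{\mu_n}\in\{w^*-1,w^*\}$, so only finitely many low-order weights enter the exact formula. The remark following Corollary~\ref{cor:sep-profile-asymp} cites exactly this fact. Your route yields the series representation $f_p(c)=\sum_\ell a_\ell(1-s(\ell))_+$ directly. Combined with $\sum_\ell a_\ell s(\ell)=1$, shown in the proof of Theorem~\ref{thm:L2-top-m-limit}, this gives Corollary~\ref{cor:fp-series} for any pointwise-Poisson sequence. It does so without the $B^u$ domination hypothesis and without the detour through the top-$m$ model that the paper uses for that corollary.
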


\begin{proof}
Fix \(\ell\ge0\). By \eqref{eq:mu-Poisson-assump},
\begin{equation}\label{eq:Smu-pointwise-limit}
S_{\mu_n}(\ell)=\sum_{u=0}^{\ell}\mu_n(n-u)\,u!\,p^u
\longrightarrow
e^{-\lambda}\sum_{u=0}^{\ell}(p\lambda)^u
=s(\ell).
\end{equation}

For all sufficiently large \(n\), define
\[
w_n:=w_{\mu_n}.
\]
Since \(S_{\mu_n}(w^*)\to s(w^*)>1\), the measures \(\widetilde Q_{\mu_n,p}\) are not equal to \(U\) for all sufficiently large \(n\), so \(w_n\) is well-defined by Theorem~\ref{prop:TV-exact-general-mu}.

Also,
\[
S_{\mu_n}(w^*-1)\to s(w^*-1)\le1,
\qquad
S_{\mu_n}(w^*)\to s(w^*)>1.
\]
Hence, for all sufficiently large \(n\),
\[
S_{\mu_n}(w^*)>1.
\]
Now:
\begin{itemize}
\item If \(s(w^*-1)<1\), then also
\[
S_{\mu_n}(w^*-1)<1
\]
for all sufficiently large \(n\). Therefore the first index \(\ell\) for which \(S_{\mu_n}(\ell)>1\) is exactly \(w^*\), so
\[
w_n=w^*.
\]
\item If \(s(w^*-1)=1\), then \(w^*\ge 2\) and, by strict increase of \(s\),
\[
s(w^*-2)<1.
\]
Thus, for all sufficiently large \(n\),
\[
S_{\mu_n}(w^*-2)<1
\qquad\text{and}\qquad
S_{\mu_n}(w^*)>1.
\]
So the first index \(\ell\) for which \(S_{\mu_n}(\ell)>1\) can only be \(w^*-1\) or \(w^*\), meaning
\[
w_n\in\{w^*-1,w^*\}.
\]
\end{itemize}

By Theorem~\ref{prop:TV-exact-general-mu},
\[
\|\widetilde Q_{\mu_n,p}-U\|_{\TV}
=
1-\sum_{u=0}^{w_n-1}\mu_n(n-u)
+\frac{1}{w_n!\,p^{w_n}}
\left(\sum_{u=0}^{w_n-1}\mu_n(n-u)\,u!\,p^u-1\right).
\]

\begin{itemize}
\item If \(s(w^*-1)<1\), then \(w_n=w^*\) eventually, so termwise convergence gives
\[
\|\widetilde Q_{\mu_n,p}-U\|_{\TV}\to f_p(c).
\]
\item If \(s(w^*-1)=1\), then \(w_n\in\{w^*-1,w^*\}\) eventually. In the case \(w_n=w^*-1\), Theorem~\ref{prop:TV-exact-general-mu} also gives
\[
\|\widetilde Q_{\mu_n,p}-U\|_{\TV}
=\widetilde Q_{\mu_n,p}(A_{w^*-1})-U(A_{w^*-1}).
\]
\end{itemize}
In the tie case \(s(w^*-1)=1\), the discrepancy
\begin{align*}
&\Bigl(\widetilde Q_{\mu_n,p}(A_{w^*-1})-U(A_{w^*-1})\Bigr)
-\Bigl(\widetilde Q_{\mu_n,p}(A_{w^*})-U(A_{w^*})\Bigr)\\
&\qquad=
\widetilde Q_{\mu_n,p}\{L_p=w^*-1\}-U\{L_p=w^*-1\}\\
&\qquad=
\bigl(S_{\mu_n}(w^*-1)-1\bigr)\,U\{L_p=w^*-1\}=o(1),
\end{align*}
since \(S_{\mu_n}(w^*-1)\to1\). Therefore the same limit \(f_p(c)\) holds in both subcases.
\end{proof}

\begin{remark}\label{rem:fp-extend-sums}
Equivalently, we can also write \eqref{eq:fp-explicit0} with the upper limits extended to $w^*$:
\[
f_p(c)
=
1-e^{-\lambda}\sum_{u=0}^{w^*}\frac{\lambda^u}{u!}
+\frac{1}{w^*!\,p^{w^*}}
\left(e^{-\lambda}\sum_{u=0}^{w^*}r^u-1\right).
\]
Indeed,
\[
e^{-\lambda}\frac{\lambda^{w^*}}{w^*!}
=\frac{1}{w^*!\,p^{w^*}}\,e^{-\lambda}(p\lambda)^{w^*}
=\frac{1}{w^*!\,p^{w^*}}\,e^{-\lambda}r^{w^*}.
\]
\end{remark}

In particular:

\textbf{Case 1.} \(c\ge0,\ p\ge2\).
Then \(0<\lambda\le1\), so \(e^{-\lambda}<1\). Also,
\[
e^\lambda
=1+\lambda+\sum_{j=2}^{\infty}\frac{\lambda^j}{j!}
\le 1+\lambda+\lambda\sum_{j=2}^{\infty}\frac1{j!}
<1+2\lambda
\le 1+p\lambda.
\]
Hence
\[
1<e^{-\lambda}(1+p\lambda),
\]
so \(w^*=1\) by \eqref{eq:wstar}. Therefore
\[
f_p(c)=1-e^{-\lambda}+\frac1p(e^{-\lambda}-1)
=\Bigl(1-\frac1p\Bigr)\bigl(1-e^{-\lambda}\bigr)
=\Bigl(1-\frac1p\Bigr)\Bigl(1-e^{-e^{-c}}\Bigr).
\]

\textbf{Case 2.} \(c\ge0,\ p=1\).
Then \(0<\lambda\le1\). Since \(e^\lambda\ge 1+\lambda\),
\[
e^{-\lambda}(1+\lambda)\le1.
\]
Also,
\[
e^\lambda
=1+\lambda+\frac{\lambda^2}{2}+\sum_{j=3}^{\infty}\frac{\lambda^j}{j!}
\le 1+\lambda+\frac{\lambda^2}{2}+\lambda^2\sum_{j=3}^{\infty}\frac1{j!}
<1+\lambda+\lambda^2.
\]
Thus
\[
1<e^{-\lambda}(1+\lambda+\lambda^2),
\]
so \(w^*=2\). Therefore
\[
f_1(c)=1-e^{-\lambda}(1+\lambda)+\frac12\bigl(e^{-\lambda}(1+\lambda)-1\bigr)
=\frac12\Bigl(1-e^{-\lambda}(1+\lambda)\Bigr),
\]
which is the \(c\ge0\) specialization of \cite[Theorem~1.1]{DFP}.

\textbf{Case 3.} \(c<0\).
Then \(\lambda>1\) and \(r=p\lambda>1\), so Lemma~\ref{lem:wstar-closed} gives
\[
w^*=\left\lfloor \frac{\log\!\bigl(1+e^\lambda(r-1)\bigr)}{\log r}\right\rfloor.
\]
Substituting this into \eqref{eq:fp-explicit0} gives the profile. For \(p=1\), writing \(\ell^*:=w^*-1\) recovers \cite[(1.5b)]{DFP}.

\subsection{Asymptotics as \texorpdfstring{$c\to\pm\infty$}{c to plus/minus infinity}}

We explore the two extreme regimes: \(c\to+\infty\iff \lambda\to0\); and \(c\to-\infty\iff \lambda\to\infty\).

If \(p\ge2\), then \(w^*(c,p)=1\) for all \(c\ge0\), so
\[
f_p(c)=\Bigl(1-\frac1p\Bigr)\bigl(1-e^{-\lambda}\bigr)
=\Bigl(1-\frac1p\Bigr)e^{-c}+O(e^{-2c}).
\]
If \(p=1\), then \(w^*(c,1)=2\) for all \(c\ge0\), so
\[
f_1(c)=\frac12\Bigl(1-e^{-\lambda}(1+\lambda)\Bigr)
=\frac14 e^{-2c}+O(e^{-3c}),
\]
agreeing with \cite[Remarks~2]{DFP}. Hence \(f_p(c)\to0\) as \(c\to+\infty\) in all cases.

Now let \(c\to-\infty\), equivalently \(\lambda\to\infty\), and set
\[
p_\lambda(u):=e^{-\lambda}\frac{\lambda^u}{u!},
\qquad
F_\lambda(m):=\sum_{u=0}^{m}p_\lambda(u).
\]
Let \(w^*=w^*(c,p)\) be given by \eqref{eq:wstar}. By Remark~\ref{rem:fp-extend-sums},
\begin{equation}\label{eq:1-minus-fp-decomp2}
1-f_p(c)=F_\lambda(w^*)-\Delta_\lambda,
\qquad
\Delta_\lambda:=\frac{1}{w^*!\,p^{w^*}}
\Bigl(e^{-\lambda}\sum_{u=0}^{w^*}r^u-1\Bigr).
\end{equation}
Since \(e^{-\lambda}\sum_{u=0}^{w^*}r^u>1\) by \eqref{eq:wstar}, we have \(\Delta_\lambda>0\). Also,
\[
e^{-\lambda}\sum_{u=0}^{w^*}r^u-1
=e^{-\lambda}r^{w^*}
+\Bigl(e^{-\lambda}\sum_{u=0}^{w^*-1}r^u-1\Bigr)
\le e^{-\lambda}r^{w^*},
\]
so
\begin{equation}\label{eq:Delta-upper}
0<\Delta_\lambda\le \frac{e^{-\lambda}r^{w^*}}{w^*!\,p^{w^*}}
=\frac{e^{-\lambda}(p\lambda)^{w^*}}{w^*!\,p^{w^*}}
=e^{-\lambda}\frac{\lambda^{w^*}}{w^*!}
=p_\lambda(w^*).
\end{equation}
Combining \eqref{eq:1-minus-fp-decomp2} and \eqref{eq:Delta-upper} yields the Poisson sandwich
\begin{equation}\label{eq:1-minus-fp-bounds}
F_\lambda(w^*-1)\le1-f_p(c)\le F_\lambda(w^*).
\end{equation}

\begin{lemma}\label{lem:wstar-asymp}
As \(\lambda\to\infty\),
\[
w^*=\frac{\lambda}{\log r}+O(1)
=\frac{\lambda}{\log(p\lambda)}(1+o(1)),
\qquad
\frac{w^*}{\lambda}\to0,
\]
and
\[
\log\!\Bigl(\frac{w^*}{\lambda}\Bigr)
=-\log\log(p\lambda)+o(1)=o(\lambda).
\]
\end{lemma}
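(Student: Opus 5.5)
The plan is to read off the asymptotics from the closed form in Lemma~\ref{lem:wstar-closed}. For \(c<0\) we have \(\lambda>1\), so \(r=p\lambda>1\), and
\[
w^*=\left\lfloor \frac{\log\bigl(1+e^{\lambda}(r-1)\bigr)}{\log r}\right\rfloor .
\]
Write the numerator as \(\lambda+\log(r-1)+\log\bigl(1+e^{-\lambda}/(r-1)\bigr)\). As \(\lambda\to\infty\) we have \(r-1\to\infty\), so the last term is \(o(1)\). The numerator therefore equals \(\lambda+\log(r-1)+o(1)\).

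Dividing by \(\log r\) gives
\[
\frac{\lambda}{\log r}+\frac{\log(r-1)}{\log r}+o(1).
\]
The middle term tends to \(1\), since \(\log(r-1)=\log r+\log(1-1/r)\). Taking the floor changes the value by at most \(1\). Together these give \(w^*=\lambda/\log r+O(1)\). Because \(\lambda/\log(p\lambda)\to\infty\), the \(O(1)\) error is absorbed into a factor \(1+o(1)\), which yields the second form.

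Next, \(w^*/\lambda=1/\log(p\lambda)+O(1/\lambda)\to0\). For the logarithm, factor as
\[
\frac{w^*}{\lambda}=\frac{1}{\log(p\lambda)}\Bigl(1+O\bigl(\log(p\lambda)/\lambda\bigr)\Bigr).
\]
The bracket tends to \(1\), so
\[
\log\frac{w^*}{\lambda}=-\log\log(p\lambda)+o(1).
\]
Finally \(\log\log(p\lambda)=o(\lambda)\) trivially, which gives the \(o(\lambda)\) statement.

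The only point needing care is keeping the \(O(1)\) error uniform when dividing by \(\log r\), together with the positivity \(w^*\ge1\) so that the logarithm is defined. Both follow because \(r>1\) and \(r\to\infty\). I expect no real obstacle: this is a direct expansion of the explicit formula.
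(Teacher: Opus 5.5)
Your proposal is correct and follows essentially the same route as the paper. Both expand the closed form from Lemma~\ref{lem:wstar-closed} as $(\lambda+\log r+o(1))/\log r$, absorb the floor into the $O(1)$, and then take logarithms of $w^*/\lambda=(1+o(1))/\log(p\lambda)$; your factorization with the explicit $O(\log(p\lambda)/\lambda)$ correction is just a slightly more quantitative version of the paper's $(1+o(1))$ step.
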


\begin{proof}
By Lemma~\ref{lem:wstar-closed},
\[
w^*=\left\lfloor \frac{\log(1+e^\lambda(r-1))}{\log r}\right\rfloor.
\]
Since \(r=p\lambda\to\infty\), we have \(\log(r-1)=\log r+o(1)\). Thus
\[\log(1+e^\lambda(r-1))
=
\lambda+\log(r-1)+\log\!\left(1+\frac{1}{e^\lambda(r-1)}\right)
=
\lambda+\log r+o(1),\]
which gives the first asymptotic. Hence
\[
w^*=\frac{\lambda}{\log r}+O(1)=\frac{\lambda}{\log(p\lambda)}(1+o(1)),
\]
so \(w^*/\lambda\to0\).

Thus
\[
\frac{w^*}{\lambda}=\frac{1}{\log(p\lambda)}(1+o(1)),
\]
and taking \(\log\) gives
\[
\log\!\Bigl(\frac{w^*}{\lambda}\Bigr)
=-\log\log(p\lambda)+o(1).
\]
In particular, since \(\log\log(p\lambda)=o(\lambda)\) as \(\lambda\to\infty\), we have
\[
\log\!\Bigl(\frac{w^*}{\lambda}\Bigr)=o(\lambda).\qedhere
\]
\end{proof}

\begin{proposition}[Poisson-tail estimate {\cite[Remark~3]{DFP}}]\label{prop:Poisson-tail-DFP}
Let \(t=t(\lambda)\in\mathbb Z_{\ge0}\) satisfy \(t/\lambda=o(1)\). Then
\[
1\le \frac{F_\lambda(t)}{p_\lambda(t)}
\le \sum_{j=0}^{t}\Bigl(\frac{t}{\lambda}\Bigr)^j
\le \frac{1}{1-t/\lambda}
=1+o(1).
\]
In particular,
\[
F_\lambda(t)=(1+o(1))\,p_\lambda(t).
\]
\end{proposition}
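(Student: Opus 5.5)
The plan is to compare $F_\lambda(t)=\sum_{j=0}^{t}p_\lambda(j)$ term by term with its last term $p_\lambda(t)$. This gives a geometric series in the ratio $t/\lambda$. The lower bound $F_\lambda(t)/p_\lambda(t)\ge1$ is immediate, since $F_\lambda(t)$ contains the term $p_\lambda(t)$ and all other terms are nonnegative.

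For the upper bound, we reindex by $u=t-j$ with $0\le j\le t$ and write
\[
\frac{p_\lambda(t-j)}{p_\lambda(t)}=\frac{t!}{(t-j)!\,\lambda^{j}}=\prod_{i=0}^{j-1}\frac{t-i}{\lambda}\le\Bigl(\frac{t}{\lambda}\Bigr)^{j}.
\]
Each factor satisfies $(t-i)/\lambda\le t/\lambda$. The empty product for $j=0$ equals $1$. Summing over $j=0,\dots,t$ gives
\[
\frac{F_\lambda(t)}{p_\lambda(t)}\le\sum_{j=0}^{t}\Bigl(\frac{t}{\lambda}\Bigr)^{j}.
\]

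Next we bound the finite sum by the full geometric series. This requires $0\le t/\lambda<1$, which holds for all sufficiently large $\lambda$ because $t/\lambda=o(1)$. The series then converges, and
\[
\sum_{j=0}^{t}\Bigl(\frac{t}{\lambda}\Bigr)^{j}\le\frac{1}{1-t/\lambda}.
\]
Since $t/\lambda\to0$, we get $1/(1-t/\lambda)=1+o(1)$.

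Combining the two bounds gives $F_\lambda(t)=(1+o(1))\,p_\lambda(t)$. There is no real obstacle here. The only point needing care is that the last inequality in the chain is asserted only for $\lambda$ large enough that $t/\lambda<1$. This is exactly the asymptotic regime of the statement, so it suffices for the conclusion $F_\lambda(t)=(1+o(1))\,p_\lambda(t)$.
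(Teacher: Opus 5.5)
Your proposal is correct and is essentially the paper's proof: bound each ratio $p_\lambda(t-j)/p_\lambda(t)=(t)_j/\lambda^j$ by $(t/\lambda)^j$, sum over $j$, and dominate by the full geometric series once $t/\lambda<1$. Your remark that the last inequality holds only for $\lambda$ large enough is the same caveat the paper makes.
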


\begin{proof}
For \(0\le j\le t\),
\[
\frac{p_\lambda(t-j)}{p_\lambda(t)}
=
\frac{e^{-\lambda}\lambda^{t-j}/(t-j)!}{e^{-\lambda}\lambda^t/t!}
=
\frac{(t)_j}{\lambda^j}
\le
\left(\frac{t}{\lambda}\right)^j.
\]
Therefore
\[
\frac{F_\lambda(t)}{p_\lambda(t)}
=
\sum_{j=0}^{t}\frac{p_\lambda(t-j)}{p_\lambda(t)}
=
\sum_{j=0}^{t}\frac{(t)_j}{\lambda^j}
\le
\sum_{j=0}^{t}\left(\frac{t}{\lambda}\right)^j.
\]
The lower bound
\[
1\le \frac{F_\lambda(t)}{p_\lambda(t)}
\]
is immediate from the \(j=0\) term. Since \(t/\lambda=o(1)\), we have \(t/\lambda<1\) for all sufficiently large \(\lambda\), and hence
\[
\sum_{j=0}^{t}\left(\frac{t}{\lambda}\right)^j
\le \frac{1}{1-t/\lambda}
=1+o(1).
\]
This proves the claim.
\end{proof}

Applying Proposition~\ref{prop:Poisson-tail-DFP} with \(t=w^*\) and \(t=w^*-1\) gives
\begin{equation}\label{eq:F-asymp-p}
F_\lambda(w^*)=(1+o(1))\,p_\lambda(w^*),
\qquad
F_\lambda(w^*-1)=(1+o(1))\,p_\lambda(w^*-1).
\end{equation}

\begin{lemma}\label{lem:poisson-pmf-wstar-scale}
As \(\lambda\to\infty\),
\begin{equation}\label{eq:p-wstar-exp-scale}
p_\lambda(w^*)=\exp\bigl(-\lambda+o(\lambda)\bigr)
=\exp\bigl(-(1+o(1))\lambda\bigr).
\end{equation}
\end{lemma}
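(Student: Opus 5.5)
We need to show that \(p_\lambda(w^*)=e^{-\lambda}\lambda^{w^*}/w^*!=\exp(-\lambda+o(\lambda))\). The plan is to take logarithms and show that \(w^*\log\lambda-\log(w^*!)=o(\lambda)\). Then
\[
\log p_\lambda(w^*)=-\lambda+w^*\log\lambda-\log(w^*!)
\]
is \(-\lambda+o(\lambda)\), and the second form \(\exp(-(1+o(1))\lambda)\) is just a rewriting.

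The input is Lemma~\ref{lem:wstar-asymp}, which gives \(w^*=\frac{\lambda}{\log(p\lambda)}(1+o(1))\) and \(w^*/\lambda\to0\). For an upper bound, I drop \(-\log(w^*!)\le0\). This gives
\[
w^*\log\lambda\le \frac{\lambda\log\lambda}{\log(p\lambda)}(1+o(1))=\lambda(1+o(1)),
\]
since \(\log(p\lambda)=\log\lambda+\log p\) and \(\log\lambda\to\infty\). That bound is too crude, because it is of order \(\lambda\), not \(o(\lambda)\). So I will keep the factorial term and use Stirling in the form \(\log(w^*!)=w^*\log w^*-w^*+O(\log w^*)\), valid since \(w^*\to\infty\) by Lemma~\ref{lem:wstar-asymp}. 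This yields
\[
w^*\log\lambda-\log(w^*!)=-w^*\log\!\Bigl(\frac{w^*}{\lambda}\Bigr)+w^*+O(\log w^*).
\]

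Each term is then bounded separately. First, \(w^*=o(\lambda)\) because \(w^*/\lambda\to0\). Second, \(O(\log w^*)=O(\log\lambda)=o(\lambda)\). Third, by Lemma~\ref{lem:wstar-asymp}, \(\log(w^*/\lambda)=-\log\log(p\lambda)+o(1)\), so
\[
w^*\,\bigl|\log(w^*/\lambda)\bigr|\sim\frac{\lambda\log\log(p\lambda)}{\log(p\lambda)}=o(\lambda).
\]
This third term is the main point. The lemma's bare statement \(\log(w^*/\lambda)=o(\lambda)\) is not enough by itself, since \(w^*\) multiplies it. One needs the product estimate, which comes from \(w^*\sim\lambda/\log(p\lambda)\) together with \(\log\log x/\log x\to0\).

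Summing the three estimates gives \(\log p_\lambda(w^*)=-\lambda+o(\lambda)\), which proves \eqref{eq:p-wstar-exp-scale}. If one prefers to avoid Stirling, the elementary bounds \((w/e)^w\le w!\le w^{w+1}\) give the same three-term decomposition up to \(O(\log w^*)\).
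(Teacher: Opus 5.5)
Your proposal is correct and is essentially the paper's proof: you take logarithms, apply Stirling to write $\log p_\lambda(w^*)=-\lambda+w^*\bigl(1+\log(\lambda/w^*)\bigr)+O(\log w^*)$, and use Lemma~\ref{lem:wstar-asymp} in its refined form, $w^*\sim\lambda/\log(p\lambda)$ and $\log(\lambda/w^*)=\log\log(p\lambda)+o(1)$, to see that the correction is $o(\lambda)$. One minor quibble: the elementary bounds $(w/e)^w\le w!\le w^{w+1}$ in your last sentence give only $\log(w^*!)=w^*\log w^*-w^*+O(w^*)$, not an $O(\log w^*)$ error, but since $w^*=o(\lambda)$ this weaker error still suffices.
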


\begin{proof}
Since
\[
p_\lambda(w^*)=e^{-\lambda}\frac{\lambda^{w^*}}{w^*!},
\]
we have
\[
\log p_\lambda(w^*)=-\lambda+w^*\log\lambda-\log(w^*!).
\]
By Stirling,
\[
\log(w^*!)=w^*\log w^*-w^*+O(\log w^*),
\]
so
\[
\log p_\lambda(w^*)
=-\lambda+w^*\Bigl(1+\log\frac{\lambda}{w^*}\Bigr)+O(\log w^*).
\]
Now Lemma~\ref{lem:wstar-asymp} gives
\[
w^*=\frac{\lambda}{\log r}(1+o(1)),
\qquad
\log\frac{\lambda}{w^*}=\log\log r+o(1),
\qquad
O(\log w^*)=o(w^*)=o(\lambda).
\]
Therefore
\[
\log p_\lambda(w^*)
=-\lambda+\frac{\lambda}{\log r}\bigl(1+\log\log r\bigr)+o(\lambda)
=-\lambda+o(\lambda),
\]
since \((1+\log\log r)/\log r\to0\).
Exponentiating gives \eqref{eq:p-wstar-exp-scale}.
\end{proof}

\begin{theorem}[Doubly exponential approach to one]\label{thm:doubly-exp}
Fix \(p\ge1\). As \(c\to-\infty\), equivalently \(\lambda\to\infty\),
\[
1-f_p(c)=\exp\bigl(-\lambda+o(\lambda)\bigr)
=\exp\bigl(-(1+o(1))\lambda\bigr)
=\exp\bigl(-(1+o(1))e^{-c}\bigr).
\]
In particular, \(f_p(c)\to1\) doubly exponentially fast as \(c\to-\infty\).
\end{theorem}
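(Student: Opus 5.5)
The plan is to combine the Poisson sandwich \eqref{eq:1-minus-fp-bounds} with the tail estimates already established. By \eqref{eq:1-minus-fp-bounds},
\[
F_\lambda(w^*-1)\le 1-f_p(c)\le F_\lambda(w^*),
\]
so it suffices to show that both $\log F_\lambda(w^*-1)$ and $\log F_\lambda(w^*)$ equal $-\lambda+o(\lambda)$ as $\lambda\to\infty$.

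For the upper bound, I first invoke \eqref{eq:F-asymp-p}, which gives $F_\lambda(w^*)=(1+o(1))\,p_\lambda(w^*)$; this applies because $w^*/\lambda\to0$ by Lemma~\ref{lem:wstar-asymp}, so Proposition~\ref{prop:Poisson-tail-DFP} is in force. Lemma~\ref{lem:poisson-pmf-wstar-scale} then gives $p_\lambda(w^*)=\exp(-\lambda+o(\lambda))$. Since $\log(1+o(1))=o(1)=o(\lambda)$, it follows that $\log F_\lambda(w^*)\le-\lambda+o(\lambda)$.

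For the lower bound, I use \eqref{eq:F-asymp-p} with $t=w^*-1$. This reduces the task to showing $\log p_\lambda(w^*-1)=-\lambda+o(\lambda)$. The cleanest route is the ratio identity
\[
\frac{p_\lambda(w^*-1)}{p_\lambda(w^*)}=\frac{w^*}{\lambda},
\]
whose logarithm is $o(\lambda)$ by the last assertion of Lemma~\ref{lem:wstar-asymp}. A small detail is that $w^*\ge1$, so $w^*-1\ge0$ and the identity is legitimate. As a cruder alternative, one could simply use $F_\lambda(w^*-1)\ge p_\lambda(0)=e^{-\lambda}$, which already gives the lower bound $\log(1-f_p(c))\ge-\lambda$ directly, with no asymptotics needed. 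I would present this trivial bound, since it is airtight.

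Combining the two bounds yields $1-f_p(c)=\exp(-\lambda+o(\lambda))=\exp(-(1+o(1))\lambda)$, and substituting $\lambda=e^{-c}$ gives the stated form. The only point needing any care is the upper bound, which in turn rests on the Stirling computation in Lemma~\ref{lem:poisson-pmf-wstar-scale}. Since that lemma is already proved, no genuine obstacle remains; the proof consists of assembling the sandwich with these two estimates.
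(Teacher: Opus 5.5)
Your proposal is correct and follows the paper's proof: the upper bound, from the Poisson sandwich \eqref{eq:1-minus-fp-bounds}, the tail estimate \eqref{eq:F-asymp-p} and Lemma~\ref{lem:poisson-pmf-wstar-scale}, is exactly the paper's argument. For the lower bound, the paper uses the ratio identity $p_\lambda(w^*-1)=p_\lambda(w^*)\,w^*/\lambda$ with Lemma~\ref{lem:wstar-asymp}, while your preferred shortcut $1-f_p(c)\ge F_\lambda(w^*-1)\ge p_\lambda(0)=e^{-\lambda}$ (valid since $w^*\ge1$) is also correct and slightly simpler.
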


\begin{proof}
By \eqref{eq:1-minus-fp-bounds} and \eqref{eq:F-asymp-p},
\[
(1+o(1))\,p_\lambda(w^*-1)\le1-f_p(c)\le(1+o(1))\,p_\lambda(w^*).
\]
Taking $\log$ throughout,
\[
\log p_\lambda(w^*-1)+o(1)
 \le \log(1-f_p(c))
 \le \log p_\lambda(w^*)+o(1).
\]
Since
\[
p_\lambda(w^*-1)=p_\lambda(w^*)\frac{w^*}{\lambda},
\]
we get
\[
\log p_\lambda(w^*-1)
=\log p_\lambda(w^*)+\log\!\Bigl(\frac{w^*}{\lambda}\Bigr)
=\log p_\lambda(w^*)+o(\lambda)
\]
by Lemma~\ref{lem:wstar-asymp}. Hence the sandwich implies
\[
\log(1-f_p(c))=\log p_\lambda(w^*)+o(\lambda).
\]
Finally, by Lemma \ref{lem:poisson-pmf-wstar-scale}, $\log p_\lambda(w^*)=-\lambda+o(\lambda)$, hence
\[
\log(1-f_p(c))=-(1+o(1))\lambda,
\qquad\text{i.e.}\qquad
1-f_p(c)=\exp\bigl(-(1+o(1))\lambda\bigr). \qedhere
\]
\end{proof}

\begin{remark}\label{rem:fp-endpoints}
The endpoint limits for \(f_p\) were established above, and its monotonicity is proved later in Corollary~\ref{cor:profile-monotone}. Thus \(f_p\) has the shape of a cutoff profile:
\[
f_p(c)\to1 \quad(c\to-\infty),
\qquad
f_p(c)\to0 \quad(c\to+\infty).
\]
\end{remark}

\section{Colored top-\texorpdfstring{$m$}{m}-to-random}\label{sec:top-m}

This section specializes the general nested-set mixture framework to colored top-\(m\)-to-random. The \(k\)-step law is identified with an \(m\)-subset coupon-collector mixture, and the empty-label Poisson limit then gives the total-variation cutoff profile.

Fix \(1\le m\le n\). Recall
\[
Q^{(m)}:=\widetilde Q_{m,p}=\frac{1}{|B_m|}B_m.
\]
By Proposition~\ref{lem:Ba-size-exact},
\[
|B_m|=\binom{n}{m}p^m m!=(n)_m\,p^m,
\qquad
(n)_m:=n(n-1)\cdots(n-m+1).
\]
Thus \(Q^{(m)}\) is the uniform law on \(\supp(B_m)\). Also,
\[
Q^{(1)}=Q,
\qquad
Q^{(n)}=\widetilde Q_{n,p}=U,
\qquad
(Q^{(m)})^{*k}=(Q^{(m)})^k
\]
(product in \(\mathbb RG_{n,p}\)).

\subsection{Exact mixture}

We again expand the \(k\)-step law over the nested supports
\[
A_u=\supp(B_{n-u})=\supp(\widetilde Q_{n-u,p}),
\]
but now the weights come from an \(m\)-sampling occupancy model.

\begin{definition}\label{def:occupancy-m}
Fix integers \(n\ge1\), \(1\le m\le n\), and \(k\ge0\). For each round \(t=1,\dots,k\), choose a subset
\[
S_t\subseteq[n],\qquad |S_t|=m,
\]
uniformly from \(\binom{[n]}{m}\), independently across rounds. Let
\[
A:=\left|\bigcup_{t=1}^k S_t\right|,
\qquad
E:=n-A.
\]
Thus \(A\) is the number of indices ever chosen, and \(E\) is the number never chosen. Define
\[
\mu_k^{(m)}(a):=\Prob(A=a),
\qquad
P_k^{(m)}(u):=\Prob(E=u)
\qquad(0\le a,u\le n).
\]
When \(m=1\), this is the ordinary balls-into-boxes model from Section~\ref{sec:mixture-engine}; in particular,
\[
\mu_k^{(1)}(a)=\mu_k(a),
\qquad
P_k^{(1)}(u)=P_k(u).
\]
\end{definition}

\begin{example}
Let $n=5$, $m=2$, and $k=3$. In each round choose a uniform $2$-subset $S_t\subseteq[n]$. Suppose
\[
S_1=\{1,4\},\qquad S_2=\{4,5\},\qquad S_3=\{2,4\}.
\]
Then $\bigcup_{t=1}^3 S_t=\{1,2,4,5\}$, so $A=\left|\bigcup_{t=1}^3 S_t\right|=4$ and $E=n-A=1$ (the only never-chosen label is $3$).
\end{example}

\begin{lemma}[Exact top-\texorpdfstring{$m$}{m} occupancy mixture]\label{prop:Qm-mixture}
Fix \(1\le m\le n\) and \(k\ge0\). Then
\begin{align}
(Q^{(m)})^{*k}
&=\sum_{a=0}^{n}\mu_k^{(m)}(a)\,\widetilde Q_{a,p},
\label{eq:Qm-mixture-a}\\
&=\sum_{u=0}^{n}P_k^{(m)}(u)\,\widetilde Q_{n-u,p}.
\label{eq:Qm-mixture-u}
\end{align}
In particular,
\[
P_k^{(m)}(u)=\mu_k^{(m)}(n-u),
\qquad
(Q^{(m)})^{*k}=\widetilde Q_{\mu_k^{(m)},p}.
\]
If \(k\ge1\), then
\begin{equation}\label{eq:mu-m-explicit}
\mu_k^{(m)}(a)
=
\frac{1}{(n)_m^k}\binom{n}{a}
\sum_{j=0}^{a}(-1)^j\binom{a}{j}(a-j)_m^k,
\qquad
0\le a\le n,
\end{equation}
with the convention \((x)_m=0\) for \(x<m\). 
\end{lemma}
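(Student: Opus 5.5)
The plan is to prove a one-step transition rule for the mixture components and then iterate it. Concretely, I will show that for every \(0\le a\le n\),
\begin{equation*}
\widetilde Q_{a,p}*Q^{(m)}=\sum_{j=0}^{m}\frac{\binom{n-a}{j}\binom{a}{m-j}}{\binom{n}{m}}\,\widetilde Q_{a+j,p},
\end{equation*}
with the usual convention that binomials with out-of-range arguments vanish. The weights are the hypergeometric probabilities \(\Prob(|S\setminus[a]|=j)\) for a uniform \(m\)-subset \(S\subseteq[n]\), which is exactly the one-round transition of the occupied-count \(A\) in Definition~\ref{def:occupancy-m}. By exchangeability of labels, "occupied set of size \(a\)" may be replaced by \([a]\).

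Given this rule, the mixture formula \eqref{eq:Qm-mixture-a} follows by induction on \(k\). The base case \(k=0\) holds since \(\widetilde Q_{0,p}=\delta_{\mathrm{id}}\) and \(\mu_0^{(m)}=\delta_0\). The inductive step uses bilinearity of convolution and the Markov property of \(A_t=|S_1\cup\cdots\cup S_t|\). The second line \eqref{eq:Qm-mixture-u} is a reindexing with \(u=n-a\), and \((Q^{(m)})^{*k}=\widetilde Q_{\mu_k^{(m)},p}\) is then Definition~\ref{def:LR-general-mu}.

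To prove the one-step rule, I use \(\Law(GH)=\widetilde Q_{a,p}*Q^{(m)}\) with \(G\sim\widetilde Q_{a,p}\) and \(H\sim Q^{(m)}\) independent, and read the product \((t,\tau)(s,\sigma)=(\sigma t+s,\tau\sigma)\) on words. The word of \(GH\) is obtained from that of \(G\) by the top-\(m\)-to-random move described in the introduction: the top \(m\) cards are removed, recolored by adding independent uniform colors, permuted uniformly, and inserted at a uniform \(m\)-set of positions, while the rest keep their order and colors. I will check this carefully against the convention \(\sigma t=(t_{\sigma(1)},\dots)\).

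Under \(G\) uniform on \(\supp(B_a)\), the frozen labels \(a+1,\dots,n\) occupy a uniformly random \((n-a)\)-set of positions, in increasing order and with color \(0\). So if \(J\) frozen cards lie among the top \(m\) positions, they are exactly \(a+1,\dots,a+J\). Moreover \(J\) is hypergeometric with the weights displayed above, because the frozen/free position pattern is uniform among \(\binom{n}{a}\) patterns.

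The main obstacle is showing that, conditionally on \(J=j\), the law of \(GH\) is uniform on \(\supp(B_{a+j})\). The support inclusion is clear. Labels \(a+j+1,\dots,n\) are untouched, so they stay in increasing order with color \(0\), and their relative positions are preserved because the remaining \(n-m\) cards keep their order.

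For uniformity I will count preimages. Fix a target \(x\in\supp(B_{a+j})\), and count pairs \((G,H)\) with \(J=j\) and \(GH=x\). First, \(H\) is determined up to the choice of which \(m\) of the free-in-\(x\) cards were the moved ones. That choice must include \(a+1,\dots,a+j\) and \(m-j\) labels from \([a]\). Second, given that choice, \(G\) and \(H\) are uniquely determined: the moved cards' positions in \(x\) fix the insertion set. Their order, and the colors in \(G\), can be chosen freely and are compensated by \(H\)'s permutation and recoloring. Concretely, the count is \(\binom{a}{m-j}\,m!\,p^m\), which is independent of \(x\). I expect this bookkeeping, especially matching the colored word multiplication, to be the delicate part, since the earlier steps are structural.

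Finally, for \eqref{eq:mu-m-explicit} I use inclusion--exclusion. First, \(\Prob(A=a)=\binom{n}{a}\Prob\bigl(\bigcup_t S_t=[a]\bigr)\). Next, \(\Prob\bigl(\bigcup_t S_t\subseteq T\bigr)=\bigl((|T|)_m/(n)_m\bigr)^k\), since \(\binom{b}{m}/\binom{n}{m}=(b)_m/(n)_m\). Möbius inversion over subsets of \([a]\) then gives \(\sum_{j}(-1)^j\binom{a}{j}(a-j)_m^k/(n)_m^k\), with \(k\ge1\) needed so that the empty-union case behaves correctly.
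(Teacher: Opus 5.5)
Your proof is correct, but it takes a different route from the paper. The paper proves no one-step rule. It quotes the Nakano--Sadahiro--Sakurai expansion \cite[Theorem~2.7]{NSS}, $B_m^k=\sum_a p^{mk-a}S_m(k,a)B_a$, with generalized Stirling numbers $S_m(k,a)=\frac1{a!}\sum_j(-1)^j\binom aj(a-j)_m^k$. It rescales by $|B_m|^k$ and $|B_a|$, and then observes that the resulting coefficients coincide with the inclusion--exclusion (Stadje) formula for $\Prob(A=a)$.

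You instead prove the intertwining relation $\widetilde Q_{a,p}*Q^{(m)}=\sum_j\frac{\binom{n-a}{j}\binom{a}{m-j}}{\binom nm}\widetilde Q_{a+j,p}$ directly by counting preimages. You then read off the mixture from the Markov chain of occupied counts by induction. So the weights are $\Prob(A_k=a)$ by construction, and inclusion--exclusion is needed only for the closed form \eqref{eq:mu-m-explicit}.

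What each route buys:
\begin{itemize}
\item Your route is self-contained and explains why the occupancy model appears: the mixture index is literally the number of touched labels. This is the same frozen-tail mechanism the paper only exploits later, for the reversed chain in Section~\ref{sec:sst}. Your argument in fact reproves the NSS expansion.
\item The paper's route is shorter, but it rests entirely on the external algebraic identity and on matching two closed forms.
\end{itemize}

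One correction to your preimage count. In $G\in\supp(B_a)$, the frozen labels $a+1,\dots,a+j$ that sit in the top block must still appear in increasing order with color $0$. So the only free choices are:
\begin{itemize}
\item the positions of these $j$ labels within the block, in $\binom mj$ ways;
\item the order and colors of the $m-j$ free cards, in $(m-j)!\,p^{m-j}$ ways.
\end{itemize}
The number of pairs $(G,H)$ with $J=j$ and $GH=x$ is therefore $\binom{a}{m-j}\frac{m!}{j!}p^{m-j}$, not $\binom{a}{m-j}m!\,p^m$. As a check, this equals $|B_a|\,|B_m|\,\Prob(J=j)/|B_{a+j}|$.

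This count is still independent of $x\in\supp(B_{a+j})$, so your uniformity conclusion and everything downstream are unaffected. Equivalently, your argument yields the structure constants $B_aB_m=\sum_{j}\binom{a}{m-j}\frac{m!}{j!}p^{m-j}B_{a+j}$ for $0\le j\le\min(m,n-a)$. For $m=1$ this is the Stirling recursion $B_aB_1=ap\,B_a+B_{a+1}$.
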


\begin{proof}
If \(k=0\), then
\[
(Q^{(m)})^{*0}=\delta_e=\widetilde Q_{0,p},
\qquad
\mu_0^{(m)}(0)=1,
\qquad
P_0^{(m)}(n)=1,
\]
so \eqref{eq:Qm-mixture-a} and \eqref{eq:Qm-mixture-u} are immediate. Assume henceforth that \(k\ge1\).

By \(|B_m|=(n)_m p^m\),
\begin{equation}\label{eq:Qm-from-Bm}
(Q^{(m)})^{*k}=(Q^{(m)})^k=\frac{1}{|B_m|^k}\,B_m^k
=\frac{1}{(n)_m^k p^{mk}}\,B_m^k.
\end{equation}
By \cite[Theorem~2.7]{NSS},
\begin{equation}\label{eq:Bm-stirling}
\begin{aligned}
B_m^k
&=\sum_{a=m}^{n} p^{mk-a}\,S_m(k,a)\,B_a\\
&=\sum_{a=0}^{n} p^{mk-a}\,S_m(k,a)\,B_a
\qquad\text{since }S_m(k,a)=0\text{ for }0\le a\le m-1,
\end{aligned}
\end{equation}
where \(S_m(k,a)\) is the generalized Stirling number \(S_{m,m}(k,a)\)
in the notation of \cite{NSS}, given by
\begin{equation}\label{eq:Sm-explicit}
S_m(k,a)
=\frac{1}{a!}\sum_{j=0}^{a}(-1)^j\binom{a}{j}(a-j)_m^k,
\end{equation}
and \(S_m(k,a)=0\) for \(a<m\). For \(m=1\), this reduces to the ordinary Stirling formula
\[
S_1(k,a)=\left\{\!\!\begin{matrix}k\\ a\end{matrix}\!\!\right\}.
\]

Substituting \eqref{eq:Bm-stirling} into \eqref{eq:Qm-from-Bm} and using
\[
B_a=|B_a|\widetilde Q_{a,p},
\qquad
|B_a|=\binom{n}{a}p^a a!,
\]
gives
\[
(Q^{(m)})^{*k}
=\sum_{a=0}^{n}c_k^{(m)}(a)\,\widetilde Q_{a,p},
\quad \text{where } c_k^{(m)}(a):=\frac{\binom{n}{a}a!}{(n)_m^k}\,S_m(k,a).
\]
Using \eqref{eq:Sm-explicit},
\[
c_k^{(m)}(a)
=
\frac{1}{(n)_m^k}\binom{n}{a}
\sum_{j=0}^{a}(-1)^j\binom{a}{j}(a-j)_m^k.
\]

On the other hand, by inclusion--exclusion (equivalently \cite[Theorem~1, Eq.~(2.2)]{Stadje1990} with $S=[n]$ and $A=S$, so $s=l=n$),
\[
\Prob(A=a)
=
\binom{n}{a}\sum_{j=0}^{a}(-1)^j\binom{a}{j}
\left(\frac{\binom{a-j}{m}}{\binom{n}{m}}\right)^k.
\]
Since
\[
\binom{a-j}{m}=\frac{(a-j)_m}{m!},
\qquad
\binom{n}{m}=\frac{(n)_m}{m!},
\]
this becomes
\[
\Prob(A=a)
=
\frac{1}{(n)_m^k}\binom{n}{a}
\sum_{j=0}^{a}(-1)^j\binom{a}{j}(a-j)_m^k
=
c_k^{(m)}(a).
\]
By Definition~\ref{def:occupancy-m}, \(\mu_k^{(m)}(a)=\Prob(A=a)\), so
\[
\mu_k^{(m)}(a)
=
\frac{1}{(n)_m^k}\binom{n}{a}
\sum_{j=0}^{a}(-1)^j\binom{a}{j}(a-j)_m^k,
\]
which is \eqref{eq:mu-m-explicit}. Therefore
\[
(Q^{(m)})^{*k}
=\sum_{a=0}^{n}\mu_k^{(m)}(a)\,\widetilde Q_{a,p},
\]
which is \eqref{eq:Qm-mixture-a}. Since \(E=n-A\),
\[
P_k^{(m)}(u)=\Prob(E=u)=\Prob(A=n-u)=\mu_k^{(m)}(n-u),
\]
and \eqref{eq:Qm-mixture-u} follows.
\end{proof}

\begin{remark}
Definition~\ref{def:LR-general-mu} introduces \(\widetilde Q_{\mu,p}\) for an arbitrary pmf \(\mu\). Lemma~\ref{prop:Qm-mixture} identifies the shuffle law with this construction:
\[
(Q^{(m)})^{*k}=\widetilde Q_{\mu_k^{(m)},p},
\qquad
\mu_k^{(m)}(n-u)=P_k^{(m)}(u).
\]
Thus the lemma is the nontrivial step that connects top-\(m\)-to-random with the general mixture framework.
\end{remark}

Because \eqref{eq:Qm-mixture-u} uses the same nested sets \(A_u\) as before, the likelihood ratio again depends only on \(L_p\).

\begin{corollary}\label{cor:LR-top-m}
For every \(1\le m\le n\), \(k\ge0\), and \(x\in G_{n,p}\),
\begin{equation}\label{eq:LR-top-m}
\frac{(Q^{(m)})^{*k}(x)}{U(x)}
=\sum_{u=0}^{L_p(x)}P_k^{(m)}(u)\,u!\,p^u.
\end{equation}
\end{corollary}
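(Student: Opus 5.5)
The plan is to read off Corollary~\ref{cor:LR-top-m} from the general one-statistic reduction, using the mixture identification in Lemma~\ref{prop:Qm-mixture}. By that lemma,
\[
(Q^{(m)})^{*k}=\widetilde Q_{\mu_k^{(m)},p},
\qquad
\mu_k^{(m)}(n-u)=P_k^{(m)}(u),
\]
so the $k$-step top-$m$ law is a nested-set mixture $\widetilde Q_{\mu,p}$ with the pmf $\mu=\mu_k^{(m)}$ on $\{0,\dots,n\}$. The case $k=0$ is already covered by the lemma: there $P_0^{(m)}(n)=1$, and the mixture reduces to $\widetilde Q_{0,p}=\delta_e$.

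Next I would apply Remark~\ref{rem:LR-general-mu}, equivalently \eqref{eq:Smu}, which holds for an arbitrary pmf $\mu$. It gives
\[
\frac{\widetilde Q_{\mu,p}(x)}{U(x)}=S_\mu(L_p(x))=\sum_{u=0}^{L_p(x)}\mu(n-u)\,u!\,p^u .
\]
The only input behind this is the componentwise identity \eqref{eq:component-ratio-indicator},
\[
\frac{\widetilde Q_{n-u,p}}{U}=u!\,p^u\,\mathbf 1_{\{L_p\ge u\}},
\]
which in turn rests on $|A_u|=\binom{n}{u}p^{n-u}(n-u)!$ (Proposition~\ref{lem:Ba-size-exact}) and on the characterization $\{L_p\ge u\}=A_u$ (Lemma~\ref{lem:Au-Lp-equiv}).

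Finally I substitute $\mu(n-u)=P_k^{(m)}(u)$ into this sum, which yields \eqref{eq:LR-top-m} for every $x\in G_{n,p}$.

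There is no real obstacle here. The nontrivial content, namely the generalized Stirling expansion of $B_m^k$ and its identification with the $m$-subset occupancy law, is already contained in Lemma~\ref{prop:Qm-mixture}. The only thing to check is the indexing: the weight attached to $\widetilde Q_{n-u,p}$ must be $P_k^{(m)}(u)$, not $\mu_k^{(m)}(u)$, and this is exactly the relation $P_k^{(m)}(u)=\mu_k^{(m)}(n-u)$.
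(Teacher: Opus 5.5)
Your proposal is correct and is essentially the paper's proof. The paper uses Lemma~\ref{prop:Qm-mixture} to write $(Q^{(m)})^{*k}=\sum_{u}P_k^{(m)}(u)\,\widetilde Q_{n-u,p}$, divides by $U(x)$ via \eqref{eq:component-ratio-indicator}, and notes that this is equivalent to applying Remark~\ref{rem:LR-general-mu} with $\mu(n-u)=P_k^{(m)}(u)$, which is your route.
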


\begin{proof}
By Lemma~\ref{prop:Qm-mixture},
\[
(Q^{(m)})^{*k}=\sum_{u=0}^{n}P_k^{(m)}(u)\,\widetilde Q_{n-u,p}.
\]
Now divide by \(U(x)\) and use \eqref{eq:component-ratio-indicator}.
\end{proof}

Equivalently, we may apply Remark~\ref{rem:LR-general-mu} with \(\mu(n-u)=P_k^{(m)}(u)\), which gives the same formula.

\begin{corollary}\label{cor:TV-exact-top-m}
Fix \(1\le m\le n\) and \(k\ge0\). If \((Q^{(m)})^{*k}=U\), then
\[
\|(Q^{(m)})^{*k}-U\|_{\TV}=0.
\]
Assume \((Q^{(m)})^{*k}\neq U\), and define
\[
w_{n,k}^{(m)}
:=
\min\left\{\ell\in\{0,\dots,n\}:\ \sum_{u=0}^{\ell}P_k^{(m)}(u)\,u!\,p^u>1\right\}.
\]
Then
\[
\|(Q^{(m)})^{*k}-U\|_{\TV}
=
1-\sum_{u=0}^{w_{n,k}^{(m)}-1}P_k^{(m)}(u)
+\frac{1}{w_{n,k}^{(m)}!\,p^{w_{n,k}^{(m)}}}
\left(\sum_{u=0}^{w_{n,k}^{(m)}-1}P_k^{(m)}(u)\,u!\,p^u-1\right).
\]
\end{corollary}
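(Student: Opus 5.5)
This is a direct specialization of Theorem~\ref{prop:TV-exact-general-mu} to the mixing weights
\[
\mu:=\mu_k^{(m)}, \qquad \mu(n-u)=P_k^{(m)}(u).
\]
By Lemma~\ref{prop:Qm-mixture}, $(Q^{(m)})^{*k}=\widetilde Q_{\mu_k^{(m)},p}$, and $\mu_k^{(m)}$ is a genuine probability mass function on $\{0,\dots,n\}$, since it is the law of the occupancy count $A$ from Definition~\ref{def:occupancy-m}. So the first step is to record this identification; all hypotheses of Theorem~\ref{prop:TV-exact-general-mu} then hold.

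The degenerate case is immediate. If $(Q^{(m)})^{*k}=U$, then $\widetilde Q_{\mu,p}=U$, and the first assertion of Theorem~\ref{prop:TV-exact-general-mu} gives total variation $0$.

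In the nondegenerate case, I would compare the index $w_{n,k}^{(m)}$ with $w_\mu$. By \eqref{eq:def-Smu},
\[
S_{\mu_k^{(m)}}(\ell)=\sum_{u=0}^{\ell}\mu_k^{(m)}(n-u)\,u!\,p^u=\sum_{u=0}^{\ell}P_k^{(m)}(u)\,u!\,p^u .
\]
Hence $w_{n,k}^{(m)}=w_{\mu_k^{(m)}}$ exactly. Theorem~\ref{prop:TV-exact-general-mu} guarantees that this minimum exists and lies in $\{1,\dots,n\}$. Substituting $\mu(n-u)=P_k^{(m)}(u)$ into its closed formula then yields the displayed expression term by term.

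There is no real obstacle here. The only point needing care is that $\mu_k^{(m)}$ is indexed by occupied rather than empty labels. This reindexing $a=n-u$ is already built into Lemma~\ref{prop:Qm-mixture}, so one only has to check that the sums over $u<w$ match. Alternatively, one can bypass the general theorem and rerun its argument verbatim: use Corollary~\ref{cor:LR-top-m} and Proposition~\ref{prop:TV-likelihood} to write the distance as $(Q^{(m)})^{*k}(A_w)-U(A_w)$. Then evaluate this with the nested intersections of Lemma~\ref{lem:nested-intersection} and the identity $U(A_w)=1/(w!\,p^w)$ from Lemma~\ref{lem:Lp-under-U}.
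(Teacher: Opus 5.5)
Your proposal is correct and is essentially the paper's proof. Like the paper, you identify $(Q^{(m)})^{*k}=\widetilde Q_{\mu_k^{(m)},p}$ via Lemma~\ref{prop:Qm-mixture}, note that $S_{\mu_k^{(m)}}(\ell)=\sum_{u\le\ell}P_k^{(m)}(u)\,u!\,p^u$ so that $w_{n,k}^{(m)}=w_{\mu_k^{(m)}}$, and apply Theorem~\ref{prop:TV-exact-general-mu}. The paper handles $k=0$ separately via $\delta_e=\widetilde Q_{0,p}$, but Lemma~\ref{prop:Qm-mixture} already covers $k=0$, so treating all $k\ge0$ at once is fine.
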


\begin{proof}
If \(k=0\), then
\[
(Q^{(m)})^{*0}=\delta_e=\widetilde Q_{0,p},
\]
so the result is Theorem~\ref{prop:TV-exact-general-mu} with \(\mu(0)=1\). Assume \(k\ge1\). By Lemma~\ref{prop:Qm-mixture},
\[
(Q^{(m)})^{*k}
=
\sum_{u=0}^{n}P_k^{(m)}(u)\,\widetilde Q_{n-u,p}
=
\widetilde Q_{\mu,p},
\qquad
\mu(n-u)=P_k^{(m)}(u).
\]
Now apply Theorem~\ref{prop:TV-exact-general-mu}.
\end{proof}

\subsection{Coupon collector}

For related recent work on coupon collecting with group drawings via Stein's method, see \cite{BetkenThaele2023}.

We now show that the number of never-chosen indices has the same Poisson limit as in the case \(m=1\), after rescaling time by \(1/m\).

\begin{lemma}\label{lem:empty-factorial-moments-m}
Fix \(n\ge1\), \(1\le m\le n\), and  \(k \ge 1\). In the \(m\)-sampling model,
\begin{equation}\label{eq:Em-factorial}
\E[(E)_u]
=(n)_u\left(\frac{\binom{n-u}{m}}{\binom{n}{m}}\right)^k
=(n)_u\left(\frac{(n-u)_m}{(n)_m}\right)^k
\qquad(0\le u\le n).
\end{equation}
\end{lemma}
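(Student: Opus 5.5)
The plan is the standard indicator-counting argument for factorial moments of the number of empty cells. Write
\[
E=\sum_{i=1}^{n} I_i,\qquad I_i:=\mathbf 1\Bigl\{i\notin \textstyle\bigcup_{t=1}^k S_t\Bigr\}.
\]
The factorial power \((E)_u\) counts ordered \(u\)-tuples of distinct never-chosen indices. So pointwise,
\[
(E)_u=\sum_{(i_1,\dots,i_u)} I_{i_1}\cdots I_{i_u},
\]
where the sum runs over ordered tuples of distinct elements of \([n]\). This identity is immediate: if \(E=e\), the right side counts the ordered selections of \(u\) distinct elements from the \(e\) empty indices, which is \((e)_u\), and this is \(0\) when \(u>e\).

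Next take expectations. Fix a set \(T=\{i_1,\dots,i_u\}\) of size \(u\). The product \(I_{i_1}\cdots I_{i_u}\) equals \(1\) exactly when every round avoids \(T\), that is, when \(S_t\subseteq[n]\setminus T\) for all \(t\). The rounds are independent, and each \(S_t\) is uniform on \(\binom{[n]}{m}\). So each round avoids \(T\) with probability
\[
\frac{\binom{n-u}{m}}{\binom{n}{m}},
\]
and hence
\[
\E[I_{i_1}\cdots I_{i_u}]=\Bigl(\binom{n-u}{m}\big/\binom{n}{m}\Bigr)^k.
\]
This does not depend on the tuple. Since there are \((n)_u\) ordered tuples, we get the first equality in \eqref{eq:Em-factorial}.

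The second equality uses \(\binom{x}{m}=(x)_m/m!\) for integer \(x\ge0\); the factors \(m!\) cancel in the ratio. The edge cases need a brief check. For \(u=0\) both sides equal \(1\). When \(n-u<m\), we have \(\binom{n-u}{m}=0=(n-u)_m\), consistent with the convention \((x)_m=0\) for \(x<m\) used in Lemma~\ref{prop:Qm-mixture}. Then the formula gives \(0\), which is correct because every round covers at least \(m>n-u\) elements, so no \(u\) indices can all be avoided. Here \(k\ge1\) matters: for \(k=0\) the factor \(0^0\) would be ambiguous, whereas \(E=n\) deterministically.

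There is no real obstacle. The only points needing care are the pointwise identity for \((E)_u\) and the degenerate case \(n-u<m\), both of which are handled above.
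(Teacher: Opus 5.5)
Your proposal is correct and follows essentially the same route as the paper: you write $E=\sum_i I_i$, expand $(E)_u$ as a sum over ordered distinct $u$-tuples of products of indicators, and use independence of the rounds to get $\bigl(\binom{n-u}{m}/\binom{n}{m}\bigr)^k$ for each of the $(n)_u$ tuples, then convert via $\binom{x}{m}=(x)_m/m!$. Your extra edge-case checks are fine. One small correction to your closing remark: with the usual convention $0^0=1$ the identity also holds at $k=0$, since then $E=n$ and both sides equal $(n)_u$.
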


\begin{proof}
Let
\[
I_i:=\mathbf 1_{\{i\text{ is never chosen}\}},
\qquad
E=\sum_{i=1}^n I_i.
\]
Since the falling factorial \((E)_u\) counts ordered distinct \(u\)-tuples of indices with \(I_i=1\), we have
\[
(E)_u
=\sum_{(i_1,\dots,i_u)\ \mathrm{distinct}} I_{i_1}\cdots I_{i_u}.
\]
Taking expectations,
\[
\E[(E)_u]
=\sum_{(i_1,\dots,i_u)\ \mathrm{distinct}}
\Prob(i_1,\dots,i_u\text{ are never chosen}).
\]
For fixed distinct \(i_1,\dots,i_u\), one round avoids them iff the chosen \(m\)-subset lies in their complement, so
\[
\Prob(\text{one round avoids }i_1,\dots,i_u)
=\frac{\binom{n-u}{m}}{\binom{n}{m}}.
\]
By independence across rounds,
\[
\Prob(i_1,\dots,i_u\text{ are never chosen})
=\left(\frac{\binom{n-u}{m}}{\binom{n}{m}}\right)^k.
\]
There are \((n)_u\) ordered \(u\)-tuples of distinct indices, giving \eqref{eq:Em-factorial}. The second form follows from \(\binom{x}{m}=(x)_m/m!\).
\end{proof}

\begin{proposition}[Poisson limit for \texorpdfstring{$m$}{m}-sampling]\label{lem:Pk-m-Poisson}
Fix \(m\ge1\), independent of \(n\), and \(c\in\R\). Write \(k=k_n(c)\).
Then, for each fixed \(u\ge0\),
\[
P_k^{(m)}(u)=\Prob(E=u)\longrightarrow e^{-\lambda}\frac{\lambda^u}{u!}.
\]
Equivalently,
\[
E\Rightarrow \mathrm{Poisson}(\lambda).
\]
Moreover, for each fixed \(u\ge0\),
\[
\E[(E)_u]\longrightarrow \lambda^u.
\]
\end{proposition}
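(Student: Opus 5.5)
The plan is to use the method of factorial moments. By Lemma~\ref{lem:empty-factorial-moments-m}, for each fixed \(u\ge0\),
\[
\E[(E)_u]=(n)_u\left(\frac{(n-u)_m}{(n)_m}\right)^k .
\]
We first show that this tends to \(\lambda^u\). Since the Poisson law is determined by its moments, and its factorial moments \(\lambda^u\) grow slowly enough that the standard moment (Fréchet--Shohat) criterion applies, convergence of every factorial moment gives \(E\Rightarrow\mathrm{Poisson}(\lambda)\). Because \(E\) is integer-valued, weak convergence is the same as pointwise convergence of the point masses \(P_k^{(m)}(u)\). Alternatively, we can avoid the moment theorem and use the Bonferroni/inclusion--exclusion identity
\[
\Prob(E=u)=\sum_{j\ge0}(-1)^j\frac{\E[(E)_{u+j}]}{u!\,j!}.
\]
Its truncations alternate and sandwich \(\Prob(E=u)\), so passing to the limit term by term gives \(e^{-\lambda}\lambda^u/u!\).

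The main computation is the asymptotic of the ratio. For fixed \(u\) and \(m\) we write
\[
\frac{(n-u)_m}{(n)_m}=\prod_{i=0}^{m-1}\frac{n-u-i}{n-i}=\prod_{i=0}^{m-1}\Bigl(1-\frac{u}{n-i}\Bigr).
\]
Taking logarithms gives
\[
\log\frac{(n-u)_m}{(n)_m}=-\frac{mu}{n}+O(n^{-2}),
\]
where the error is uniform in the fixed parameters \(u,m\). Multiplying by \(k=\lfloor\frac nm(\log n+c)\rfloor\), which equals \(\frac nm(\log n+c)+O(1)\), we get
\[
k\log\frac{(n-u)_m}{(n)_m}=-u(\log n+c)+O\!\left(\frac{\log n}{n}\right)+O\!\left(\frac1n\right).
\]
The \(O(1)\) error in \(k\) contributes \(O(u/n)\), and the \(O(n^{-2})\) error multiplied by \(k=O(n\log n)\) is \(o(1)\). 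Hence
\[
\left(\frac{(n-u)_m}{(n)_m}\right)^k=n^{-u}e^{-cu}(1+o(1)).
\]
Since \((n)_u=n^u(1+o(1))\), it follows that \(\E[(E)_u]\to e^{-cu}=\lambda^u\).

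For large \(n\) we have \(k\ge1\), so Lemma~\ref{lem:empty-factorial-moments-m} applies. The only delicate point is the passage from factorial moments to point masses. The cleanest route is the alternating-sum bound: for each \(J\), the partial sums up to \(2J\) and \(2J+1\) bound \(\Prob(E=u)\) from above and below. Letting \(n\to\infty\) and then \(J\to\infty\) gives the Poisson limit, since the tail \(\sum_j\lambda^{u+j}/j!\) converges. The main obstacle is therefore only keeping the error terms uniform, which is routine because \(u\) and \(m\) are fixed.
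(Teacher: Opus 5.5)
Your proof is correct, but it gets the distributional limit from a different source than the paper does. The paper proves $E\Rightarrow\mathrm{Poisson}(\lambda)$ by identifying $E$ with the variable $Z_{n,m,1}(k)$ of Schilling--Henze and citing their theorem. It then carries out the factorial-moment asymptotics from Lemma~\ref{lem:empty-factorial-moments-m} only to establish the separate ``moreover'' claim, and it reads off the point masses from weak convergence. In the paper these two facts are proved independently of each other.

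You instead let the factorial moments do all the work. You derive $\E[(E)_u]\to\lambda^u$ by the same logarithmic expansion as the paper, and then feed it into the exact identity $\Prob(E=u)=\sum_{j\ge0}(-1)^j\E[(E)_{u+j}]/(u!\,j!)$. The even and odd truncations of that identity sandwich $\Prob(E=u)$, so letting $n\to\infty$ and then the truncation level go to infinity gives the Poisson point masses. Your route is self-contained and avoids any external coverage theorem, at the small cost of invoking the Bonferroni inequalities. The paper's route is shorter on the page and rests on a more general published result, which also covers multi-fold coverage.

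You could even skip the truncation step. Lemma~\ref{lem:Linfty-domination}, whose proof uses only Lemma~\ref{lem:empty-factorial-moments-m} and so creates no circularity, gives $\E[(E)_v]\le (ne^{-mk/n})^v\le B^v$ for all large $n$. Hence dominated convergence applies directly to the finite alternating series.
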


\begin{proof}
In the notation of \cite{SchillingHenze}, \(Z_{n,m,1}(k)\) denotes the number of coupon types not observed after \(k\) independent drawings of uniform \(m\)-subsets of \([n]\). This is exactly our random variable \(E\). Therefore, by \cite[eq.~(4), Thm.~1]{SchillingHenze}, applied with their subset-size parameter \(s=m\), their count parameter \(c=1\), and their window parameter \(x=c\),
\[
E=Z_{n,m,1}(k)\Rightarrow \mathrm{Poisson}(\lambda).
\]
Also, by Lemma~\ref{lem:empty-factorial-moments-m},
\[
\E[(E)_u]=(n)_u\left(\frac{(n-u)_m}{(n)_m}\right)^k.
\]
For fixed \(u\),
\[
(n)_u=n^u(1+o(1)),
\qquad
\log\!\left(\frac{(n-u)_m}{(n)_m}\right)
=-\frac{um}{n}+O\!\left(\frac1{n^2}\right).
\]
Also, \(k=\frac{n}{m}(\log n+c)+O(1)\), so
\begin{align*}
k\log\!\left(\frac{(n-u)_m}{(n)_m}\right)
&=
\left(\frac{n}{m}(\log n+c)+O(1)\right)
\left(-\frac{um}{n}+O\!\left(\frac1{n^2}\right)\right)\\
&=
-u(\log n+c)+o(1).
\end{align*}
Therefore
\[
\left(\frac{(n-u)_m}{(n)_m}\right)^k
=
\exp\!\bigl(-u(\log n+c)+o(1)\bigr)
=
n^{-u}e^{-uc}(1+o(1))
=
n^{-u}\lambda^u(1+o(1)).
\]

Hence
\[
\E[(E)_u]\longrightarrow \lambda^u.
\]

Since both \(E\) and the Poisson limit are \(\mathbb Z_{\ge0}\)-valued, for each fixed \(u\ge0\),
\[
P_k^{(m)}(u)=\Prob(E=u)
=\Prob\!\left(E\le u+\tfrac12\right)-\Prob\!\left(E\le u-\tfrac12\right)
\longrightarrow e^{-\lambda}\frac{\lambda^u}{u!}.\qedhere
\]
\end{proof}

\begin{remark}
For \(m=1\), Proposition~\ref{lem:Pk-m-Poisson} is the classical empty-box Poisson limit used in \cite[Section~3]{DFP}.
\end{remark}

\subsection{TV profile}

For fixed \(m\), the limiting profile is the same \(f_p(c)\) as in the case \(m=1\); only the time scale changes from \(n\log n\) to \(\frac{n}{m}\log n\).

\begin{theorem}[Cutoff profile for colored top-\texorpdfstring{$m$}{m}-to-random]\label{thm:profile-top-m}
Fix \(c\in\R\) and integers \(p\ge1\) and \(m\ge1\), with \(m\) independent of \(n\). Let \(k=k_n(c)\).
Then
\[
\bigl\|(Q^{(m)})^{*k}-U\bigr\|_{\TV}
=f_p(c)+o(1)
\qquad(n\to\infty),
\]
where \(f_p(c)\) is the function from Theorem~\ref{prop:DFP3.1-colored}. In particular, the colored top-\(m\)-to-random shuffle has total-variation cutoff at time \(\frac{n}{m}\log n\) with window \(\frac{n}{m}\).
\end{theorem}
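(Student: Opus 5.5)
The plan is to reduce Theorem~\ref{thm:profile-top-m} to the general Poisson-regime limit, Theorem~\ref{prop:DFP3.1-colored}, by taking \(\mu_n:=\mu_k^{(m)}\) with \(k=k_n(c)\). By Lemma~\ref{prop:Qm-mixture},
\[
(Q^{(m)})^{*k}=\widetilde Q_{\mu_k^{(m)},p},
\qquad
\mu_k^{(m)}(n-u)=P_k^{(m)}(u),
\]
so \((Q^{(m)})^{*k}\) is exactly \(M_n=\widetilde Q_{\mu_n,p}\). The only hypothesis of Theorem~\ref{prop:DFP3.1-colored} is \eqref{eq:mu-Poisson-assump}, namely \(\mu_n(n-u)\to e^{-\lambda}\lambda^u/u!\) for each fixed \(u\ge0\). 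This is precisely Proposition~\ref{lem:Pk-m-Poisson}, applied with \(m\) fixed and \(k=k_n(c)\), which is nonnegative for large \(n\). Hence \(\|(Q^{(m)})^{*k}-U\|_{\TV}\to f_p(c)\).

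There are two small points to check. First, \(\mu_n\) is a pmf on \(\{0,\dots,n\}\), which holds by Definition~\ref{def:occupancy-m}. Second, Theorem~\ref{prop:DFP3.1-colored} needs \(M_n\neq U\) eventually; its proof already supplies this, since \(S_{\mu_n}(w^*)\to s(w^*)>1\).

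For the cutoff statement, I must show that \(f_p\) is a cutoff profile: \(f_p(c)\to1\) as \(c\to-\infty\), \(f_p(c)\to0\) as \(c\to+\infty\), and \(f_p\) is nonincreasing. The limit at \(-\infty\) is Theorem~\ref{thm:doubly-exp}, and the limit at \(+\infty\) follows from the explicit Cases~1--2 (\(w^*=1\) or \(2\) for \(c\ge0\)).

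Monotonicity in \(c\) is the main obstacle, since the paper defers it to Corollary~\ref{cor:profile-monotone}. If it cannot be cited, I would prove it directly from the finite-\(n\) statement, because total variation to \(U\) of a random walk, \(k\mapsto\|Q^{*k}-U\|_{\TV}\), is nonincreasing in \(k\). Given \(c<c'\), we have \(k_n(c)\le k_n(c')\) for large \(n\), so
\[
\|(Q^{(m)})^{*k_n(c')}-U\|_{\TV}\le\|(Q^{(m)})^{*k_n(c)}-U\|_{\TV}.
\]
Letting \(n\to\infty\) gives \(f_p(c')\le f_p(c)\). This avoids any analysis of the piecewise formula in \(w^*(c,p)\).

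Finally, since \(k_n(c)=\lfloor t_n+cb_n\rfloor\) with \(t_n=\frac nm\log n\) and \(b_n=\frac nm\), this matches the definition of cutoff at \(\frac nm\log n\) with window \(\frac nm\).
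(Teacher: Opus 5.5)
Your proposal is correct and follows the paper's proof essentially step for step. Like the paper, you identify $(Q^{(m)})^{*k}=\widetilde Q_{\mu_k^{(m)},p}$ via Lemma~\ref{prop:Qm-mixture}, verify the Poisson hypothesis with Proposition~\ref{lem:Pk-m-Poisson}, and apply Theorem~\ref{prop:DFP3.1-colored}; you then get monotonicity of $f_p$ from $k_n(c)$ being nondecreasing in $c$ together with total variation being nonincreasing in time, and the endpoint limits from Theorem~\ref{thm:doubly-exp} and Cases~1--2, as collected in Remark~\ref{rem:fp-endpoints}.
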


\begin{proof}
Let
\[
\mu_n:=\mu_k^{(m)},
\qquad\text{so}\qquad
\mu_n(n-u)=P_k^{(m)}(u).
\]
By Lemma~\ref{prop:Qm-mixture},
\[
(Q^{(m)})^{*k}
=\sum_{u=0}^{n}P_k^{(m)}(u)\,\widetilde Q_{n-u,p}
=\widetilde Q_{\mu_n,p}.
\]
By Proposition~\ref{lem:Pk-m-Poisson}, for each fixed \(u\ge0\),
\[
\mu_n(n-u)=P_k^{(m)}(u)\longrightarrow e^{-\lambda}\frac{\lambda^u}{u!}.
\]
Thus the hypotheses of Theorem~\ref{prop:DFP3.1-colored} hold, and
\[
\bigl\|(Q^{(m)})^{*k}-U\bigr\|_{\TV}\longrightarrow f_p(c).
\]
Since \(k_n(c)\) is nondecreasing in \(c\) and total variation distance to stationarity is nonincreasing in time, \(f_p\) is nonincreasing; together with the endpoint limits in Remark~\ref{rem:fp-endpoints}, this yields the cutoff statement.
\end{proof}

In particular, \(m=1\) recovers colored top-to-random, while \(m=p=1\) recovers \cite[Theorem~1.1]{DFP}; for \(p=1\) and fixed \(m\), the same profile holds with time scaled by \(1/m\).

\section{Separation, \texorpdfstring{$L^\infty(U)$}{Linfty(U)}, and likelihood-ratio functionals}\label{sec:sepdis}

This section evaluates separation, \(L^\infty(U)\), \(L^q(U)\), \(\chi^2\), and relative entropy using the same \(L_p\)-marginal reduction. It gives exact formulas, Poisson-regime profiles, monotonicity, and mixing-time consequences.

\subsection{Separation and \texorpdfstring{$L^\infty(U)$}{Linfty(U)}}

For probability measures \(M\) and \(U\) on a finite set with \(U(x)>0\) for all \(x\),
\[
\sep(M,U):=\max_x\Bigl(1-\frac{M(x)}{U(x)}\Bigr)
=1-\min_x\frac{M(x)}{U(x)}.
\]
Since
\[
U(x)=\frac{1}{p^n n!}>0
\qquad(x\in G_{n,p}),
\]
we may also write
\[
\left\|\frac{M(\cdot)}{U(\cdot)}-1\right\|_{L^\infty(U)}
=\max_{x\in G_{n,p}}\left|\frac{M(x)}{U(x)}-1\right|.
\]
Set
\[
\ell_{\min}:=\min\{L_p(x):x\in G_{n,p}\}.
\]

\begin{lemma}\label{lem:sep-general-mu}
With \(S_\mu\) as in \eqref{eq:def-Smu}, we have
\[
\sep(\widetilde Q_{\mu,p},U)=1-S_\mu(\ell_{\min}).
\]
\end{lemma}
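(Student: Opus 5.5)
The statement reduces, through the one-statistic formula \eqref{eq:Smu}, to minimizing a nondecreasing function of \(L_p\). By \eqref{eq:Smu}, \(\widetilde Q_{\mu,p}(x)/U(x)=S_\mu(L_p(x))\) for every \(x\in G_{n,p}\). Hence
\[
\min_{x\in G_{n,p}}\frac{\widetilde Q_{\mu,p}(x)}{U(x)}=\min_{x\in G_{n,p}}S_\mu(L_p(x)).
\]
By Lemma~\ref{lem:Smu-monotone}, \(S_\mu\) is nondecreasing on \(\{0,\dots,n\}\). As \(x\) ranges over \(G_{n,p}\), \(L_p(x)\) ranges over the finite set \(\{L_p(x):x\in G_{n,p}\}\). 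The minimum of a nondecreasing function over this set is attained at its least element, \(\ell_{\min}\).

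The plan has three steps.

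First, I note that \(\ell_{\min}\) is well defined and attained. The group \(G_{n,p}\) is finite and nonempty, and \(L_p(x)\in\{0,\dots,n\}\) is well defined because \(x\in A_0=G_{n,p}\) always. Choose \(x_0\) with \(L_p(x_0)=\ell_{\min}\).

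Second, I establish both inequalities for the minimum. For every \(x\) we have \(L_p(x)\ge\ell_{\min}\), so monotonicity gives
\[
S_\mu(L_p(x))\ge S_\mu(\ell_{\min}).
\]
At \(x_0\) equality holds. Therefore \(\min_x \widetilde Q_{\mu,p}(x)/U(x)=S_\mu(\ell_{\min})\).

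Third, I substitute into the definition
\[
\sep(M,U)=1-\min_x \frac{M(x)}{U(x)},
\]
which applies since \(U(x)=1/(p^nn!)>0\) everywhere. This gives \(\sep(\widetilde Q_{\mu,p},U)=1-S_\mu(\ell_{\min})\).

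There is no real obstacle here. The only point needing care is that monotonicity of \(S_\mu\) is non-strict, so the minimum is attained at \(\ell_{\min}\) but possibly not only there. This does not affect the value.

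Identifying \(\ell_{\min}\) explicitly is not part of this lemma, but it will be needed next to recover the \(p=1\) versus \(p\ge2\) formulas in Theorem~\ref{thm:intro-LR}. For \(p\ge2\), coloring label \(n\) with a nonzero color gives \(L_p=0\), so \(\ell_{\min}=0\). For \(p=1\), label \(n\) alone is always in increasing order, so \(L\ge1\), and placing \(n\) before \(n-1\) gives \(L=1\); hence \(\ell_{\min}=1\).
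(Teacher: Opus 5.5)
Your proof is correct and follows the paper's argument: use \eqref{eq:Smu} to write the likelihood ratio as \(S_\mu(L_p(x))\), use Lemma~\ref{lem:Smu-monotone} to place the minimum at \(\ell_{\min}\), then substitute into the definition of separation. Your two-sided check with an explicit minimizer \(x_0\) just spells out the paper's one-line step \(\min_x S_\mu(L_p(x))=S_\mu(\min_x L_p(x))\), and your closing aside matches Lemma~\ref{lem:ellmin} (for \(p=1\), "placing \(n\) before \(n-1\)" needs \(n\ge2\); the case \(n=1\) is immediate since \(L(\mathrm{id})=1\)).
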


\begin{proof}
By \eqref{eq:Smu},
\[
\frac{\widetilde Q_{\mu,p}(x)}{U(x)}=S_\mu(L_p(x)).
\]
Since \(S_\mu\) is nondecreasing (Lemma~\ref{lem:Smu-monotone}),
\[
\min_x\frac{\widetilde Q_{\mu,p}(x)}{U(x)}
=S_\mu\!\left(\min_x L_p(x)\right)
=S_\mu(\ell_{\min}).
\]
Therefore
\[
\sep(\widetilde Q_{\mu,p},U)
=1-\min_x\frac{\widetilde Q_{\mu,p}(x)}{U(x)}
=1-S_\mu(\ell_{\min}).\qedhere
\]
\end{proof}

\begin{lemma}\label{lem:ellmin}
We have
\[
\ell_{\min}=
\begin{cases}
1, & p=1,\\[4pt]
0, & p\ge2.
\end{cases}
\]
\end{lemma}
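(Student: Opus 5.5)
The plan is to determine \(\min_x L_p(x)\) directly from the characterization in Lemma~\ref{lem:Au-Lp-equiv}: \(L_p(x)\ge u\) if and only if \(x\in A_u\). Every \(x\) lies in \(A_0=G_{n,p}\), so \(\ell_{\min}\ge0\) always. The question is therefore only whether \(A_1=G_{n,p}\) (forcing \(\ell_{\min}\ge1\)) and, when it is, whether \(A_2\neq G_{n,p}\) (forcing \(\ell_{\min}=1\)).

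\emph{Case \(p\ge2\).} By \eqref{eq:Au-condition-exact} with \(u=1\), the condition for \(x=(s,\sigma)\in A_1\) reduces to a color condition on a single label: \(s_{\sigma^{-1}(n)}=0\), since the order condition on one label is vacuous. I will exhibit an element violating it, for instance \(x=(s,\mathrm{id})\) with \(s_n=1\ne0\) in \(C_p\) (possible since \(p\ge2\)). Then \(L_p(x)=0\), so \(\ell_{\min}=0\). As a cross-check, Lemma~\ref{lem:Lp-under-U} gives \(U\{L_p\ge1\}=1/p<1\), which already shows that \(A_1\subsetneq G_{n,p}\).

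\emph{Case \(p=1\).} All colors are \(0\), so the condition for \(A_1\) is vacuous and \(A_1=G_{n,1}\); consistently, \(U\{L\ge1\}=1/1!=1\). Hence \(\ell_{\min}\ge1\). For the reverse inequality I need \(x\notin A_2\), i.e.\ \(\sigma^{-1}(n-1)>\sigma^{-1}(n)\), for instance the transposition swapping \(n-1\) and \(n\). Then \(L(x)=1\), so \(\ell_{\min}=1\).

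The only delicate point is the edge case \(n=1\). There \(A_2\) is not defined, and for \(p=1\) the group is trivial, so \(L\equiv n=1\) and the formula \(\ell_{\min}=1\) still holds. For \(p\ge2\) with \(n=1\), the element with color \(1\) has \(L_p=0\). So the argument requires \(n\ge2\) only in the \(p=1\) construction, and the case \(n=1\) is checked directly. There is no real obstacle beyond stating these witnesses correctly.
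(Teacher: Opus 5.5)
Your proof is correct and follows the paper's argument. Both use $A_0=G_{n,p}$ to get $\ell_{\min}\ge0$, the witness $(s,\mathrm{id})$ with $s_n\neq0$ outside $A_1$ when $p\ge2$, and $A_1=S_n$ together with an explicit permutation outside $A_2$ when $p=1$ (the paper uses the reversal, you use the transposition of $n-1$ and $n$; both work), with $n=1$ handled separately.
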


\begin{proof}
Since \(A_0=G_{n,p}\), we always have \(L_p(x)\ge0\).

If \(p=1\), then \(G_{n,1}=S_n\) and \(A_1=S_n\), since the order condition for a single label is vacuous. Hence \(L(\pi)\ge1\) for all \(\pi\in S_n\). If \(n=1\), then \(S_1=\{\mathrm{id}\}\) and \(L(\mathrm{id})=1\). If \(n\ge2\), the reverse permutation \(\mathrm{rev}=[n,n-1,\dots,1]\) satisfies \(\mathrm{rev}\in A_1\setminus A_2\), so \(L(\mathrm{rev})=1\). Thus \(\ell_{\min}=1\).

If \(p\ge2\), then
\[
A_1=\{(s,\sigma):s_{\sigma^{-1}(n)}=0\}.
\]
Choose \(x=(s,\mathrm{id})\) with \(s_n\neq0\). Then \(x\notin A_1\), so \(L_p(x)=0\). Hence \(\ell_{\min}=0\).
\end{proof}

\begin{corollary}\label{cor:sep-general-mu-explicit}
For
\[
\widetilde Q_{\mu,p}=\sum_{u=0}^{n}\mu(n-u)\,\widetilde Q_{n-u,p},
\]
we have
\[
\sep(\widetilde Q_{\mu,p},U)=
\begin{cases}
1-\mu(n)-\mu(n-1), & p=1,\\[4pt]
1-\mu(n), & p\ge2.
\end{cases}
\]
\end{corollary}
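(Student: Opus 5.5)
This corollary follows by combining Lemma~\ref{lem:sep-general-mu} with Lemma~\ref{lem:ellmin}, and then evaluating the partial sum $S_\mu$ from \eqref{eq:def-Smu} at the resulting index. By Lemma~\ref{lem:sep-general-mu},
\[
\sep(\widetilde Q_{\mu,p},U)=1-S_\mu(\ell_{\min}),
\]
so it only remains to identify $\ell_{\min}$ and to compute $S_\mu(\ell_{\min})$ explicitly.

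The proof splits into two cases according to Lemma~\ref{lem:ellmin}.

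\emph{Case $p\ge2$.} Here $\ell_{\min}=0$. The sum defining $S_\mu(0)$ has the single term $u=0$, so
\[
S_\mu(0)=\mu(n)\cdot 0!\cdot p^0=\mu(n).
\]
This gives $\sep(\widetilde Q_{\mu,p},U)=1-\mu(n)$.

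\emph{Case $p=1$.} Here $\ell_{\min}=1$, and the sum has the two terms $u=0,1$:
\[
S_\mu(1)=\mu(n)\cdot 0!\cdot 1^0+\mu(n-1)\cdot 1!\cdot 1^1=\mu(n)+\mu(n-1).
\]
This gives $\sep(\widetilde Q_{\mu,p},U)=1-\mu(n)-\mu(n-1)$.

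The only point needing care is the degenerate case $n=1$ with $p=1$, where the term $\mu(n-1)=\mu(0)$ must still lie in the domain $\{0,\dots,n\}$ of $\mu$. It does: when $n=1$ we have $\mu(0)+\mu(1)=1$, and the formula correctly returns separation $0$ on the trivial group $S_1$. Lemma~\ref{lem:ellmin} already covers $n=1$ (it gives $\ell_{\min}=1$), so no separate argument is needed.

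Beyond this bookkeeping there is no real obstacle. All the substantive work, namely the monotonicity of $S_\mu$ and the identification of $\ell_{\min}$, was done in the two preceding lemmas.
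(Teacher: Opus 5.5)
Your proposal is correct and follows the paper's proof essentially verbatim: combine Lemma~\ref{lem:sep-general-mu} with Lemma~\ref{lem:ellmin}, then evaluate $S_\mu(0)=\mu(n)$ and, for $p=1$, $S_\mu(1)=\mu(n)+\mu(n-1)$. Your extra remark on the degenerate case $n=1$, $p=1$ is also correct.
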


\begin{proof}
By Lemmas~\ref{lem:sep-general-mu} and~\ref{lem:ellmin},
\[
\sep(\widetilde Q_{\mu,p},U)=
\begin{cases}
1-S_\mu(1), & p=1,\\[4pt]
1-S_\mu(0), & p\ge2.
\end{cases}
\]
Now
\[
S_\mu(0)=\mu(n),
\qquad
S_\mu(1)=\mu(n)+\mu(n-1)
\quad(p=1),
\]
since \(1!\,p=1\) when \(p=1\).
\end{proof}

\begin{theorem}\label{thm:sep-poisson-general}
Assume \eqref{eq:mu-Poisson-assump}, and define
\[
s_p(c):=
\begin{cases}
1-e^{-\lambda}(1+\lambda), & p=1,\\[4pt]
1-e^{-\lambda}, & p\ge2.
\end{cases}
\]
Then
\[
\sep(\widetilde Q_{\mu_n,p},U)\longrightarrow s_p(c).
\]
\end{theorem}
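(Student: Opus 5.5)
The plan is to read the limit off the exact finite-$n$ formula in Corollary~\ref{cor:sep-general-mu-explicit} and then pass to the limit term by term using the Poisson hypothesis \eqref{eq:mu-Poisson-assump}. No uniformity or tail control is needed, because the separation formula involves only one or two fixed coordinates of $\mu_n$.

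For every $n$, Corollary~\ref{cor:sep-general-mu-explicit}, applied to $\mu=\mu_n$, gives
\[
\sep(\widetilde Q_{\mu_n,p},U)=
\begin{cases}
1-\mu_n(n)-\mu_n(n-1), & p=1,\\[4pt]
1-\mu_n(n), & p\ge2.
\end{cases}
\]
This formula comes from Lemma~\ref{lem:sep-general-mu} together with the computation $\ell_{\min}\in\{0,1\}$ in Lemma~\ref{lem:ellmin}.

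There is one technical point for $p=1$. Lemma~\ref{lem:ellmin} uses $n\ge2$ to produce $L=1$, but the formula $1-S_\mu(1)$ holds for every $n$. Since only $n\to\infty$ matters, I will simply restrict to $n\ge2$.

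Now I apply \eqref{eq:mu-Poisson-assump}. With $u=0$ it gives $\mu_n(n)\to e^{-\lambda}$, and with $u=1$ it gives $\mu_n(n-1)\to e^{-\lambda}\lambda$. Substituting these limits:
\begin{itemize}[leftmargin=*]
\item for $p\ge2$, the separation tends to $1-e^{-\lambda}$;
\item for $p=1$, it tends to $1-e^{-\lambda}-\lambda e^{-\lambda}=1-e^{-\lambda}(1+\lambda)$.
\end{itemize}
Both values agree with $s_p(c)$, which completes the argument.

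There is no real obstacle here. The only care needed is to observe that the limit involves just the fixed indices $u=0,1$, so pointwise convergence suffices, and that the exact formula applies for all sufficiently large $n$.
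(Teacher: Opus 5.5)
Your proposal is correct and matches the paper's proof exactly: apply Corollary~\ref{cor:sep-general-mu-explicit} to $\mu_n$, then use the Poisson hypothesis at $u=0$ and $u=1$ and substitute the limits. One minor point: Lemma~\ref{lem:ellmin} already treats $n=1$ separately (there $L(\mathrm{id})=1$), so restricting to $n\ge2$ is harmless but not actually needed.
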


\begin{proof}
By Corollary~\ref{cor:sep-general-mu-explicit},
\[
\sep(\widetilde Q_{\mu_n,p},U)=
\begin{cases}
1-\mu_n(n)-\mu_n(n-1), & p=1,\\[4pt]
1-\mu_n(n), & p\ge2.
\end{cases}
\]
The assumption gives
\[
\mu_n(n)\longrightarrow e^{-\lambda},
\qquad
\mu_n(n-1)\longrightarrow e^{-\lambda}\lambda.
\]
Substituting these limits yields the result.
\end{proof}

\begin{corollary}\label{cor:sep-profile-asymp}
Let \(s_p(c)\) be as in Theorem~\ref{thm:sep-poisson-general}. As \(c\to+\infty\),
\[
s_p(c)=
\begin{cases}
\displaystyle \frac12 e^{-2c}+O(e^{-3c}), & p=1,\\[8pt]
\displaystyle e^{-c}+O(e^{-2c}), & p\ge2.
\end{cases}
\]
Also, as \(c\to-\infty\) (equivalently \(\lambda\to\infty\)),
\[
1-s_p(c)\sim
\begin{cases}
\lambda e^{-\lambda}=e^{-c}e^{-e^{-c}}, & p=1,\\[4pt]
e^{-\lambda}=e^{-e^{-c}}, & p\ge2.
\end{cases}
\]
In particular,
\[
1-s_p(c)=\exp\bigl(-(1+o(1))e^{-c}\bigr)
\qquad(c\to-\infty)
\]
for every fixed \(p\ge1\).
\end{corollary}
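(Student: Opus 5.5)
The plan is to compute both regimes directly from the closed form of \(s_p(c)\) in Theorem~\ref{thm:sep-poisson-general}, with \(\lambda=e^{-c}\). No further input from the shuffle is needed; everything reduces to Taylor expansion of \(e^{-\lambda}\) near \(\lambda=0\) and elementary asymptotics as \(\lambda\to\infty\).

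\emph{The regime \(c\to+\infty\), i.e.\ \(\lambda\to0\).} For \(p\ge2\), write \(s_p(c)=1-e^{-\lambda}=\lambda-\lambda^2/2+\cdots\), which gives \(\lambda+O(\lambda^2)=e^{-c}+O(e^{-2c})\). For \(p=1\), expand \(e^{-\lambda}(1+\lambda)\) as
\[
\Bigl(1-\lambda+\tfrac{\lambda^2}{2}-\tfrac{\lambda^3}{6}+O(\lambda^4)\Bigr)(1+\lambda)
=1-\tfrac{\lambda^2}{2}+\tfrac{\lambda^3}{3}+O(\lambda^4).
\]
Hence \(s_1(c)=\tfrac12\lambda^2+O(\lambda^3)=\tfrac12e^{-2c}+O(e^{-3c})\). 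The error terms are uniform because the power series converge absolutely for \(|\lambda|\le1\).

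\emph{The regime \(c\to-\infty\), i.e.\ \(\lambda\to\infty\).} Here \(1-s_p(c)\) is exactly \(e^{-\lambda}\) for \(p\ge2\), and exactly \(e^{-\lambda}(1+\lambda)\sim\lambda e^{-\lambda}\) for \(p=1\). Substituting \(\lambda=e^{-c}\) gives the stated forms.

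For the unified statement, take logarithms. When \(p\ge2\), \(\log(1-s_p)=-\lambda\) exactly. When \(p=1\), \(\log(1-s_1)=-\lambda+\log(1+\lambda)=-\lambda\,(1+o(1))\), since \(\log(1+\lambda)=o(\lambda)\). In both cases \(1-s_p(c)=\exp(-(1+o(1))e^{-c})\).

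There is no real obstacle here. The only point needing care is to track the cubic term in the \(p=1\) expansion, to confirm that the error is genuinely \(O(e^{-3c})\) and that the \(\lambda^2\) coefficient is exactly \(\tfrac12\).
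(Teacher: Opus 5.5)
Your proposal is correct and follows essentially the same route as the paper. Both arguments Taylor-expand the closed form of \(s_p\) near \(\lambda=0\), and read off \(1-s_p\) exactly as \(e^{-\lambda}\) or \(e^{-\lambda}(1+\lambda)\) as \(\lambda\to\infty\). The only cosmetic difference is in the unified statement: you take logarithms of \(e^{-\lambda}(1+\lambda)\) directly and use \(\log(1+\lambda)=o(\lambda)\), whereas the paper first passes to \(\lambda e^{-\lambda}\) and uses \(\log\lambda=o(\lambda)\).
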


\begin{proof}
If \(p=1\), then
\[
s_1(c)=1-e^{-\lambda}(1+\lambda).
\]
As \(\lambda\to0\),
\[
e^{-\lambda}(1+\lambda)=1-\frac{\lambda^2}{2}+O(\lambda^3),
\]
so
\[
s_1(c)=\frac{\lambda^2}{2}+O(\lambda^3)
=\frac12 e^{-2c}+O(e^{-3c}).
\]
If \(p\ge2\), then
\[
s_p(c)=1-e^{-\lambda}=\lambda+O(\lambda^2)=e^{-c}+O(e^{-2c})
\qquad(\lambda\to0).
\]

As \(\lambda\to\infty\), if \(p=1\), then
\[
1-s_1(c)=e^{-\lambda}(1+\lambda)\sim \lambda e^{-\lambda}.
\]
If \(p\ge2\), then
\[
1-s_p(c)=e^{-\lambda}.
\]
Also,
\[
\lambda e^{-\lambda}
=
\exp(-\lambda+\log \lambda)
=
\exp\bigl(-(1+o(1))\lambda\bigr)
\qquad(\lambda\to\infty),
\]
since \(\log \lambda=o(\lambda)\). Thus in both cases
\[
1-s_p(c)=\exp\bigl(-(1+o(1))\lambda\bigr).
\]
Replacing \(\lambda\) by \(e^{-c}\) gives the result.
\end{proof}

\begin{remark}
The pointwise Poisson assumption \eqref{eq:mu-Poisson-assump} controls the fixed low-order weights \(\mu_n(n-u)\). This is enough for separation, which depends only on \(\mu_n(n)\) and, when \(p=1\), also on \(\mu_n(n-1)\); see Corollary~\ref{cor:sep-general-mu-explicit}. It is also enough for total variation, since the proof of Theorem~\ref{prop:DFP3.1-colored} shows that the threshold \(w_{\mu_n}\) is eventually in \(\{w^*(c,p)-1,w^*(c,p)\}\), so only finitely many low-order weights matter.

By contrast, \(L^\infty(U)\), \(L^q(U)\) for \(q>1\), \(\chi^2\), and relative entropy are sensitive to tails. They involve the amplified quantities
\[
S_{\mu_n}(\ell)=\sum_{u=0}^{\ell}\mu_n(n-u)\,u!\,p^u,
\]
summed over all \(\ell\), or, for \(L^\infty(U)\), over all \(u\). Thus pointwise convergence in fixed \(u\) does not prevent mass from escaping to large \(u\), where the factor \(u!\,p^u\) can be large. This is why the profile results for these metrics require the additional domination hypotheses used in Theorems~\ref{thm:Linfty-poisson-general} and \ref{thm:Phi-top-m-asymp} in their finite-convergence regimes.
\end{remark}

\begin{theorem}[Exact and asymptotic separation for colored top-\texorpdfstring{$m$}{m}-to-random]\label{thm:sep-top-m}
Fix \(p\ge1\) and \(1\le m\le n\). Then, for every \(k\ge0\),
\[
\sep\bigl((Q^{(m)})^{*k},U\bigr)=
\begin{cases}
1-P_k^{(m)}(0)-P_k^{(m)}(1), & p=1,\\[4pt]
1-P_k^{(m)}(0), & p\ge2.
\end{cases}
\]
Moreover, if \(m\) is fixed, independent of \(n\), and \(c\in\R\), and \(k=k_n(c)\),
then
\[
\sep\bigl((Q^{(m)})^{*k},U\bigr)\longrightarrow
s_p(c)=\begin{cases}
1-e^{-\lambda}(1+\lambda), & p=1,\\[4pt]
1-e^{-\lambda}, & p\ge2.
\end{cases}
\]
\end{theorem}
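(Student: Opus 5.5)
The plan is to reduce this to the general separation results already proved for nested-set mixtures; no new estimate is needed.

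For the exact formula, fix \(k\ge0\) and \(1\le m\le n\). Lemma~\ref{prop:Qm-mixture} gives
\[
(Q^{(m)})^{*k}=\widetilde Q_{\mu,p},
\qquad
\mu(n-u)=P_k^{(m)}(u).
\]
For \(k=0\) this holds with \(\mu(0)=1\), i.e.\ \(P_0^{(m)}(n)=1\). I will then apply Corollary~\ref{cor:sep-general-mu-explicit}, which was derived from Lemmas~\ref{lem:sep-general-mu} and~\ref{lem:ellmin}. It says that separation equals \(1-\mu(n)-\mu(n-1)\) when \(p=1\), and \(1-\mu(n)\) when \(p\ge2\). Substituting \(\mu(n)=P_k^{(m)}(0)\) and \(\mu(n-1)=P_k^{(m)}(1)\) gives the displayed exact formula.

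One degenerate case needs a check: \(n=1\) with \(p=1\). Then \(\mu(n-1)=\mu(0)\), and the formula should give separation \(0\), since the group is trivial. This is consistent: \(P_k(0)+P_k(1)=1\) because \(E\in\{0,1\}\). Lemma~\ref{lem:ellmin} already covers \(n=1\), so no separate argument is needed.

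For the asymptotic statement, set \(\mu_n:=\mu_k^{(m)}\) with \(k=k_n(c)\). Proposition~\ref{lem:Pk-m-Poisson} gives \(\mu_n(n-u)=P_k^{(m)}(u)\to e^{-\lambda}\lambda^u/u!\) for each fixed \(u\), which is exactly hypothesis \eqref{eq:mu-Poisson-assump}. Theorem~\ref{thm:sep-poisson-general} then yields convergence to \(s_p(c)\). Alternatively, one can pass to the limit directly in the exact formula, since only \(u=0\) and \(u=1\) appear.

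There is no real obstacle. The only points requiring care are that the mixture identification in Lemma~\ref{prop:Qm-mixture} includes \(k=0\), and that \(k_n(c)\ge0\) for large \(n\), so the exact formula applies along the sequence.
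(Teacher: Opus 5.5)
Your proposal is correct and matches the paper's proof: identify \((Q^{(m)})^{*k}=\widetilde Q_{\mu_k^{(m)},p}\) via Lemma~\ref{prop:Qm-mixture} (with \(k=0\) giving \(\delta_e=\widetilde Q_{0,p}\)), then read off the exact formula from Corollary~\ref{cor:sep-general-mu-explicit}. For the limit, Proposition~\ref{lem:Pk-m-Poisson} verifies the Poisson hypothesis, so Theorem~\ref{thm:sep-poisson-general} applies; your extra check of the \(n=1\), \(p=1\) case is consistent but not needed.
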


\begin{proof}
If \(k=0\), then
\[
(Q^{(m)})^{*0}=\delta_e=\widetilde Q_{0,p},
\]
so the exact formula follows immediately from Corollary~\ref{cor:sep-general-mu-explicit}. Assume henceforth that \(k\ge1\).

Take \(\mu=\mu_k^{(m)}\), so
\[
(Q^{(m)})^{*k}
=\widetilde Q_{\mu,p},
\qquad
\mu(n-u)=P_k^{(m)}(u).
\]

The exact formula is Corollary~\ref{cor:sep-general-mu-explicit}. By Proposition~\ref{lem:Pk-m-Poisson},
\[
\mu(n-u)=P_k^{(m)}(u)\longrightarrow e^{-\lambda}\frac{\lambda^u}{u!}
\qquad(u\ge0\ \text{fixed}).
\]
Hence Theorem~\ref{thm:sep-poisson-general} applies.
\end{proof}

The following results connect the separation distance with the total variation distance.

\begin{theorem}[TV--separation comparison for nested-set mixtures]\label{thm:TV-vs-sep-general-mu}
For every nested-set mixture \(\widetilde Q_{\mu,p}\),
\[
\|\widetilde Q_{\mu,p}-U\|_{\TV}\le \sep(\widetilde Q_{\mu,p},U).
\]
Moreover,
\[
\frac12\,\sep(\widetilde Q_{\mu,1},U)\le \|\widetilde Q_{\mu,1}-U\|_{\TV},
\]
and, for \(p\ge2\),
\[
\left(1-\frac1p\right)\sep(\widetilde Q_{\mu,p},U)\le \|\widetilde Q_{\mu,p}-U\|_{\TV}.
\]
\end{theorem}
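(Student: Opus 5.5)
The plan is to work entirely with the one-statistic likelihood ratio \eqref{eq:Smu}, \(\widetilde Q_{\mu,p}(x)/U(x)=S_\mu(L_p(x))\). There are two ingredients. The first is Proposition~\ref{prop:TV-likelihood}. Its proof also shows that, because \(Z:=S_\mu(L_p)-1\) has \(U\)-mean zero, total variation can be written through the negative part:
\[
\|\widetilde Q_{\mu,p}-U\|_{\TV}
=\mathbb E_U\bigl[(S_\mu(L_p)-1)_+\bigr]
=\mathbb E_U\bigl[(1-S_\mu(L_p))_+\bigr].
\]
The second is Lemma~\ref{lem:sep-general-mu}, which gives \(\sep(\widetilde Q_{\mu,p},U)=1-S_\mu(\ell_{\min})\), with \(\ell_{\min}\) computed in Lemma~\ref{lem:ellmin}.

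For the upper bound I would bound the negative-part expression pointwise. By monotonicity of \(S_\mu\) (Lemma~\ref{lem:Smu-monotone}), \((1-S_\mu(L_p(x)))_+\le(1-S_\mu(\ell_{\min}))_+\) for every \(x\). Next I check that \(1-S_\mu(\ell_{\min})\ge0\):
\begin{itemize}
\item for \(p\ge2\), \(S_\mu(0)=\mu(n)\le1\);
\item for \(p=1\), \(S_\mu(1)=\mu(n)+\mu(n-1)\le1\).
\end{itemize}
Taking \(U\)-expectations then gives TV \(\le\) sep. Alternatively, this direction is simply Proposition~\ref{prop:distance-comparison} with \(k\) replaced by a general measure, which is \cite[Lemma~6.16]{LPW}.

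For the lower bounds, I keep only the event \(\{L_p=\ell_{\min}\}\) in the negative-part expectation:
\[
\|\widetilde Q_{\mu,p}-U\|_{\TV}\ge U\{L_p=\ell_{\min}\}\,\bigl(1-S_\mu(\ell_{\min})\bigr)
=U\{L_p=\ell_{\min}\}\,\sep(\widetilde Q_{\mu,p},U).
\]
Lemma~\ref{lem:Lp-under-U} supplies the constant:
\begin{itemize}
\item for \(p\ge2\), \(U\{L_p=0\}=1-\frac1p\);
\item for \(p=1\), \(U\{L=1\}=1-\frac12=\frac12\), valid when \(n\ge2\).
\end{itemize}

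The only delicate point is the degenerate case \(n=1\) with \(p=1\). There \(G_{1,1}\) is trivial, \(\widetilde Q_{\mu,1}=U\), both sides vanish, and the inequality holds trivially. Similarly, if \(n\ge1\), \(p\ge2\) and \(\ell_{\min}=0\le n-1\), the mass formula \eqref{eq:Lp-mass} applies directly. Apart from this bookkeeping I expect no real obstacle; the main step is recognizing that the negative-part form of TV is the one that isolates the minimal level set.
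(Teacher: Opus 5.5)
Your proof is correct and is essentially the paper's argument. For the upper bound, the paper cites Proposition~\ref{prop:distance-comparison}; your pointwise negative-part bound is just that lemma's standard proof. For the lower bounds, the paper evaluates $\widetilde Q_{\mu,p}(A)-U(A)$ on $A=A_1$ ($p\ge2$) and $A=A_2$ ($p=1$, $n\ge2$), which are exactly the complements of your bottom level set $\{L_p=\ell_{\min}\}$, so both routes compute the same quantity $U\{L_p=\ell_{\min}\}\,\sep(\widetilde Q_{\mu,p},U)$. The paper gets it from the mixture masses $\widetilde Q_{n-u,p}(A)$, whereas you read it off the $L_p$-marginal in Lemma~\ref{lem:Lp-under-U}.
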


\begin{proof}
For the upper bound, apply Proposition~\ref{prop:distance-comparison} to the right random walk on \(G_{n,p}\) with one-step law \(\widetilde Q_{\mu,p}\), at time \(k=1\) and starting point \(e\). Since
\[
P^1(e,\cdot)=\widetilde Q_{\mu,p},
\]
it gives
\[
\|\widetilde Q_{\mu,p}-U\|_{\TV}\le \sep(\widetilde Q_{\mu,p},U).
\]

For the lower bounds, use
\[
\|\widetilde Q_{\mu,p}-U\|_{\TV}
=\max_{A\subseteq G_{n,p}}|\widetilde Q_{\mu,p}(A)-U(A)|.
\]

If \(p=1\) and \(n=1\), then \(G_{1,1}\) is a singleton, so both sides are \(0\). Assume \(p=1\) and \(n\ge2\), and take \(A=A_2\). Since \(A_0=A_1=S_n\),
\[
\widetilde Q_{n,1}=\widetilde Q_{n-1,1}=U.
\]
Also,
\[
A_2=\{\pi\in S_n:\pi^{-1}(n-1)<\pi^{-1}(n)\},
\]
so
\[
\widetilde Q_{n,1}(A_2)=\widetilde Q_{n-1,1}(A_2)=U(A_2)=\frac12.
\]
For \(u\ge2\), we have \(A_u\subseteq A_2\), hence
\[
\widetilde Q_{n-u,1}(A_2)=1.
\]
Therefore, by Definition~\ref{def:LR-general-mu},
\[
\widetilde Q_{\mu,1}(A_2)
=\sum_{u=0}^{n}\mu(n-u)\,\widetilde Q_{n-u,1}(A_2)
=\frac12\mu(n)+\frac12\mu(n-1)+\sum_{u=2}^{n}\mu(n-u)
=1-\frac12\bigl(\mu(n)+\mu(n-1)\bigr).
\]
Thus
\[
\widetilde Q_{\mu,1}(A_2)-U(A_2)
=\frac12\bigl(1-\mu(n)-\mu(n-1)\bigr)
=\frac12\,\sep(\widetilde Q_{\mu,1},U),
\]
by Corollary~\ref{cor:sep-general-mu-explicit}.

If \(p\ge2\), take \(A=A_1\). Then
\[
\widetilde Q_{n,p}(A_1)=\frac{|A_1|}{|A_0|}=U(A_1)=\frac1p,
\qquad
\widetilde Q_{n-u,p}(A_1)=1
\quad(u\ge1).
\]
Thus
\[
\widetilde Q_{\mu,p}(A_1)
=\frac1p\,\mu(n)+\sum_{u=1}^{n}\mu(n-u)
=1-\left(1-\frac1p\right)\mu(n),
\]
while
\[
U(A_1)=\frac1p.
\]
Hence
\[
\widetilde Q_{\mu,p}(A_1)-U(A_1)
=\left(1-\frac1p\right)(1-\mu(n))
=\left(1-\frac1p\right)\sep(\widetilde Q_{\mu,p},U),
\]
again by Corollary~\ref{cor:sep-general-mu-explicit}.
\end{proof}

\begin{corollary}[Exact TV--separation identities in the late window]\label{cor:TV-sep-ratio}
Let \(\widetilde Q_{\mu,p}\) be a nested-set mixture. If \(p\ge2\) and \(S_\mu(1)>1\), then
\[
\|\widetilde Q_{\mu,p}-U\|_{\TV}
=
\left(1-\frac1p\right)\sep(\widetilde Q_{\mu,p},U).
\]
If \(p=1\), \(n\ge2\), and \(S_\mu(2)>1\), then
\[
\|\widetilde Q_{\mu,1}-U\|_{\TV}
=
\frac12\,\sep(\widetilde Q_{\mu,1},U).
\]

Consequently, under \eqref{eq:mu-Poisson-assump} with \(c\ge0\), for all sufficiently large \(n\),
\[
\|M_n-U\|_{\TV}
=
\begin{cases}
\displaystyle \frac12\,\sep(M_n,U), & p=1,\\[8pt]
\displaystyle \left(1-\frac1p\right)\sep(M_n,U), & p\ge2.
\end{cases}
\]
In particular, for fixed \(m\ge1\), fixed \(c\ge0\), and \(k=k_n^{(m)}(c)\), the same identities hold with \(M_n=(Q^{(m)})^{*k}\) for all sufficiently large \(n\).
\end{corollary}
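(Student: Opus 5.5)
The plan is to start from the exact TV formula of Theorem~\ref{prop:TV-exact-general-mu} and to show that, under the stated hypotheses, the threshold index \(w_\mu\) is \(1\) (for \(p\ge2\)) or \(2\) (for \(p=1\)). Evaluating the formula at that index and comparing with Corollary~\ref{cor:sep-general-mu-explicit} should then give the identities.

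\emph{Case \(p\ge2\).} If \(S_\mu(0)=\mu(n)>1\) were possible we would have \(w_\mu=0\), but \(\mu(n)\le1\), so \(S_\mu(0)\le1\). The hypothesis \(S_\mu(1)>1\) then forces \(w_\mu=1\). It also shows \(\widetilde Q_{\mu,p}\ne U\), since otherwise \(S_\mu\equiv1\) by \eqref{eq:Smu}. Substituting \(w_\mu=1\) into Theorem~\ref{prop:TV-exact-general-mu} gives
\[
1-\mu(n)+\tfrac1p(\mu(n)-1)=\bigl(1-\tfrac1p\bigr)(1-\mu(n)).
\]
By Corollary~\ref{cor:sep-general-mu-explicit} this equals \((1-\tfrac1p)\sep\).

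\emph{Case \(p=1\), \(n\ge2\).} Here \(S_\mu(1)=\mu(n)+\mu(n-1)\le1\), and \(S_\mu(2)>1\) forces \(w_\mu=2\). Theorem~\ref{prop:TV-exact-general-mu} then gives
\[
1-\mu(n)-\mu(n-1)+\tfrac12(\mu(n)+\mu(n-1)-1)=\tfrac12(1-\mu(n)-\mu(n-1)),
\]
which is \(\tfrac12\sep\).

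An alternative route uses the sets exhibited in the proof of Theorem~\ref{thm:TV-vs-sep-general-mu}. Claim~2 in the proof of Theorem~\ref{prop:TV-exact-general-mu} identifies TV with \(\widetilde Q_{\mu,p}(A_{w_\mu})-U(A_{w_\mu})\), and the computations there evaluate this on \(A_1\) and \(A_2\).

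For the asymptotic consequence, assume \eqref{eq:mu-Poisson-assump} with \(c\ge0\). By \eqref{eq:Smu-pointwise-limit}, \(S_{\mu_n}(\ell)\to s(\ell)\) for each fixed \(\ell\), so it suffices to check \(s(1)>1\) when \(p\ge2\) and \(s(2)>1\) when \(p=1\). Both inequalities are established in the "Case 1" and "Case 2" computations following Remark~\ref{rem:fp-extend-sums}: there \(1<e^{-\lambda}(1+p\lambda)\) for \(0<\lambda\le1\), \(p\ge2\), and \(1<e^{-\lambda}(1+\lambda+\lambda^2)\) for \(0<\lambda\le 1\). Strict inequality in the limit gives \(S_{\mu_n}(1)>1\), respectively \(S_{\mu_n}(2)>1\), for all large \(n\), so the finite-\(n\) identities apply. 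For \(p=1\) we may take \(n\ge2\) eventually.

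The top-\(m\) statement then follows by taking \(\mu_n=\mu_k^{(m)}\). Lemma~\ref{prop:Qm-mixture} gives \((Q^{(m)})^{*k}=\widetilde Q_{\mu_n,p}\), and Proposition~\ref{lem:Pk-m-Poisson} verifies the Poisson hypothesis.

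The only delicate point is making sure the limiting inequalities are strict, since only strictness passes to large \(n\). The Case computations supply exactly this strictness, so I expect no real obstacle.
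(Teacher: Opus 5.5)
Your proposal is correct and follows essentially the same argument as the paper. You pin down \(w_\mu=1\) (resp.\ \(w_\mu=2\)) from \(S_\mu(0)=\mu(n)\le1\) (resp.\ \(S_\mu(1)=\mu(n)+\mu(n-1)\le1\)), evaluate the exact formula of Theorem~\ref{prop:TV-exact-general-mu}, match it with Corollary~\ref{cor:sep-general-mu-explicit}, and pass to large \(n\) via the strict limits \(e^{-\lambda}(1+p\lambda)>1\) and \(e^{-\lambda}(1+\lambda+\lambda^2)>1\) for \(0<\lambda\le1\); your explicit check that \(\widetilde Q_{\mu,p}\ne U\), so the theorem's formula applies, is a small point the paper leaves implicit.
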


\begin{proof}
If \(p\ge2\), then \(S_\mu(0)=\mu(n)\le1\), so \(S_\mu(1)>1\) implies \(w_\mu=1\). By Theorem~\ref{prop:TV-exact-general-mu},
\[
\|\widetilde Q_{\mu,p}-U\|_{\TV}
=
1-\mu(n)+\frac1p\bigl(\mu(n)-1\bigr)
=
\left(1-\frac1p\right)(1-\mu(n)).
\]
Corollary~\ref{cor:sep-general-mu-explicit} now proves the first identity.

If \(p=1\), then \(S_\mu(1)=\mu(n)+\mu(n-1)\le1\), so \(S_\mu(2)>1\) implies \(w_\mu=2\). By Theorem~\ref{prop:TV-exact-general-mu},
\[
\begin{aligned}
\|\widetilde Q_{\mu,1}-U\|_{\TV}
&=
1-\mu(n)-\mu(n-1)
+\frac12\bigl(\mu(n)+\mu(n-1)-1\bigr)\\
&=
\frac12\bigl(1-\mu(n)-\mu(n-1)\bigr).
\end{aligned}
\]
Corollary~\ref{cor:sep-general-mu-explicit} now proves the second identity.

For the Poisson-regime statement, by \eqref{eq:Smu-pointwise-limit},
\[
S_{\mu_n}(1)\to e^{-\lambda}(1+p\lambda)
\qquad(p\ge2),
\]
and
\[
S_{\mu_n}(2)\to e^{-\lambda}(1+\lambda+\lambda^2)
\qquad(p=1).
\]
If \(c\ge0\), then \(0<\lambda\le1\). For \(p\ge2\),
\[
e^\lambda<1+2\lambda\le1+p\lambda,
\]
so \(e^{-\lambda}(1+p\lambda)>1\). For \(p=1\),
\[
e^\lambda<1+\lambda+\lambda^2,
\]
so \(e^{-\lambda}(1+\lambda+\lambda^2)>1\). Thus the relevant hypothesis holds for all sufficiently large \(n\). The top-\(m\) statement follows by taking \(M_n=(Q^{(m)})^{*k}\) and using Lemma~\ref{prop:Qm-mixture} and Proposition~\ref{lem:Pk-m-Poisson}.
\end{proof}

\begin{corollary}[Separation mixing time]\label{cor:sep-mixing-time}
Fix \(p\ge1\) and \(m\ge1\), independent of \(n\). Let
\[
t_{\sep}^{(m)}(\varepsilon)
:=\min\left\{k\ge0:\sep\bigl((Q^{(m)})^{*k},U\bigr)\le\varepsilon\right\},
\qquad 0<\varepsilon<1.
\]
Let \(c_{\varepsilon,p}\) be the unique real number satisfying
\[
s_p(c_{\varepsilon,p})=\varepsilon.
\]
Then
\[
t_{\sep}^{(m)}(\varepsilon)
=
\frac{n}{m}\bigl(\log n+c_{\varepsilon,p}\bigr)+o(n).
\]
For \(p\ge2\),
\[
c_{\varepsilon,p}=-\log\bigl(-\log(1-\varepsilon)\bigr).
\]
\end{corollary}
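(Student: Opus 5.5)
The plan is to sandwich \(t_{\sep}^{(m)}(\varepsilon)\) between \(k_n(c_{\varepsilon,p}-\delta)\) and \(k_n(c_{\varepsilon,p}+\delta)\) for arbitrary \(\delta>0\), using the pointwise separation limit of Theorem~\ref{thm:sep-top-m}. The key facts are that \(s_p\) is continuous and strictly decreasing in \(c\), with limits \(1\) and \(0\), and that separation is nonincreasing in \(k\).

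First I would verify that \(c_{\varepsilon,p}\) is well defined. Write \(s_p\) as a function of \(\lambda=e^{-c}\in(0,\infty)\). For \(p\ge2\), \(\lambda\mapsto 1-e^{-\lambda}\) is continuous and strictly increasing from \(0\) to \(1\). For \(p=1\), \(\frac{d}{d\lambda}\bigl(1-e^{-\lambda}(1+\lambda)\bigr)=\lambda e^{-\lambda}>0\), again with range \((0,1)\). Since \(c\mapsto\lambda\) is a strictly decreasing bijection \(\R\to(0,\infty)\), the function \(s_p\) is a continuous strictly decreasing bijection \(\R\to(0,1)\). Hence each \(\varepsilon\in(0,1)\) has a unique preimage \(c_{\varepsilon,p}\). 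For \(p\ge2\), solving \(1-e^{-\lambda}=\varepsilon\) gives \(\lambda=-\log(1-\varepsilon)\), so \(c_{\varepsilon,p}=-\log(-\log(1-\varepsilon))\).

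Second, I would establish monotonicity of \(k\mapsto\sep((Q^{(m)})^{*k},U)\). The exact formula in Theorem~\ref{thm:sep-top-m} expresses separation as \(1-\Prob(E_k\le \ell_{\min})\), where \(E_k\) is the number of never-chosen labels after \(k\) rounds. Under the natural coupling that adds rounds one at a time, \(E_k\) is nonincreasing in \(k\). Therefore \(\Prob(E_k\le \ell_{\min})\) is nondecreasing, and separation is nonincreasing in \(k\). Alternatively, one can cite the submultiplicativity/monotonicity of separation for random walks on groups.

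Third comes the sandwich. Fix \(\delta>0\). Strict monotonicity gives \(s_p(c_{\varepsilon,p}+\delta)<\varepsilon<s_p(c_{\varepsilon,p}-\delta)\). By Theorem~\ref{thm:sep-top-m}, for all large \(n\),
\[
\sep\bigl((Q^{(m)})^{*k_n(c_{\varepsilon,p}+\delta)},U\bigr)<\varepsilon<\sep\bigl((Q^{(m)})^{*k_n(c_{\varepsilon,p}-\delta)},U\bigr).
\]
Combining this with the monotonicity in \(k\) gives
\[
k_n(c_{\varepsilon,p}-\delta)<t_{\sep}^{(m)}(\varepsilon)\le k_n(c_{\varepsilon,p}+\delta).
\]
Since \(k_n(c)=\frac nm(\log n+c)+O(1)\), this yields
\[
\Bigl|t_{\sep}^{(m)}(\varepsilon)-\tfrac nm(\log n+c_{\varepsilon,p})\Bigr|\le \tfrac{n}{m}\delta+O(1)
\]
for all large \(n\). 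Because \(\delta\) is arbitrary, the deviation is \(o(n)\).

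The main (mild) obstacle is the monotonicity in \(k\), which the sandwich argument needs. The coupling of the \(m\)-subset occupancy model handles it cleanly. Everything else reduces to continuity and strict monotonicity of \(s_p\).
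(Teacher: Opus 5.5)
Your proposal is correct and follows essentially the same sandwich argument as the paper. It brackets $t_{\sep}^{(m)}(\varepsilon)$ between $k_n(c_{\varepsilon,p}-\delta)$ and $k_n(c_{\varepsilon,p}+\delta)$ using Theorem~\ref{thm:sep-top-m}, monotonicity in $k$, and $k_n(c)=\frac nm(\log n+c)+O(1)$, then lets $\delta\downarrow0$. The one difference is that you justify monotonicity of separation in $k$ directly, from the exact formula $1-\Prob(E_k\le\ell_{\min})$ and the monotone coupling of the occupancy model, whereas the paper cites the general monotonicity of separation \cite[Exercise~6.4]{LPW}; both are valid.
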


\begin{proof}
The function \(s_p\) is continuous and strictly decreasing from \(1\) to \(0\), so \(c_{\varepsilon,p}\) exists and is unique. Fix \(\delta>0\), and set
\[
k_n^\pm:=k_n(c_{\varepsilon,p}\pm\delta).
\]
By Theorem~\ref{thm:sep-top-m},
\[
\sep\bigl((Q^{(m)})^{*k_n^-},U\bigr)\to s_p(c_{\varepsilon,p}-\delta)>\varepsilon,
\]
and
\[
\sep\bigl((Q^{(m)})^{*k_n^+},U\bigr)\to s_p(c_{\varepsilon,p}+\delta)<\varepsilon.
\]
By the standard monotonicity of separation distance \cite[Exercise~6.4]{LPW}, for all sufficiently large \(n\),
\[
k_n^-<t_{\sep}^{(m)}(\varepsilon)\le k_n^+.
\]
Since \(k_n(c)=\frac{n}{m}(\log n+c)+O(1)\), these bounds imply
\[
-\frac{\delta}{m}+o(1)
\le
\frac{1}{n}\left(t_{\sep}^{(m)}(\varepsilon)
-\frac{n}{m}(\log n+c_{\varepsilon,p})\right)
\le
\frac{\delta}{m}+o(1).
\]
First letting \(n\to\infty\) with \(\delta\) fixed, and then letting \(\delta\downarrow0\), gives
\[
t_{\sep}^{(m)}(\varepsilon)
=
\frac{n}{m}\bigl(\log n+c_{\varepsilon,p}\bigr)+o(n).
\]
For \(p\ge2\), \(s_p(c)=1-e^{-e^{-c}}\), which gives the displayed formula.
\end{proof}

\begin{remark}
A corresponding asymptotic for the total-variation mixing time also follows from Theorem~\ref{thm:profile-top-m} and the later monotonicity of \(f_p\); unlike separation, however, it is not given by a comparably simple closed formula, since \(f_p\) is piecewise through \(w^*(c,p)\).
\end{remark}

We now turn from separation to \(\,L^\infty(U)\,\) and related likelihood-ratio functionals. Recall that
\[
\mu_k^{(m)}(n-u)=P_k^{(m)}(u),
\qquad
S_{\mu_k^{(m)}}(\ell)=\sum_{u=0}^{\ell}P_k^{(m)}(u)\,u!\,p^u.
\]

\begin{theorem}[Exact \texorpdfstring{$L^\infty(U)$}{Linfty(U)} formula]\label{thm:Linfty-general-mu}
We have
\[
\left\|\frac{\widetilde Q_{\mu,p}(\cdot)}{U(\cdot)}-1\right\|_{L^\infty(U)}
=S_\mu(n)-1
=\sum_{u=0}^{n}\mu(n-u)\,u!\,p^u-1.
\]
\end{theorem}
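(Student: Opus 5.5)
The plan is to reduce the $L^\infty(U)$ norm to a comparison between the two extreme values of the nondecreasing function $S_\mu$. By \eqref{eq:Smu}, $\widetilde Q_{\mu,p}(x)/U(x)=S_\mu(L_p(x))$. Hence
\[
\left\|\frac{\widetilde Q_{\mu,p}(\cdot)}{U(\cdot)}-1\right\|_{L^\infty(U)}=\max_{x}|S_\mu(L_p(x))-1|.
\]
By Lemma~\ref{lem:Smu-monotone}, $S_\mu$ is nondecreasing. So the maximum of $|S_\mu(L_p)-1|$ over $G_{n,p}$ is attained at one of the extremes of $L_p$. It equals $\max\{S_\mu(\ell_{\max})-1,\;1-S_\mu(\ell_{\min})\}$, where $\ell_{\max}=\max_x L_p(x)=n$, because the identity lies in $A_n$. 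By Lemma~\ref{lem:Smu-plateau-max}, $S_\mu(n)=S_\mu(r_\mu)$ is the maximum of the likelihood ratio. By Lemma~\ref{lem:sep-general-mu}, $1-S_\mu(\ell_{\min})=\sep(\widetilde Q_{\mu,p},U)$.

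The key step is to show that the upper deviation dominates, i.e.\ $S_\mu(n)-1\ge 1-S_\mu(\ell_{\min})$. I would use the normalization
\[
1=\sum_{\ell=0}^{n}U\{L_p=\ell\}\,S_\mu(\ell)
\]
from the proof of Theorem~\ref{prop:TV-exact-general-mu}. It shows that $S_\mu(L_p)$ has $U$-mean $1$, so $S_\mu(n)\ge1$ and $S_\mu(\ell_{\min})\le1$.

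This alone does not give the inequality. I would instead use the explicit form $S_\mu(n)=\sum_{u}\mu(n-u)\,u!\,p^u$. For $p\ge2$, $\ell_{\min}=0$ by Lemma~\ref{lem:ellmin}, and I need
\[
\sum_u \mu(n-u)\,u!\,p^u-1\ \ge\ 1-\mu(n)=\sum_{u\ge1}\mu(n-u).
\]
Rewriting $1=\sum_u\mu(n-u)$, this becomes $\sum_{u\ge1}\mu(n-u)(u!\,p^u-2)\ge0$. That holds because $u!\,p^u\ge p\ge2$ for $u\ge1$.

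For $p=1$, $\ell_{\min}=1$, and the target is
\[
\sum_u \mu(n-u)\,u!-1\ \ge\ \sum_{u\ge2}\mu(n-u).
\]
This reduces to $\sum_{u\ge2}\mu(n-u)(u!-2)\ge0$, which holds since $u!\ge2$ for $u\ge2$. In both cases the $u=0$ (and, for $p=1$, $u=1$) terms cancel exactly.

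This comparison is the only nontrivial step: the naive bound from ``mean $1$'' is insufficient. The termwise inequality $u!\,p^u\ge2$ on the relevant range of $u$ is what closes the argument. Once it is established, the maximum equals $S_\mu(n)-1$, which gives the displayed formula $\sum_{u=0}^{n}\mu(n-u)\,u!\,p^u-1$.

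For completeness I would also note the degenerate case $n=1$, $p=1$. There $G_{1,1}$ is trivial, $S_\mu(1)=\mu(1)+\mu(0)=1$, and both sides vanish.
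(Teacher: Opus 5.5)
Your proposal is correct and follows essentially the same route as the paper. You use $L_p(e)=n$ and Lemma~\ref{lem:ellmin} to reduce the norm to $\max\{S_\mu(n)-1,\,1-S_\mu(\ell_{\min})\}$, and then compare the two terms using $u!\,p^u\ge 2$ for $u\ge1$ when $p\ge2$, and $u!\ge2$ for $u\ge2$ when $p=1$; the paper's inequality $\sum\mu(n-u)(u!\,p^u-1)\ge\sum\mu(n-u)$ is the same comparison, and your mean-one remark additionally justifies dropping the absolute values, which the paper leaves implicit.
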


\begin{proof}
Since \(e\in A_n\), we have \(L_p(e)=n\), so
\[
\max_x\frac{\widetilde Q_{\mu,p}(x)}{U(x)}=S_\mu(n)
\]
by \eqref{eq:Smu} and the monotonicity of \(S_\mu\). By Lemma~\ref{lem:ellmin},
\[
\min_x\frac{\widetilde Q_{\mu,p}(x)}{U(x)}=S_\mu(\ell_{\min}).
\]
Hence
\[
\left\|\frac{\widetilde Q_{\mu,p}(\cdot)}{U(\cdot)}-1\right\|_{L^\infty(U)}
=\max\{S_\mu(n)-1,\ 1-S_\mu(\ell_{\min})\}.
\]

If \(p\ge2\), then \(\ell_{\min}=0\) (Lemma \ref{lem:ellmin}), so
\[
S_\mu(n)-1
=\sum_{u=1}^{n}\mu(n-u)\,(u!\,p^u-1)
\ge \sum_{u=1}^{n}\mu(n-u)
=1-\mu(n)
=1-S_\mu(0).
\]
If \(p=1\), then \(\ell_{\min}=1\) (Lemma \ref{lem:ellmin}), so
\[
S_\mu(n)-1
=\sum_{u=2}^{n}\mu(n-u)\,(u!-1)
\ge \sum_{u=2}^{n}\mu(n-u)
=1-\mu(n)-\mu(n-1)
=1-S_\mu(1).
\]
Thus, in both cases, \(S_\mu(n)-1\ge 1-S_\mu(\ell_{\min})\), so
\[
\left\|\frac{\widetilde Q_{\mu,p}(\cdot)}{U(\cdot)}-1\right\|_{L^\infty(U)}
=S_\mu(n)-1.\qedhere
\]
\end{proof}

\begin{corollary}[Exact \texorpdfstring{$L^\infty(U)$}{Linfty(U)} formula for colored top-\texorpdfstring{$m$}{m}-to-random]\label{cor:Linfty-top-m-exact}
For every \(k\ge0\),
\begin{equation}\label{eq:Linfty-top-m-exact}
\left\|\frac{(Q^{(m)})^{*k}(\cdot)}{U(\cdot)}-1\right\|_{L^\infty(U)}
=S_{\mu_k^{(m)}}(n)-1
=\sum_{u=0}^{n}P_k^{(m)}(u)\,u!\,p^u-1.
\end{equation}
If \(k\ge1\), then the maximum of
\[
x\longmapsto \frac{(Q^{(m)})^{*k}(x)}{U(x)}
\]
is attained exactly on
\[
A_{n-m}=\supp(B_m).
\]
\end{corollary}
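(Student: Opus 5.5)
The exact formula \eqref{eq:Linfty-top-m-exact} is a direct specialization. By Lemma~\ref{prop:Qm-mixture}, $(Q^{(m)})^{*k}=\widetilde Q_{\mu,p}$ with $\mu=\mu_k^{(m)}$ and $\mu(n-u)=P_k^{(m)}(u)$. This includes $k=0$, where $\mu(0)=1$, so the term $u=n$ carries all the mass. Theorem~\ref{thm:Linfty-general-mu} then gives the displayed identity immediately.

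For the location of the maximum when $k\ge1$, I would apply Lemma~\ref{lem:Smu-plateau-max}. It says that the maximum of $S_\mu(L_p(x))$ is attained exactly on $A_{r_\mu}$, where $r_\mu=\max\{u:\mu(n-u)>0\}=\max\{u:P_k^{(m)}(u)>0\}$. So it suffices to show $r_\mu=n-m$, i.e., that the number $E$ of never-chosen labels satisfies $E\le n-m$ almost surely and $\Prob(E=n-m)>0$.

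Both facts are elementary in the occupancy model of Definition~\ref{def:occupancy-m}. Since $k\ge1$, the first round already chooses $m$ distinct labels, so $A=|\bigcup_t S_t|\ge m$, hence $E\le n-m$. For the lower bound, the event $S_1=S_2=\cdots=S_k$ has probability $\binom{n}{m}^{-(k-1)}>0$, and on it $A=m$, so $P_k^{(m)}(n-m)>0$. Alternatively, one can read both facts from \eqref{eq:mu-m-explicit}: $(a-j)_m=0$ for $a<m$, and $\mu_k^{(m)}(m)=\binom{n}{m}\bigl((m)_m/(n)_m\bigr)^k>0$.

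Finally, $A_{n-m}=\supp(B_{n-(n-m)})=\supp(B_m)$ by Definition~\ref{def:Au}, which matches the stated set. If $m=n$, this is $A_0=G_{n,p}$; here $Q^{(n)}=U$, the ratio is identically $1$, and the statement remains consistent.

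There is no real obstacle. The only point needing care is the strict inequality in Lemma~\ref{lem:Smu-plateau-max}, which requires identifying $r_\mu$ exactly rather than merely bounding it; that is why both $E\le n-m$ and positivity at $E=n-m$ are checked.
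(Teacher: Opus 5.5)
Your proposal is correct and follows essentially the same route as the paper. You specialize Theorem~\ref{thm:Linfty-general-mu} via Lemma~\ref{prop:Qm-mixture}. You then pin down $r_\mu=n-m$, using that the first round already chooses $m$ labels and that the event $S_1=\cdots=S_k$ has positive probability. Finally you invoke Lemma~\ref{lem:Smu-plateau-max}, exactly as in the paper.
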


\begin{proof}
The identity \eqref{eq:Linfty-top-m-exact} is Theorem~\ref{thm:Linfty-general-mu} applied to
\[
(Q^{(m)})^{*k}=\widetilde Q_{\mu_k^{(m)},p}.
\]

Assume \(k\ge1\). Since at least \(m\) indices are chosen after one round,
\[
P_k^{(m)}(u)=0\qquad(u>n-m).
\]
Fix an \(m\)-subset \(T\subseteq[n]\). The event
\[
S_1=\cdots=S_k=T
\]
has positive probability, and on this event \(E=n-m\). Hence
\[
P_k^{(m)}(n-m)>0.
\]
Thus
\[
r_{\mu_k^{(m)}}:=\max\{u:\mu_k^{(m)}(n-u)>0\}=n-m.
\]
Now apply Lemma~\ref{lem:Smu-plateau-max}.
\end{proof}

\begin{lemma}\label{lem:Linfty-domination}
Assume $k\ge1$. For \(0\le u\le n\),
\[
P_k^{(m)}(u)\,u!
\le \E[(E)_u]
=(n)_u\left(\frac{(n-u)_m}{(n)_m}\right)^k
\le (n e^{-mk/n})^u.
\]
Also,
\[
P_k^{(m)}(u)=0
\qquad(u>n-m).
\]
\end{lemma}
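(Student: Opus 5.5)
The plan is to prove the three assertions in order, reusing Lemma~\ref{lem:empty-factorial-moments-m} for the middle equality.

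\emph{First inequality.} The falling factorial \((E)_u\) is nonnegative for every nonnegative integer value of \(E\). On the event \(\{E=u\}\) it equals \(u!\). Hence
\[
(E)_u\ \ge\ u!\,\mathbf 1_{\{E=u\}},
\]
and taking expectations gives \(P_k^{(m)}(u)\,u!\le \E[(E)_u]\). The equality \(\E[(E)_u]=(n)_u\bigl((n-u)_m/(n)_m\bigr)^k\) is exactly Lemma~\ref{lem:empty-factorial-moments-m}, which is available since \(k\ge1\).

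\emph{Exponential bound.} Use \((n)_u\le n^u\). For the ratio, write
\[
\frac{(n-u)_m}{(n)_m}=\prod_{i=0}^{m-1}\frac{n-u-i}{n-i}
=\prod_{i=0}^{m-1}\Bigl(1-\frac{u}{n-i}\Bigr).
\]
If some factor is nonpositive, then \((n-u)_m=0\) by the convention for \(u>n-m\), and the bound is trivial. Otherwise, each factor satisfies
\[
0\le 1-\frac{u}{n-i}\le 1-\frac{u}{n}\le e^{-u/n}.
\]
The product is therefore at most \(e^{-um/n}\), and its \(k\)th power is at most \(e^{-umk/n}\). Combining, \(\E[(E)_u]\le n^u e^{-umk/n}=(ne^{-mk/n})^u\). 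The only mild care needed is nonnegativity of the factors, so that the termwise bounds may be multiplied; this is handled by the case split just described.

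\emph{Vanishing for \(u>n-m\).} When \(k\ge1\), the first round already chooses \(m\) distinct labels, so \(A\ge m\) and hence \(E\le n-m\). Therefore \(P_k^{(m)}(u)=0\) for \(u>n-m\). This is also consistent with the formula, since \((n-u)_m=0\) in that range.

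There is no real obstacle here; the step requiring the most attention is the elementary product estimate in the second assertion.
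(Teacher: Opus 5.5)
Your proposal is correct and follows essentially the same route as the paper: the same pointwise bound $(E)_u \ge u!\,\mathbf 1_{\{E=u\}}$, the same case split at $u>n-m$, and the same first-round argument for $E\le n-m$. The only cosmetic difference is that you bound each factor by $1-\frac{u}{n}\le e^{-u/n}$ directly, whereas the paper uses $e^{-u/(n-j)}$ and then $\sum_{j=0}^{m-1}(n-j)^{-1}\ge m/n$.
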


\begin{proof}
On \(\{E=u\}\), we have \((E)_u=u!\), and always \((E)_u\ge0\). Thus
\[
u!\,P_k^{(m)}(u)\le \E[(E)_u].
\]
The exact formula for \(\E[(E)_u]\) is Lemma~\ref{lem:empty-factorial-moments-m}.

Also,
\[
(n)_u\le n^u.
\]
If \(u>n-m\), then \((n-u)_m=0\), so
\[
\left(\frac{(n-u)_m}{(n)_m}\right)^k=0\le e^{-umk/n}.
\]
If \(0\le u\le n-m\), then for \(0\le j\le m-1\),
\[
0\le 1-\frac{u}{n-j}\le e^{-u/(n-j)}.
\]
Hence
\[
\frac{(n-u)_m}{(n)_m}
=\prod_{j=0}^{m-1}\left(1-\frac{u}{n-j}\right)
\le \exp\!\left(-u\sum_{j=0}^{m-1}\frac{1}{n-j}\right)
\le e^{-um/n},
\]
since \(\sum_{j=0}^{m-1}(n-j)^{-1}\ge m/n\). Therefore
\[
\left(\frac{(n-u)_m}{(n)_m}\right)^k\le e^{-umk/n}
\]
for all \(0\le u\le n\). Combining the bounds gives
\[
\E[(E)_u]\le n^u e^{-umk/n}=(n e^{-mk/n})^u.
\]
Finally, since \(k\ge1\), after one round at least \(m\) indices have been chosen, so \(E\le n-m\) almost surely. Hence 
\[ P_k^{(m)}(u)=0 \qquad(u>n-m).\qedhere \]
\end{proof}

\begin{theorem}[Poisson-regime \texorpdfstring{$L^\infty(U)$}{Linfty(U)} profile]\label{thm:Linfty-poisson-general}
Assume \eqref{eq:mu-Poisson-assump}, and define

\[
H_{\infty,p}(c):=
\begin{cases}
\displaystyle \frac{e^{-\lambda}}{1-r}-1, & r<1 \ \ (\text{equivalently }c>\log p),\\[10pt]
+\infty, & r\ge1 \ \ (\text{equivalently }c\le \log p).
\end{cases}
\]
Assume further, in the case \(r<1\), that there exists a summable \((b_u)_{u\ge0}\) such that, for all sufficiently large \(n\) and all \(u\ge0\),
\[
0\le \mu_n(n-u)\,u!\,p^u\le b_u.
\]
Then, with convergence in the extended interval \([0,\infty]\),
\[
\left\|\frac{M_n(\cdot)}{U(\cdot)}-1\right\|_{L^\infty(U)}
\longrightarrow
H_{\infty,p}(c).
\]
\end{theorem}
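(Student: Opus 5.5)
By Theorem~\ref{thm:Linfty-general-mu}, for every \(n\),
\[
\left\|\frac{M_n(\cdot)}{U(\cdot)}-1\right\|_{L^\infty(U)}=S_{\mu_n}(n)-1=\sum_{u=0}^{n}\mu_n(n-u)\,u!\,p^u-1 .
\]
With the convention \(\mu_n(a)=0\) for \(a\notin\{0,\dots,n\}\), the right side equals \(\sum_{u\ge0}\mu_n(n-u)u!p^u-1\). The problem is therefore reduced to a statement about a series of nonnegative terms \(t_n(u):=\mu_n(n-u)\,u!\,p^u\), and we split on \(r\ge1\) versus \(r<1\). By \eqref{eq:mu-Poisson-assump}, for each fixed \(u\),
\[
t_n(u)\longrightarrow e^{-\lambda}\frac{\lambda^u}{u!}\,u!\,p^u=e^{-\lambda}r^u .
\]

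\emph{Case \(r\ge1\).} No domination hypothesis is needed, since a lower bound suffices. For any fixed \(\ell\), nonnegativity gives
\[
S_{\mu_n}(n)\ge S_{\mu_n}(\ell)\quad\text{once }n\ge\ell .
\]
By \eqref{eq:Smu-pointwise-limit}, \(S_{\mu_n}(\ell)\to s(\ell)=e^{-\lambda}\sum_{u=0}^{\ell}r^u\). Hence
\[
\liminf_n\bigl(S_{\mu_n}(n)-1\bigr)\ge s(\ell)-1\quad\text{for every }\ell .
\]
Since \(r\ge1\), \(s(\ell)\to+\infty\) as \(\ell\to\infty\), as already noted before Lemma~\ref{lem:wstar-closed}. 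So the \(L^\infty(U)\) distance tends to \(+\infty=H_{\infty,p}(c)\) in \([0,\infty]\).

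\emph{Case \(r<1\).} Here we use the summable dominating sequence \((b_u)\). For all large \(n\), \(0\le t_n(u)\le b_u\) for every \(u\ge0\), and \(t_n(u)\) converges pointwise in \(u\). Dominated convergence for series (counting measure on \(\mathbb Z_{\ge0}\)) gives
\[
\sum_{u\ge0}t_n(u)\longrightarrow\sum_{u\ge0}e^{-\lambda}r^u=\frac{e^{-\lambda}}{1-r}.
\]
Subtracting \(1\) yields \(H_{\infty,p}(c)\). This limit is positive, consistent with the earlier observation that \(e^{-\lambda}/(1-r)>1\).

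The equivalences \(r<1\iff c>\log p\) and \(r\ge1\iff c\le\log p\) follow from \(r=pe^{-c}\). The only real subtlety is the passage to the full sum in the case \(r<1\): pointwise Poisson convergence alone cannot exclude mass escaping to large \(u\), where it is amplified by \(u!p^u\). That is exactly what the hypothesis on \((b_u)\) controls, so nothing beyond the reduction to \(S_{\mu_n}(n)\) and these two limiting arguments is required.
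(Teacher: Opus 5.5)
Your proposal is correct and matches the paper's proof essentially step for step. Both reduce via Theorem~\ref{thm:Linfty-general-mu} to the series \(\sum_{u\ge0}\mu_n(n-u)\,u!\,p^u-1\). Both use dominated convergence with \((b_u)\) when \(r<1\), and a partial-sum \(\liminf\) lower bound together with the divergence of \(s(\ell)\) when \(r\ge1\).
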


\begin{proof}
By Theorem~\ref{thm:Linfty-general-mu} and our standing convention
\(\mu_n(a)=0\) for \(a\notin\{0,\dots,n\}\),
\[
\left\|\frac{M_n(\cdot)}{U(\cdot)}-1\right\|_{L^\infty(U)}
=
\sum_{u=0}^{\infty}\mu_n(n-u)\,u!\,p^u-1.
\]

If \(r<1\), dominated convergence gives
\[
\sum_{u=0}^{\infty}\mu_n(n-u)\,u!\,p^u
\longrightarrow
\sum_{u=0}^{\infty}e^{-\lambda}(p\lambda)^u
=
\frac{e^{-\lambda}}{1-r},
\]
and hence
\[
\left\|\frac{M_n(\cdot)}{U(\cdot)}-1\right\|_{L^\infty(U)}
\longrightarrow
\frac{e^{-\lambda}}{1-r}-1.
\]

If \(r\ge1\), then for every fixed \(M\),
\[
\sum_{u=0}^{\infty}\mu_n(n-u)\,u!\,p^u
\ge
\sum_{u=0}^{M}\mu_n(n-u)\,u!\,p^u.
\]
Taking \(\liminf\) and using the assumed pointwise convergence,
\[
\liminf_{n\to\infty}\sum_{u=0}^{\infty}\mu_n(n-u)\,u!\,p^u
\ge
e^{-\lambda}\sum_{u=0}^{M}r^u.
\]
Let \(M\to\infty\). Since \(r\ge1\), the right-hand side diverges. Therefore
\[
\left\|\frac{M_n(\cdot)}{U(\cdot)}-1\right\|_{L^\infty(U)}
\longrightarrow +\infty.\qedhere
\]
\end{proof}

\begin{corollary}\label{cor:Linfty-top-m-limit}
Fix \(p\ge1\), \(m\ge1\), and \(c\in\R\), with \(m\) independent of \(n\). Let \(k=k_n(c)\).
Then, with convergence in the extended interval \([0,\infty]\),
\[
\left\|\frac{(Q^{(m)})^{*k}(\cdot)}{U(\cdot)}-1\right\|_{L^\infty(U)}
\longrightarrow H_{\infty,p}(c).
\]
Equivalently, after re-centering the window parameter, write
\[
k=k_n(d+\log p)=\Bigl\lfloor \frac{n}{m}(\log n+\log p+d)\Bigr\rfloor,
\]
so that \(r=e^{-d}\); thus the transition is shifted by \(\frac{n}{m}\log p\).
\end{corollary}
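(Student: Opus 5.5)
The plan is to reduce to Theorem~\ref{thm:Linfty-poisson-general} with \(\mu_n:=\mu_k^{(m)}\), \(k=k_n(c)\). By Lemma~\ref{prop:Qm-mixture} we have \((Q^{(m)})^{*k}=\widetilde Q_{\mu_n,p}\) with \(\mu_n(n-u)=P_k^{(m)}(u)\). Proposition~\ref{lem:Pk-m-Poisson} supplies the pointwise Poisson hypothesis \eqref{eq:mu-Poisson-assump}. When \(r\ge1\) nothing more is needed: the theorem's lower-bound argument already gives divergence to \(+\infty\).

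In the case \(r<1\) we must produce a summable dominating sequence \(b_u\) with \(0\le P_k^{(m)}(u)\,u!\,p^u\le b_u\) for all large \(n\) and all \(u\ge0\). I would take this from Lemma~\ref{lem:Linfty-domination}, which gives \(P_k^{(m)}(u)\,u!\le (ne^{-mk/n})^u\), with the quantity vanishing for \(u>n-m\).

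Since \(k\ge \frac nm(\log n+c)-1\), we get
\[
ne^{-mk/n}\le n\,e^{-(\log n+c)+m/n}=\lambda e^{m/n}.
\]
Hence
\[
P_k^{(m)}(u)\,u!\,p^u\le (r e^{m/n})^u .
\]
Because \(r<1\), we can fix \(\rho\in(r,1)\). For \(n\) large, \(re^{m/n}\le\rho\), so \(b_u:=\rho^u\) is summable and serves as the domination. Nonnegativity is automatic, and \(k\ge1\) holds for large \(n\), so the lemma applies. The only point requiring care is this uniformity in \(u\) of the bound; the floor in \(k\) costs only the factor \(e^{m/n}\to1\), so I expect it to go through cleanly.

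Dominated convergence (inside Theorem~\ref{thm:Linfty-poisson-general}) then yields the limit \(e^{-\lambda}/(1-r)-1\). For the re-centered statement, substitute \(c=d+\log p\). This gives \(\lambda=e^{-d}/p\) and \(r=p\lambda=e^{-d}\), so the threshold \(r=1\) corresponds to \(d=0\), that is, \(c=\log p\). This is the claimed shift of \(\frac nm\log p\) in time.
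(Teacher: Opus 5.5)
Your proposal is correct and follows the paper's proof: both reduce to Theorem~\ref{thm:Linfty-poisson-general} via Lemma~\ref{prop:Qm-mixture} and Proposition~\ref{lem:Pk-m-Poisson}, and take the geometric domination \(b_u=\rho^u\) with \(\rho\in(r,1)\) from Lemma~\ref{lem:Linfty-domination} when \(r<1\). The only cosmetic difference is that you use the explicit one-sided bound \(ne^{-mk/n}\le\lambda e^{m/n}\) where the paper uses the limit \(p\,ne^{-mk/n}\to r\). Your re-centering computation \(r=e^{-d}\) is also correct.
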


\begin{proof}
Set \(\mu_n(n-u):=P_k^{(m)}(u)\). By Lemma~\ref{prop:Qm-mixture},
\[
M_n:=(Q^{(m)})^{*k}=\widetilde Q_{\mu_n,p}.
\]
By Proposition~\ref{lem:Pk-m-Poisson}, \(\mu_n(n-u)\to e^{-\lambda}\lambda^u/u!\) for each fixed \(u\ge0\), so the pointwise Poisson assumption of Theorem~\ref{thm:Linfty-poisson-general} holds.

If \(r<1\), choose \(R\in(r,1)\). By Lemma~\ref{lem:Linfty-domination},
\[
0\le P_k^{(m)}(u)\,u!\,p^u
\le \bigl(p\,n e^{-mk/n}\bigr)^u.
\]
Since
\[
p\,n e^{-mk/n}\longrightarrow p e^{-c}=r,
\]
we have \(p\,n e^{-mk/n}\le R\) for all sufficiently large \(n\). Hence
\[
0\le P_k^{(m)}(u)\,u!\,p^u\le R^u
\qquad(u\ge0),
\]
and Theorem~\ref{thm:Linfty-poisson-general} applies with
\[
b_u:=R^u.
\]

If \(r\ge1\), the same theorem applies without any further hypothesis.
\end{proof}

\begin{corollary}[Uniform mixing time]\label{cor:uniform-mixing-time}
Fix \(p\ge1\), \(m\ge1\), independent of \(n\), and \(0<\varepsilon<\infty\). Let
\[
t_{\infty}^{(m)}(\varepsilon)
:=
\min\left\{k\ge0:
\left\|\frac{(Q^{(m)})^{*k}(\cdot)}{U(\cdot)}-1\right\|_{L^\infty(U)}
\le \varepsilon
\right\}.
\]
Let \(d_{\varepsilon,p}>0\) be the unique solution of
\[
\frac{\exp(-e^{-d_{\varepsilon,p}}/p)}{1-e^{-d_{\varepsilon,p}}}-1
=
\varepsilon.
\]
Then the uniform mixing time satisfies
\[
t_{\infty}^{(m)}(\varepsilon)
=
\frac{n}{m}\left(\log n+\log p+d_{\varepsilon,p}\right)+o(n).
\]
\end{corollary}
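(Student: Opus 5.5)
The argument copies the separation mixing-time argument of Corollary~\ref{cor:sep-mixing-time}, with $s_p$ replaced by the re-centered $L^\infty$ profile. Write $c=d+\log p$, so that $\lambda=e^{-c}=e^{-d}/p$ and $r=e^{-d}$. Then
\[
G_p(d):=H_{\infty,p}(d+\log p)=\frac{\exp(-e^{-d}/p)}{1-e^{-d}}-1 \qquad (d>0),
\]
and $G_p(d)=+\infty$ for $d\le 0$.

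\textbf{Step 1: existence and uniqueness of $d_{\varepsilon,p}$.} I would show that $G_p$ is continuous and strictly decreasing on $(0,\infty)$, with $G_p(d)\to+\infty$ as $d\downarrow0$ and $G_p(d)\to0$ as $d\to\infty$.

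\emph{Limits.} As $d\downarrow0$ the denominator $1-e^{-d}\to0$ while the numerator tends to $e^{-1/p}>0$, so $G_p\to+\infty$. As $d\to\infty$ both numerator and denominator tend to $1$, so $G_p\to0$.

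\emph{Monotonicity.} Put $x=e^{-d}\in(0,1)$. Then $G_p+1=e^{-x/p}/(1-x)$, and
\[
\frac{d}{dx}\log\bigl(G_p+1\bigr)=-\frac1p+\frac{1}{1-x}>0,
\]
because $1/(1-x)>1\ge 1/p$. So $G_p$ is strictly increasing in $x$, hence strictly decreasing in $d$.

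Together these give, for every $\varepsilon\in(0,\infty)$, a unique $d_{\varepsilon,p}>0$ with $G_p(d_{\varepsilon,p})=\varepsilon$.

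\textbf{Step 2: sandwich around the target time.} Fix $\delta\in(0,d_{\varepsilon,p})$ and set
\[
k_n^\pm:=k_n\bigl(\log p+d_{\varepsilon,p}\pm\delta\bigr).
\]
By Corollary~\ref{cor:Linfty-top-m-limit}, the $L^\infty(U)$ distance at time $k_n^-$ converges to $G_p(d_{\varepsilon,p}-\delta)>\varepsilon$. At time $k_n^+$ it converges to $G_p(d_{\varepsilon,p}+\delta)<\varepsilon$. Both limits are finite because $d_{\varepsilon,p}\pm\delta>0$, so we are in the $r<1$ regime. Hence, for large $n$, the distance at $k_n^+$ is at most $\varepsilon$, giving $t_\infty^{(m)}(\varepsilon)\le k_n^+$.

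\textbf{Step 3: monotonicity in time (the main point needing care).} To get the lower bound $t_\infty^{(m)}(\varepsilon)>k_n^-$, it is not enough that the distance at the single time $k_n^-$ exceeds $\varepsilon$. We need the distance to exceed $\varepsilon$ at every time $k\le k_n^-$, which follows once $k\mapsto\|Q^{*k}/U-1\|_{L^\infty(U)}$ is nonincreasing. I would prove this directly. Write
\[
\frac{Q^{*(k+1)}(x)}{U(x)}=\sum_{y}Q(y)\,\frac{Q^{*k}(xy^{-1})}{U(xy^{-1})},
\]
which expresses the new ratio as an average of old ratios. Therefore
\[
\Bigl|\frac{Q^{*(k+1)}(x)}{U(x)}-1\Bigr|\le \max_z\Bigl|\frac{Q^{*k}(z)}{U(z)}-1\Bigr|
\]
for every $x$, which is the desired monotonicity. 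An alternative route is the exact formula of Corollary~\ref{cor:Linfty-top-m-exact}, but the averaging argument is simpler and avoids any occupancy estimates.

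\textbf{Step 4: conclusion.} Steps 2 and 3 give $k_n^-<t_\infty^{(m)}(\varepsilon)\le k_n^+$ for large $n$. Since
\[
k_n(c)=\frac{n}{m}(\log n+c)+O(1),
\]
dividing by $n$ and first letting $n\to\infty$, then $\delta\downarrow0$, yields
\[
t_\infty^{(m)}(\varepsilon)=\frac{n}{m}\bigl(\log n+\log p+d_{\varepsilon,p}\bigr)+o(n).
\]
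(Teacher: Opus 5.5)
Your proposal is correct and matches the paper's proof essentially step for step: you re-center via $c=\log p+d$, invoke Corollary~\ref{cor:Linfty-top-m-limit} at $d_{\varepsilon,p}\pm\delta$, and squeeze using monotonicity in time, as in Corollary~\ref{cor:sep-mixing-time}. Your averaging identity in Step 3 is the same argument as the paper's adjoint-kernel proof in Proposition~\ref{prop:Lq-KL-monotone}, and your derivative computation in $x=e^{-d}$ usefully supplies the strict monotonicity and endpoint limits of the profile, which the paper only asserts.
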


\begin{proof}
For \(d\in\mathbb R\), set
\[
k_n^\infty(d):=
\left\lfloor\frac{n}{m}(\log n+\log p+d)\right\rfloor
=
k_n(\log p+d).
\]
Applying Corollary~\ref{cor:Linfty-top-m-limit} with \(c=\log p+d\), we have
\[
\lambda=e^{-c}=\frac{e^{-d}}{p},
\qquad
r=p\lambda=e^{-d}.
\]
Hence
\[
\left\|\frac{(Q^{(m)})^{*k_n^\infty(d)}(\cdot)}{U(\cdot)}-1\right\|_{L^\infty(U)}
\longrightarrow
\begin{cases}
\displaystyle \frac{\exp(-e^{-d}/p)}{1-e^{-d}}-1, & d>0,\\[8pt]
+\infty, & d\le0.
\end{cases}
\]
The finite profile is continuous and strictly decreasing from \(+\infty\) to \(0\) on \(d>0\), so \(d_{\varepsilon,p}\) is uniquely defined. By the monotonicity of the \(L^\infty(U)\)-distance in time, proved below in Proposition~\ref{prop:Lq-KL-monotone}, the same squeeze argument as in Corollary~\ref{cor:sep-mixing-time} gives the result.
\end{proof}

\subsection{Integrated functionals of the likelihood ratio}

For \(\Phi:[0,\infty)\to\R\),
\[
\sum_{x\in G_{n,p}}U(x)\,\Phi\!\left(\frac{\widetilde Q_{\mu,p}(x)}{U(x)}\right)
\]
depends only on the \(L_p\)-marginal.

\begin{theorem}[Reduction to the \texorpdfstring{$L_p$}{Lp}-marginal]\label{thm:Phi-general-mu}
For every \(\Phi:[0,\infty)\to\R\),
\[
\sum_{x\in G_{n,p}}U(x)\,\Phi\!\left(\frac{\widetilde Q_{\mu,p}(x)}{U(x)}\right)
=
\sum_{\ell=0}^{n}U\{L_p=\ell\}\,\Phi\!\bigl(S_\mu(\ell)\bigr).
\]
Equivalently,
\[
\sum_{x\in G_{n,p}}U(x)\,\Phi\!\left(\frac{\widetilde Q_{\mu,p}(x)}{U(x)}\right)
=
\sum_{\ell=0}^{n-1}
\left(\frac{1}{\ell!\,p^\ell}-\frac{1}{(\ell+1)!\,p^{\ell+1}}\right)\Phi\!\bigl(S_\mu(\ell)\bigr)
+\frac{1}{n!\,p^n}\,\Phi\!\bigl(S_\mu(n)\bigr).
\]
\end{theorem}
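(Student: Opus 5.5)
The argument is a level-set decomposition of \(G_{n,p}\) according to the value of \(L_p\), followed by an appeal to Lemma~\ref{lem:Lp-under-U}.

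First, use the one-statistic identity \eqref{eq:Smu}, which says \(\widetilde Q_{\mu,p}(x)/U(x)=S_\mu(L_p(x))\) for every \(x\in G_{n,p}\). Substituting this into the left-hand side turns the summand into \(U(x)\,\Phi(S_\mu(L_p(x)))\). This depends on \(x\) only through \(L_p(x)\).

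Second, partition \(G_{n,p}\) into the disjoint level sets \(\{L_p=\ell\}\) for \(\ell=0,\dots,n\). This is a genuine partition because \(L_p\) takes values in \(\{0,\dots,n\}\) by Definition~\ref{def:Lp}. On the level set \(\{L_p=\ell\}\) the factor \(\Phi(S_\mu(\ell))\) is constant. Summing \(U(x)\) over that set gives \(U\{L_p=\ell\}\), so the sum collapses to
\[
\sum_{\ell=0}^{n}U\{L_p=\ell\}\,\Phi\bigl(S_\mu(\ell)\bigr).
\]
This is the first identity. No finiteness issue arises, since \(G_{n,p}\) is finite and every sum is a finite sum of real numbers.

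Third, for the equivalent form, insert the point masses from Lemma~\ref{lem:Lp-under-U}. For \(0\le\ell\le n-1\) these are
\[
U\{L_p=\ell\}=\frac{1}{\ell!\,p^\ell}-\frac{1}{(\ell+1)!\,p^{\ell+1}},
\]
and at the top level \(U\{L_p=n\}=1/(n!\,p^n)\). The only point needing care is this endpoint \(\ell=n\). There \(U\{L_p\ge n+1\}=0\) rather than \(1/((n+1)!\,p^{n+1})\), which is why the last term is split off separately. Nothing here is a real obstacle: the proof is a direct reorganization of a finite sum, and all the content is already in \eqref{eq:Smu} and Lemma~\ref{lem:Lp-under-U}.
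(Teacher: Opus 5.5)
Your proposal is correct and follows the paper's own argument: substitute the one-statistic identity \eqref{eq:Smu}, partition $G_{n,p}$ into the level sets of $L_p$, and read off the masses $U\{L_p=\ell\}$ from Lemma~\ref{lem:Lp-under-U}. You also correctly single out the endpoint $\ell=n$, and the evaluations $\Phi(S_\mu(\ell))$ are well defined because $S_\mu(\ell)\ge 0$.
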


\begin{proof}
By \eqref{eq:Smu},
\[
\frac{\widetilde Q_{\mu,p}(x)}{U(x)}=S_\mu(L_p(x)).
\]
Partition \(G_{n,p}\) by the level sets of \(L_p\), then use Lemma~\ref{lem:Lp-under-U}.
\end{proof}

\begin{remark}
By Lemma~\ref{lem:Smu-plateau-max}, one may combine all terms with \(\ell\ge r_\mu\) into a single tail term. For the \(k\ge1\) top-\(m\) law, \(r_\mu=n-m\). This is useful in small examples, but does not materially simplify the general formula.
\end{remark}

In particular, total variation is recovered by the choice $\Phi(t)=(t-1)_+$ by Proposition~\ref{prop:TV-likelihood} with $M=\widetilde Q_{\mu,p}$.

\begin{corollary}\label{cor:Phi-top-m-exact}
We have
\[
\sum_{x\in G_{n,p}}U(x)\,\Phi\!\left(\frac{(Q^{(m)})^{*k}(x)}{U(x)}\right)
=
\sum_{\ell=0}^{n}U\{L_p=\ell\}\,\Phi\!\bigl(S_{\mu_k^{(m)}}(\ell)\bigr).
\]
Equivalently,
\[
\sum_{x\in G_{n,p}}U(x)\,\Phi\!\left(\frac{(Q^{(m)})^{*k}(x)}{U(x)}\right)
=
\sum_{\ell=0}^{n-1}
\left(\frac{1}{\ell!\,p^\ell}-\frac{1}{(\ell+1)!\,p^{\ell+1}}\right)\Phi\!\bigl(S_{\mu_k^{(m)}}(\ell)\bigr)
+\frac{1}{n!\,p^n}\,\Phi\!\bigl(S_{\mu_k^{(m)}}(n)\bigr).
\]
\end{corollary}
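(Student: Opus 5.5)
This is a direct specialization of Theorem~\ref{thm:Phi-general-mu}, so the plan is short. First, by Lemma~\ref{prop:Qm-mixture}, for every $k\ge0$ we have $(Q^{(m)})^{*k}=\widetilde Q_{\mu,p}$ with $\mu=\mu_k^{(m)}$, i.e.\ $\mu(n-u)=P_k^{(m)}(u)$. For $k\ge1$ this is the content of the lemma. For $k=0$ we use $(Q^{(m)})^{*0}=\delta_e=\widetilde Q_{0,p}$ and $\mu_0^{(m)}=\delta_0$. In both cases $\mu_k^{(m)}$ is a genuine probability mass function on $\{0,\dots,n\}$, so the general mixture framework of Definition~\ref{def:LR-general-mu} applies.

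Next we substitute $\mu=\mu_k^{(m)}$ into Theorem~\ref{thm:Phi-general-mu}. Concretely, \eqref{eq:Smu} (equivalently Corollary~\ref{cor:LR-top-m}) gives that the likelihood ratio $(Q^{(m)})^{*k}(x)/U(x)$ equals $S_{\mu_k^{(m)}}(L_p(x))$. Hence $\Phi$ of the ratio is constant on each level set $\{L_p=\ell\}$, and the $U$-weighted sum over $G_{n,p}$ splits as $\sum_{\ell=0}^n U\{L_p=\ell\}\,\Phi(S_{\mu_k^{(m)}}(\ell))$. This is the first displayed identity.

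For the second form, we insert the point masses from Lemma~\ref{lem:Lp-under-U}:
\[
U\{L_p=\ell\}=\frac{1}{\ell!\,p^\ell}-\frac{1}{(\ell+1)!\,p^{\ell+1}}\quad(0\le\ell\le n-1),\qquad U\{L_p=n\}=\frac{1}{n!\,p^n}.
\]
All sums are finite since $G_{n,p}$ is finite, so no integrability condition on $\Phi$ is needed.

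There is no real obstacle. The only point needing care is that the identification $(Q^{(m)})^{*k}=\widetilde Q_{\mu_k^{(m)},p}$ holds also at $k=0$, and we have handled that case separately above.
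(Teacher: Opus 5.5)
Your proposal is correct and matches the paper's proof, which simply applies Theorem~\ref{thm:Phi-general-mu} to the identification $(Q^{(m)})^{*k}=\widetilde Q_{\mu_k^{(m)},p}$ from Lemma~\ref{prop:Qm-mixture}. Your separate treatment of $k=0$ is harmless but not needed, since that lemma already covers $k=0$ explicitly.
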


\begin{proof}
Apply Theorem~\ref{thm:Phi-general-mu} to
\[
(Q^{(m)})^{*k}=\widetilde Q_{\mu_k^{(m)},p}.\qedhere
\]
\end{proof}

Since \(U(x)>0\) for all \(x\in G_{n,p}\), for \(1\le q<\infty\),
\[
\left\|\frac{M(\cdot)}{U(\cdot)}-1\right\|_{L^q(U)}^q
=
\sum_{x\in G_{n,p}}U(x)\left|\frac{M(x)}{U(x)}-1\right|^q.
\]
For \(q=2\), define
\[
\chi^2(M,U):=
\left\|\frac{M(\cdot)}{U(\cdot)}-1\right\|_{L^2(U)}^2.
\]
Expanding the square,
\begin{equation}\label{eq:L2-via-second-moment}
\chi^2(M,U)
=
\sum_{x\in G_{n,p}}\frac{M(x)^2}{U(x)}-1
=
\sum_{x\in G_{n,p}}U(x)\left(\frac{M(x)}{U(x)}\right)^2-1.
\end{equation}

\begin{corollary}\label{cor:Lq-general-mu}
For \(1\le q<\infty\),
\[
\left\|\frac{\widetilde Q_{\mu,p}(\cdot)}{U(\cdot)}-1\right\|_{L^q(U)}^q
=
\sum_{\ell=0}^{n}U\{L_p=\ell\}\,|S_\mu(\ell)-1|^q.
\]
Equivalently,
\[
\left\|\frac{\widetilde Q_{\mu,p}(\cdot)}{U(\cdot)}-1\right\|_{L^q(U)}^q
=
\sum_{\ell=0}^{n-1}
\left(\frac{1}{\ell!\,p^\ell}-\frac{1}{(\ell+1)!\,p^{\ell+1}}\right)|S_\mu(\ell)-1|^q
+\frac{1}{n!\,p^n}|S_\mu(n)-1|^q.
\]
\end{corollary}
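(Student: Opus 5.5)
This is a direct specialization of Theorem~\ref{thm:Phi-general-mu} to \(\Phi(t)=|t-1|^q\), so the plan is short.

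First, since \(U(x)>0\) for every \(x\in G_{n,p}\), the \(L^q(U)\) norm is
\[
\left\|\frac{\widetilde Q_{\mu,p}(\cdot)}{U(\cdot)}-1\right\|_{L^q(U)}^q
=\sum_{x\in G_{n,p}}U(x)\,\Phi\!\left(\frac{\widetilde Q_{\mu,p}(x)}{U(x)}\right),
\qquad \Phi(t):=|t-1|^q .
\]
The function \(\Phi\) maps \([0,\infty)\) into \(\R\). Because \(G_{n,p}\) is finite, every sum in sight is a finite sum, so no integrability issue arises.

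Next, Theorem~\ref{thm:Phi-general-mu} gives the first displayed identity. It rests on the one-statistic formula \eqref{eq:Smu}: the likelihood ratio equals \(S_\mu(L_p(x))\), so we partition \(G_{n,p}\) into the level sets \(\{L_p=\ell\}\), \(0\le\ell\le n\). For the second form, substitute the point masses from Lemma~\ref{lem:Lp-under-U}:
\[
U\{L_p=\ell\}=\frac{1}{\ell!\,p^\ell}-\frac{1}{(\ell+1)!\,p^{\ell+1}}\quad(0\le\ell\le n-1),
\qquad
U\{L_p=n\}=\frac{1}{n!\,p^n}.
\]
Keep the \(\ell=n\) term separate, since its mass has no subtracted term.

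I do not expect any real obstacle here. The only point needing care is that the endpoint \(\ell=n\) is handled separately from the differenced masses for \(\ell\le n-1\).
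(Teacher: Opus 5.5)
Your proposal is correct and matches the paper's proof, which is the one-line application of Theorem~\ref{thm:Phi-general-mu} with \(\Phi(t)=|t-1|^q\). The second form then follows from the point masses in Lemma~\ref{lem:Lp-under-U}, exactly as in that theorem's equivalent statement.
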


\begin{proof}
Apply Theorem~\ref{thm:Phi-general-mu} with \(\Phi(t)=|t-1|^q\).
\end{proof}

\begin{corollary}\label{cor:chisq-general-mu}
We have
\[
\chi^2(\widetilde Q_{\mu,p},U)
=
\sum_{u,v=0}^{n}\mu(n-u)\mu(n-v)\,\min(u,v)!\,p^{\min(u,v)}-1.
\]
\end{corollary}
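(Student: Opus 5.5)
The plan is to start from the second-moment identity \eqref{eq:L2-via-second-moment}, which gives $\chi^2(M,U)=\sum_x M(x)^2/U(x)-1$. Rather than go through the $L_p$-marginal, it is cleanest to expand $M=\widetilde Q_{\mu,p}$ directly as the mixture $\sum_u \mu(n-u)\widetilde Q_{n-u,p}$. Squaring produces a double sum:
\[
\sum_x \frac{M(x)^2}{U(x)}=\sum_{u,v=0}^{n}\mu(n-u)\mu(n-v)\sum_x \frac{\widetilde Q_{n-u,p}(x)\,\widetilde Q_{n-v,p}(x)}{U(x)}.
\]
The task is then to evaluate each inner cross term.

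For the inner sum, I would write $\widetilde Q_{n-v,p}(x)/U(x)=v!\,p^v\mathbf 1_{A_v}(x)$ using \eqref{eq:component-ratio-indicator}. This turns the inner sum into $v!\,p^v\,\widetilde Q_{n-u,p}(A_v)$. Since $\widetilde Q_{n-u,p}$ is uniform on $A_u$, Lemma~\ref{lem:nested-intersection} gives
\[
\widetilde Q_{n-u,p}(A_v)=\frac{|A_{\max(u,v)}|}{|A_u|}=\frac{u!\,p^u}{\max(u,v)!\,p^{\max(u,v)}},
\]
where the last equality uses $|A_j|=n!\,p^{n-j}/j!$. The inner sum is therefore $u!\,p^u\,v!\,p^v/(\max(u,v)!\,p^{\max(u,v)})$, which simplifies to $\min(u,v)!\,p^{\min(u,v)}$. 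Substituting back and subtracting $1$ gives the claimed formula.

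As a cross-check, the same identity can be obtained from Corollary~\ref{cor:Lq-general-mu} with $q=2$. Expand $S_\mu(\ell)^2=\sum_{u,v\le \ell}\mu(n-u)\mu(n-v)u!p^u v!p^v$, swap the order of summation, and use $U\{L_p\ge \max(u,v)\}=1/(\max(u,v)!\,p^{\max(u,v)})$ from Lemma~\ref{lem:Lp-under-U}. This route leads to the same simplification.

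There is no real obstacle here. The only step needing care is the bookkeeping of $\max$ versus $\min$ in the ratio $u!p^u v!p^v/(\max!\,p^{\max})$, together with the fact that the case $u=v$ fits the formula as well.
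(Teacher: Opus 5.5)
Your argument is correct, but your main route differs from the paper's. The paper's proof is precisely your ``cross-check'': apply Theorem~\ref{thm:Phi-general-mu} with $\Phi(t)=t^2$ to \eqref{eq:L2-via-second-moment}, expand $S_\mu(\ell)^2$, interchange the finite sums, and collapse $\sum_{\ell\ge\max(u,v)}U\{L_p=\ell\}=1/(\max(u,v)!\,p^{\max(u,v)})$.

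Your primary route instead computes the Gram matrix of the mixture components directly, $\sum_x \widetilde Q_{n-u,p}(x)\widetilde Q_{n-v,p}(x)/U(x)=v!\,p^v\,\widetilde Q_{n-u,p}(A_v)$, and never passes through the $L_p$-marginal. In effect you prove $\chi^2+1=\sum_{u,v}\mu(n-u)\mu(n-v)\,U(A_u\cap A_v)/\bigl(U(A_u)U(A_v)\bigr)$. This identity holds for any mixture of uniform laws on arbitrary sets, and nesting (Lemma~\ref{lem:nested-intersection}) enters only at the last step, to replace $A_u\cap A_v$ by $A_{\max(u,v)}$.

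The paper's version instead stays within its organizing principle that every functional is read off from the law of $L_p$ under $U$. Its interchange-and-collapse step is also reused essentially verbatim in the Poisson-limit computation of $g_p$ in Theorem~\ref{thm:L2-top-m-limit}.

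One small point about your cross-check as written: Corollary~\ref{cor:Lq-general-mu} with $q=2$ gives $\sum_\ell U\{L_p=\ell\}(S_\mu(\ell)-1)^2$. To reach $\sum_\ell U\{L_p=\ell\}S_\mu(\ell)^2-1$ you also need $\sum_\ell U\{L_p=\ell\}S_\mu(\ell)=1$, the total mass of $\widetilde Q_{\mu,p}$. The paper avoids this by taking $\Phi(t)=t^2$ directly.
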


\begin{proof}
By \eqref{eq:L2-via-second-moment} and Theorem~\ref{thm:Phi-general-mu} with \(\Phi(t)=t^2\),
\[
\chi^2(\widetilde Q_{\mu,p},U)+1
=
\sum_{x\in G_{n,p}}U(x)\left(\frac{\widetilde Q_{\mu,p}(x)}{U(x)}\right)^2
=
\sum_{\ell=0}^{n}U\{L_p=\ell\}\,S_\mu(\ell)^2.
\]
By \eqref{eq:def-Smu} and interchanging the finite sums,
\[
\chi^2(\widetilde Q_{\mu,p},U)+1
=
\sum_{u,v=0}^{n}\mu(n-u)\mu(n-v)\,u!\,p^u\,v!\,p^v
\sum_{\ell=\max(u,v)}^{n}U\{L_p=\ell\}.
\]
By Lemma~\ref{lem:Lp-under-U},
\[
\sum_{\ell=\max(u,v)}^{n}U\{L_p=\ell\}
=
U\{L_p\ge\max(u,v)\}
=
\frac{1}{\max(u,v)!\,p^{\max(u,v)}}.
\]
Hence the factor
\[
\frac{u!\,p^u\,v!\,p^v}{\max(u,v)!\,p^{\max(u,v)}}
=
\min(u,v)!\,p^{\min(u,v)}.
\]
Subtract \(1\).
\end{proof}

\begin{corollary}\label{cor:L2-top-m-exact}
We have
\[
\chi^2\bigl((Q^{(m)})^{*k},U\bigr)
=
\sum_{u,v=0}^{n}
P_k^{(m)}(u)\,P_k^{(m)}(v)\,\min(u,v)!\,p^{\min(u,v)}-1.
\]
\end{corollary}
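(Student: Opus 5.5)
This is a direct specialization, so the plan is to apply Corollary~\ref{cor:chisq-general-mu} with \(\mu=\mu_k^{(m)}\).

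First, Lemma~\ref{prop:Qm-mixture} gives
\[
(Q^{(m)})^{*k}=\widetilde Q_{\mu_k^{(m)},p},
\qquad
\mu_k^{(m)}(n-u)=P_k^{(m)}(u)\quad(0\le u\le n).
\]
For \(k=0\) this also holds, with \(\mu_0^{(m)}(0)=1\) and \(P_0^{(m)}(n)=1\).

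Second, substitute these weights into the general formula of Corollary~\ref{cor:chisq-general-mu}. Each factor \(\mu(n-u)\mu(n-v)\) becomes \(P_k^{(m)}(u)\,P_k^{(m)}(v)\), and this gives exactly the displayed double sum minus \(1\).

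If a self-contained check is wanted, I would instead repeat the short computation directly. Apply Corollary~\ref{cor:Phi-top-m-exact} with \(\Phi(t)=t^2\), together with \eqref{eq:L2-via-second-moment}. Then expand \(S_{\mu_k^{(m)}}(\ell)^2\) as a double sum over \(u,v\le \ell\). Swap the order of summation to get the tail \(U\{L_p\ge \max(u,v)\}\), and evaluate it by Lemma~\ref{lem:Lp-under-U}. The product \(u!\,p^u\,v!\,p^v\) divided by \(\max(u,v)!\,p^{\max(u,v)}\) then collapses to \(\min(u,v)!\,p^{\min(u,v)}\).

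There is no real obstacle here. The only point needing care is confirming that the identification \((Q^{(m)})^{*k}=\widetilde Q_{\mu,p}\) holds for every \(k\ge0\), including the trivial case \(k=0\).
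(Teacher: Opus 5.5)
Your proposal is correct and is the same as the paper's proof, which simply applies Corollary~\ref{cor:chisq-general-mu} to \((Q^{(m)})^{*k}=\widetilde Q_{\mu_k^{(m)},p}\) from Lemma~\ref{prop:Qm-mixture}. Your check that this identification also holds at \(k=0\) (where both sides give \(n!\,p^n-1\)) is a harmless extra detail.
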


\begin{proof}
Apply Corollary~\ref{cor:chisq-general-mu} to
\[
(Q^{(m)})^{*k}=\widetilde Q_{\mu_k^{(m)},p}. \qedhere
\]
\end{proof}

For probability measures \(M\) and \(U\) on \(G_{n,p}\), define
\[
D(M\|U):=\sum_{x\in G_{n,p}}M(x)\log\frac{M(x)}{U(x)},
\]
with the convention \(0\log0:=0\).

\begin{corollary}\label{cor:KL-general-mu}
We have
\[
D(\widetilde Q_{\mu,p}\|U)
=
\sum_{\ell=0}^{n}U\{L_p=\ell\}\,S_\mu(\ell)\log S_\mu(\ell).
\]
Equivalently,
\[
D(\widetilde Q_{\mu,p}\|U)
=
\sum_{\ell=0}^{n-1}
\left(\frac{1}{\ell!\,p^\ell}-\frac{1}{(\ell+1)!\,p^{\ell+1}}\right)
S_\mu(\ell)\log S_\mu(\ell)
+\frac{1}{n!\,p^n}S_\mu(n)\log S_\mu(n).
\]
\end{corollary}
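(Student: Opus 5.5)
The plan is to apply Theorem~\ref{thm:Phi-general-mu} with \(\Phi(t)=t\log t\) on \([0,\infty)\), using the convention \(0\log0=0\), so that \(\Phi\) is a genuine real-valued function there. Since \(U(x)>0\) for every \(x\in G_{n,p}\), we can write
\[
D(\widetilde Q_{\mu,p}\|U)=\sum_{x}\widetilde Q_{\mu,p}(x)\log\frac{\widetilde Q_{\mu,p}(x)}{U(x)}
=\sum_{x}U(x)\,\Phi\!\left(\frac{\widetilde Q_{\mu,p}(x)}{U(x)}\right).
\]
This uses \(M(x)=U(x)\cdot(M(x)/U(x))\). The convention is consistent: if \(\widetilde Q_{\mu,p}(x)=0\), the summand \(0\log 0\) on the left equals \(U(x)\Phi(0)=0\) on the right.

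Next, Theorem~\ref{thm:Phi-general-mu}, which rests on the identity \(\widetilde Q_{\mu,p}(x)/U(x)=S_\mu(L_p(x))\) in \eqref{eq:Smu}, turns the right-hand side into
\[
\sum_{\ell=0}^{n}U\{L_p=\ell\}\,S_\mu(\ell)\log S_\mu(\ell).
\]
Here \(S_\mu(\ell)\log S_\mu(\ell)\) is again interpreted as \(0\) whenever \(S_\mu(\ell)=0\). This can indeed happen, for example when \(\ell<n-r\) for small weights.

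Finally, we substitute the point masses from Lemma~\ref{lem:Lp-under-U}. For \(0\le\ell\le n-1\) the mass is \(U\{L_p=\ell\}=\frac{1}{\ell!p^\ell}-\frac{1}{(\ell+1)!p^{\ell+1}}\), and \(U\{L_p=n\}=\frac{1}{n!p^n}\). This gives the second displayed form.

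There is no real obstacle here. The only care needed is the bookkeeping of the \(0\log0\) convention, namely making sure \(\Phi\) is defined at \(0\) so that Theorem~\ref{thm:Phi-general-mu} applies verbatim.
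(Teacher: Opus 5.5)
Your proposal is correct and matches the paper's proof, which applies Theorem~\ref{thm:Phi-general-mu} with \(\Phi(t)=t\log t\) and \(\Phi(0)=0\). Your check that \(D(\widetilde Q_{\mu,p}\|U)=\sum_x U(x)\,\Phi\bigl(\widetilde Q_{\mu,p}(x)/U(x)\bigr)\) under the \(0\log0\) convention is simply the step the paper leaves implicit. One small aside: your example of when \(S_\mu(\ell)=0\) is garbled. This happens exactly when \(\mu(n-u)=0\) for all \(u\le\ell\) (e.g.\ \(\ell<n-m\) for the one-step law \(Q^{(m)}\)), but it plays no role in the argument.
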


\begin{proof}
Apply Theorem~\ref{thm:Phi-general-mu} with \(\Phi(t)=t\log t\) and \(\Phi(0)=0\).
\end{proof}

For \(p=1\), Corollary~\ref{cor:KL-general-mu} agrees with the reduction used by Stark \cite[Lemma~2.1]{Stark2002InformationLoss}.

\begin{corollary}\label{cor:KL-top-m-exact}
We have
\[
D\bigl((Q^{(m)})^{*k}\|U\bigr)
=
\sum_{\ell=0}^{n}U\{L_p=\ell\}\,
S_{\mu_k^{(m)}}(\ell)\log S_{\mu_k^{(m)}}(\ell).
\]
Equivalently,
\[
D\bigl((Q^{(m)})^{*k}\|U\bigr)
=
\sum_{\ell=0}^{n-1}
\left(\frac{1}{\ell!\,p^\ell}-\frac{1}{(\ell+1)!\,p^{\ell+1}}\right)
S_{\mu_k^{(m)}}(\ell)\log S_{\mu_k^{(m)}}(\ell)
+\frac{1}{n!\,p^n}S_{\mu_k^{(m)}}(n)\log S_{\mu_k^{(m)}}(n).
\]
\end{corollary}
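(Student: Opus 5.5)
This is a direct specialization of Corollary~\ref{cor:KL-general-mu}, in the same way that Corollary~\ref{cor:L2-top-m-exact} specializes Corollary~\ref{cor:chisq-general-mu}. First I will invoke Lemma~\ref{prop:Qm-mixture}, which identifies the $k$-step law as a nested-set mixture:
\[
(Q^{(m)})^{*k}=\widetilde Q_{\mu_k^{(m)},p},
\qquad
\mu_k^{(m)}(n-u)=P_k^{(m)}(u).
\]
This holds for every $k\ge0$; the case $k=0$ gives $\delta_e=\widetilde Q_{0,p}$, with $\mu_0^{(m)}$ the point mass at $0$. Both displayed formulas then follow by applying Corollary~\ref{cor:KL-general-mu} with $\mu=\mu_k^{(m)}$.

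For completeness I will record why that corollary applies verbatim. Theorem~\ref{thm:Phi-general-mu} holds for every real function $\Phi$ on $[0,\infty)$. I take $\Phi(t)=t\log t$ with $\Phi(0)=0$. Then
\[
U(x)\,\Phi\!\left(\frac{M(x)}{U(x)}\right)=M(x)\log\frac{M(x)}{U(x)},
\]
including at points where $M(x)=0$, by the convention $0\log0=0$. So the left side of Theorem~\ref{thm:Phi-general-mu} is exactly $D(M\|U)$. The values $S_{\mu}(\ell)$ are nonnegative by Lemma~\ref{lem:Smu-monotone}, so $\Phi(S_\mu(\ell))$ is well defined for every $\ell$.

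The second form of the statement comes from substituting the point masses of $L_p$ under $U$ from Lemma~\ref{lem:Lp-under-U}. For $0\le\ell\le n-1$ these are $\frac1{\ell!p^\ell}-\frac1{(\ell+1)!p^{\ell+1}}$, and the top mass is $U\{L_p=n\}=1/(n!p^n)$.

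There is no real obstacle here. The only point needing care is the convention at $S_\mu(\ell)=0$. This can occur for $k\ge1$, for instance when $\ell<n-m$ and all $P_k^{(m)}(u)$ with $u\le\ell$ vanish. In that case the $\ell$-th term is $0$, consistent with the $x$-sum.
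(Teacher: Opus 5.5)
Your proposal is correct and is the paper's own argument: it applies Corollary~\ref{cor:KL-general-mu} to $(Q^{(m)})^{*k}=\widetilde Q_{\mu_k^{(m)},p}$ from Lemma~\ref{prop:Qm-mixture}. Your added checks are accurate but not needed beyond what Corollary~\ref{cor:KL-general-mu} already covers; these are the $k=0$ case and the convention $0\log 0=0$ when $S_{\mu_k^{(m)}}(\ell)=0$, which happens exactly for $\ell<n-km$.
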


\begin{proof}
Apply Corollary~\ref{cor:KL-general-mu} to
\[
(Q^{(m)})^{*k}=\widetilde Q_{\mu_k^{(m)},p}.\qedhere
\]
\end{proof}

\subsection{Poisson-regime profiles}

\begin{corollary}\label{cor:Lp-limit-top-m}
Assume \eqref{eq:mu-Poisson-assump}. Then, for fixed \(\ell\ge0\),
\[
M_n\{L_p=\ell\}
\longrightarrow
\left(\frac{1}{\ell!\,p^\ell}-\frac{1}{(\ell+1)!\,p^{\ell+1}}\right)
e^{-\lambda}\sum_{u=0}^{\ell}(p\lambda)^u.
\]
\end{corollary}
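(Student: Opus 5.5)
The argument is short, because the exact finite-$n$ law of $L_p$ under a nested-set mixture is already available. First I would apply Corollary~\ref{cor:Lp-law-general-mu} with $\mu=\mu_n$. For all $n$ large enough that $\ell\le n-1$, it gives
\[
M_n\{L_p=\ell\}
=\left(\frac{1}{\ell!\,p^\ell}-\frac{1}{(\ell+1)!\,p^{\ell+1}}\right)S_{\mu_n}(\ell),
\]
since $M_n=\widetilde Q_{\mu_n,p}$. Here $\ell$ is fixed, so the prefactor $U\{L_p=\ell\}$ does not depend on $n$ once $n>\ell$. Because the statement concerns the limit as $n\to\infty$ with $\ell$ fixed, excluding the finitely many $n\le\ell$ (where the boundary case $\ell=n$ has a different prefactor) costs nothing.

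Second, I would identify the limit of $S_{\mu_n}(\ell)=\sum_{u=0}^{\ell}\mu_n(n-u)\,u!\,p^u$. This is a finite sum with $\ell+1$ terms, and each term converges by the Poisson assumption \eqref{eq:mu-Poisson-assump}:
\[
\mu_n(n-u)\,u!\,p^u\to e^{-\lambda}\frac{\lambda^u}{u!}\,u!\,p^u=e^{-\lambda}(p\lambda)^u .
\]
Summing over $u=0,\dots,\ell$ gives $S_{\mu_n}(\ell)\to e^{-\lambda}\sum_{u=0}^{\ell}(p\lambda)^u=s(\ell)$. This is exactly \eqref{eq:Smu-pointwise-limit}, already recorded in the proof of Theorem~\ref{prop:DFP3.1-colored}, so I can simply cite it.

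Third, I would multiply the constant prefactor by this limit, which yields the stated formula. There is no real obstacle. The only point needing care is the bookkeeping of the $\ell=n$ boundary term in Corollary~\ref{cor:Lp-law-general-mu}, and taking $n>\ell$ removes it. No domination hypothesis is needed, because only finitely many weights $\mu_n(n-u)$, namely those with $u\le\ell$, enter the formula.

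As a consistency check, summing the limits over $\ell\ge0$ should give $1$ when $r<1$ (no mass escapes). I would not include this in the proof, since the statement is pointwise in $\ell$.
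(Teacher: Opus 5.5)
Your proof is correct and is essentially the paper's own argument: for $n>\ell$, apply Corollary~\ref{cor:Lp-law-general-mu} to $M_n=\widetilde Q_{\mu_n,p}$, then pass to the limit termwise in the finite sum $S_{\mu_n}(\ell)$. As a side remark, your consistency check holds for every $r$, not only for $r<1$: since $\sum_{\ell\ge u}\bigl(\frac{1}{\ell!\,p^\ell}-\frac{1}{(\ell+1)!\,p^{\ell+1}}\bigr)=\frac{1}{u!\,p^u}$, the limits sum to $e^{-\lambda}\sum_{u\ge0}\lambda^u/u!=1$, so no mass of $L_p$ escapes in any regime.
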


\begin{proof}
For all sufficiently large \(n\), we have \(\ell\le n-1\). By Corollary~\ref{cor:Lp-law-general-mu},
\[
M_n\{L_p=\ell\}
=
\left(\frac{1}{\ell!\,p^\ell}-\frac{1}{(\ell+1)!\,p^{\ell+1}}\right)
\sum_{u=0}^{\ell}\mu_n(n-u)\,u!\,p^u.
\]
Now pass to the limit termwise in the finite sum.
\end{proof}

\begin{theorem}[Poisson-regime limit for likelihood-ratio functionals]\label{thm:Phi-top-m-asymp}
Assume \eqref{eq:mu-Poisson-assump}. Let \(\Phi:[0,\infty)\to\R\) be continuous and satisfy
\begin{equation}\label{eq:Phi-growth}
|\Phi(t)|\le C_\Phi(1+t^d)
\qquad(t\ge0)
\end{equation}
for some \(C_\Phi>0\) and \(d\ge0\). Assume in addition that there exists \(B\ge1\) such that, for all sufficiently large \(n\) and all \(u\ge0\),
\[
0\le \mu_n(n-u)\,u!\,p^u\le B^u.
\]
Then
\[
\sum_{x\in G_{n,p}}U(x)\,\Phi\!\left(\frac{M_n(x)}{U(x)}\right)
\longrightarrow
\sum_{\ell=0}^{\infty}
\left(\frac{1}{\ell!\,p^\ell}-\frac{1}{(\ell+1)!\,p^{\ell+1}}\right)\Phi\!\bigl(s(\ell)\bigr).
\]
\end{theorem}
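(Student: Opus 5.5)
The plan is to start from the exact reduction in Theorem~\ref{thm:Phi-general-mu}. Extend \(S_{\mu_n}(\ell)\) to all \(\ell\ge0\) using the convention \(\mu_n(a)=0\) for \(a\notin\{0,\dots,n\}\), and set
\[
w_n(\ell):=U\{L_p=\ell\}\ \ (\ell\le n),
\qquad
w_n(\ell):=0\ \ (\ell>n).
\]
Then the left-hand side equals \(\sum_{\ell\ge0} w_n(\ell)\,\Phi(S_{\mu_n}(\ell))\). For each fixed \(\ell\) and all \(n>\ell\), Lemma~\ref{lem:Lp-under-U} gives \(w_n(\ell)=a_\ell:=\frac{1}{\ell!p^\ell}-\frac{1}{(\ell+1)!p^{\ell+1}}\). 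Moreover \(S_{\mu_n}(\ell)\to s(\ell)\) by \eqref{eq:Smu-pointwise-limit}, and continuity of \(\Phi\) then gives termwise convergence \(w_n(\ell)\Phi(S_{\mu_n}(\ell))\to a_\ell\Phi(s(\ell))\). The theorem therefore reduces to justifying the interchange of limit and sum, which I will do by dominated convergence on \(\ell\in\mathbb Z_{\ge0}\).

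For the dominating function, use the hypothesis \(\mu_n(n-u)u!p^u\le B^u\) with \(B\ge1\) for large \(n\). This gives
\[
0\le S_{\mu_n}(\ell)\le \sum_{u=0}^{\ell}B^u\le (\ell+1)B^\ell ,
\]
uniformly in \(n\) (including the plateau \(\ell\ge n\), where the sum stops at \(n\)). Also \(0\le w_n(\ell)\le U\{L_p\ge \ell\}\le \frac{1}{\ell!p^\ell}\le \frac{1}{\ell!}\). By the growth condition \eqref{eq:Phi-growth},
\[
|w_n(\ell)\Phi(S_{\mu_n}(\ell))|\le \frac{C_\Phi\bigl(1+(\ell+1)^dB^{d\ell}\bigr)}{\ell!}=:D_\ell .
\]
Since factorial decay beats geometric growth, \(\sum_\ell D_\ell<\infty\). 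Dominated convergence then yields the limit \(\sum_\ell a_\ell\Phi(s(\ell))\), and this series converges absolutely by the same bound, because \(s(\ell)\le \sum_{u\le\ell}r^u\le(\ell+1)\max(1,r)^\ell\).

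The only delicate point is the boundary term \(\ell=n\). It carries mass \(\frac{1}{n!p^n}\) rather than \(a_n\), and it also carries the full plateau value \(S_{\mu_n}(n)\). Both are already covered by the bounds above, since \(w_n(n)\le 1/n!\) and \(S_{\mu_n}(n)\le (n+1)B^n\). The main obstacle is therefore just making the domination uniform in \(n\) on the whole index range; the hypothesis on \(\mu_n(n-u)u!p^u\) is exactly what controls the amplified tail, as discussed in the remark preceding Theorem~\ref{thm:sep-top-m}.
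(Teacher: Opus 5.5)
Your proposal is correct and follows essentially the same route as the paper: you extend the $L_p$-marginal sum by zeros, obtain termwise convergence from $S_{\mu_n}(\ell)\to s(\ell)$, continuity of $\Phi$, and $U\{L_p=\ell\}=a_\ell$ for $n>\ell$, and then apply dominated convergence with a dominating sequence of the form $C_\Phi(1+\text{geometric in }\ell)/\ell!$ coming from the $B^u$ hypothesis. The paper's bound differs only cosmetically: it uses $(\ell+1)B^\ell\le(2B)^\ell$ and keeps the factor $p^{-\ell}$.
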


\begin{proof}
By Theorem~\ref{thm:Phi-general-mu},
\[
\sum_{x\in G_{n,p}}U(x)\,\Phi\!\left(\frac{M_n(x)}{U(x)}\right)
=
\sum_{\ell=0}^{\infty} b_{n,\ell},
\]
where, extending the finite sum by zeros,
\[
b_{n,\ell}:=
\begin{cases}
U\{L_p=\ell\}\,\Phi\!\bigl(S_{\mu_n}(\ell)\bigr), & 0\le \ell\le n,\\[4pt]
0, & \ell>n,
\end{cases}
\]
and \(S_{\mu_n}\) is given by \eqref{eq:def-Smu}.

Fix \(\ell\ge0\). By the assumed pointwise convergence,
\[
S_{\mu_n}(\ell)\longrightarrow e^{-\lambda}\sum_{u=0}^{\ell}(p\lambda)^u=s(\ell).
\]
Also, for all sufficiently large \(n\), by Lemma~\ref{lem:Lp-under-U},
\[
U\{L_p=\ell\}
=
\frac{1}{\ell!\,p^\ell}-\frac{1}{(\ell+1)!\,p^{\ell+1}},
\]
so
\[
b_{n,\ell}\longrightarrow
\left(\frac{1}{\ell!\,p^\ell}-\frac{1}{(\ell+1)!\,p^{\ell+1}}\right)\Phi\!\bigl(s(\ell)\bigr).
\]

By the domination hypothesis,
\[
0\le S_{\mu_n}(\ell)\le \sum_{u=0}^{\ell}B^u\le (\ell+1)B^\ell\le (2B)^\ell
\]
for all sufficiently large \(n\). Therefore, by \eqref{eq:Phi-growth} and \(U\{L_p=\ell\}\le 1/(\ell!\,p^\ell)\),
\[
|b_{n,\ell}|
\le
\frac{C_\Phi\bigl(1+(2B)^{d\ell}\bigr)}{\ell!\,p^\ell}
\]
for all sufficiently large \(n\). The right-hand side is summable in \(\ell\), so dominated convergence yields the result.
\end{proof}

\begin{remark}\label{rem:Phi-limit-rv}
Equivalently, the limiting right-hand side in Theorem~\ref{thm:Phi-top-m-asymp} is \(\E[\Phi(Z_{c,p})]\), where
\[
Z_{c,p}:=s(J_p),
\qquad
\Prob(J_p=\ell)=a_\ell:=
\frac{1}{\ell!\,p^\ell}
-
\frac{1}{(\ell+1)!\,p^{\ell+1}}
\qquad(\ell\ge0).
\]
Indeed, if \(t_\ell:=(\ell!\,p^\ell)^{-1}\), then
\[
a_\ell=t_\ell-t_{\ell+1}\ge0,
\qquad
\sum_{\ell=0}^{\infty}a_\ell=t_0-\lim_{\ell\to\infty}t_{\ell+1}=1.
\]
Thus, for every \(\Phi\) satisfying the hypotheses of Theorem~\ref{thm:Phi-top-m-asymp},
\[
\E[\Phi(Z_{c,p})]
=
\sum_{\ell=0}^{\infty}a_\ell\,\Phi(s(\ell)).
\]
\end{remark}

\begin{corollary}\label{cor:fp-series}
For every \(p\ge1\) and \(c\in\R\),
\[
f_p(c)=
\sum_{\ell=0}^{\infty}
\left(\frac{1}{\ell!\,p^\ell}-\frac{1}{(\ell+1)!\,p^{\ell+1}}\right)
\bigl(s(\ell)-1\bigr)_+.
\]
\end{corollary}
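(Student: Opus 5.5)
Fix $p\ge1$ and $c\in\R$, and work directly with the explicit formula \eqref{eq:fp-explicit0}. The plan is to show that the series collapses to that closed form; this avoids any limiting argument over $n$. The starting point is that $s(\ell)$ is strictly increasing. By \eqref{eq:wstar}, $s(\ell)\le1$ for $\ell\le w^*-1$ and $s(\ell)>1$ for $\ell\ge w^*$. Hence, with $a_\ell$ as in Remark~\ref{rem:Phi-limit-rv}, the series reduces to
\[
\sum_{\ell\ge w^*}a_\ell\bigl(s(\ell)-1\bigr).
\]
All its terms are nonnegative, so the sum is well defined in $[0,\infty]$, and finiteness will come out of the computation.

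Next, swap the order of summation. Write $s(\ell)=e^{-\lambda}\sum_{u\le\ell}r^u$ and $t_\ell=(\ell!\,p^\ell)^{-1}$, so that $\sum_{\ell\ge j}a_\ell=t_j$ by telescoping. The $-1$ part contributes $-t_{w^*}$. The positive part splits according to whether $u<w^*$ or $u\ge w^*$:
\[
e^{-\lambda}\sum_{u<w^*}r^u\,t_{w^*}\;+\;e^{-\lambda}\sum_{u\ge w^*}r^u\,t_u .
\]
Tonelli justifies the interchange, since everything is nonnegative. In the second piece, $r^u t_u=\lambda^u/u!$, so it equals
\[
e^{-\lambda}\sum_{u\ge w^*}\frac{\lambda^u}{u!}=1-e^{-\lambda}\sum_{u<w^*}\frac{\lambda^u}{u!}.
\]
Adding the three pieces gives exactly
\[
1-e^{-\lambda}\sum_{u<w^*}\frac{\lambda^u}{u!}+t_{w^*}\Bigl(e^{-\lambda}\sum_{u<w^*}r^u-1\Bigr),
\]
which is $f_p(c)$ in \eqref{eq:fp-explicit0}.

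As an alternative route, one could use Proposition~\ref{prop:TV-likelihood}, Theorem~\ref{thm:Phi-general-mu} with $\Phi(t)=(t-1)_+$, and Theorem~\ref{thm:profile-top-m}. This needs a domination input, which Lemma~\ref{lem:Linfty-domination} supplies via the $B^u$ bound in Theorem~\ref{thm:Phi-top-m-asymp}, with $d=1$. The direct computation is cleaner, so I would present that and only mention the alternative. The only delicate point is the bookkeeping in the interchange: terms with $u<w^*$ are summed over $\ell\ge w^*$, whereas terms with $u\ge w^*$ are summed over $\ell\ge u$.
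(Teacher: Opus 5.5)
Your computation is correct, and it takes a genuinely different route from the paper's. The paper argues softly, and its argument is exactly your ``alternative route.'' It fixes some $m$ and takes $M_n=(Q^{(m)})^{*k_n(c)}$. It checks the Poisson hypothesis (Proposition~\ref{lem:Pk-m-Poisson}) and the $B^u$ domination (Lemma~\ref{lem:Linfty-domination}). It then applies Theorem~\ref{thm:Phi-top-m-asymp} with $\Phi(t)=(t-1)_+$, so that $\|M_n-U\|_{\TV}$ converges to the series, and identifies that limit with $f_p(c)$ via Theorem~\ref{thm:profile-top-m} and uniqueness of limits.

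Your argument instead checks directly an identity between two explicit functions of $(\lambda,p)$. It uses only three inputs:
\begin{itemize}
\item strict monotonicity of $s$, so $(s(\ell)-1)_+$ vanishes for $\ell<w^*$, including in the tie case $s(w^*-1)=1$;
\item the telescoping $\sum_{\ell\ge j}a_\ell=t_j$;
\item Tonelli on nonnegative terms, which also shows $\sum_{\ell\ge w^*}a_\ell s(\ell)<\infty$, so subtracting $t_{w^*}$ is legitimate.
\end{itemize}
This is more elementary and self-contained. To prove what is really an algebraic fact, it needs no concrete sequence of measures in the Poisson regime, no external coupon-collector limit, and no domination lemma. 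It is the same Tonelli bookkeeping the paper itself uses to get $\sum_\ell a_\ell s(\ell)=1$ in the proof of Theorem~\ref{thm:L2-top-m-limit}.

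The paper's route costs nothing once Theorem~\ref{thm:Phi-top-m-asymp} is in hand. It also exhibits $f_p$ as the $\Phi(t)=(t-1)_+$ instance of the unified profile $\E[\Phi(Z_{c,p})]$ from Remark~\ref{rem:Phi-limit-rv}.

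As a small bonus, your computation combined with $\sum_\ell a_\ell\bigl(s(\ell)-1\bigr)=0$ gives $H_{1,p}(c)=2f_p(c)$ directly, as asserted in Remark~\ref{rem:Lq-explicit}.
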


\begin{proof}
Fix \(m\ge1\), independent of \(n\), let \(k=k_n(c)\), and set \(\mu_n(n-u):=P_k^{(m)}(u)\). By Lemma~\ref{prop:Qm-mixture},
\[
M_n:=(Q^{(m)})^{*k}=\widetilde Q_{\mu_n,p}.
\]
By Proposition~\ref{lem:Pk-m-Poisson}, \(\mu_n(n-u)\to e^{-\lambda}\lambda^u/u!\) for each fixed \(u\ge0\).
Also, by Lemma~\ref{lem:Linfty-domination},
\[
0\le P_k^{(m)}(u)\,u!\,p^u\le \bigl(p\,n e^{-mk/n}\bigr)^u.
\]
Since
\[
p\,n e^{-mk/n}\longrightarrow p e^{-c},
\]
there exists \(B\ge1\) such that
\[
0\le P_k^{(m)}(u)\,u!\,p^u\le B^u
\]
for all sufficiently large \(n\) and all \(u\ge0\). Hence Theorem~\ref{thm:Phi-top-m-asymp} applies with
\[
\Phi(t)=(t-1)_+.
\]
By Proposition~\ref{prop:TV-likelihood},
\[
\sum_{x\in G_{n,p}}U(x)\Bigl(\frac{M_n(x)}{U(x)}-1\Bigr)_+
=\|M_n-U\|_{\TV}.
\]
Now Theorem~\ref{thm:profile-top-m} yields the stated formula.
\end{proof}

\begin{corollary}[Poisson-regime \texorpdfstring{$L^q(U)$}{Lq(U)} profile]\label{cor:Lq-top-m-asymp}
Let \(1\le q<\infty\). Under the hypotheses of Theorem~\ref{thm:Phi-top-m-asymp},
\[
\left\|\frac{M_n(\cdot)}{U(\cdot)}-1\right\|_{L^q(U)}^q
\longrightarrow
H_{q,p}(c),
\]
where
\[
H_{q,p}(c):=
\sum_{\ell=0}^{\infty}
\left(\frac{1}{\ell!\,p^\ell}-\frac{1}{(\ell+1)!\,p^{\ell+1}}\right)|s(\ell)-1|^q.
\]
\end{corollary}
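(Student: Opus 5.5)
This follows directly from Theorem~\ref{thm:Phi-top-m-asymp} applied with \(\Phi(t)=|t-1|^q\). First we check that hypothesis \eqref{eq:Phi-growth} holds. The function \(\Phi\) is continuous on \([0,\infty)\). For \(t\ge0\) and \(q\ge1\) we have \(|t-1|\le \max(1,t)\), hence
\[
|t-1|^q\le \max(1,t^q)\le 1+t^q .
\]
So \eqref{eq:Phi-growth} holds with \(C_\Phi=1\) and \(d=q\). The remaining hypotheses of Theorem~\ref{thm:Phi-top-m-asymp} are the pointwise Poisson convergence \eqref{eq:mu-Poisson-assump} and the domination \(0\le \mu_n(n-u)\,u!\,p^u\le B^u\). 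Both are assumed in the corollary verbatim.

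Next we identify the left-hand side. Since \(U(x)>0\) for all \(x\), the definition of the \(L^q(U)\) norm gives
\[
\left\|\frac{M_n(\cdot)}{U(\cdot)}-1\right\|_{L^q(U)}^q=\sum_{x\in G_{n,p}}U(x)\,\Phi\!\left(\frac{M_n(x)}{U(x)}\right).
\]
This is exactly the quantity whose limit Theorem~\ref{thm:Phi-top-m-asymp} computes. That theorem then yields convergence to
\[
\sum_{\ell\ge0}\left(\frac{1}{\ell!\,p^\ell}-\frac{1}{(\ell+1)!\,p^{\ell+1}}\right)|s(\ell)-1|^q ,
\]
which is \(H_{q,p}(c)\) by definition. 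One can also read off from the dominating bound in the proof of Theorem~\ref{thm:Phi-top-m-asymp} that this series is finite.

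The only step needing any care is the polynomial growth bound for \(\Phi\). It is immediate, so there is no genuine obstacle. All the analytic work, namely the dominated convergence using \(S_{\mu_n}(\ell)\le(2B)^\ell\) together with the factorial decay of \(U\{L_p=\ell\}\), is already contained in Theorem~\ref{thm:Phi-top-m-asymp}.
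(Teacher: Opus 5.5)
Your proposal is correct and is exactly the paper's argument: apply Theorem~\ref{thm:Phi-top-m-asymp} with \(\Phi(t)=|t-1|^q\). The paper leaves the growth hypothesis \eqref{eq:Phi-growth} unchecked, and your bound \(|t-1|^q\le 1+t^q\) (so \(C_\Phi=1\), \(d=q\)) fills that in correctly.
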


\begin{proof}
Apply Theorem~\ref{thm:Phi-top-m-asymp} with \(\Phi(t)=|t-1|^q\).
\end{proof}

\begin{theorem}[Closed form for the \texorpdfstring{$\chi^2$}{chi-squared} profile]\label{thm:L2-top-m-limit}
Under the hypotheses of Theorem~\ref{thm:Phi-top-m-asymp},
\[
\chi^2(M_n,U)\longrightarrow H_{2,p}(c)=:g_p(c),
\]
where
\[
g_p(c)=
\begin{cases}
\displaystyle \frac{r+1}{r-1}e^{-\lambda(2-r)}-\frac{2}{r-1}e^{-\lambda}-1,
& r\neq1,\\[12pt]
\displaystyle (2\lambda+1)e^{-\lambda}-1,
& r=1.
\end{cases}
\]
\end{theorem}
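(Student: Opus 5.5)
The convergence $\chi^2(M_n,U)\to H_{2,p}(c)$ is Corollary~\ref{cor:Lq-top-m-asymp} with $q=2$, since $\Phi(t)=(t-1)^2$ is continuous and satisfies the growth bound \eqref{eq:Phi-growth} with $d=2$. So only the closed-form evaluation of $H_{2,p}(c)=\sum_{\ell\ge0}a_\ell(s(\ell)-1)^2$ remains.

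To evaluate the series I will avoid summing $a_\ell(s(\ell)-1)^2$ directly. Instead I will imitate the proof of Corollary~\ref{cor:chisq-general-mu} at the level of the limit. Write $\nu(u):=e^{-\lambda}\lambda^u/u!$, so that $s(\ell)=\sum_{u\le\ell}\nu(u)u!p^u$, and use $\sum_\ell a_\ell=1$ and $\sum_{\ell\ge j}a_\ell=t_j=1/(j!p^j)$ (Remark~\ref{rem:Phi-limit-rv}).

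\textbf{First moment.} Exchanging sums (all terms nonnegative) gives
\[
\sum_\ell a_\ell s(\ell)=\sum_u\nu(u)\,u!p^u\,t_u=\sum_u\nu(u)=1.
\]

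\textbf{Second moment.} In the same way,
\[
\sum_\ell a_\ell s(\ell)^2=\sum_{u,v}\nu(u)\nu(v)\min(u,v)!\,p^{\min(u,v)}.
\]
Hence $H_{2,p}(c)=\sum_{u,v}\nu(u)\nu(v)\min(u,v)!p^{\min(u,v)}-1$, provided this double series converges. If it diverges, both sides equal $+\infty$.

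\textbf{Evaluating the double sum.} Split according to $j=\min(u,v)$:
\[
\sum_{u,v}=\sum_j \nu(j)^2 j!p^j+2\sum_j \nu(j)\,j!p^j\,\Prob(\mathrm{Poi}(\lambda)>j).
\]
The diagonal term is $e^{-2\lambda}\sum_j r^j\lambda^j/j!=e^{-2\lambda}e^{r\lambda}=e^{-\lambda(2-r)}$. For the off-diagonal term, write $\nu(j)j!p^j=e^{-\lambda}r^j$ and $\Prob(\mathrm{Poi}>j)=\sum_{v>j}\nu(v)$, then swap the order of summation:
\[
2e^{-\lambda}\sum_{v\ge1}\nu(v)\sum_{j=0}^{v-1}r^j=2e^{-\lambda}\sum_{v}\nu(v)\frac{r^v-1}{r-1}.
\]
This uses only nonnegative terms for $r\ne1$ (the ratio is positive). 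It equals
\[
\frac{2e^{-\lambda}}{r-1}\bigl(e^{-\lambda}e^{r\lambda}-1\bigr)=\frac{2}{r-1}\bigl(e^{-\lambda(2-r)}-e^{-\lambda}\bigr).
\]
Adding the diagonal term and subtracting $1$ gives
\[
\Bigl(1+\frac{2}{r-1}\Bigr)e^{-\lambda(2-r)}-\frac{2}{r-1}e^{-\lambda}-1,
\]
which is the stated formula, since $1+2/(r-1)=(r+1)/(r-1)$.

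For $r=1$ the inner sum is $v$, so the off-diagonal term is $2e^{-\lambda}\E[\mathrm{Poi}(\lambda)]=2\lambda e^{-\lambda}$. The diagonal term is $e^{-\lambda}$, giving $(2\lambda+1)e^{-\lambda}-1$. Equivalently, this is the limit $r\to1$ of the first case.

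The only delicate point is justifying the exchanges of infinite sums. I will handle it by Tonelli: every term, including $(r^v-1)/(r-1)$, is nonnegative. All resulting series are finite because $e^{r\lambda}<\infty$, so the identity also shows that $H_{2,p}(c)$ is finite for every $c$. This is consistent with the limit in Corollary~\ref{cor:Lq-top-m-asymp}.
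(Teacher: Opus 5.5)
Your proposal is correct and follows essentially the same route as the paper. You reduce to $H_{2,p}(c)$ via the $L^q$ profile with $q=2$, use $\sum_\ell a_\ell s(\ell)=1$ and Tonelli to collapse the $\ell$-sum to the tail $t_{\max(u,v)}$, then split into diagonal and off-diagonal parts and sum the resulting geometric and Poisson series. Your terms $\nu(u)\nu(v)\min(u,v)!\,p^{\min(u,v)}$ are exactly the paper's $e^{-2\lambda}r^{u+v}/(\max(u,v)!\,p^{\max(u,v)})$ written differently.
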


\begin{proof}
By Corollary~\ref{cor:Lq-top-m-asymp},
\[
\chi^2(M_n,U)\longrightarrow H_{2,p}(c).
\]
Set
\[
H:=H_{2,p}(c),
\qquad
a_\ell:=\frac{1}{\ell!\,p^\ell}-\frac{1}{(\ell+1)!\,p^{\ell+1}},
\qquad
s_\ell:=e^{-\lambda}\sum_{u=0}^{\ell}r^u.
\]
Then
\[
H=\sum_{\ell=0}^{\infty}a_\ell(s_\ell-1)^2.
\]

Let
\[
t_u:=\frac{1}{u!\,p^u}.
\]
Since \(a_\ell=t_\ell-t_{\ell+1}\),
\[
\sum_{\ell=0}^{\infty}a_\ell=1,
\qquad
\sum_{\ell=u}^{\infty}a_\ell=t_u.
\]
Also, by Tonelli,
\[
\sum_{\ell=0}^{\infty}a_\ell s_\ell
=
e^{-\lambda}\sum_{u=0}^{\infty}r^u\sum_{\ell=u}^{\infty}a_\ell
=
e^{-\lambda}\sum_{u=0}^{\infty}\frac{r^u}{u!\,p^u}
=
e^{-\lambda}\sum_{u=0}^{\infty}\frac{\lambda^u}{u!}
=1.
\]
Hence
\[
H+1=\sum_{\ell=0}^{\infty}a_\ell s_\ell^2.
\]
Expanding \(s_\ell^2\) and applying Tonelli again,
\[
H+1
=
e^{-2\lambda}\sum_{u,v=0}^{\infty}r^{u+v}
\sum_{\ell=\max(u,v)}^{\infty}a_\ell
=
e^{-2\lambda}\sum_{u,v=0}^{\infty}
\frac{r^{u+v}}{\max(u,v)!\,p^{\max(u,v)}}.
\]
Split into diagonal and off-diagonal terms:
\[
H+1
=
e^{-2\lambda}\sum_{u=0}^{\infty}\frac{r^{2u}}{u!\,p^u}
+
2e^{-2\lambda}\sum_{v=1}^{\infty}\frac{r^v}{v!\,p^v}\sum_{u=0}^{v-1}r^u.
\]

If \(r\neq1\), then
\[
\sum_{u=0}^{v-1}r^u=\frac{r^v-1}{r-1},
\qquad
\frac{r^v}{v!\,p^v}=\frac{\lambda^v}{v!},
\qquad
\frac{r^{2u}}{u!\,p^u}=\frac{(p\lambda^2)^u}{u!}.
\]
Therefore
\begin{align*}
H+1
&=
e^{-2\lambda}e^{p\lambda^2}
+\frac{2e^{-2\lambda}}{r-1}\sum_{v=1}^{\infty}\frac{\lambda^v(r^v-1)}{v!}\\
&=
e^{-\lambda(2-r)}
+\frac{2e^{-2\lambda}}{r-1}\bigl(e^{\lambda r}-e^\lambda\bigr)\\
&=
\frac{r+1}{r-1}e^{-\lambda(2-r)}-\frac{2}{r-1}e^{-\lambda}.
\end{align*}
Subtracting \(1\) gives the stated formula.

If \(r=1\), then
\[
\sum_{u=0}^{v-1}r^u=v,
\qquad
\sum_{u=0}^{\infty}\frac{r^{2u}}{u!\,p^u}
=\sum_{u=0}^{\infty}\frac{\lambda^u}{u!}
=e^\lambda.
\]
Hence
\begin{align*}
H+1
&=
e^{-2\lambda}e^\lambda
+2e^{-2\lambda}\sum_{v=1}^{\infty}\frac{\lambda^v}{v!}\,v\\
&=
e^{-\lambda}
+2\lambda e^{-2\lambda}\sum_{w=0}^{\infty}\frac{\lambda^w}{w!}\\
&=
(2\lambda+1)e^{-\lambda}.
\end{align*}
Subtract \(1\).
\end{proof}

\begin{remark}\label{rem:Lq-explicit}
The cases \(q=1\) and \(q=2\) are exceptional:
\[
H_{1,p}(c)=2f_p(c),
\qquad
H_{2,p}(c)=g_p(c).
\]
For \(q=2\), the square expands into a double sum, and the \(\ell\)-sum collapses through the identity
\[
\sum_{\ell\ge\max(u,v)}U\{L_p=\ell\}
=\frac{1}{\max(u,v)!\,p^{\max(u,v)}}.
\]
No comparably simple closed form arises from this method for general real \(1<q<\infty\). For \textit{even integers} \(q\ge4\), we can expand \((s(\ell)-1)^q\) termwise, but the resulting formulas are cumbersome.
\end{remark}

\begin{definition}\label{def:hp-profile}
Define
\[
h_p(c):=
\sum_{\ell=0}^{\infty}
\left(\frac{1}{\ell!\,p^\ell}-\frac{1}{(\ell+1)!\,p^{\ell+1}}\right)
s(\ell)\log s(\ell).
\]
\end{definition}

\begin{corollary}[Relative-entropy profile]\label{cor:KL-top-m-asymp}
Under the hypotheses of Theorem~\ref{thm:Phi-top-m-asymp},
\[
D(M_n\|U)\longrightarrow h_p(c).
\]
\end{corollary}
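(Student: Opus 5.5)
The plan is to apply Theorem~\ref{thm:Phi-top-m-asymp} with \(\Phi(t)=t\log t\) for \(t>0\) and \(\Phi(0)=0\). Corollary~\ref{cor:KL-general-mu} already writes \(D(M_n\|U)\) as \(\sum_x U(x)\Phi(M_n(x)/U(x))\), so the left side of Theorem~\ref{thm:Phi-top-m-asymp} is exactly \(D(M_n\|U)\). Its right side is \(\sum_\ell a_\ell\, s(\ell)\log s(\ell)=h_p(c)\) by Definition~\ref{def:hp-profile}. The pointwise Poisson assumption and the domination hypothesis \(0\le\mu_n(n-u)\,u!\,p^u\le B^u\) are exactly the hypotheses of that theorem, so they carry over unchanged.

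Two conditions on \(\Phi\) remain to be checked.

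\emph{Continuity.} The function \(t\mapsto t\log t\) is continuous on \((0,\infty)\) and tends to \(0\) as \(t\downarrow0\). With the convention \(\Phi(0)=0\), it is therefore continuous on \([0,\infty)\).

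\emph{Polynomial growth \eqref{eq:Phi-growth} with \(d=2\).} On \([0,1]\) we have \(|t\log t|\le e^{-1}\). On \([1,\infty)\), using \(\log t\le t-1\le t\), we get \(0\le t\log t\le t^2\). Hence \(|\Phi(t)|\le 1+t^2\), so we may take \(C_\Phi=1\).

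The only point needing care is whether \(D(M_n\|U)\) and \(h_p(c)\) are well defined, since some \(S_{\mu_n}(\ell)\) may vanish, for example when \(\mu_n(n)=0\). The convention \(0\log0=0\) is already built into both Corollary~\ref{cor:KL-general-mu} and \(\Phi\). In the limit there is no issue, because \(s(\ell)\ge e^{-\lambda}>0\) for every \(\ell\).

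The sum defining \(h_p(c)\) converges absolutely. Indeed, \(a_\ell\le 1/(\ell!\,p^\ell)\), and \(s(\ell)\le(\ell+1)\max(1,r)^\ell\) is at most exponential in \(\ell\), so the terms \(a_\ell|\Phi(s(\ell))|\) are dominated by a summable sequence. This is the same domination used in the proof of Theorem~\ref{thm:Phi-top-m-asymp}, so there is no real obstacle beyond verifying the growth bound above.
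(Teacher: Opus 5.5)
Your proposal is correct and follows essentially the same route as the paper: apply Theorem~\ref{thm:Phi-top-m-asymp} with \(\Phi(t)=t\log t\), \(\Phi(0)=0\), and verify the growth hypothesis via \(|t\log t|\le 1+t^2\), so \(C_\Phi=1\) and \(d=2\). Your extra checks of continuity and of absolute convergence of the series for \(h_p(c)\) are harmless additions that the paper leaves implicit. One small point: the identity \(D(M_n\|U)=\sum_x U(x)\,\Phi\bigl(M_n(x)/U(x)\bigr)\) follows directly from the definition of \(D\), so you do not need Corollary~\ref{cor:KL-general-mu} for it.
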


\begin{proof}
Apply Theorem~\ref{thm:Phi-top-m-asymp} with \(\Phi(t)=t\log t\), where \(\Phi(0)=0\). Since
\[
|t\log t|\le t^2+e^{-1}\le 1+t^2
\qquad(t\ge0),
\]
the growth hypothesis holds with \(C_\Phi=1\) and \(d=2\).
\end{proof}

For \(p=1\) and \(m=1\), this recovers the cutoff-window entropy limit of \cite{Stark2002InformationLoss}.

We next compare relative entropy with total variation and \(\chi^2\).
\begin{proposition}\label{prop:KL-vs-TV-chisq}
For every probability measure $M$ on $G_{n,p}$,
\[
2\|M-U\|_{\TV}^2
\le
D(M\|U)
\le
\log\bigl(1+\chi^2(M,U)\bigr)
\le
\chi^2(M,U).
\]
\end{proposition}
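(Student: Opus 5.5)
The statement consists of three standard inequalities, each proved separately. The likelihood ratio is \(Z(x):=M(x)/U(x)\). Expectations \(\mathbb E_U\) are taken with respect to \(X\sim U\), as in Proposition~\ref{prop:TV-likelihood}. Since \(U(x)>0\) for every \(x\), we have \(M\ll U\) automatically and \(D(M\|U)\) is finite.

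\textbf{Left inequality (Pinsker).} This is the classical Pinsker inequality \(\|M-U\|_{\TV}^2\le\frac12 D(M\|U)\) with total variation normalized as \(\frac12\ell^1\); it holds on any finite set. I would cite it (e.g.\ from Gibbs--Su \cite{GibbsSu} or \cite{LPW}). A self-contained route is also available, in two steps:
\begin{enumerate}
\item Reduce to the two-point set \(\{A,A^c\}\) via the data-processing inequality for \(D\), where \(A=\{M>U\}\) attains \(\|M-U\|_{\TV}=M(A)-U(A)\).
\item Verify the binary inequality \(a\log\frac ab+(1-a)\log\frac{1-a}{1-b}\ge 2(a-b)^2\) by fixing \(a\) and differentiating in \(b\).
\end{enumerate}

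\textbf{Middle inequality.} Write
\[
D(M\|U)=\sum_x M(x)\log\frac{M(x)}{U(x)}=\mathbb E_{M}[\log Z].
\]
Jensen's inequality for the concave function \(\log\) under the probability measure \(M\) gives
\[
\mathbb E_M[\log Z]\le\log\mathbb E_M[Z]=\log\sum_x \frac{M(x)^2}{U(x)}.
\]
By \eqref{eq:L2-via-second-moment}, the last sum equals \(1+\chi^2(M,U)\). Points with \(M(x)=0\) contribute zero under the convention \(0\log0=0\) and are simply excluded from the \(M\)-expectation, so no care is needed there.

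\textbf{Right inequality.} This is \(\log(1+t)\le t\) for \(t=\chi^2(M,U)\ge0\).

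The only nontrivial ingredient is Pinsker's inequality. If a citation is not acceptable, the main obstacle is the calculus verification of the binary inequality in step 2. The plan there is to set \(g(b)=a\log\frac ab+(1-a)\log\frac{1-a}{1-b}-2(a-b)^2\), observe that \(g(a)=0\), and show that \(g'(b)=(b-a)\bigl(\frac{1}{b(1-b)}-4\bigr)\) has the sign of \(b-a\), using \(b(1-b)\le\frac14\).
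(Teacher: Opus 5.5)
Your proposal is correct and follows the same three-step chain as the paper: Pinsker's inequality for the left bound, $D(M\|U)\le\log\bigl(1+\chi^2(M,U)\bigr)$ for the middle bound, and $\log(1+t)\le t$ for the right bound. The paper simply cites the first two results (van Erven--Harremo\"es and Nishiyama--Sason). You instead prove the middle one directly by Jensen's inequality for $\log$ under $M$, which is the standard argument behind the cited bound, and you also sketch a valid self-contained proof of Pinsker's inequality.
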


\begin{proof}
By the Csisz\'ar--Kullback--Pinsker inequality, in the notation of
\cite[Eq.~(34), Thm.~31]{vanErvenHarremoes},
\[
\frac12\,V(M,U)^2\le D(M\|U),
\qquad
V(M,U)=2\|M-U\|_{\TV},
\]
hence
\[
2\|M-U\|_{\TV}^2\le D(M\|U).
\]
Also,
\[
D(M\|U)\le \log\bigl(1+\chi^2(M,U)\bigr)
\]
by \cite[Eq.~(16)]{NishiyamaSasonChi2}. Finally,
\[
\log\bigl(1+\chi^2(M,U)\bigr)\le \chi^2(M,U)
\]
because $\log(1+x)\le x$ for all $x\ge0$.
\end{proof}

\begin{corollary}\label{cor:KL-profile-comparison}
For every fixed \(p\ge1\) and \(c\in\R\),
\[
2f_p(c)^2
\le
h_p(c)
\le
\log\bigl(1+g_p(c)\bigr)
\le
g_p(c).
\]
\end{corollary}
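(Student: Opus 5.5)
The plan is to get the corollary by passing to the limit in Proposition~\ref{prop:KL-vs-TV-chisq}, applied along a concrete sequence \(M_n\) whose three distances converge to \(f_p(c)\), \(h_p(c)\) and \(g_p(c)\). Fix \(m=1\) (any fixed \(m\) would do), let \(k=k_n(c)\), and set \(M_n:=(Q^{(1)})^{*k}=\widetilde Q_{\mu_n,p}\) with \(\mu_n(n-u)=P_k^{(1)}(u)\), as in Lemma~\ref{prop:Qm-mixture}.

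First I check the hypotheses of Theorem~\ref{thm:Phi-top-m-asymp}.
\begin{itemize}
\item The pointwise Poisson assumption \eqref{eq:mu-Poisson-assump} is Proposition~\ref{lem:Pk-m-Poisson}.
\item For the domination hypothesis, Lemma~\ref{lem:Linfty-domination} gives \(0\le P_k^{(1)}(u)\,u!\,p^u\le (p\,n e^{-k/n})^u\). Since \(p\,ne^{-k/n}\to pe^{-c}=r\), there is a constant \(B\ge1\) (say \(B=\max(1,2r)\)) with \(\mu_n(n-u)u!p^u\le B^u\) for all large \(n\). This is exactly the argument already given in the proof of Corollary~\ref{cor:fp-series}.
\end{itemize}
With the hypotheses in hand, the three limits follow from earlier results:
\begin{itemize}
\item \(\|M_n-U\|_{\TV}\to f_p(c)\) by Theorem~\ref{prop:DFP3.1-colored} (or Theorem~\ref{thm:profile-top-m});
\item \(\chi^2(M_n,U)\to g_p(c)\) by Theorem~\ref{thm:L2-top-m-limit};
\item \(D(M_n\|U)\to h_p(c)\) by Corollary~\ref{cor:KL-top-m-asymp}.
\end{itemize}

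Next, Proposition~\ref{prop:KL-vs-TV-chisq} gives, for every \(n\),
\[
2\|M_n-U\|_{\TV}^2\le D(M_n\|U)\le \log\bigl(1+\chi^2(M_n,U)\bigr)\le \chi^2(M_n,U).
\]
The maps \(x\mapsto 2x^2\) and \(x\mapsto\log(1+x)\) are continuous on \([0,\infty)\). The limit \(g_p(c)\) is finite, being a convergent series dominated as in the proof of Theorem~\ref{thm:Phi-top-m-asymp}. Letting \(n\to\infty\) therefore preserves the non-strict inequalities and yields \(2f_p(c)^2\le h_p(c)\le\log(1+g_p(c))\le g_p(c)\).

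The only delicate point is the domination hypothesis, which is needed because \(\chi^2\) and relative entropy are tail-sensitive. It is already handled by Lemma~\ref{lem:Linfty-domination}, so I expect no real obstacle.
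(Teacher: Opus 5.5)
Your proposal is correct and follows essentially the same route as the paper's proof. Both fix a top-$m$ walk at $k=k_n(c)$ and verify the Poisson and domination hypotheses via Proposition~\ref{lem:Pk-m-Poisson} and Lemma~\ref{lem:Linfty-domination}. Both then apply Proposition~\ref{prop:KL-vs-TV-chisq} for each $n$ and let $n\to\infty$ using Theorem~\ref{thm:profile-top-m}, Corollary~\ref{cor:KL-top-m-asymp}, and Theorem~\ref{thm:L2-top-m-limit}.
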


\begin{proof}
Fix any \(m\ge1\), independent of \(n\), and set \(k=k_n(c)\). Proposition~\ref{lem:Pk-m-Poisson} and Lemma~\ref{lem:Linfty-domination} verify the hypotheses of Corollary~\ref{cor:KL-top-m-asymp} and Theorem~\ref{thm:L2-top-m-limit}. Apply Proposition~\ref{prop:KL-vs-TV-chisq} to \(M=(Q^{(m)})^{*k}\), then let \(n\to\infty\) and use Theorem~\ref{thm:profile-top-m}, Corollary~\ref{cor:KL-top-m-asymp}, and Theorem~\ref{thm:L2-top-m-limit}.
\end{proof}

\begin{proposition}[Monotonicity of separation, \texorpdfstring{$L^q(U)$}{Lq(U)}, and relative entropy]\label{prop:Lq-KL-monotone}
Let \(M\) be any probability measure on \(G_{n,p}\), and let \(P_M\) be the transition kernel of the right random walk with one-step law \(M\). Then, as functions of \(k\ge0\),
\[
\sep(M^{*k},U),\qquad
\left\|\frac{M^{*k}(\cdot)}{U(\cdot)}-1\right\|_{L^q(U)}
\quad(1\le q\le\infty),
\qquad
D(M^{*k}\|U)
\]
are nonincreasing.
\end{proposition}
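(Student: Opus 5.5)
The argument rests on one structural fact: for a right random walk on a group, the $k$-step law is obtained from the $(k-1)$-step law by a Markov operator that fixes $U$. Concretely, $M^{*k}=M^{*(k-1)}*M$, and dividing by $U$ (which is constant) gives
\[
\frac{M^{*k}(z)}{U(z)}=\sum_{u\in G_{n,p}}M(u)\,\frac{M^{*(k-1)}(zu^{-1})}{U(zu^{-1})}.
\]
So if $f_k:=M^{*k}/U$, then $f_k=Tf_{k-1}$, where $(Tf)(z):=\sum_u M(u)f(zu^{-1})$. Each map $z\mapsto zu^{-1}$ is a bijection, so $T$ is an average of measure-preserving permutations of $(G_{n,p},U)$: it is positive, satisfies $T1=1$, and is doubly stochastic.

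Separation follows directly. Since $f_k(z)$ is a convex combination of values of $f_{k-1}$,
\[
\min_z f_k(z)\ge\min_z f_{k-1}(z),
\]
so $\sep(M^{*k},U)=1-\min f_k$ is nonincreasing. Alternatively this is \cite[Exercise~6.4]{LPW}.

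For $L^q(U)$ with $1\le q<\infty$, write $f_k-1=T(f_{k-1}-1)$, using $T1=1$. Jensen's inequality for the convex function $|\cdot|^q$, applied to the probability weights $M(u)$, gives
\[
|f_k(z)-1|^q\le\sum_u M(u)\,|f_{k-1}(zu^{-1})-1|^q.
\]
Sum against $U(z)$ and apply the change of variables $z\mapsto zu^{-1}$ for each fixed $u$; this is legitimate because $U$ is uniform. The result is $\|f_k-1\|_q^q\le\|f_{k-1}-1\|_q^q$. For $q=\infty$, the same pointwise convex-combination bound gives $\max_z|f_k(z)-1|\le\max_z|f_{k-1}(z)-1|$.

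Relative entropy uses the same mechanism. Write $D(M^{*k}\|U)=\sum_z U(z)\,\Phi(f_k(z))$ with $\Phi(t)=t\log t$, $\Phi(0)=0$. The function $\Phi$ is convex and continuous on $[0,\infty)$, so Jensen gives $\Phi(f_k(z))\le\sum_u M(u)\Phi(f_{k-1}(zu^{-1}))$. Averaging over $U$ and using invariance of $U$ under right translation yields $D(M^{*k}\|U)\le D(M^{*(k-1)}\|U)$. The convention $0\log 0=0$ causes no trouble, because $\Phi$ is continuous at $0$.

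The only point requiring care is verifying that $U$ is invariant under $z\mapsto zu^{-1}$, which is what makes $T$ contract every convex functional. This is immediate because $U$ is uniform on a group. No property specific to $G_{n,p}$ or to the mixture structure is used, so the proposition holds for an arbitrary step law $M$.
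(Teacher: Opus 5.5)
Your proof is correct, and it is more self-contained than the paper's. The paper handles the four quantities separately:
\begin{itemize}
\item separation, by citing \cite[Exercise~6.4]{LPW};
\item $L^q(U)$ for $q<\infty$ and relative entropy, by invoking the data-processing inequality for $f$-divergences \cite[Theorem~14]{LieseVajda2006}, with $f(t)=|t-1|^q$ and $f(t)=t\log t$ and using $UP_M=U$;
\item $q=\infty$, by writing out an explicit averaging identity through the adjoint kernel $P_M^*(y,x)=U(x)P_M(x,y)/U(y)$.
\end{itemize}
Your operator $T$ is exactly this adjoint kernel, since $P_M^*(z,zu^{-1})=M(u)$, so your $q=\infty$ step coincides with the paper's. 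The difference is that you run every case through the single identity $f_k=Tf_{k-1}$. Separation comes from the convex-combination bound on the minimum, and $L^q(U)$ and relative entropy come from pointwise Jensen plus right-translation invariance of $U$. In effect you reprove data processing in the special case where the reference measure is stationary, which is all that is needed here.

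The paper's route is shorter and places the result inside a general contraction principle for $f$-divergences. Yours needs no external input and treats all four quantities by one mechanism. It also works verbatim for any convex $\Phi$, and, with $T$ replaced by $P_M^*$, for any Markov kernel that fixes $U$.
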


\begin{proof}
The separation monotonicity is standard; see \cite[Exercise~6.4]{LPW}.

For \(1\le q<\infty\), apply the data-processing inequality for \(f\)-divergences to
\[
\mu=M^{*k},\qquad \nu=U,\qquad K=P_M,\qquad f(t)=|t-1|^q,
\]
where \(P_M(x,y)=M(x^{-1}y)\), so \(\mu K=M^{*(k+1)}\) and \(\nu K=U\).
Since \(UP_M=U\) and
\[
D_f(\mu\|U)
=
\sum_x U(x)\left|\frac{\mu(x)}{U(x)}-1\right|^q
=
\left\|\frac{\mu(\cdot)}{U(\cdot)}-1\right\|_{L^q(U)}^q,
\]
we get
\[
\left\|\frac{M^{*(k+1)}(\cdot)}{U(\cdot)}-1\right\|_{L^q(U)}^q
\le
\left\|\frac{M^{*k}(\cdot)}{U(\cdot)}-1\right\|_{L^q(U)}^q.
\]
The same argument with \(f(t)=t\log t\) gives
\[
D(M^{*(k+1)}\|U)\le D(M^{*k}\|U).
\]
See \cite[Theorem~14]{LieseVajda2006}.

For \(q=\infty\), define
\[
P_M^*(y,x):=\frac{U(x)P_M(x,y)}{U(y)}.
\]
Since \(U(x),U(y)>0\), \(P_M(x,y)\ge0\), and \(UP_M=U\),
\[
P_M^*(y,x)\ge0,
\qquad
\sum_x P_M^*(y,x)
=
\frac{1}{U(y)}\sum_x U(x)P_M(x,y)
=
\frac{(UP_M)(y)}{U(y)}
=
1,
\]
so \(P_M^*\) is a Markov kernel.

Also,
\[
M^{*(k+1)}=M^{*k}P_M,
\]
hence
\[
M^{*(k+1)}(y)=\sum_x M^{*k}(x)P_M(x,y).
\]
Dividing by \(U(y)\),
\[
\frac{M^{*(k+1)}(y)}{U(y)}
=
\sum_x \frac{U(x)P_M(x,y)}{U(y)}\,\frac{M^{*k}(x)}{U(x)}
=
\sum_x P_M^*(y,x)\frac{M^{*k}(x)}{U(x)}.
\]
Since \(\sum_x P_M^*(y,x)=1\),
\[
\frac{M^{*(k+1)}(y)}{U(y)}-1
=
\sum_x P_M^*(y,x)\left(\frac{M^{*k}(x)}{U(x)}-1\right).
\]
Therefore
\[
\left|\frac{M^{*(k+1)}(y)}{U(y)}-1\right|
\le
\sum_x P_M^*(y,x)\left|\frac{M^{*k}(x)}{U(x)}-1\right|
\le
\left\|\frac{M^{*k}(\cdot)}{U(\cdot)}-1\right\|_{L^\infty(U)},
\]
because \(P_M^*(y,\cdot)\) is a probability measure. Taking the maximum over \(y\) gives
\[
\left\|\frac{M^{*(k+1)}(\cdot)}{U(\cdot)}-1\right\|_{L^\infty(U)}
\le
\left\|\frac{M^{*k}(\cdot)}{U(\cdot)}-1\right\|_{L^\infty(U)}.\qedhere
\]
\end{proof}

\begin{corollary}[Monotonicity of the limit profiles]\label{cor:profile-monotone}
For every fixed \(p\ge1\), the functions
\[
c\longmapsto f_p(c),
\qquad
c\longmapsto s_p(c),
\qquad
c\longmapsto H_{q,p}(c)\quad(1\le q<\infty),
\qquad
c\longmapsto H_{\infty,p}(c),
\qquad
c\longmapsto h_p(c)
\]
are nonincreasing on \(\R\). In particular,
\[
c\longmapsto g_p(c)=H_{2,p}(c)
\]
is nonincreasing.
\end{corollary}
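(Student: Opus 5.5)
The idea is to transfer monotonicity in time, at each fixed finite \(n\), to monotonicity in the window parameter \(c\) by passing to the limit. Fix \(p\ge1\), take \(m=1\) for concreteness, and let \(c_1<c_2\). Put \(k_i:=k_n(c_i)\), so \(k_1\le k_2\) for every \(n\).

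For each \(n\) we have finite-\(n\) monotonicity in \(k\):
\begin{itemize}
\item Total variation is nonincreasing in \(k\) (the standard fact already used in Theorem~\ref{thm:profile-top-m}).
\item Separation, \(L^q(U)\) for \(1\le q\le\infty\), and relative entropy are nonincreasing in \(k\) by Proposition~\ref{prop:Lq-KL-monotone}, applied with \(M=Q^{(1)}\).
\end{itemize}
Hence each of these distances evaluated at \((Q^{(1)})^{*k_2}\) is at most the same distance evaluated at \((Q^{(1)})^{*k_1}\). For \(L^q(U)\) with \(q<\infty\), raising to the power \(q\) preserves the inequality, so it also holds for the \(q\)-th powers.

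We then let \(n\to\infty\) and identify the limits using results already proved:
\begin{itemize}
\item \(f_p\) comes from Theorem~\ref{thm:profile-top-m}.
\item \(s_p\) comes from Theorem~\ref{thm:sep-top-m}.
\item \(H_{q,p}\) and \(h_p\) come from Corollary~\ref{cor:Lq-top-m-asymp} and Corollary~\ref{cor:KL-top-m-asymp}. Their domination hypothesis with some \(B\ge1\) is verified by Lemma~\ref{lem:Linfty-domination}, exactly as in the proof of Corollary~\ref{cor:fp-series}.
\item \(H_{\infty,p}\) comes from Corollary~\ref{cor:Linfty-top-m-limit}.
\end{itemize}
Non-strict inequalities survive limits, so \(F(c_2)\le F(c_1)\) for each profile \(F\). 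For \(H_{\infty,p}\) the limits live in \([0,\infty]\). The inequality \(a_n\le b_n\) still passes to the limit there: if \(\lim a_n=+\infty\), then \(b_n\to+\infty\) as well, and the comparison holds in the extended order. The statement for \(g_p\) is the case \(q=2\) of the \(H_{q,p}\) statement.

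The only delicate point is the \(L^\infty\) case, where we must check that convergence in the extended interval is compatible with the ordering. As a sanity check, \(s_p\) and \(H_{\infty,p}\) can also be verified directly from their closed forms:
\begin{itemize}
\item \(1-e^{-\lambda}\) and \(1-e^{-\lambda}(1+\lambda)\) are increasing in \(\lambda\), and \(\lambda\) is decreasing in \(c\).
\item \(e^{-\lambda}/(1-p\lambda)=\sum_u e^{-\lambda}\lambda^u p^u\) has derivative in \(\lambda\) equal to \(e^{-\lambda}\sum_u \lambda^u(p^{u+1}-p^u)\ge0\) for \(r<1\), and it jumps to \(+\infty\) once \(r\ge1\).
\end{itemize}
These direct checks are not needed for the limit argument above.
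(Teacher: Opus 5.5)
Your proposal is correct and matches the paper's proof: fix a top-\(m\) chain (the paper allows any fixed \(m\), you take \(m=1\)), use \(k_n(c_1)\le k_n(c_2)\) together with the finite-\(n\) time-monotonicity of Proposition~\ref{prop:Lq-KL-monotone} (the paper gets total variation from the \(L^1(U)\) case rather than citing it separately), and pass to the limit via the profile theorems, with the domination hypothesis supplied by Lemma~\ref{lem:Linfty-domination}. One small slip in your optional sanity check: the \(\lambda\)-derivative of \(e^{-\lambda}\sum_u (p\lambda)^u\) is \(e^{-\lambda}\sum_u\lambda^u\bigl((u+1)p^{u+1}-p^u\bigr)\), not \(e^{-\lambda}\sum_u\lambda^u(p^{u+1}-p^u)\) (which would vanish identically when \(p=1\)), though the nonnegativity conclusion is unaffected.
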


\begin{proof}
Fix \(m\ge1\), independent of \(n\). If \(c_1<c_2\), then
\[
k_n(c_1)\le k_n(c_2).
\]
Hence, by Proposition~\ref{prop:Lq-KL-monotone},
\[
\sep\bigl((Q^{(m)})^{*k_n(c)},U\bigr),\qquad
\left\|\frac{(Q^{(m)})^{*k_n(c)}(\cdot)}{U(\cdot)}-1\right\|_{L^q(U)},\qquad
D\bigl((Q^{(m)})^{*k_n(c)}\|U\bigr)
\]
are nonincreasing in \(c\). Since
\[
\bigl\|(Q^{(m)})^{*k_n(c)}-U\bigr\|_{\TV}
=
\frac12\left\|\frac{(Q^{(m)})^{*k_n(c)}(\cdot)}{U(\cdot)}-1\right\|_{L^1(U)},
\]
the total-variation term is also nonincreasing. For \(1\le q<\infty\), because \(x\mapsto x^q\) is increasing on \([0,\infty)\), the quantity
\[
\left\|\frac{(Q^{(m)})^{*k_n(c)}(\cdot)}{U(\cdot)}-1\right\|_{L^q(U)}^q
\]
is also nonincreasing in \(c\). The case \(q=\infty\) is already covered by the monotonicity of the \(L^\infty(U)\)-norm.

Proposition~\ref{lem:Pk-m-Poisson} and Lemma~\ref{lem:Linfty-domination} verify the hypotheses of Corollary~\ref{cor:Lq-top-m-asymp} and Corollary~\ref{cor:KL-top-m-asymp} for the top-\(m\) laws. Letting \(n\to\infty\) and applying Theorem~\ref{thm:profile-top-m}, Theorem~\ref{thm:sep-top-m}, Corollary~\ref{cor:Lq-top-m-asymp}, Corollary~\ref{cor:Linfty-top-m-limit}, and Corollary~\ref{cor:KL-top-m-asymp}, we obtain that
\[
f_p(c),\qquad
s_p(c),\qquad
H_{q,p}(c)\ (1\le q<\infty),\qquad
H_{\infty,p}(c),\qquad
h_p(c)
\]
are nonincreasing. Since \(g_p(c)=H_{2,p}(c)\), the same holds for \(g_p\).
\end{proof}

\section{Optimal strong stationary times for the reversed chain}\label{sec:sst}

This section uses the reversed chain to expose the frozen-tail structure behind separation. Conditional uniformity on the touched-label set gives optimal strong stationary times whose tails equal the separation distance.

We use the reversed-chain notation \(\check Q^{(m)},(Y_k),T_k\) fixed in the introduction. For \(S\subseteq[n]\), let \(F_S\) be the set of colored arrangements whose front block consists exactly of the touched labels \(S\), while the untouched labels \(S^c\) remain frozen in increasing order with color \(0\). That is,
\[
F_S:=\Bigl\{(s,\sigma)\in G_{n,p}:
\{\sigma(i):1\le i\le |S|\}=S,\ 
\sigma(i)<\sigma(j)\ \text{for all } |S|<i<j\le n,\ 
s_i=0\ \text{for all } |S|<i\le n
\Bigr\}.
\]

As we will see, we use the reversed chain because its inverse move preserves a frozen-tail structure: after time \(k\), the touched labels form the front block, while the untouched labels remain together in increasing order with color \(0\). Thus, conditional on \(T_k=S\), the chain is uniform on the simple set \(F_S\). This gives natural strong stationary times. By the reversal invariance from Section~\ref{sec:prel}, the resulting separation formulas also apply to the original colored top-\(m\)-to-random chain.

\begin{lemma}\label{lem:sst-conditional-uniform}
For every \(k\ge0\) and every \(S\subseteq[n]\) with \(\Prob(T_k=S)>0\),
\[
\Law(Y_k\mid T_k=S)=\Unif(F_S).
\]
\end{lemma}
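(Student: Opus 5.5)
We argue by induction on \(k\), proving the stronger joint statement that for every \(S\subseteq[n]\) and every \(y\in G_{n,p}\),
\[
\Prob(Y_k=y,\ T_k=S)=\Prob(T_k=S)\,\frac{\mathbf 1_{F_S}(y)}{|F_S|},
\qquad |F_S|=|S|!\,p^{|S|},
\]
where the count holds because the front block is an arbitrary colored arrangement of \(S\) and the tail is forced. The base case \(k=0\) is immediate: \(T_0=\varnothing\), \(Y_0=e\), and \(F_\varnothing=\{e\}\), since the whole word is then the frozen tail \(W_{0,n}\).

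For the inductive step, we use the inverse-move description of one \(\check Q^{(m)}\)-step. A uniform \(m\)-subset \(R\) of labels is chosen, independent of the past. The labels of \(R\) are moved to the front in uniform random order with independent uniform colors, and all other letters keep their relative order and colors. Then \(T_{k+1}=T_k\cup R\). Condition on \(T_k=S\) and \(R=R_0\), and set \(S'=S\cup R_0\). By induction and independence of \(R\) from \(Y_k\), the word \(Y_k\) is uniform on \(F_S\) and independent of the new order and colors of \(R_0\).

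First we check that \(Y_{k+1}\in F_{S'}\). The new front is \(R_0\), then the letters of \(S\setminus R_0\) in their old relative order, then the untouched labels \([n]\setminus S'\), still increasing with color \(0\). So the first \(|S'|\) positions carry exactly the labels \(S'\), and the tail is frozen.

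Second, and this is the main point, we show that \(Y_{k+1}\) is uniform on \(F_{S'}\). The map
\[
(\text{old word }y\in F_S,\ \text{ordering and colors of }R_0)\longmapsto Y_{k+1}
\]
has all fibers of equal size. Given a target \(z\in F_{S'}\), the ordering and colors of \(R_0\) are read off from the first \(m\) letters of \(z\). The letters of \(S\setminus R_0\) must occupy positions \(m+1,\dots,|S'|\) of \(z\), in an order and with colors that are forced. What remains to reconstruct \(y\) is a choice of how the letters of \(R_0\cap S\), with arbitrary colors, were interleaved with \(S\setminus R_0\) inside the old front block of length \(|S|\). Every letter of \(R_0\setminus S\) is forced into the old frozen tail, with color \(0\) and in increasing position. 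The number of such reconstructions,
\[
\binom{|S|}{|R_0\cap S|}\,|R_0\cap S|!\;p^{|R_0\cap S|},
\]
does not depend on \(z\). Since the input is uniform on a product set, the output is uniform on \(F_{S'}\).

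Finally, we average over the pairs \((S,R_0)\) with \(S\cup R_0=S'\). Each conditional law is \(\Unif(F_{S'})\), so the mixture is as well, and this proves the joint statement at \(k+1\). The main obstacle is the fiber count above. I would set it up carefully as a bijection, noting that the untouched labels absorbed into \(R_0\) leave no ambiguity because the frozen tail determines their old positions. This is what makes the fiber size independent of the target \(z\).
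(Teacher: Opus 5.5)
There is a genuine gap at the step you call the main point. Conditional on $T_k=S$ and $R=R_0$, the word $Y_{k+1}$ always carries exactly the labels of $R_0$ in its first $m$ positions. So it cannot be uniform on $F_{S'}$ unless $S\subseteq R_0$. Your sentence ``the ordering and colors of $R_0$ are read off from the first $m$ letters of $z$'' presupposes that those letters are $R_0$; for every other $z\in F_{S'}$ the fiber is empty. For instance, take $n=2$ and $m=p=1$. On $\{T_1=\{1\},\ R=\{2\}\}$ you get $Y_2=(0,2)(0,1)$ deterministically, whereas $F_{\{1,2\}}=G_{2,1}$ has two elements. Your fiber count is correct, but what it shows is that this conditional law is uniform on
\[
F_{S'}^{R_0}:=\{z\in F_{S'}:\ \text{the first } m \text{ labels of } z \text{ form } R_0\},
\]
a set of size $m!\,p^m\,(|S'|-m)!\,p^{|S'|-m}$. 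Hence the closing claim ``each conditional law is $\Unif(F_{S'})$'' is false, and the averaging step does not go through as written.

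The missing ingredient is label exchangeability. Your argument uses only that $R$ is independent of the past, never that it is uniform among $m$-subsets, and that hypothesis cannot be dropped. With $n=2$ and $m=p=1$, select label $1$ with probability $1/3$ and label $2$ with probability $2/3$; then $\Prob(Y_3=(0,1)(0,2),\,T_3=\{1,2\})=8/27\neq 10/27=\Prob(Y_3=(0,2)(0,1),\,T_3=\{1,2\})$.

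To repair the induction, let $R(z)$ be the set of the first $m$ labels of $z\in F_{S'}$. Only $R_0=R(z)$ contributes, so
\[
\Prob(Y_{k+1}=z,\ T_{k+1}=S')=\frac{1}{\binom{n}{m}\,|F_{S'}^{R(z)}|}\sum_{S:\ S'\setminus R(z)\subseteq S\subseteq S'}\Prob(T_k=S).
\]
The law of the i.i.d.\ uniform $m$-subsets is invariant under relabeling $[n]$, so $\Prob(T_k=S)$ depends only on $|S|$. The sum therefore depends only on $|S'|$ and $m$, and the right side is constant on $F_{S'}$.

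The paper avoids the induction altogether. It shows $Y_k\in F_S$ directly and gets uniformity from the relabeling-and-recoloring bijection $(\rho,a)$, with $\rho\in\mathfrak S_S$ and $a\in C_p^S$, applied to entire histories. That bijection is probability-preserving precisely because the selected subsets, orderings, and colors are all uniform.
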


\begin{proof}
Fix \(S\subseteq[n]\) with \(\Prob(T_k=S)>0\). On \(\{T_k=S\}\), no label in \(S^c\) has ever been selected, so the labels in \(S^c\) remain in increasing order and with color \(0\). If \(i\in S\), then at its last selection \(i\) is moved in front of every label in \(S^c\), and later moves preserve this relative order. Hence \(Y_k\in F_S\) on \(\{T_k=S\}\).

It remains to show that all states in \(F_S\) have the same conditional probability. Fix \(x,x'\in F_S\). Choose \(\rho\in\mathfrak S_S\) and \(a\in C_p^S\) sending the ordering and coloring of the front \(S\)-block of \(x\) to that of \(x'\). Apply \((\rho,a)\) to a history by replacing each selected label \(i\in S\) by \(\rho(i)\), and by replacing the corresponding assigned color \(c\) by \(c+a_i\). Since by definition the selected ordered lists and color assignments are uniform, this is a probability-preserving bijection from histories with
\[
T_k=S,\qquad Y_k=x
\]
to histories with
\[
T_k=S,\qquad Y_k=x'.
\]
Thus
\[
\Prob(Y_k=x\mid T_k=S)=\Prob(Y_k=x'\mid T_k=S)
\qquad(x,x'\in F_S).
\]
Since \(Y_k\in F_S\) on \(\{T_k=S\}\), the conditional law is uniform on \(F_S\).
\end{proof}

\begin{theorem}[Optimal strong stationary times for the reversed chain]\label{thm:sst-top-m}
For the reversed chain, if \(p\ge2\), then \(\tau_{\mathrm{cov}}:=\min\{k:T_k=[n]\}\) is an optimal strong stationary time and, for every \(k\ge0\),
\[
\Prob(\tau_{\mathrm{cov}}>k)
=
1-P_k^{(m)}(0)
=
\sep\bigl((Q^{(m)})^{*k},U\bigr).
\]
If \(p=1\), then \(\tau_{n-1}:=\min\{k:|T_k|\ge n-1\}\) is an optimal strong stationary time and, for every \(k\ge0\),
\[
\Prob(\tau_{n-1}>k)
=
1-P_k^{(m)}(0)-P_k^{(m)}(1)
=
\sep\bigl((Q^{(m)})^{*k},U\bigr).
\]
\end{theorem}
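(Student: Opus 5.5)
The plan has three steps: show that each candidate time is a strong stationary time, compute its tail exactly, and identify that tail with the separation distance already computed in Theorem~\ref{thm:sep-top-m}. Optimality then follows from the standard inequality $s(k)\le\Prob(\tau>k)$, valid for every strong stationary time.

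\emph{Step 1: the stopping rules.} First note that $(T_k)$ is determined by the selected subsets alone, and these are independent of the orderings and colors used at each step. Hence $\tau_{\mathrm{cov}}$ and $\tau_{n-1}$ are stopping times for the natural filtration of the randomized walk. The subsets $(S_j)$ chosen by the reversed chain are i.i.d.\ uniform $m$-subsets, so $n-|T_k|$ has exactly the law of $E$ in Definition~\ref{def:occupancy-m}. Therefore
\[
\Prob(\tau_{\mathrm{cov}}\le k)=\Prob(T_k=[n])=P_k^{(m)}(0),
\qquad
\Prob(\tau_{n-1}\le k)=P_k^{(m)}(0)+P_k^{(m)}(1),
\]
which gives the middle equalities.

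\emph{Step 2: strong stationarity.} I would strengthen Lemma~\ref{lem:sst-conditional-uniform} to conditioning on the whole history of touched sets $(T_0,\dots,T_k)=(S_0,\dots,S_k)$, not just on $T_k$. The relabeling-and-recoloring bijection in that proof preserves every $T_j$, since it acts only by permutations and color shifts within $S=S_k$, and every earlier $S_j\subseteq S$ is mapped into itself because labels outside $S$ are never touched. So the same argument gives
\[
\Law\bigl(Y_k\mid T_0,\dots,T_k\bigr)=\Unif(F_{T_k}).
\]
The event $\{\tau\le k\}$ is determined by $(T_j)_{j\le k}$.

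For $p\ge2$, on $\{\tau_{\mathrm{cov}}=j\}$ we have $F_{[n]}=G_{n,p}$, so $Y_j$ is uniform and independent of $\tau$. For $p=1$, $|S|\ge n-1$ forces $F_S=S_n$: the frozen tail has at most one label, so its order constraint is vacuous and the color constraint is trivial. Hence $Y_{\tau_{n-1}}$ is uniform given $\tau_{n-1}=j$.

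Uniformity must also persist for $k\ge\tau$. Either the strong Markov property plus stationarity of $U$ handles this, or one observes directly that $\Law(Y_k\mid\tau\le k,\,T\text{-history})$ is uniform for every $k$. The latter follows from the displayed conditional law, since $F_{T_k}=G_{n,p}$ once $\tau\le k$, using monotonicity of $T_k$. This gives the strong stationary time property in the form $\Prob(\tau\le k,\,Y_k=y)=\Prob(\tau\le k)U(y)$.

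\emph{Step 3: optimality.} A strong stationary time always satisfies $\sep(\check Q^{*k},U)\le\Prob(\tau>k)$ \cite[Lemma~6.11]{LPW}. By the reversal invariance of Section~\ref{sec:prel}, $\sep(\check Q^{*k},U)=\sep((Q^{(m)})^{*k},U)$. Theorem~\ref{thm:sep-top-m} computes this separation as exactly $1-P_k^{(m)}(0)$ for $p\ge2$ and $1-P_k^{(m)}(0)-P_k^{(m)}(1)$ for $p=1$. So equality holds for every $k$, and the times are optimal.

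The main obstacle is Step 2. Lemma~\ref{lem:sst-conditional-uniform} conditions only on $T_k$, and one must verify that the bijection respects the full touched-set history so that the conditioning involving $\tau$ is legitimate. I would check this first, rather than trying to express $\{\tau\le k\}$ through $T_k$ alone. In fact it is expressible that way, since $T$ is nondecreasing, which gives a simple fallback at least for showing $\Prob(\tau\le k,\,Y_k=y)=\Prob(\tau\le k)U(y)$.
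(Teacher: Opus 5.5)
Your Step 2 breaks down at two points. First, for $p=1$ the claim that $|S|\ge n-1$ forces $F_S=S_n$ is false. Besides the tail conditions, $F_S$ requires the front block $\{\sigma(i):i\le |S|\}$ to be exactly $S$. So for $S=[n]\setminus\{j\}$ the untouched label is pinned to position $n$, and $F_{[n]\setminus\{j\}}=\{\pi:\pi(n)=j\}$ has only $(n-1)!$ elements. On $\{|T_k|=n-1\}$, the conditional law of $Y_k$ given $T_k$ is therefore not uniform on $S_n$, let alone given the whole touched-set history. Uniformity appears only after averaging over which label is missing. The missing idea is the exchangeability step the paper uses. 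By relabeling invariance, $\Prob(T_k=[n]\setminus\{j\})=\frac1n\Prob(|T_k|=n-1)$ for every $j$, hence
\[
\Prob(Y_k=\pi,\ |T_k|=n-1)=\frac{1}{(n-1)!}\,\Prob\bigl(T_k=[n]\setminus\{\pi(n)\}\bigr)=U(\pi)\,\Prob(|T_k|=n-1).
\]
Adding the $|T_k|=n$ case gives $\Prob(Y_k=\pi,\tau_{n-1}\le k)=U(\pi)\Prob(\tau_{n-1}\le k)$.

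Second, the strengthened lemma $\Law(Y_k\mid T_0,\dots,T_k)=\Unif(F_{T_k})$ is false for every $p$. The bijection $(\rho,a)$ with $\rho\in\mathfrak S_S$ maps each earlier $T_j$ to $\rho(T_j)\subseteq S$, not to $T_j$ itself. Concretely, take $n=2$, $m=1$: the history $T_1=\{1\}$, $T_2=\{1,2\}$ forces label $2$ to precede label $1$ in $Y_2$, so $Y_2$ is not uniform on $F_{[2]}=G_{2,p}$.

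This leaves three of your conclusions unsupported. Your conclusion that $Y_j$ is uniform on $\{\tau_{\mathrm{cov}}=j\}$ fails, since that event involves $T_{j-1}$ as well as $T_j$, and given $(T_{j-1},T_j)$ the law of $Y_j$ is generally not uniform. Your direct persistence argument rests on the same lemma. The strong Markov alternative only propagates uniformity forward from time $\tau$, so it inherits the gap.

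The repair is your own fallback, completed as in the paper. Since $T$ is nondecreasing, $\{\tau\le k\}$ is determined by $T_k$. So Lemma~\ref{lem:sst-conditional-uniform}, plus the exchangeability step when $p=1$, gives $\Prob(Y_k=y,\tau\le k)=U(y)\Prob(\tau\le k)$ for all $k$. Then
\[
\Prob(Y_j=y,\tau=j)=\Prob(Y_j=y,\tau\le j)-\Prob(Y_j=y,\tau\le j-1),
\]
where the last term equals $U(y)\Prob(\tau\le j-1)$. This follows from the Markov property at time $j-1$, because $\{\tau\le j-1\}$ depends only on the first $j-1$ moves, together with stationarity of $U$ for the reversed kernel.

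Steps 1 and 3 are fine and match the paper.
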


\begin{proof}
In each case, the displayed time is a stopping time, since the events \(\{\tau_{\mathrm{cov}}\le k\}\) and \(\{\tau_{n-1}\le k\}\) are determined by \(T_k\), hence by the first \(k\) moves. 

Write
\[
E_k:=n-|T_k|.
\]
Then \(E_k\) has law \(P_k^{(m)}\), since \(P_k^{(m)}(u)=\Prob(n-|T_k|=u)\).

Assume \(p\ge2\). 
Since \(\tau_{\mathrm{cov}}\le k\iff T_k=[n]\), Lemma~\ref{lem:sst-conditional-uniform} gives, for every \(y\in G_{n,p}\),
\begin{equation}\label{eq:sst-cov-uniform}
\Prob(Y_k=y,\tau_{\mathrm{cov}}\le k)
=
U(y)\Prob(\tau_{\mathrm{cov}}\le k),
\end{equation}
because \(F_{[n]}=G_{n,p}\); if \(\Prob(T_k=[n])=0\), both sides are \(0\).
Thus
\[
\Prob(\tau_{\mathrm{cov}}>k)
=
\Prob(E_k>0)
=
1-P_k^{(m)}(0).
\]
By the reversal identity from Section~\ref{sec:prel} and Theorem~\ref{thm:sep-top-m},
\[
\sep(\Law(Y_k),U)
=
\sep\bigl((Q^{(m)})^{*k},U\bigr)
=
1-P_k^{(m)}(0).
\]

Assume \(p=1\). If \(n=1\), then \(G_{1,1}\) is a singleton, and the claim is immediate. Assume \(n\ge2\). Since \(\tau_{n-1}\le k\iff |T_k|\ge n-1\), we consider the cases \(|T_k|=n\) and \(|T_k|=n-1\).

For every \(\pi\in S_n\), Lemma~\ref{lem:sst-conditional-uniform} gives
\begin{equation}\label{eq:sst-p1-full}
\Prob(Y_k=\pi,\ |T_k|=n)
=
U(\pi)\Prob(|T_k|=n),
\end{equation}
because \(F_{[n]}=S_n\); if \(\Prob(T_k=[n])=0\), both sides are \(0\).
By relabeling invariance,
\[
\Prob(T_k=[n]\setminus\{j\})
=
\frac1n\Prob(|T_k|=n-1).
\]
Hence, again by Lemma~\ref{lem:sst-conditional-uniform}, for every \(j\in[n]\),
\[
\Prob(Y_k=\pi,\ T_k=[n]\setminus\{j\})
=
\frac{\mathbf 1_{\{\pi(n)=j\}}}{(n-1)!}
\Prob(T_k=[n]\setminus\{j\}),
\]
with the identity trivial when \(\Prob(T_k=[n]\setminus\{j\})=0\). Summing over \(j\),
\begin{equation}\label{eq:sst-p1-one-missing}
\begin{aligned}
\Prob(Y_k=\pi,\ |T_k|=n-1)
&=
\frac{1}{(n-1)!}\Prob(T_k=[n]\setminus\{\pi(n)\})\\
&=
\frac1{n!}\Prob(|T_k|=n-1)
=
U(\pi)\Prob(|T_k|=n-1).
\end{aligned}
\end{equation}
Since
\[
\{\tau_{n-1}\le k\}
=
\{|T_k|=n\}\,\sqcup\,\{|T_k|=n-1\},
\]
adding \eqref{eq:sst-p1-full} and \eqref{eq:sst-p1-one-missing} gives
\begin{equation}\label{eq:sst-p1-uniform}
\Prob(Y_k=\pi,\tau_{n-1}\le k)
=
U(\pi)\Prob(\tau_{n-1}\le k).
\end{equation}
Also,
\[
\Prob(\tau_{n-1}>k)
=
\Prob(E_k\ge2)
=
1-P_k^{(m)}(0)-P_k^{(m)}(1).
\]
By the reversal identity from Section~\ref{sec:prel} and Theorem~\ref{thm:sep-top-m},
\[
\sep(\Law(Y_k),U)
=
\sep\bigl((Q^{(m)})^{*k},U\bigr)
=
1-P_k^{(m)}(0)-P_k^{(m)}(1).
\]

In either case, write \(\tau\) for the displayed stopping time. In both cases, \(\{\tau>k\}\subseteq\{E_k>0\}\). For each fixed label \(i\) and each \(k\ge1\),
\[
\Prob(i\notin T_k)=\left(1-\frac{m}{n}\right)^k.
\]
Thus, by the union bound,
\[
\Prob(\tau>k)
\le
\Prob(E_k>0)
\le
n\left(1-\frac{m}{n}\right)^k
\longrightarrow 0.
\]
Hence \(\tau<\infty\) almost surely.

By \eqref{eq:sst-cov-uniform} for \(p\ge2\) and \eqref{eq:sst-p1-uniform} for \(p=1\), the corresponding \(\tau\) satisfies
\[
\Prob(Y_k=y,\tau\le k)=U(y)\Prob(\tau\le k)
\qquad(k\ge0,\ y\in G_{n,p}).
\]
We now verify the strong-stationary-time condition. Let
\[
B_j:=\{\tau\le j\},
\qquad
B_{-1}:=\varnothing.
\]
Thus, for \(j\ge0\),
\[
\Prob(Y_j=y,B_j)=U(y)\Prob(B_j).
\]
Let \(\check P^{(m)}\) be the transition kernel of \(Y\). For \(j\ge1\),
\[
\begin{aligned}
\Prob(Y_j=y,B_{j-1})
&=
\sum_z \Prob(Y_j=y,\ Y_{j-1}=z,\ B_{j-1})  \\
&=
\sum_z \Prob(Y_{j-1}=z,\ B_{j-1})\check P^{(m)}(z,y)  \\
&=
\Prob(B_{j-1})\sum_z U(z)\check P^{(m)}(z,y)  \\
&=
U(y)\Prob(B_{j-1}),
\end{aligned}
\]
where the last equality uses stationarity of \(U\) for the reversed chain, and for \(j=0\) this is trivial because \(B_{-1}=\varnothing\).
Therefore
\[
\begin{aligned}
\Prob(Y_\tau=y,\tau=j)
&=
\Prob(Y_j=y,\tau=j)  \\
&=
\Prob(Y_j=y,\ B_j\setminus B_{j-1})  \\
&=
\Prob(Y_j=y,B_j)-\Prob(Y_j=y,B_{j-1})  \\
&=
U(y)\Prob(B_j)-U(y)\Prob(B_{j-1})  \\
&=
U(y)\Prob(\tau=j).
\end{aligned}
\]
Thus \(Y_\tau\sim U\) and \(Y_\tau\) is independent of \(\tau\). Hence \(\tau\) is a strong stationary time.

Finally, by \cite[Lemma~6.12]{LPW},
\[
\sep(\Law(Y_k),U)\le \Prob(\tau>k).
\]
Since equality holds for every \(k\), \(\tau\) is optimal.
\end{proof}

\begin{remark}
This is analogous to the classical top-to-random example of \cite[Example~6.15]{LPW}, which corresponds to \(p=1\) and \(m=1\).
\end{remark}

\begin{corollary}[Limit laws for the optimal stopping times]\label{cor:sst-limit-laws}
Fix \(m\ge1\), independent of \(n\). If \(p\ge2\), then
\[
\frac{m}{n}\tau_{\mathrm{cov}}-\log n
\Rightarrow \Lambda,
\]
where \(\Lambda\) has the standard Gumbel distribution
\[
\Prob(\Lambda\le c)=e^{-e^{-c}}.
\]
If \(p=1\), then
\[
\frac{m}{n}\tau_{n-1}-\log n
\Rightarrow \Lambda_1,
\]
where
\[
\Prob(\Lambda_1\le c)=e^{-e^{-c}}(1+e^{-c}).
\]
\end{corollary}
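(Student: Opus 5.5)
The plan is to read the limit laws directly off the exact tail identities in Theorem~\ref{thm:sst-top-m} and the separation limits in Theorem~\ref{thm:sep-top-m}.

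Fix \(c\in\R\) and set \(k=k_n(c)=\lfloor \frac{n}{m}(\log n+c)\rfloor\). Since \(\tau_{\mathrm{cov}}\) and \(\tau_{n-1}\) are integer-valued,
\[
\Bigl\{\tfrac{m}{n}\tau-\log n\le c\Bigr\}=\Bigl\{\tau\le \tfrac{n}{m}(\log n+c)\Bigr\}=\{\tau\le k_n(c)\}.
\]
Hence
\[
\Prob\Bigl(\tfrac{m}{n}\tau-\log n\le c\Bigr)=1-\Prob(\tau>k_n(c)).
\]

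For \(p\ge2\), Theorem~\ref{thm:sst-top-m} gives
\[
\Prob(\tau_{\mathrm{cov}}>k)=\sep\bigl((Q^{(m)})^{*k},U\bigr),
\]
and Theorem~\ref{thm:sep-top-m} gives that this converges to \(s_p(c)=1-e^{-\lambda}\). So the distribution function converges to \(e^{-e^{-c}}\).

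For \(p=1\), the same two theorems give
\[
\Prob(\tau_{n-1}>k)\to 1-e^{-\lambda}(1+\lambda).
\]
So the distribution function converges to \(e^{-e^{-c}}(1+e^{-c})\). Equivalently, one can bypass separation and use Proposition~\ref{lem:Pk-m-Poisson} directly: \(P_k^{(m)}(0)\to e^{-\lambda}\) and \(P_k^{(m)}(1)\to\lambda e^{-\lambda}\).

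It remains to check that both limits are genuine distribution functions. Both are continuous in \(c\). Both tend to \(0\) as \(c\to-\infty\) and to \(1\) as \(c\to+\infty\). Both are nondecreasing: for \(\Lambda_1\), set \(\lambda=e^{-c}\); then \(e^{-\lambda}(1+\lambda)\) is decreasing in \(\lambda\), since its derivative is \(-\lambda e^{-\lambda}\). Pointwise convergence of the distribution functions at every \(c\) then gives weak convergence.

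The only point needing care, and the main obstacle, is matching the event \(\{\tau\le k_n(c)\}\) with the rescaled inequality, which depends on the floor in \(k_n(c)\). The identity \(\{\tau\le x\}=\{\tau\le\lfloor x\rfloor\}\) for integer \(\tau\) settles this. We also need \(k_n(c)\ge0\), which holds for all large \(n\).
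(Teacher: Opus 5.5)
Your proposal is correct and follows essentially the same route as the paper. Both arguments use integrality of $\tau$ to identify $\{\frac{m}{n}\tau-\log n\le c\}$ with $\{\tau\le k_n(c)\}$, and then read off $\lim\Prob(\tau>k_n(c))$ from the exact tail identities of Theorem~\ref{thm:sst-top-m} together with the Poisson limit of Proposition~\ref{lem:Pk-m-Poisson}, which you reach through Theorem~\ref{thm:sep-top-m}. Your explicit check that the limits are continuous, nondecreasing distribution functions with the correct endpoint behavior is a detail the paper leaves implicit, and you carry it out correctly.
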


\begin{proof}
Let \(k=k_n(c)\) and \(\lambda=e^{-c}\). By Proposition~\ref{lem:Pk-m-Poisson},
\[
P_k^{(m)}(u)\to e^{-\lambda}\frac{\lambda^u}{u!}.
\]
Theorem~\ref{thm:sst-top-m} gives
\[
\Prob(\tau_{\mathrm{cov}}>k)=1-P_k^{(m)}(0)
\to 1-e^{-\lambda}
\qquad(p\ge2),
\]
and
\[
\Prob(\tau_{n-1}>k)=1-P_k^{(m)}(0)-P_k^{(m)}(1)
\to 1-e^{-\lambda}(1+\lambda)
\qquad(p=1).
\]
Substitute \(\lambda=e^{-c}\).

For either \(\tau=\tau_{\mathrm{cov}}\) or \(\tau=\tau_{n-1}\), since \(\tau\) is integer-valued and
\[
k_n(c)=\left\lfloor \frac{n}{m}(\log n+c)\right\rfloor,
\]
we have
\[
\left\{\frac{m}{n}\tau-\log n\le c\right\}
=
\{\tau\le k_n(c)\}.
\]
Thus the limiting tail probabilities above give the stated distribution functions.
\end{proof}

We finish by explaining how the theorems stated in the introduction follow from the results above.

\paragraph{\textbf{Proof of Theorems~\ref{thm:intro-LR}--\ref{thm:intro-topm}.}}
Theorem~\ref{thm:intro-LR} follows from Remark~\ref{rem:LR-general-mu}, Theorem~\ref{thm:Phi-general-mu}, Corollary~\ref{cor:sep-general-mu-explicit}, and Theorem~\ref{thm:Linfty-general-mu}.

Theorem~\ref{thm:intro-engine} follows from Theorem~\ref{prop:DFP3.1-colored}, Theorem~\ref{thm:sep-poisson-general}, Corollary~\ref{cor:Lq-top-m-asymp}, Theorem~\ref{thm:L2-top-m-limit}, Corollary~\ref{cor:KL-top-m-asymp}, and Theorem~\ref{thm:Linfty-poisson-general}.

Theorem~\ref{thm:intro-topm} follows from Lemma~\ref{prop:Qm-mixture}, Proposition~\ref{lem:Pk-m-Poisson}, Lemma~\ref{lem:Linfty-domination}, Theorem~\ref{thm:intro-engine}, Remark~\ref{rem:fp-endpoints}, Corollary~\ref{cor:profile-monotone}, and Theorem~\ref{thm:sst-top-m}.

\end{document}